\tikzset{
    symbol/.style={
    draw=none,
    every to/.append style={
    edge node={node [sloped, allow upside down, auto=false]{$#1$}}
    },
},
}
\theoremstyle{plain}
\newtheorem{theorem}{Theorem}[section]
\newtheorem{lemma}[theorem]{Lemma}
\newtheorem{proposition}[theorem]{Proposition}
\newtheorem{corollary}[theorem]{Corollary}
\theoremstyle{definition}
\declaretheorem[style=definition,qed=$\triangle$,sibling=theorem]{definition}
\declaretheorem[style=definition,qed=$\triangle$,sibling=theorem]{example}
\declaretheorem[style=definition,qed=$\triangle$,sibling=theorem]{remark}
\declaretheorem[style=definition,qed=$\triangle$,sibling=theorem]{question}
\def\R{\mathbb{R}}
\def\C{\mathbb{C}}
\def\Z{\mathbb{Z}}
\def\S{\mathbb{S}}
\def\P{\mathbb{P}}
\def\T{\mathbb{T}}
\def\H{\mathbb{H}}
\def\bD{\mathbb{D}}
\def\CP{\mathbb{CP}}
\def\RP{\mathbb{RP}}
\def\D{\mathcal{D}}
\def\cC{\mathcal{C}}
\def\cL{\mathcal{L}}
\def\cF{\mathcal{F}}
\def\cS{\mathcal{S}}
\def\O{\mathcal{O}}
\def\a{\alpha}
\def\d{\mathrm{d}}
\def\p{\mathrm{p}}
\newcommand{\Gr}{\mathrm{Gr}}
\DeclareMathOperator{\rk}{rk}
\DeclareMathOperator{\im}{Im}
\newcommand{\defi}{\coloneqq}
\newcommand{\st}{\, | \,}
\newcommand{\Leg}{\mathfrak{Leg}}
\DeclareMathOperator{\Hom}{Hom}
\DeclareMathOperator{\Hopf}{Hopf}
\def\nbh{neighbourhood }
\def\Op{\mathcal{O}p}
\def\std{\mathrm{std}}
\def\can{\mathrm{can}}
\def\PF{\mathrm{PF}}
\def\F{\mathrm{F}}
\def\spin{\mathrm{spin}}
\def\pspin{\mathrm{prespin}}
\title{The h-principle fails for prelegendrians in corank $2$ fat distributions}
\author{Eduardo Fern\'andez}
\address{School of Mathematics\\ Georgia Institute of Technology \\ Atlanta \\ GA, USA}
\email{eduardofernandez@gatech.edu}
\author{\'Alvaro del Pino}
\address{Utrecht University, Department of Mathematics, Budapestlaan 6, 3584 CD Utrecht, The Netherlands}
\email{a.delpinogomez@uu.nl}
\author{Wei Zhou}
\address{Instituto de Ciencias Matemáticas, Consejo Superior de Investigaciones Científicas, Madrid, Spain}
\address{Departamento de Álgebra, Geometría y Topología, Facultad de Ciencias Matemáticas, Universidad Complutense de Madrid, Madrid, Spain}
\email{wzhou02@ucm.es}
\date{\today}
\begin{document}

\begin{abstract}
We investigate the $h$-principle problem for fat distributions. These are maximally non-integrable distributions with natural symplectisations and contactisations, that generalize contact distributions to higher corank. We focus on the corank-$2$ case, where we study a natural class of submanifolds, which we call prelegendrians. Their key feature is that they admit a canonical Legendrian lift to the contactisation. Our main results state that the $h$-principle fails for these submanifolds in all dimensions. To the best of our knowledge, this is the first example of rigidity in the study of maximally non-integrable distributions, outside of contact topology.

First, we find an infinite family of $(2n+1)$-tori in the standard fat $(\C^{2n+1},\D_\std)$, with the following two properties: (1) They all represent the same formal prelegendrian class, (2) but they are not prelegendrian isotopic because they are distinguished by pseudoholomorphic curve invariants of their Legendrian lift.

Secondly, we define the notion of prelegendrian stabilization in $(\C^{2n+1},\D_\std)$. This allows us to take an arbitrary prelegendrian and produce another one, in the same formal class, whose Legendrian lift is loose.

In order to prove these results we also develop the fundamentals of the theory of prelegendrians. This includes: (1) introducing the notion of front projection in $(\C^{2n+1},\D_\std)$, (2) proving that pseudoholomorphic curve invariants are robust under perturbations of the fat structure, allowing us to transport our results to non-standard fat structures, (3) introducing a zooming argument showing that any fat structure in dimension $6$ admits prelegendrians.
\end{abstract}

%\tableofcontents

\maketitle

\section{Introduction}

\subsection{Context}

In this paper we study manifolds $M^n$ endowed with tangent distributions $\D \subseteq TM$ of rank $r$. We are particularly interested in the following classic problem in Differential Topology: Is it possible to classify concrete classes of maximally non-integrable distributions, up to homotopy? We approach this through the lens of the $h$-principle \cite{gromovPartialDifferentialRelations1986, cieliebakIntroductionPrinciple2024a}.

The answer to this question depends on $(r,n)$ and, in most cases, it is still a completely open problem. The state-of-the-art may be summarised as follows: The study of \emph{contact structures}, $(r,n)=(2m,2m+1)$, is a rich and very active field due to the non-trivial interplay between rigidity and flexibility, as crystallized through the tight/overtwisted dichotomy \cite{Bennequin,Eliashberg:OT,BEM}. The homotopical study of \emph{even-contact structures}, $(r,n)=(2m-1,2m)$, was settled by McDuff using Gromov's convex integration \cite{McDuff:EvenContact}. The case of \emph{Engel structures}, $(r,n)=(2,4)$, has been studied in the last few years, leading to a number of flexibility results \cite{Vogel, CPPP, CPP:Loose, delPinoVogel:EngelLutz}, but whether any form of rigidity exists is still unknown. Flexibility has also been established for a few other classes \cite{Adachi:DerivedLength, MartinezAguinagaPino,MartinezAguinaga:Growth}. The key take-away is that rigidity remains mysterious outside the realm of contact structures.

In this article, we work with \emph{fat distributions}, a notion originating in Weinstein’s theory of \emph{fat bundles} \cite{Weinstein:Fat_Bundle, montgomeryTourSubriemannianGeometries2002}. These generalize contact distributions to higher corank. In the case $(4,6)$, fat distributions coincide with the \emph{elliptic distributions} conjectured to exhibit rigidity in \cite{MartinezAguinagaPino}. Fat distributions have also been studied in \cite{Bhowmick:hoeizontal_disk_in_fat46, BhowmickDatta:Horizontal_fat, Bhowmick:HorizontalContactPartially}.

Following the ``Legendrian route'' from contact topology to detect rigidity \cite{Bennequin,Eliashberg:20Years,CasalsMurphyPresas}, we introduce the class of \emph{prelegendrian submanifolds} of fat distributions. Our main contribution is that, in corank $2$, the $h$-principle fails for prelegendrian submanifolds. Thus, rigidity is present at the submanifold level in the fat setting.

\subsection{Fat manifolds and prelegendrians}

Let $M^n$ be a smooth manifold. Let $\S T^*M$ be the space of co-oriented hyperplanes and $\P T^*M$ the unoriented version. These manifolds carry canonical contact structures $( \S T^*M, \xi_\can)$ and $(\P T^*M,\xi_\can)$, known as the spaces of (co-oriented) contact elements. They are examples of \emph{Legendrian fibrations} and fit in a commuting diagram 
\begin{equation*}
     \begin{tikzcd}
\S^{n-1} \ar[rr] \ar[d,hook] & & \RP^{n-1} \ar[d,hook] \\
(\S T^*M,\xi_\can) \ar[dr] \ar[rr] & &  (\P T^*M,\xi_\can) \ar[dl] \\ 
& M &  
\end{tikzcd}
\end{equation*}
In particular, the map $(\S T^*M,\xi_\can)\rightarrow (\P T^*M,\xi_\can)$ is a contact double cover. 

Let $\D$ be a distribution on $M$. Its \emph{annihilator} is the subbundle $\D^\perp \defi\{\alpha \in T^*M:\alpha_{|\D}=0\}\subseteq T^*M$. The space of co-oriented hyperplanes in $M$ containing $\D$ is denoted by $\S\D^\perp\subseteq \S T^* M$. Similarly, we write $\P\D^\perp\subseteq \P T^* M$ for the hyperplanes containing $\D$. In this article, we are interested in the following geometric class of maximally non-integrable distributions that generalize contact structures to higher corank:
\begin{definition}
    Let $\D$ be a distribution in a smooth manifold $M$. We say that $\D$ is \emph{fat}, and $(M,\D)$ is a \emph{fat manifold}, if one of the following equivalent conditions holds:
    \begin{itemize}
        \item For every local non-zero section $\alpha:U\rightarrow \D^\perp$, $\d \a|_\D$ is a symplectic form on $\D$.
        \item $\cC(M,\D)\defi (\S\D^\perp,\xi_\can)\subseteq (\S T^*M,\xi_\can)$ is a contact submanifold, called the \emph{contactisation} of $(M,\D)$.
        \item $\cC_\P(M,\D)\defi (\P \D^\perp,\xi_\can)\subseteq (\P T^*M,\xi_\can)$, is a contact submanifold, called the \emph{projectivised contactisation} of $(M,\D)$.
        \item $\cS(M,\D)\defi (\D^\perp\setminus 0_M,\d\lambda_\can)\subseteq (T^*M\setminus 0_M,\d\lambda_\can)$ is a symplectic submanifold, called the \emph{symplectisation} of $(M,\D)$. \qedhere
    \end{itemize}
\end{definition}
The equivalence between these conditions is shown in Theorem \ref{thm:ContactisationExistence}, but is a well-known fact \cite{montgomeryTourSubriemannianGeometries2002}.

From the definition it follows that the rank of $\D$ is $2n$ even, and that both contactisations fit in a commuting diagram of \emph{isotropic fibrations}
\begin{equation*}
    \begin{tikzcd}
    \S^{k-1} \ar[rr] \ar[d,hook] & & \RP^{k-1} \ar[d,hook] \\
    \cC(M,\D) \ar[dr] \ar[rr] & &  \cC_{\P}(M,\D) \ar[dl] \\ 
    & (M,\D) &  
    \end{tikzcd}
\end{equation*}

where $k$ is the corank of $\D$. The map $\cC(M,\D)\rightarrow \cC_{\P}(M,\D)$ is a contact double cover.

\begin{remark}
    For a distribution $\xi$ of corank $1$ on a manifold $M$, being fat is the same as being contact. In this case, $\cC_{\P}(M,\xi)=(M,\xi)$. $\cC(M,\xi)$ is the co-orientable double cover of $(M,\xi)$, which is disconnected if $(M,\xi)$ is co-orientable. $\cS(M,\xi)$ is the (complete)\footnote{In contact topology it is customary to assume that $\xi=\ker\alpha$ is co-oriented. The choice of contact form $\alpha$ defines a splitting $\cS(M,\xi)\cong (M\times \R^*, d(t\alpha))$, and it is common to treat only the positive half $(M\times(0,+\infty), d(t\alpha))$ as the symplectisation of $(M,\xi)$.} symplectisation of $(M,\xi)$.

    In the degenerate case $(M,\D)=(M,\{0\})$ the (co-oriented) contactisation is the space of (co-oriented) contact elements.
\end{remark}

It follows that the contactisations of $(M,\D)$ are compact if $M$ is compact. In particular, \emph{Gray stability} applies, yielding a homotopy invariant of the fat distribution:
\begin{lemma}
    Let $(M,\D)$ be a compact fat manifold. The contactomorphism class of $\cC(M,\D)$ and $\cC_{\P}(M,\D)$ is invariant under homotopies of the underlying fat structure. 
\end{lemma}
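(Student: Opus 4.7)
The plan is to deduce this from the contact isotopy extension theorem for compact contact submanifolds, applied inside the ambient contact manifold $(\S T^*M,\xi_\can)$ (respectively $(\P T^*M,\xi_\can)$). Let $\{\D_t\}_{t\in[0,1]}$ be a smooth homotopy of fat structures on the compact manifold $M$. Smoothness of $t\mapsto \D_t$ forces the annihilators $\D_t^\perp\subset T^*M$ to vary smoothly as subbundles, so that taking unit sphere bundles (with respect to an auxiliary Riemannian metric on $T^*M$) yields a smooth family of compact submanifolds $\cC(M,\D_t)\subset \S T^*M$. By the fatness assumption at each time, every member of the family is a contact submanifold of $(\S T^*M,\xi_\can)$.

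Next, I would repackage this family as a smooth isotopy of contact embeddings. Since $\{\D_t^\perp\}_{t\in[0,1]}$ is a smooth family of subbundles of the fixed vector bundle $T^*M$ over a compact parameter interval, it is trivialisable: there is a smooth isotopy of bundle automorphisms of $T^*M$ sending $\D_0^\perp$ to $\D_t^\perp$. Passing to unit sphere bundles realises the inclusions $\cC(M,\D_t)\hookrightarrow \S T^*M$ as a smooth isotopy of contact embeddings $\iota_t\colon \cC(M,\D_0)\hookrightarrow (\S T^*M,\xi_\can)$, starting at $\iota_0$ equal to the inclusion. Because $M$ is compact, so are $\S T^*M$ and $\cC(M,\D_0)$, and the contact isotopy extension theorem applies to produce an ambient contact isotopy $\{\Phi_t\}_{t\in[0,1]}$ of $(\S T^*M,\xi_\can)$ with $\Phi_t\circ \iota_0 = \iota_t$. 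The restriction $\Phi_t|_{\cC(M,\D_0)}$ is then the desired contactomorphism onto $\cC(M,\D_t)$. The identical argument, replacing $\S T^*M$ with $\P T^*M$ throughout, handles the projectivised contactisation.

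I do not expect any serious obstacle beyond the two verifications above: smooth dependence of $\D_t^\perp$ on $t$ is immediate from the smoothness of the homotopy, and the fact that each $\iota_t$ has contact image is exactly the fatness hypothesis. Once these are in place, the conclusion is a direct invocation of the standard contact isotopy extension theorem in the compact ambient manifolds $\S T^*M$ and $\P T^*M$; the result can equivalently be phrased as a parametric Gray-type stability statement for the induced contact structures on the contactisations.
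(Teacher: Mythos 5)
Your argument is exactly the one the paper intends: the lemma is stated there without a written proof, preceded only by the remark that the contactisations are compact so that Gray stability and the contact isotopy extension theorem apply, and your proposal is a correct elaboration of precisely that strategy. The only point worth noting is that the isotopy extension theorem is usually stated for isotopies of \emph{isocontact} embeddings (fixed induced contact structure on the source), so one should first apply Gray stability on $\cC(M,\D_0)$ to the varying family $\iota_t^*\xi_\can$ before extending ambiently — a standard adjustment that you essentially acknowledge in your closing sentence.
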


This indicates that it is reasonable to pursue the use of symplectic and contact invariants (say, those arising from pseudoholomorphic curves) to establish \emph{rigidity phenomena} for fat manifolds. In this article we successfully follow this program to find rigidity for the following class of submanifolds:
\begin{definition}
    Let $(M^{2n+k},\D^{2n})$ be a fat manifold. A $(n+k-1)$-submanifold $\Lambda^{n+k-1}\subseteq (M,\D)$ is said to be \emph{prelegendrian} if $\dim T\Lambda\cap \D=n$.
\end{definition}
Sometimes, when dealing with a prelegendrian submanifold $\Lambda$, we will abuse notation and treat it as an embedding by fixing some inclusion. This will be clear from context.

\begin{remark}
    Prelegendrian submanifolds of contact manifolds are precisely Legendrians.
\end{remark}

\begin{remark}
    A generic Legendrian germ $L\subseteq \cC(M,\D)$ projects to a germ of a prelegendrian in $(M,\D)$. This follows from the fact that a generic germ is transverse to the projection to $(M,\D)$.
\end{remark}

The terminology \emph{prelegendrian} is justified by the following lifting property (\cref{prop:LiftingPreisotropics}). Let $\Lambda:S\hookrightarrow (M^{2n+k},\D^{2n})$ be a prelegendrian embedding. Then, there exists a unique\footnote{Being precise, one should fix a \emph{prelegendrian co-orientation} (\cref{prop:LiftingPreisotropics}). Nonetheless, all prelegendrians treated in this article will have an obvious prelegendrian co-orientation (Subsection \ref{convention:prelegendrianCoorientation}). Do note that one can Legendrian lift to the projective contactisation uniquely, without a preferred co-orientation.} Legendrian embedding $\cL(\Lambda): S\hookrightarrow \cC(M,\D)$, called the \emph{Legendrian lift}, making the following diagram commute:
\begin{equation*}
\begin{tikzcd}
                                             & \cC(M,\D) \ar[d] \\
      S  \ar[r,"\Lambda"] \ar[ur," \cL(\Lambda) "]   &  (M,\D)   
\end{tikzcd}
    \end{equation*}
The Legendrian lift $\cL(\Lambda)$ is naturally parametric. In the closed case, Gray stability then implies that the Legendrian isotopy class of the lift is a homotopy invariant. This is the key tool that we use to distinguish exotic prelegendrians.

\begin{remark}
    The approach we have just sketched is reminiscent of knot contact homology \cite{OoguriVafa,NG:KCH,NG:KCHSurvey,Shende:Conormal}, i.e. the use of co-normal lifts as a source of invariants of smooth submanifolds. Indeed, in the case of a trivial distribution $(M,\{0\})$, prelegendrian submanifolds are hypersurfaces, which can be viewed as the fronts of their Legendrian co-normal lifts in $(\S T^*M,\xi_\can)$.
\end{remark}

\subsection{Main results}

Even though we will ultimately prove results for general fat manifolds of corank $2$, many of our constructions deal with a concrete class thereof. Namely, the almost complex analogue of the space of (co-oriented) contact elements.

\subsubsection{Almost complex Grassmannians}

Let $(X^{2n+2},J)$ be an almost complex manifold, with $n\geq 1$. The space of $J$-complex hyperplanes in $(X,J)$ is denoted by $\Gr(X,J)$ and referred to as the \emph{complex Grassmannian} of $(X,J)$. There is a natural structure of $\CP^n$-bundle $\pi_\Gr:\Gr(X,J)\to (X,J)$.

According to \cref{prop:almostComplexGrass}, there is a natural corank $2$ fat distribution $\D_{\can}$ on $\Gr(X,J)$: 
\begin{equation*}
    \D_\can(W)\defi (\d\pi_\Gr)^{-1}(W), \qquad W\in \Gr(X,J).
\end{equation*}
We refer to the fat manifold $(\Gr(X,J), \D_\can)$ as the space of \emph{$J$-complex contact elements} of $(X,J)$. 

The \emph{standard fat distribution} $\D_{\std}$ on $\C^{2n+1}$ is defined as follows. We consider $\Gr(\C^{n+1}, J_\std) = \C^{n+1}\times \CP^n$ and restrict ourselves to those elements in $\CP^n$ representing hyperplanes graphical over $\C^n \times \{0\}$. This subspace can be canonically identified with $\C^{n+1} \times \C^n$ by considering the slope. Identically, we are removing a constant section in $\C^{n+1}\times \CP^n$ and thus passing to an affine chart in each fibre. The fat manifold $(\C^{2n+1},\D_\std)$ will be called the \emph{standard (corank $2$) fat space}. 

By construction, the contactisation of $(\Gr(X,J),\D_{\can})$ is the space of co-oriented contact elements $\cC(\Gr(X,J),\D_{\can})=(\S T^* X, \xi_\can)$, and there is a commuting diagram
\begin{equation*}
    \begin{tikzcd}
    \S^{2n+1} \ar[rr] \ar[d,hook] & & \CP^n \ar[d,hook] \\
    (\S T^* X, \xi_\can) \ar[dr,"{\pi_\F}"'] \ar["{\mathrm{Hopf}}",rr] & &  (\Gr(X,J),\D_{\can}) \ar["{\pi_\Gr}",dl] \\ 
    & (X,J) &  
    \end{tikzcd}
\end{equation*}
where the \emph{Hopf map} $(\S T^* X, \xi_\can)\rightarrow (\Gr(X,J),\D_\can)$ is defined by taking the Hopf fibration in each fibre. 

\subsubsection{Prelegendrian fronts}

Let $\Lambda\subseteq (\Gr(X,J),\D_\can)$ be a prelegendrian submanifold. Its \emph{prelegendrian front} is the singular hypersurface $\pi_\Gr(\Lambda)\subseteq (X,J)$. It follows from the construction that the prelegendrian front is nothing but the front of the Legendrian lift:
\begin{equation*}
    \pi_\Gr(\Lambda)=\pi_\F (\cL(\Lambda)).
\end{equation*}

Our first contribution is the characterisation of a class of prelegendrian fronts among all Legendrian fronts:
\begin{theorem}\label{thm:PreLegendrianFronts}
Let $\cF\subseteq (X,J)$ be a front with only self-intersections and cusp singularities. Then, $\cF$ is a prelegendrian front if and only if its singularity loci are \emph{co-real} in $(X,J)$.
\end{theorem}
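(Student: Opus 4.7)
The plan is to work stratum by stratum using the Hopf fibration $\pi_{\Hopf}\colon(\S T^*X,\xi_\can)\to(\Gr(X,J),\D_\can)$. Recall that $\pi_{\Hopf}$ sends a co-oriented hyperplane $(p,\alpha)$ to $(p,W(\alpha))$, where $W(\alpha)\defi\ker\alpha\cap J\ker\alpha$ is the unique $J$-complex hyperplane contained in $\ker\alpha$, and that its circle fibre through $(p,\alpha)$ is tangent to $J^*\alpha\in T_\alpha\S T^*_pX$. By construction, $\cF$ is a prelegendrian front if and only if the canonical Legendrian lift $L\subseteq(\S T^*X,\xi_\can)$ of $\cF$, determined by its co-orientation, satisfies that $\pi_{\Hopf}|_L$ is a smooth embedding; in that case $\Lambda\defi\pi_{\Hopf}(L)$ is a prelegendrian with $\pi_\Gr(\Lambda)=\cF$. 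The smooth locus of $\cF$ never obstructs this, so only the two singular strata must be analysed.

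For self-intersections, at a double point $p\in\cF$ with transverse sheets $H_1,H_2$, a short linear algebra computation shows that $W_1=W_2$ (where $W_i\defi H_i\cap JH_i$) if and only if $T_pH_1\cap T_pH_2$ is a $J$-complex subspace. Since this intersection is exactly the tangent space $T_p\Sigma$ to the self-intersection locus $\Sigma$, it coincides with the failure of $\Sigma$ to be co-real at $p$, i.e.\ with $T_p\Sigma+JT_p\Sigma\neq T_pX$. Thus $\pi_{\Hopf}|_L$ is injective over $p$ iff $\Sigma$ is co-real at $p$, and in that case the two branches of $L$ descend to distinct embedded branches of $\Lambda$ sitting in different Hopf fibres.

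For cusps, at a cusp point $(p,\alpha)\in L$ the line $\ker d\pi_\F|_L$ is spanned by a vector $e\in T_\alpha\S T^*_pX$, and $\pi_{\Hopf}|_L$ is an immersion at $(p,\alpha)$ iff $e\not\parallel J^*\alpha$. Using the standard $A_2$-cusp normal form for Legendrians in $J^1(\R^{2n+1})$, adapted so that $J=J_\std$ at $p$, one checks that $e\parallel J^*\alpha$ if and only if the cusp-opening direction $v\in T_p\cF$ is parallel to $Jn$, where $n$ is the common conormal of $\cF$ along the cusp edge $E$; equivalently, if and only if $T_qE$ is itself a $J$-complex subspace, i.e.\ $E$ is not co-real at $q\defi\pi_\F(p,\alpha)$. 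Hence $\pi_{\Hopf}|_L$ is a local embedding at the cusp iff $E$ is co-real at $q$.

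Combining the two strata gives the forward implication. For the converse, given $\cF$ with only self-intersections and cusps and with co-real singularity loci, one lets $L$ be the canonical Legendrian lift of $\cF$ in $\S T^*X$ (globally defined, smooth through cusps, doubly sheeted over self-intersections) and applies the two stratum-wise analyses to conclude that $\pi_{\Hopf}|_L$ is a smooth embedding; then $\Lambda\defi\pi_{\Hopf}(L)$ is a prelegendrian with $\pi_\Gr(\Lambda)=\cF$. The main technical obstacle is the cusp stratum: translating the intrinsic condition $e\not\parallel J^*\alpha$ into co-realness of $E$ requires a careful matching of the $A_2$ jet data with the almost complex structure of $X$ in a local normal form.
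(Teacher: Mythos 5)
Your proposal is correct and follows essentially the same route as the paper: localise to germs at the three strata, observe that the smooth stratum is unobstructed, detect embeddedness at a double point by whether the two sheets share the same complex tangency (equivalently, whether $H_1\cap H_2$ is $J$-invariant, which is exactly the failure of co-reality), and reduce the cusp case to whether the kernel of $\d\pi_\F|_L$ points along the Hopf fibre direction $J^*\alpha$. The computation you leave as ``one checks'' is precisely the paper's cusp proposition, and it is cleanest in the cotangent fibre rather than via your metric-dependent phrase ``$v\parallel Jn$'': in the $A_2$ normal form the kernel direction at $\alpha_0=\d x_{2n+2}$ is $\d x_{2n+1}$, which lies along $J^*\alpha_0$ modulo $\alpha_0$ exactly when $(\d x_{2n+1})^J$ and $(\d x_{2n+2})^J$ are $\C$-linearly dependent, i.e.\ (by Grassmannian duality, \cref{lem:dual_complex_basis}) exactly when $T\Sigma$ is $J$-invariant.
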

This structural result allows for the study of prelegendrians from a topological perspective, via their fronts. In particular, it allows us to prove that the class of compact prelegendrians  $\Lambda\subseteq (\C^{2n+1},\D_{\std})$ is non-empty, which is something not obvious just from the definitions.

\subsubsection{Co-real spinning}

In view of \cref{thm:PreLegendrianFronts}, finding prelegendrians amounts to finding compact Legendrian fronts $\cF\subseteq \C^{n+1}=J^0(\R^{2n+1},\R)$ whose singularity loci are co-real. To achieve this, we will implement a (co-real) version of the front spinning construction introduced by Ekholm, Etnyre and Sullivan \cite{EES:NonIsotopicLegendrians} (see also \cite{EkholmEtnyreSabloff,Golovko,Lambert-Cole:LegendrianProducts}).

Co-real spinning is surprisingly flexible and will allow us to go beyond the existence of compact prelegendrians. Indeed, the main result of this manuscript reads:
\begin{theorem}\label{thm:InfinitelyPrelegendrianTorus}
There exist infinitely many prelegendrian embeddings
\begin{equation*}
    \Lambda_s: \T^{2n+1}\hookrightarrow (\C^{2n+1},\D_\std), \qquad s\in \Z_{>0},
\end{equation*}
all formally prelegendrian isotopic. These prelegendrians are pairwise not prelegendrian isotopic, since their Legendrian lifts $\cL(\Lambda_s)$ to the contactisation $\cC(\C^{2n+1},\D_{\std})$ are pairwise not Legendrian isotopic. 
\end{theorem}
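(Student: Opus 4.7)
The strategy is to reduce the theorem, via Theorem \ref{thm:PreLegendrianFronts}, to the construction of a family of co-real fronts in $\C^{n+1}$ whose Legendrian lifts in $\cC(\C^{2n+1}, \D_\std)$ can be distinguished by pseudoholomorphic-curve invariants.

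First I would take as seed an infinite family of Legendrian knots $K_s$, $s\in\Z_{>0}$, in a standard contact three-manifold, all mutually formally Legendrian isotopic but pairwise distinguished by their Chekanov--Eliashberg DGAs; such families are classical in Legendrian knot theory and can be realised in a Darboux ball inside $\cC(\C,\{0\})=\S T^*\C$, the contactisation of the degenerate $(n=0)$ case. Each $K_s$ has a front in $\C$ whose singular locus consists of finitely many cusps and transverse double points, hence is trivially co-real. Applying the co-real front-spinning construction---the corank-$2$ fat analogue of the Ekholm--Etnyre--Sullivan spinning of \cite{EES:NonIsotopicLegendrians}---in $n$ independent real directions produces a front $\cF_s\subseteq\C^{n+1}$ diffeomorphic to $\T^{2n+1}$, whose singular locus remains co-real by construction. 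Theorem \ref{thm:PreLegendrianFronts} then promotes $\cF_s$ to a prelegendrian embedding $\Lambda_s:\T^{2n+1}\hookrightarrow(\C^{2n+1},\D_\std)$.

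Second, I would verify that the $\Lambda_s$ share a single formal prelegendrian class. The formal datum of a prelegendrian is encoded by the smooth embedding together with a bundle monomorphism $T\Lambda\to\D|_\Lambda$ and its prelegendrian co-orientation, so formal isotopy is a homotopy of embeddings-plus-bundle-data. Since the seeds $K_s$ are mutually formally Legendrian isotopic (and the corresponding formal homotopies can be realised by standard front manipulations), and since co-real spinning is natural and parametric, these formal homotopies promote to formal prelegendrian homotopies among the $\Lambda_s$.

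The main obstacle, and the heart of the theorem, is to distinguish the Legendrian lifts $\cL(\Lambda_s)\subseteq\cC(\C^{2n+1},\D_\std)$. Using the identity $\pi_\F(\cL(\Lambda_s))=\pi_\Gr(\Lambda_s)=\cF_s$, I would compute the Chekanov--Eliashberg DGA of $\cL(\Lambda_s)$ directly from its spun front. In analogy with the front-spinning DGA formulas of \cite{EES:NonIsotopicLegendrians,EkholmEtnyreSabloff,Golovko,Lambert-Cole:LegendrianProducts}, I expect a Künneth-type factorisation of the DGA of $\cL(\Lambda_s)$ into the Chekanov DGA of $K_s$ tensored with a fixed model DGA coming from the spun directions; pairwise inequivalence of the seed DGAs then propagates to pairwise inequivalence of the DGAs of the $\cL(\Lambda_s)$. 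The principal technical difficulty is that the ambient contact manifold $\cC(\C^{2n+1},\D_\std)$ is not globally a jet space, so the Reeb dynamics and pseudoholomorphic-curve counts must be set up intrinsically. The natural workaround is to identify a contact neighbourhood of $\cL(\Lambda_s)$ adapted to $\pi_\F$ with a jet-bundle neighbourhood, localise the DGA computation there, and then invoke robustness of pseudoholomorphic invariants under perturbations of the fat structure (as announced in the abstract) to transfer the computation back to the ambient contactisation. Combined with the formal isotopy of the previous step, this yields the theorem.
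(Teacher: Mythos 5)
Your overall strategy coincides with the paper's: spin a family of one‑dimensional Legendrian knots that are formally isotopic but distinguished by their Chekanov--Eliashberg DGAs (the paper uses Whitehead doubles $W_s$ of stabilised unknots, with $\operatorname{Rot}(W_s)=0$) along a co-real torus $\hat e(\T^{2n})\subseteq\C^{n+1}$ graphical over $\R^{2n+1}$, invoke \cref{thm:PreLegendrianFronts} to promote the spun fronts to prelegendrians, and distinguish the lifts by LCH. Two points in your plan are genuinely incomplete, however.

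First, the formal-isotopy step. Spinning a \emph{formal} Legendrian isotopy of the seeds is not what the paper does, and it is not obviously meaningful: the paper instead takes a generic \emph{regular Legendrian homotopy} between the seeds (available because the rotation numbers agree), observes that each transverse self-crossing of the homotopy spins to an $N$-pushing along a parallel copy of $\hat e(\T^{2n})$, and then uses $\chi(\T^{2n})=0$ to conclude via \cref{prop:FormalClassNPushing} and \cref{thm:Prelegendria_NPushing} that each such pushing is formally trivial. The vanishing of the Euler characteristic of the spinning locus is the essential input here; "naturality and parametricity of spinning" alone does not deliver a formal prelegendrian isotopy, because the spun family passes through non-embedded fronts exactly at the crossing times.

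Second, and more seriously, the distinguishing step. The lifts $\cL(\Lambda_s)=\spin_{\hat e}[W_s]$ live in the subdomain $(J^1\R^{2n+1},\xi_\std)\hookrightarrow\cC(\C^{2n+1},\D_\std)$, and the EES computation distinguishes them \emph{there}. But non-isotopy in an open contact subdomain does not imply non-isotopy in the ambient manifold, and your proposed workaround --- localising the DGA computation to a contact neighbourhood of $\cL(\Lambda_s)$ and invoking robustness under perturbation of the fat structure --- cannot close this: every Legendrian has a standard Weinstein neighbourhood, so a neighbourhood-local invariant distinguishes nothing, and robustness under fat perturbations is irrelevant to a fixed $\D_\std$. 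The paper's resolution is a covering-space argument: by \cref{cor:contactsation_fat_std} the contactisation is $(J^1(\R^{2n}\times\S^1),\xi_\std)$, whose contact universal cover is $(J^1\R^{2n+1},\xi_\std)$ by \cref{lem:unversal_cover_J1S^1xR^2n}, and \cref{lem:non_isotopic_on_ball} (a contact-Hamiltonian squeezing argument) shows that Legendrians contained in a Darboux ball and non-isotopic there remain non-isotopic in any ambient contact manifold with that universal cover. Without this step (or an equivalent one), your argument only proves non-isotopy inside the horizontal chart, not in $\cC(\C^{2n+1},\D_\std)$. A minor further point: producing $\T^{2n+1}$ from a knot requires spinning along a $2n$-torus (equivalently, $2n$ iterated $\S^1$-spinnings, cf.\ \cref{lem:coreal_torus_spin_is_iter_spin}), not "$n$ independent real directions".
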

The Legendrian lifts $\cL(\Lambda_s)$ are distinguished by their Legendrian Contact Homology \cite{Chekanov, EES:LCH,EES:LCHContactization, EliashbergGiventalHofer, Eliashberg:ICM}. It follows that the $h$-principle fails for prelegendrian submanifolds in corank $2$ fat distributions, and this failure can be detected by means of symplectic and contact invariants. To the best of our knowledge, this is the first instance of rigidity in the study of maximally non-integrable distributions beyond the contact case.

\begin{remark}
The \emph{standard fat structure} $\D_\std$ on the complex projective space $\CP^{2n+1}$ is the underlying real distribution of the standard holomorphic contact structure on $\CP^{2n+1}$. In particular, as an open fat manifold, $(\C^{2n+1},\D_\std)$ is diffeomorphic to any affine chart of $(\CP^{2n+1},\D_\std)$ (see \cref{subsec:complex_projective} for further details).

Consequently, the prelegendrians of our main result can be realized in $(\CP^{2n+1},\D_\std)$. In fact, the arguments in the proof of \cref{thm:InfinitelyPrelegendrianTorus} show that the prelegendrian tori $\Lambda_s$ remain pairwise not prelegendrian isotopic when considered inside $(\CP^{2n+1},\D_\std)$. See \cref{rmk:InfinitelyToriComplexProjectiveSpace}.
\end{remark}

\begin{remark}
In \cref{rmk:InfinitelyToriInfinitelyManyFat} we prove more: there exist infinitely many corank-$2$ fat manifolds, each containing an infinite family of pairwise non-isotopic but formally isotopic prelegendrians.
\end{remark}

\subsubsection{Prelegendrian $N$-pushing}

To realize the formal prelegendrian isotopy between the tori from \cref{thm:InfinitelyPrelegendrianTorus}, we will define an \emph{$N$-pushing operation} along a co-real submanifold $N$ of a prelegendrian front. This is an adaptation of the analogous Legendrian construction \cite{Eliashberg:Stein, EES:NonIsotopicLegendrians}.

In \cite{EES:NonIsotopicLegendrians} this operation is referred to as stabilization, but we prefer to call it $N$-pushing to distinguish it from the concrete case of stabilizing in a cusp chart to yield a loose Legendrian \cite{Murphy:Loose}. Given a Legendrian $L\subseteq (Y,\xi)$, with $\dim L\geq 2$, write $s(L)$ for the loose Legendrian formally equivalent to $L$.

A by-product of introducing prelegendrian $N$-pushing is that there is a stabilization procedure for prelegendrians in $\Gr(X,J)$. This is our second main result:
\begin{theorem}\label{thm:PrelegendrianStabilization}
    Let $\Lambda\subseteq (\C^{2n+1},\D_\std)$ be a prelegendrian submanifold. Then, there exists a prelegendrian $s(\Lambda)\subseteq (\C^{2n+1},\D_\std)$, such that
    \begin{itemize}
        \item $\Lambda$ and $s(\Lambda)$ are formally prelegendrian isotopic by a $C^0$-small formal isotopy.
        \item Their Legendrian lifts satisfy $\cL(s(\Lambda)) \simeq s(\cL(\Lambda)).$
    \end{itemize}
    We refer to $s(\Lambda)$ as a \emph{prelegendrian stabilization} of $\Lambda$.
\end{theorem}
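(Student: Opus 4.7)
The strategy is to adapt the Ekholm–Etnyre–Sullivan Legendrian $N$-pushing to the prelegendrian world, using \cref{thm:PreLegendrianFronts} as the gatekeeper that keeps the construction inside the prelegendrian class. Starting from $\Lambda\subseteq(\C^{2n+1},\D_\std)$, I would first locate an open cusp chart on the prelegendrian front $\pi_\Gr(\Lambda)\subseteq\C^{n+1}$ whose cusp edge sits as a co-real submanifold of $\C^{n+1}$; if no such chart exists a priori, create one via a local Reidemeister-type cusp pair creation, choosing the local model so that the newly produced cusp edge is co-real. Since $\Lambda$ is prelegendrian, its existing singular strata are already co-real by \cref{thm:PreLegendrianFronts}, so this is merely a $C^0$-small perturbation among prelegendrians.

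Next, pick a closed codimension-$1$ submanifold $N\subseteq\Lambda$ inside the cusp chart with $\chi(N)=0$, and perform an $N$-pushing front move: across $N$, exchange the smooth and cusped sheets of the front using a bump function of small $C^0$-norm but large derivative, thereby producing new cusp edges parallel to $N$. In the Legendrian case any transverse pushing direction works; the key refinement in the prelegendrian setting is to choose the pushing to lie along the \emph{real} normal direction to the front, so that every newly produced cusp edge and self-intersection is again co-real in $\C^{n+1}$. \cref{thm:PreLegendrianFronts} then certifies that the modified front is a prelegendrian front; we define $s(\Lambda)$ as the corresponding prelegendrian. Because the Legendrian lift to $\cC(\C^{2n+1},\D_\std)=\S T^*\C^{n+1}$ is canonically determined by the co-oriented front, $\cL(s(\Lambda))$ is obtained from $\cL(\Lambda)$ by the very same front-level modification, which is by construction the EES $N$-stabilisation $s(\cL(\Lambda))$.

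Formal prelegendrian isotopy and $C^0$-smallness come almost for free: the pushing is supported in an arbitrarily small neighbourhood of $N$, the bump function has arbitrarily small $C^0$-norm, and the Legendrian stabilisation is already known to be formally trivial, with formal prelegendrian data pulled back along the canonical lift from formal Legendrian data. The main obstacle is the co-reality constraint in the pushing step: whereas the Legendrian construction has complete freedom in the choice of normal direction, preserving co-reality of all induced singular strata restricts the admissible pushing. The resolution should come from a careful local normal form near a co-real cusp edge $C\subseteq\pi_\Gr(\Lambda)$, exhibiting a splitting of the normal bundle of the front into a real and a $J$-invariant summand; pushing only along the real factor both realises the desired $N$-move and automatically keeps the newly created singular loci co-real, so that the output is still a legitimate prelegendrian front in the sense of \cref{thm:PreLegendrianFronts}.
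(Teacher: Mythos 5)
Your overall architecture matches the paper's: create a cusp via a Reidemeister-I--type move, then perform an EES-style $N$-pushing near the cusp edge, with \cref{thm:PreLegendrianFronts} certifying that co-reality of all singular strata keeps the output prelegendrian. However, there is a genuine gap in the step ``pick a closed codimension-$1$ submanifold $N\subseteq\Lambda$ inside the cusp chart with $\chi(N)=0$.'' The constraint is not merely on the \emph{pushing direction}: the submanifold $N$ itself (and the hypersurface along which the RI move is performed) must be a closed \emph{co-real} submanifold of $\C^{n+1}$ of codimension $2$, graphical over the front, with a nowhere-vanishing section of its normal bundle in the front. This is an existence problem, not a choice of local normal form; the condition $\chi(N)=0$ is automatic for co-real $N$ (\cref{lem:coreal_euler}) but co-reality of a \emph{closed} $N$ is the hard part. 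The paper devotes \cref{sec:Existence_of_preleg} to constructing such an $N$ explicitly as a graphical perturbation of the Clifford torus $\T^{n+1}\subseteq\C^{n+1}$ (\cref{prop:existence_coreal_graphical}); without this input your construction has nothing to push along. Once a co-real $N$ exists, the co-reality of the parallel copies created by the pushing does follow, but from openness of co-reality for fibred deformations (\cref{lem:bundle_over_coreal_is_coreal}), not from a choice of ``real normal direction.''

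The second gap is the claim that the formal prelegendrian isotopy comes ``almost for free'' by pulling back formal Legendrian data along the lift. A formal Legendrian isotopy of $\cL(\Lambda)$ does not descend to a formal prelegendrian isotopy of $\Lambda$: the formal prelegendrian condition lives in $T(\C^{n+1}\times\C^n)$ and the fibre coordinates are the slopes of the \emph{complex} tangencies of the front, not of the real tangencies. The paper's argument (\cref{prop:preleg_N-pushing_affine}) must therefore rerun the rotation trick of \cref{prop:FormalClassNPushing} using a nowhere-vanishing vector field $Z$ tangent to the complex tangencies $T(j^0f)^J$ over the pushed region; producing such a $Z$ is exactly where the co-reality of $N$ and the section of $\nu_{N\backslash\cF}$ are used again (\cref{prop:existence_of_rotate_cpx_tang_vf}, \cref{lem:existence_of_nonzero_section_fiber_ver}). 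Your proposal does not supply this section, so the formal-isotopy claim is unsupported as stated.
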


\begin{remark}
   One can arrange for $\Lambda$ and $s(\Lambda)$ to only differ inside an arbitrarily small ball $\Op(p)$ around a point $p\in \Lambda$.
\end{remark}

Prelegendrian stabilization gives a clear procedure to find exotic pairs of prelegendrians: It suffices to construct $\Lambda$ with $\cL(\Lambda)$ non-loose. In the following result, this is established using fillability:
\begin{corollary}\label{cor:ExoticPrelegendrianPairsBall}
    There exists a prelegendrian embedding
    \begin{equation*}
        \Lambda: \T^{n+1}\times \S^{j_0}\times \cdots\times \S^{j_k} \hookrightarrow (\C^{2n+1},\D_\std),
    \end{equation*}
    where $j_0+\cdots+j_k=n$, such that $\cL(\Lambda)\subseteq \cC(\C^{2n+1},\D_\std)$ is non-loose. In particular, $\Lambda$ and $s(\Lambda)$ are formally prelegendrian isotopic but not prelegendrian isotopic. 
\end{corollary}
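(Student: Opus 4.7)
The plan is to construct a prelegendrian $\Lambda$ of the stated topology whose Legendrian lift $\cL(\Lambda)$ is non-loose, and then obtain the exotic pair $(\Lambda, s(\Lambda))$ as a direct application of \cref{thm:PrelegendrianStabilization}. The construction of $\Lambda$ proceeds by iterated co-real spinning—the prelegendrian construction underlying \cref{thm:InfinitelyPrelegendrianTorus}—along spheres of dimensions $j_0,\ldots,j_k$, starting from a base prelegendrian model whose Legendrian lift $\cL(L_0)$ has non-trivial Legendrian contact homology. This base can be extracted from the toric family of \cref{thm:InfinitelyPrelegendrianTorus}, or more classically from a Legendrian representative in a lower-dimensional jet space whose LCH is already computed. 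The $\T^{n+1}$-factor is inherited from the base, while each co-real spin attaches an extra $\S^{j_i}$ factor; the characterisation of prelegendrian fronts in \cref{thm:PreLegendrianFronts} guarantees that the iteratively spun fronts remain prelegendrian provided the co-real condition is propagated. By construction, $\cL(\Lambda)$ is the corresponding Ekholm--Etnyre--Sullivan iterated Legendrian front spin of $\cL(L_0)$.

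To establish non-looseness of $\cL(\Lambda)$, I would invoke the Ekholm--Etnyre--Sullivan theorem \cite{EES:NonIsotopicLegendrians} that Legendrian front spinning preserves non-vanishing of linearised/augmented LCH. Since the base lift $\cL(L_0)$ admits a non-trivial augmentation, so does $\cL(\Lambda)$. Combined with the Ekholm vanishing of LCH for loose Legendrians (equivalently, in conjunction with Murphy's $h$-principle \cite{Murphy:Loose}), this forces $\cL(\Lambda)$ to be non-loose. Now apply \cref{thm:PrelegendrianStabilization}: its first bullet produces a $C^0$-small formal prelegendrian isotopy from $\Lambda$ to $s(\Lambda)$, while its second bullet identifies $\cL(s(\Lambda))$ with a Murphy stabilisation $s(\cL(\Lambda))$ of $\cL(\Lambda)$, which is therefore loose. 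Loose and non-loose Legendrians are not Legendrian isotopic, and the Legendrian lift is a prelegendrian isotopy invariant; hence $\Lambda$ and $s(\Lambda)$ are formally but not genuinely prelegendrian isotopic, as required.

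The main obstacle is the construction step: verifying that co-real spinning of prelegendrians corresponds, under the lift $\cL$, to classical front spinning of Legendrians, and that the co-real condition of \cref{thm:PreLegendrianFronts} is preserved at each stage. This amounts to a local front computation in the affine chart $\C^{n+1}\times \C^n$ of $\Gr(\C^{n+1},J_\std)$, together with a bookkeeping of how the singularity loci behave under spinning. Once this dictionary is in place the remaining steps are essentially formal, resting on standard LCH technology and the already-established \cref{thm:PrelegendrianStabilization}.
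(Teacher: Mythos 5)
Your overall architecture (construct $\Lambda$ by spinning so that $\cL(\Lambda)$ is an iterated EES-type spinning with non-trivial invariants, conclude non-looseness, then apply \cref{thm:PrelegendrianStabilization}) matches the paper, and your last two paragraphs are sound. In particular, your non-looseness argument via preservation of augmentations under spinning is a valid alternative to the paper's route (the paper instead shows $(\spin_{\S^1})^{n+1}[L]$ is Lagrangian fillable using Golovko's result, and explicitly records your augmentation argument in a remark, attributing it to Dimitroglou Rizell--Golovko rather than to Ekholm--Etnyre--Sullivan).

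However, the construction step as you describe it has a genuine gap. You propose ``iterated co-real spinning along spheres of dimensions $j_0,\ldots,j_k$'', with the $\T^{n+1}$-factor ``inherited from the base''. This fails for two reasons. First, by \cref{lem:coreal_euler} an oriented co-real submanifold of $\C^m$ has vanishing Euler characteristic, so no even-dimensional sphere admits a co-real embedding; since the $j_i$ are arbitrary with $j_0+\cdots+j_k=n$, ``co-real spinning along $\S^{j_i}$'' is simply unavailable in general. Second, the prelegendrian spinning of \cref{cor:prelegendrianspinning} takes as input a \emph{Legendrian} in a jet space, not a prelegendrian, and its output already has the critical dimension $2n+1$ in $(\C^{2n+1},\D_\std)$; there is no room to iterate it at the prelegendrian level, and no base prelegendrian carrying the $\T^{n+1}$-factor is used. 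The correct arrangement, which is the paper's, inverts your bookkeeping: all sphere factors are produced by iterated \emph{Legendrian} spinnings in jet spaces, where no co-reality is needed --- one sets $L\defi \spin_{\S^{j_k}}\circ\cdots\circ\spin_{\S^{j_1}}[U]\subseteq (J^1\R^n,\xi_\std)$ with $U\cong\S^{j_0}$ the standard unknot --- and a \emph{single} co-real spinning $\Lambda\defi\pspin_{\hat e}[L]$ is then performed along the perturbed Clifford torus $\hat e:\T^{n+1}\hookrightarrow\C^{n+1}$ of \cref{prop:existence_coreal_graphical}. The co-real condition of \cref{thm:PreLegendrianFronts} is only verified for the singularity loci of the final front, which fiber over $\hat e(\T^{n+1})$ and are handled by \cref{lem:bundle_over_coreal_is_coreal}; the $\T^{n+1}$-factor thus comes from the co-real torus, and the spheres from the base Legendrian. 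With this correction your non-looseness and stabilization steps go through as written.
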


\subsubsection{Conormal lifts}

We were not able to find exotic pairs of prelegendrians in $(\C^{2n+1},\D_\std)$ whose underlying smooth manifold is not a (non-trivial) product. However, examples can be found in spaces of complex contact elements $(\Gr(X,J),\D_\can)$. Namely, we can consider the prelegendrian submanifold
\begin{equation*}
    \Lambda_M := (M,TM\cap J TM)\subseteq (\Gr(X,J),\D_\can)
\end{equation*}
associated to a smooth co-oriented real hypersurface $M\subseteq (X,J)$. Its Legendrian lift $\cL(\Lambda_M) = (M,TM)\subseteq (\S T^* X,\xi_\can)$ is the co-normal of $M$. Prelegendrian stabilization (\cref{thm:PrelegendrianStabilization}) and Lagrangian fillability imply that: 
\begin{corollary}\label{cor:ExoticPrelegendrianPairGrassmanian}
    Let $Y\subseteq (X,J)$ be a codimension $0$ compact submanifold with connected boundary $M=\partial Y$. Then, $\Lambda_M$ and $s(\Lambda_M)$ are formally prelegendrian isotopic but not prelegendrian isotopic. 
\end{corollary}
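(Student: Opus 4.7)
The plan is to reduce \cref{cor:ExoticPrelegendrianPairGrassmanian} to a non-looseness statement for the conormal Legendrian $\cL(\Lambda_M)$ via the canonicity of the Legendrian lift, and then to establish this non-looseness by exhibiting an exact Lagrangian filling built from $Y$.

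The formal prelegendrian isotopy between $\Lambda_M$ and $s(\Lambda_M)$ is immediate from \cref{thm:PrelegendrianStabilization}. For the rigidity half, I would argue by contradiction: were $\Lambda_M$ and $s(\Lambda_M)$ prelegendrian isotopic, then applying the canonical Legendrian lift pointwise along such an isotopy would produce, inside $\cC(\Gr(X,J),\D_\can) \simeq (\S T^*X,\xi_\can)$, a Legendrian isotopy between
\begin{equation*}
    \cL(\Lambda_M) = (M, TM) \qquad \text{and} \qquad \cL(s(\Lambda_M)) \simeq s(\cL(\Lambda_M)),
\end{equation*}
the latter being the Legendrian stabilization of the conormal, which is loose in the sense of Murphy by construction. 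It therefore suffices to show that $\cL(\Lambda_M)$ is \emph{not} loose, since a loose Legendrian cannot be Legendrian isotopic to a Legendrian admitting an exact Lagrangian filling.

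To produce the required filling, I would view $Y \subseteq X$ as sitting inside the zero section $X \hookrightarrow DT^*X$, where $(DT^*X,\d\lambda_\can)$ is the disc cotangent bundle, a Liouville filling with convex contact boundary $(\S T^*X, \xi_\can)$. In a collar neighbourhood of $\partial Y = M$ one bends $Y$ radially along the outward conormal direction, which is well defined by the coorientation of $M$ coming from $Y$, until the bent Lagrangian reaches the unit cotangent sphere, and then lets it continue cylindrically in the symplectization direction. Since the Liouville form $\lambda_\can = p\,\d q$ vanishes both on the zero section and on purely radial conormal directions, with a suitable bending profile the resulting submanifold $L_Y$ is an exact Lagrangian in the completion of $DT^*X$ whose positive end is cylindrically modelled on $\cL(\Lambda_M)$. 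By Ekholm's filling theorem, $L_Y$ induces an augmentation of the Chekanov-Eliashberg dga of $\cL(\Lambda_M)$; by the result of Dimitroglou Rizell, loose Legendrians admit no such augmentations. Hence $\cL(\Lambda_M)$ is non-loose, concluding the argument.

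The main obstacle is carrying out the bending construction so that $L_Y$ is genuinely an \emph{exact} Lagrangian with the correct cylindrical asymptotics at the convex end. The construction is local near $M$ and, in Weinstein coordinates on a tubular neighbourhood of $M\subseteq X$, reduces to the standard model of the conormal cap of a cooriented hypersurface in $T^*\R^{n}$; one should nevertheless verify that the primitive of $\lambda_\can|_{L_Y}$ extends smoothly and becomes constant outside a compact region, so that SFT compactness and Ekholm's augmentation machinery genuinely apply. The connectedness hypothesis on $M$ ensures that the resulting filling is connected, which streamlines the application of the augmentation criterion.
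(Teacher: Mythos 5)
Your proposal is correct and follows essentially the same route as the paper: reduce to non-looseness of the conormal lift $\cL(\Lambda_M)=(M,TM)$ via \cref{thm:PrelegendrianStabilization}, and establish it by exhibiting an exact Lagrangian filling built from $Y$ in $(\bD T^*X,\d\lambda_\can)$. The paper realizes your "bending" as the graph of $\d(\chi\circ\psi)$ for a cut-off of the signed distance to $M$, which makes exactness and the boundary asymptotics automatic, but this is the same construction.
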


\begin{remark}
If we apply \cref{cor:ExoticPrelegendrianPairGrassmanian} to the case $(X,J) = \C^{n+1}$, we will produce prelegendrians in $\C^{n+1} \times \CP^n$ that are homologically essential and thus not contained in $\C^{2n+1}$. Indeed, the complex Gauss map $M \rightarrow \CP^n$ of a hypersurface is generally homotopically non-trivial. Finding prelegendrians contained in $(\C^{2n+1},\D_\std)$ was precisely the motivation behind \cref{thm:InfinitelyPrelegendrianTorus}.
\end{remark}

\subsubsection{General corank-$2$ fat distributions}

It is tempting to think of $(\C^{2n+1},\D_\std)$ as the \emph{model} fat distribution of corank-$2$, but this is not correct. It turns out that there is no Darboux theorem in this setting. Nonetheless, $(\C^3,\D_\std)$ serves as an \emph{approximate} model when $n=1$. This is explained in  \cref{sec:general}, where we show:
\begin{theorem} \label{thm:general46}
Fix a fat distribution $(M^6,\D^4)$. Then:
\begin{itemize}
    \item It contains compact prelegendrians.
    \item The prelegendrians it contains can be stabilised.
\end{itemize}
\end{theorem}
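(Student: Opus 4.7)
The plan is to establish both statements by reducing to the standard model $(\C^3, \D_\std)$ via a pointwise zooming argument. The key technical input is an \emph{approximate Darboux lemma}: for any $p \in M$, any integers $k, N$, and any $\varepsilon > 0$, there exists a chart $\phi \colon \Op(p) \to \Op(0) \subseteq \C^3$ sending $p$ to $0$ and a compact set $K = \{|z| \leq N\} \subset \C^3$ on which the pushed-forward distribution $\phi_* \D$ is $\varepsilon$-close to $\D_\std$ in the $C^k$-topology. I would prove this in two stages: first normalise $\D$ to $\D_\std$ to first order at the origin by a linear change of coordinates, then precompose with a zoom $z \mapsto \lambda z$ with $\lambda$ small. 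Since the higher-order deviations between $\D$ and $\D_\std$ vanish at $0$, the zoom makes them arbitrarily small on $K$.

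For the existence statement, I would apply \cref{thm:InfinitelyPrelegendrianTorus} with $n = 1$ to obtain a compact prelegendrian torus $\Lambda_0 \subset (\C^3, \D_\std)$, and choose $N$ so that $\Lambda_0 \subset K$. The prelegendrian condition $\dim(T\Lambda_0 \cap \D) = 1$ is $C^1$-open in the fat distribution on $K$, so for $\varepsilon$ small enough the lemma gives $\Lambda_0$ prelegendrian with respect to $\phi_* \D$, and then $\phi^{-1}(\Lambda_0) \subset (M, \D)$ is a compact prelegendrian.

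For the stabilisation statement, let $\Lambda \subseteq (M, \D)$ be prelegendrian, pick $p \in \Lambda$, and choose a chart $\phi$ about $p$ as above. By the remark following \cref{thm:PrelegendrianStabilization}, the prelegendrian stabilisation in $(\C^3, \D_\std)$ can be performed inside an arbitrarily small ball around any chosen point of the prelegendrian. I would perform this move on $\phi(\Lambda)$ near $\phi(p) = 0$, obtaining $s(\phi(\Lambda))$ in $(\C^3, \D_\std)$, and define $s(\Lambda)$ by transporting back via $\phi^{-1}$. The three conclusions of \cref{thm:PrelegendrianStabilization} must now be checked in $(M, \D)$: that $s(\Lambda)$ remains prelegendrian follows from the $C^1$-openness used in the existence step; the $C^0$-small formal isotopy in $\D_\std$ transports to a $C^0$-small formal isotopy in $\D$ because it depends continuously on the ambient fat structure on a compact set; and the identity $\cL(s(\Lambda)) \simeq s(\cL(\Lambda))$ survives the identification thanks to the robustness of the Legendrian-lift construction under $C^\infty$-small perturbations of the fat distribution.

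The main obstacle is the stabilisation step. The existence part is essentially quantitative once the approximate Darboux lemma is in hand. The subtler content is verifying that each ingredient of \cref{thm:PrelegendrianStabilization}---the local front-move defining $s$, the formal isotopy, and the Legendrian-lift identity---is stable under the $C^\infty$-small perturbation of $\D_\std$ produced by the zoom. This is precisely where the paper's robustness results for fat-geometric constructions, advertised in the abstract, serve as the essential black box.
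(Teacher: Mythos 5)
Your overall strategy (zoom to the model $(\C^3,\D_\std)$, import the torus from \cref{thm:InfinitelyPrelegendrianTorus}, and perform the stabilisation locally) is the right one, and your ``approximate Darboux lemma'' is essentially the nilpotentisation statement the paper uses. But there is a genuine gap at the step where you transport prelegendrians across the perturbation: the prelegendrian condition is \emph{not} open in the fat structure. For $\Lambda^3\subseteq(M^6,\D^4)$ the requirement is that $T\Lambda+\D$ have constant corank $1$, i.e.\ $\dim(T\Lambda\cap\D)=2$, whereas in generic position $T\Lambda+\D=TM$ and the intersection is $1$-dimensional. Being prelegendrian is therefore a degeneracy condition of positive codimension on the pair (tangent space, distribution), and a $C^1$-small perturbation of $\D_\std$ will generically destroy it. So ``for $\varepsilon$ small enough $\Lambda_0$ is prelegendrian with respect to $\phi_*\D$'' is false as stated, and the same problem undermines the stabilisation step, where you moreover apply \cref{thm:PrelegendrianStabilization} to $\phi(\Lambda)$, which is only \emph{approximately} prelegendrian for $\D_\std$. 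The paper circumvents exactly this: it passes to the contactisations $\mathrm{pr}_s\colon(U,\xi_s)\to(B,\D_s)$ of the path $\D_s$ from the nilpotentisation, applies Gray stability there to move the Legendrian lift (the Legendrian condition \emph{is} preserved by the resulting contactomorphisms), and then uses the genuinely open condition of transversality of the Legendrian to the fibration $\mathrm{pr}_s$ to project back down; the prelegendrian one obtains for $\D_s$ is the projection of the perturbed lift, not the original submanifold. For the stabilisation the paper additionally interpolates, in a Weinstein neighbourhood of $\cL(\Lambda_s)$, between the transported stabilised lift near the loose chart and the honest lift away from it.

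Two further points. First, your claim that one can normalise $\D$ to $\D_\std$ ``to first order at the origin by a linear change of coordinates'' is exactly the statement that the nilpotentisation at every point is the complex Heisenberg algebra; in dimension $6$ this holds but only because of the classification of $6$-dimensional nilpotent Lie algebras (there is a unique fat one), which is a nontrivial input you do not justify and which fails in higher dimensions. Also the zoom must be the weighted dilation adapted to the step-$2$ filtration, not a uniform one. Second, invoking ``the paper's robustness results for pseudoholomorphic/fat-geometric constructions'' as a black box for the stabilisation step is circular: those robustness statements are proved precisely by the Gray-stability-in-the-contactisation argument that your proof is missing.
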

We also prove a similar (but weaker) statement in higher dimensions.

In a similar direction, consider the following problem: we fix a path of corank-$2$ fat distributions $(\D_s)_{s \in [0,1]}$, as well as a prelegendrian $\Lambda$ for $\D_0$. There is no Gray stability in this setting, so there is no reason for a path of prelegendrians $(\Lambda_s)_{s\in [0,1]}$ in $\D_s$ to exist with $\Lambda_0 = \Lambda$. However:
\begin{lemma} \label{lem:paths}
If $\Lambda$ is compact, a path $\Lambda_s$ does exist for small $s$. In dimension $6$ a path exists for all time if $\D_0 = \D_\std$.
\end{lemma}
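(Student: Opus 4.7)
The plan is to lift everything to the contactisation and invoke Gray stability. Consider $L_0 \defi \cL(\Lambda) \subseteq \cC(M,\D_0)$, the Legendrian lift, which is compact because $\Lambda$ is. The annihilator bundles $\D_s^\perp \subseteq T^*M$ form a smooth family of rank-$2$ subbundles, so after fixing an auxiliary connection we obtain smooth bundle isomorphisms $\Phi_s\colon \D_0^\perp \to \D_s^\perp$ over a compact neighbourhood of $\Lambda$, and these descend to diffeomorphisms between the sphere bundles identifying the co-oriented contactisations $\cC(M,\D_s)$ over that neighbourhood. Pulling back the canonical contact structures $\xi_{\can,s}$ through these identifications gives a smooth family of contact structures on a fixed open neighbourhood of $L_0$, all agreeing at $s=0$. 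Compactness of $L_0$ lets us apply Gray stability, producing an ambient isotopy that carries $L_0$ to a smoothly varying family of Legendrians $L_s \subseteq \cC(M,\D_s)$. Setting $\Lambda_s \defi \pi_s(L_s)$, at $s=0$ this recovers $\Lambda$ and $\pi_0|_{L_0}$ is an embedding by definition of the Legendrian lift; since being an embedding of a compact manifold is open in the $C^1$-topology, $\pi_s|_{L_s}$ remains an embedding for small $s$, and $T\Lambda_s \cap \D_s$ automatically has the required dimension $n$ because $L_s$ is Legendrian in $\cC(M,\D_s)$. This proves the first assertion.

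For the second assertion, fix $M = \C^3$ and $\D_0 = \D_\std$, and let $s_* \in (0,1]$ be the supremum of times for which the construction above extends. The Legendrian $L_s$ is a compact surface in the $5$-dimensional contactisation $\cC(\C^3,\D_s)$ whose fibers over $\C^3$ are $\S^1$'s, so the obstruction to extending past $s_*$ is that $\pi_{s_*}|_{L_{s_*}}$ degenerates, either through a tangency to the $1$-dimensional vertical foliation $\ker d\pi_{s_*}$ or through a double point. A dimension count shows that tangency loci are expected codimension $2$ on $L_{s_*}$ and double points expected codimension $4$ on $L_{s_*} \times L_{s_*}$, so the bad stratum has positive codimension inside the space of Legendrian embeddings into $\cC(\C^3, \D_{s_*})$. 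The plan is to correct $L_{s_*}$ by a $C^\infty$-small Legendrian isotopy localised in small balls around the bad locus, after which the local argument from the first part continues the family past $s_*$, contradicting maximality unless $s_* = 1$.

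The main obstacle is that the corrective perturbation must stay inside the contact submanifold $\cC(\C^3,\D_{s_*}) \subseteq \S T^* \C^3$ and must yield a genuine prelegendrian front on $\C^3$, not merely an immersion. This is exactly where the hypothesis $\D_0 = \D_\std$ enters, in combination with the zooming argument supporting \cref{thm:general46}: near any bad point we can rescale so that $\D_{s_*}$ is arbitrarily close to $\D_\std$, and the local nature of the prelegendrian front moves developed for $\D_\std$, in particular the prelegendrian $N$-pushing underlying \cref{thm:PrelegendrianStabilization}, allows the required modification to be carried out inside an arbitrarily small ball $\Op(p) \subseteq \C^3$ and then reinserted. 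Patching these local corrections over the compact $L_{s_*}$ and combining with the first part yields the extension. The special role of dimension $6$ here is that the bad strata have large enough codimension for the perturbation to be carried out at the front level without further tangency analysis, which is precisely the obstacle preventing a verbatim generalisation to higher dimensions.
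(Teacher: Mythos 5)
Your proof of the first assertion is essentially the paper's: lift to the contactisation, apply Gray stability to the compact Legendrian lift on a neighbourhood, and use openness of transversality to the fibres to project back down for small $s$. That part is fine.

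The second assertion is where the proposal breaks down, and your route is not the one the paper takes. First, the dimensions are off: for $(M^6,\D^4)$ the contactisation $\cC(\C^3,\D_s)=\S\D_s^\perp$ is an $\S^1$-bundle over $\C^3$, hence $7$-dimensional, and the Legendrian lift $L_s$ is $3$-dimensional (prelegendrians have dimension $n+k-1=3$), not a surface in a $5$-manifold. With the correct dimensions your genericity claim fails at the decisive point: double points of $\pi_{s}|_{L_s}\colon L^3\to \C^3$ are cut out by $6$ equations on the $6$-dimensional $L\times L\setminus\Delta$, so they are generically isolated and \emph{stable} — they cannot be removed by a $C^\infty$-small Legendrian perturbation, and "having a double point" is an open, not positive-codimension, condition on the space of Legendrian embeddings. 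Since an embedded prelegendrian requires $\pi_{s}|_{L_s}$ to be injective, the continuation argument stalls exactly there. Two further problems: even if you could cancel double points by a (large) Legendrian isotopy, the corrected $L_{s_*}'$ is not joined to the $L_s$, $s<s_*$, by a path of projection-transverse Legendrians, so you do not obtain a continuous path of prelegendrians through $s_*$; and the prelegendrian $N$-pushing is a front move that alters the prelegendrian within its formal class — it is not a tool for repairing degeneracies of the projection.

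The paper's actual argument for the second claim avoids continuation entirely. It applies the nilpotentisation parametrically in $s$: in dimension $6$ there is a unique fat nilpotent Lie algebra (the complex Heisenberg algebra), so the local model of every $\D_s$ at a point is $\D_\std$. Since $\Lambda$ lives in $\D_\std$ and the Heisenberg dilations preserve $\D_\std$, one shrinks $\Lambda$ into an arbitrarily small ball and reinserts this copy into each $\D_s$ via the zooming argument of \cref{thm:general46} (Gray stability in the contactisation over the compact lift, uniformly in $s$ by compactness of $[0,1]$), obtaining the path for all time. If you want to salvage your approach, you would need to replace the perturbation step by this shrink-and-reinsert mechanism — at which point the continuation scaffolding becomes unnecessary.
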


On the side of rigidity, we moreover obtain:
\begin{theorem} \label{thm:noPaths}
Let $\Lambda$ and $\Lambda'$ be two non isotopic prelegendrians in $(\C^{2n+1},\D_\std)$, as produced by \cref{thm:InfinitelyPrelegendrianTorus} or  \cref{cor:ExoticPrelegendrianPairsBall}. Then, there is no path $(\D_s,\Lambda_s)_{s \in [0,1]}$ of corank-$2$ fat distributions and prelegendrians such that:
\begin{itemize}
    \item $\D_0 =\D_1= \D_\std$.
    \item $\D_s$ agrees with $\D_\std$ outside of a compact.
    \item $\Lambda_0 = \Lambda$ and $\Lambda_1 = \Lambda'$.
\end{itemize}
\end{theorem}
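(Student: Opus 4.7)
The plan is to argue by contradiction: assuming a path $(\D_s, \Lambda_s)$ as in the statement exists, I derive that the Legendrian lifts $\cL(\Lambda)$ and $\cL(\Lambda')$ in $\cC(\C^{2n+1}, \D_\std)$ must share their Legendrian Contact Homology (in the setting of \cref{thm:InfinitelyPrelegendrianTorus}) or their looseness status (in the setting of \cref{cor:ExoticPrelegendrianPairsBall}), contradicting the constructions there. The reduction proceeds in three steps: lift the path of prelegendrians, rigidify the family of contactisations into a single contact manifold, and apply a compactly supported Gray stability.

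First, I lift. Applying the canonical Legendrian lift construction (\cref{prop:LiftingPreisotropics}) pointwise in $s$ produces a smooth family of Legendrians $\cL(\Lambda_s) \subseteq \cC(\C^{2n+1}, \D_s)$ with endpoints $\cL(\Lambda)$ and $\cL(\Lambda')$. Each contactisation is a contact submanifold of $(\S T^*\C^{2n+1}, \xi_\can)$ that coincides with $\cC(\C^{2n+1}, \D_\std)$ over the complement of a fixed compact $K \subseteq \C^{2n+1}$, since $\D_s = \D_\std$ there by hypothesis.

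Second, I rephrase this as a family of contact structures on a single ambient manifold. The annihilators $\D_s^\perp \subseteq T^*\C^{2n+1}$ form a smooth family of rank-$2$ subbundles that agree with $\D_\std^\perp$ outside $K$. A partition-of-unity argument over $K$ yields a compactly supported smooth family of vector bundle isomorphisms $\D_s^\perp \to \D_\std^\perp$ covering the identity; passing to unit spheres gives a compactly supported isotopy of $\S T^*\C^{2n+1}$ identifying each $\cC(\C^{2n+1}, \D_s)$ with $Y \defi \cC(\C^{2n+1}, \D_\std)$ as smooth manifolds. Pulling back the inherited contact structures produces a smooth family $\xi_s$ of contact structures on $Y$, equal to $\xi_\can$ outside a compact and at $s = 0, 1$, together with a path $L_s$ of $\xi_s$-Legendrians in $Y$ with $L_0 = \cL(\Lambda)$ and $L_1 = \cL(\Lambda')$.

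Third, I invoke Gray stability in compactly supported form. Since $\xi_s$ and $\xi_\can$ differ only over a compact region of $Y$, the Gray vector field is compactly supported and integrates to a globally defined isotopy $\phi_s$ of $Y$ with $\phi_s^* \xi_s = \xi_\can$ and $\phi_0 = \id$. The path $s \mapsto \phi_s^{-1}(L_s)$ then consists of $\xi_\can$-Legendrians in $Y$, giving a genuine Legendrian isotopy in $\cC(\C^{2n+1}, \D_\std)$ from $\cL(\Lambda)$ to $\phi_1^{-1}(\cL(\Lambda'))$. Since $\phi_1 \in \mathrm{Cont}(Y, \xi_\can)$, the Legendrian Contact Homology and the looseness status of $\phi_1^{-1}(\cL(\Lambda'))$ coincide with those of $\cL(\Lambda')$. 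Combined with the Legendrian isotopy, this forces $\cL(\Lambda)$ and $\cL(\Lambda')$ to share these invariants, which is the desired contradiction.

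The main obstacle I anticipate is the careful, compactly supported trivialization of the family of annihilator subbundles $\D_s^\perp$, together with justifying Gray stability on the non-compact manifold $Y$. Both should be routine once the compact support of the variation is exploited: the vector bundle isomorphism can be built by straightening $\D_s^\perp$ to $\D_\std^\perp$ inside $K$ via a standard $s$-parameter argument, and the compact support of the Gray vector field makes it automatically complete, bypassing the usual obstructions to Gray stability on open manifolds.
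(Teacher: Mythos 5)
Your proposal is correct and follows essentially the same route as the paper's proof: pass to the contactisations, observe that the compactly supported homotopy of fat structures yields a compactly supported family of contact structures on a fixed manifold, apply Gray stability to obtain a genuine Legendrian isotopy between the lifts, and contradict the LCH/looseness distinction. Your intermediate steps (trivializing the family of annihilator bundles and checking completeness of the Gray vector field) are exactly the details the paper leaves implicit.
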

That is, rigidity persists even if we allow homotopies of the fat structures themselves.

\subsection{Open questions}

We finish the introduction with a set of open questions about fat manifolds and prelegendrians, that should be relevant for the further development of the theory.

In this article, we build prelegendrians using spinning. In particular, our prelegendrians are topological products. It would be interesting to find more general examples, perhaps relying on higher order singularities of prelegendrian fronts. Concretely:
\begin{question}\label{problem:PrelegendrianSpheres}
    Does there exist a prelegendrian embedding $\Lambda:\S^{2n+1}\hookrightarrow(\C^{2n+1},\D_\std)$?
\end{question}

In a different direction, one could ask about fat distributions of other coranks:
\begin{question}\label{question:RigidPrelegendrians}
    Does the $h$-principle fail for prelegendrians in corank $k>2$ fat manifolds?
\end{question}
We are currently working on the case $k=4$, using ideas and constructions similar to the ones presented here. For other coranks we still lack a geometric understanding of how fat structures look (making it difficult to manipulate prelegendrians topologically).

Analogously to the case of contact structures, there is a class of subcritical preisotropic submanifolds in fat manifolds (see \cref{def:Preisotropic,def:Prelegendrian}). The $h$-principle for subcritical preisotropic immersions, and prelegendrian immersions of open manifolds, has been established by Bhowmick, building on Gromov's work \cite{Bhowmick:HorizontalContactPartially,gromovCarnotCaratheodorySpacesSeen1996}. The following question is natural: 
\begin{question}
    Does the $h$-principle hold for subcritical preisotropic embeddings and prelegendrian immersions in corank $k>1$ fat manifolds? 
\end{question}
The case $k=1$ corresponds to the usual contact case in which the answer is known to be true by work of Gromov \cite{cieliebakIntroductionPrinciple2024a,gromovPartialDifferentialRelations1986}.

Our prelegendrian stabilization from \cref{thm:PrelegendrianStabilization} naturally leads to the following:
\begin{question}
    Does there exist a class of \emph{loose} prelegendrians (containing our stabilized prelegendrians), that is governed by an $h$-principle?
\end{question}

If the answer to this question is negative, then the following question also has a negative answer:
\begin{question}
    Is the isotopy class of the prelegendrian lift $\cL(\Lambda)$ a complete invariant within each class of formally equivalent prelegendrians? 
\end{question}
This question is a concrete incarnation of Eliashberg's ``pseudoholomorphic curves or nothing'' philosophy. The problem is also analogous to the fundamental question of whether the Legendrian isotopy class of the co-normal lift is a complete isotopy invariant. This is known to be true for knots in $\R^3$ by a celebrated result of Shende \cite{ENS:KCH_complete,Shende:Conormal}. %The study of singularities of Legendrian submanifolds with respect to an isotropic foliation \cite{AlvarezGavela:LegendrianFronts} seems relevant in studying this problem.

We finish with the most fundamental question that motivates this work:
\begin{question}
Recall that fat distributions satisfy the $h$-principle on open manifolds, according to Gromov's $h$-principle for open and diff-invariant relations \cite{cieliebakIntroductionPrinciple2024a,gromovPartialDifferentialRelations1986}. Then, focusing on closed manifolds:
    \begin{itemize}
        \item [(a)] Does the $h$-principle fail for fat distributions of corank $k>1$? 
        \item [(b)] If the answer to (a) is positive, does there exist an \emph{overtwisted} class of fat distributions?
        \item [(c)] Is there a fat manifold with overtwisted contactisation? \qedhere
    \end{itemize}
\end{question}
We conjecture that the answer to (a) should be positive, at least for $k=2$. This aligns with the conjecture of Mart\'inez-Aguinaga and the second author about $(4,6)$ fat distributions in \cite{MartinezAguinagaPino}. %A possible approach for problem (c) would be to define a ``preovertwisted" object lifting to an overtwisted disc in the contactisation. Note moreover that all the fat manifolds appearing in this article (e.g. complex grassmannians) have tight contactisations (spaces of contact elements, which are fillable).

\subsection{Outline of the article}

The article is organized as follows. In \cref{sec:Fat} we introduce \emph{fat manifolds} and \emph{prelegendrians}. We prove all the basic properties of these, such as the existence of contactisations and Legendrian lifts. Finally, we introduce \emph{formal prelegendrian embeddings}. \cref{sec:Review_coreal} reviews co-real submanifolds in almost complex manifolds. These will play a crucial role in our constructions of prelegendrians. In \cref{sec:examples_fat} we introduce several examples of corank-$2$ fat manifolds, including the \emph{space of complex contact elements}; we also determine their contactisations. This will be the main class of fat manifolds to be studied throughout the article. In \cref{sec:Leg_front} we give a quick recap of Legendrian fibrations and introduce \emph{simple} fronts. \cref{sec:Leg_stablization} reviews the \emph{stabilization} of higher dimensional Legendrians and \emph{loose Legendrians}. In \cref{sec:Leg_spinning} we review the Legendrian front-spinning operation. In \cref{sec:Preleg_front} we introduce the notion of \emph{prelegendrian front}, and establish \cref{thm:PreLegendrianFronts} and the existence of basic prelegendrian Reidemeister moves. Afterwards, we define a \emph{prelegendrian spinning} operation along certain co-real submanifolds as a method to produce prelegendrian fronts in $(\C^{2n+1},\D_\std)$. In \cref{sec:Preleg_stab} we adapt the Legendrian front operations from \cref{sec:Leg_stablization} to the prelegendrian setting. In \cref{sec:Existence_of_preleg} we prove the existence of special co-real submanifolds. By our prelegendrian spinning operation this implies the existence of compact prelegendrians in $(\C^{2n+1},\D_\std)$. These submanifolds are also used to prove the existence of prelegendrian stabilizations, i.e. \cref{thm:PrelegendrianStabilization}. In \cref{sec:Non_isotopic_preleg} we combine the previous constructions to produce our exotic prelegendrians: the proofs of \cref{thm:InfinitelyPrelegendrianTorus}, \cref{cor:ExoticPrelegendrianPairsBall} and \cref{cor:ExoticPrelegendrianPairGrassmanian} are provided. Our results about general fat manifolds of corank-$2$ are proven in  \cref{sec:general}.

\textbf{Acknowledgements:} AdP is funded by the NWO grant Vidi.223.118 “Flexibility and rigidity of tangent distributions". During this project, EF received support from an AMS-Simons Travel Grant. WZ is supported by the Spanish FPI predoctoral program PRE2020-092185, as well as by the ICMAT project PID2022-142024NB-I00, which covered travel and local expenses associated with his visits to AdP and EF; he is grateful to the Department of Mathematics at the University of Georgia and the Mathematical Institute of Utrecht University for their warm hospitality during these visits. This work forms part of the PhD thesis of WZ. The authors are indebted to Robert Cardona, Georgios Dimitroglou Rizell, Fabio Gironella, Peter Lambert-Cole, Marina Logares and Mar\'ia Pe Pereira for their careful reading and many valuable comments, and to Sushmita Venugopalan for numerous helpful conversations during the early stages of this project.

\section{Fat manifolds}\label{sec:Fat}

In this section we introduce the main characters of the article: \emph{fat manifolds} and \emph{prelegendrian submanifolds}. We prove the key properties about them: the existence of contactisations (\cref{thm:ContactisationExistence}) and Legendrian lifts (\cref{prop:LiftingPreisotropics}). We conclude the section by introducing formal prelegendrian embeddings, which is necessary to set up the $h$-principle problem.

\subsection{Fat Distributions}
Let $\D^r\subseteq TM$ be a distribution of rank $r$ on a smooth manifold $M$. The \emph{Lie-bracket} of $\D$ with itself is the $C^\infty(M)$-module of vector fields \begin{equation*}
    [\D,\D] \defi \{[u_1,u_2] \st u_1, u_2\in\Gamma(\D) \}.
\end{equation*}
Throughout this article we will always assume that $\D$ is \emph{regular}, meaning that the module $[\D,\D]$ has constant pointwise rank, hence defines a distribution that we denote by the same symbol. 

The \emph{curvature map} of $\D$ is the bundle morphism
\begin{equation*}
    \Omega:\D \wedge \D \to TM/\D, \quad (X,Y)\mapsto -[X,Y] \mod \D.
\end{equation*}

For each distribution $H\subseteq TM$ such that $\D\subseteq H$, there is also an associated \emph{$H$-directional curvature}
\begin{align*}
    \Omega_H: \D\times \D &\to TM/H\\
    (X,Y)&\mapsto -[X,Y] \mod H.
\end{align*}

It is often convenient to dualize this picture. The annihilator of $\D$ is the subbundle
\begin{equation*}
    (TM/\D)^* \simeq \D^\perp = \{\alpha \in T^*M:\alpha|_{\D}=0\},
\end{equation*}
consisting of all covectors vanishing on $\D$. Dualizing $\Omega$ and using Cartan’s formula, we obtain the \emph{dual curvature} map
\begin{equation*}
    \Omega^*:\D^\perp \to \wedge^2\D^*, \quad \a\mapsto \a\circ\Omega = \a \circ \Omega_{\ker\alpha}= \d \a|_\D.
\end{equation*}
This map encodes the same information as $\Omega$, but from the perspective of $1$-forms rather than vector fields.

The basic link between curvature and bracket-generating properties is well known:
\begin{lemma}\label{lem:equiv_step2}
    Let $\D$ be a distribution on a manifold $M$. The following statements are equivalent:
    \begin{enumerate}
        \item $[\D,\D]=TM$, i.e. $\D$ is bracket-generating of step $2$.
        \item The dual curvature $\Omega^*$ is a monomorphism.
        \item The curvature $\Omega$ is an epimorphism.
    \end{enumerate}
\end{lemma}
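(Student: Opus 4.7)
The plan is to show (1) $\Leftrightarrow$ (3) by tautological unpacking of definitions, and (2) $\Leftrightarrow$ (3) by pointwise linear algebra duality. The only non-trivial point I need to make is that $\Omega$ is well-defined as a bundle map, so that ``epimorphism'' and ``monomorphism'' make sense pointwise.

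First, I would verify that $\Omega$ is $C^\infty(M)$-bilinear, hence a genuine bundle morphism. For $X,Y \in \Gamma(\D)$ and $f \in C^\infty(M)$, the Leibniz identity gives $[fX,Y] = f[X,Y] - Y(f)X$, and since $Y(f)X \in \Gamma(\D)$ the correction term vanishes modulo $\D$. So the assignment $(X,Y) \mapsto -[X,Y] \bmod \D$ descends to a pointwise skew-symmetric bilinear map $\D_p \wedge \D_p \to T_pM/\D_p$. This justifies talking about surjectivity/injectivity of $\Omega$ and $\Omega^*$ fiberwise.

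Next I would prove (1) $\Leftrightarrow$ (3). Under the standing regularity assumption, $[\D,\D]$ is the distribution whose fiber at $p$ is $\D_p + \spanop{\,[X,Y]_p : X,Y \in \Gamma(\D)\,}$. Thus $[\D,\D]_p = T_pM$ at every $p$ if and only if the classes $[X,Y]_p \bmod \D_p$ span $T_pM/\D_p$, which is precisely the statement that the image of $\Omega_p : \D_p \wedge \D_p \to T_pM/\D_p$ equals the whole quotient. Since decomposable bivectors generate $\D_p \wedge \D_p$, this is equivalent to $\Omega$ being a fiberwise epimorphism, i.e. (3).

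Finally I would prove (2) $\Leftrightarrow$ (3) by linear algebra. Under the canonical identifications $(TM/\D)^* \simeq \D^\perp$ (dual of quotient is annihilator) and $(\D \wedge \D)^* \simeq \wedge^2 \D^*$, the map $\Omega^* : \D^\perp \to \wedge^2 \D^*$ sending $\alpha \mapsto \d\alpha|_\D$ is, by Cartan's formula $\d\alpha(X,Y) = X\alpha(Y) - Y\alpha(X) - \alpha([X,Y])$ applied to sections of $\D$ on which $\alpha$ vanishes, the pointwise transpose of $\Omega$. A linear map between finite-dimensional vector spaces is surjective if and only if its transpose is injective; applying this fiberwise yields (3) $\Leftrightarrow$ (2).

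The only step that requires any care is the tensoriality of $\Omega$ and the Cartan-formula identification of $\Omega^*$ as the genuine transpose of $\Omega$; the rest is bookkeeping. There is no serious obstacle.
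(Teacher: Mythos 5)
Your proof is correct. Note that the paper itself states this lemma without proof, introducing it as "well known", so there is no argument in the text to compare against; what you have written is exactly the standard argument one would supply. The three points you flag as needing care are indeed the only ones with content: the tensoriality of $\Omega$ (so that surjectivity and injectivity are meaningful fiberwise notions), the identification of the fibre of $[\D,\D]$ with $\D_p$ plus the span of the bracket classes under the paper's standing regularity hypothesis, and the Cartan-formula identification of $\alpha\mapsto \d\alpha|_\D$ with the fiberwise transpose of $\Omega$ under the canonical isomorphisms $(TM/\D)^*\simeq\D^\perp$ and $(\D\wedge\D)^*\simeq\wedge^2\D^*$. (The overall sign relating $\d\alpha|_\D$ to $\alpha\circ\Omega$ is immaterial for the injectivity/surjectivity duality, so there is nothing to worry about there.) One tiny point worth a half-sentence in a final write-up: the image of $\Omega_p$ is spanned by its values on decomposable bivectors $u\wedge v$, and each such value is realized as $-[X,Y]_p \bmod \D_p$ for \emph{any} sections $X,Y$ extending $u,v$ — this is exactly where tensoriality is used to pass between the pointwise image of $\Omega$ and the fibre of $[\D,\D]$.
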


We are interested in a special subclass of step-$2$ distributions whose curvature exhibits maximal non-degeneracy.
\begin{definition}
A distribution $\D$ of corank $k$ on a manifold $M$ is said to be \emph{fat} at a point $p\in M$ if, for every non-vanishing $1$-form $\alpha \in \D^\perp$ at $p$, the $2$-form $\Omega^*(\alpha)$ defines a symplectic form on $\D_p$. 

A distribution $\D$ is \emph{fat} if it is fat at every point $p \in M$. The pair $(M,\D)$ is called a \emph{fat manifold}. 
\end{definition}
In other words, $(M,\D)$ is a fat manifold if and only if for every (local) non-zero section of $\alpha:U\rightarrow \D^\perp$, the vector bundle $(\D|_{U},d\alpha)$ is symplectic. In particular, fat distributions of corank $1$ are precisely contact distributions.
It follows that the rank of a fat distribution $\D$ is always an even number $2n$. We will usually write $(M^{2n+k},\D^{2n})$ to denote fat manifolds of corank $k$. 

\begin{remark}
    A fat distribution $\D^{2n}$ is necessarily bracket-generating of step $2$, by \cref{lem:equiv_step2}. Moreover, fat distributions are maximally non-integrable in the following sense: The distribution $\D^{2n}$ is fat if and only if for every hyperplane distribution $H\subseteq TM$ with $\D\subseteq H$, the $H$-directional curvature
    \begin{equation*}
        \Omega_H:\D\wedge\D\rightarrow TM/H, (X,Y)\mapsto -[X,Y] \mod H,
    \end{equation*}
    is a symplectic form with values in the line bundle $TM/H$. In the case $H=\ker(\alpha)$, $\Omega_H=\Omega^*(\alpha)$. In particular, fat distributions come equipped, at least locally, with a $\S^{k-1}$-family of conformal symplectic structures. 
\end{remark}

Inspired by \cite{BhowmickDatta:Horizontal_fat}, we adopt their construction and refer to it as the \emph{fatization}.
\begin{example}
    Let $(W^{2n},\lambda_1,\ldots,\lambda_k)$ be a smooth manifold $W^{2n}$ equipped with $(k-1)$-sphere of Liouville forms, i.e. a $k$-tuple $\lambda_1,\ldots,\lambda_k\in \Omega^1W$ of $1$-forms such that 
    \begin{enumerate}
        \item $\d\lambda_i$ is a symplectic form for all $i=1,\ldots,k$.
        \item For every $(a_1,\ldots,a_k)\in \S^{k-1}$ the combination $a_1\d\lambda_1+a_2 d\lambda_2+\cdots+a_k\d\lambda_k$ is  symplectic.
    \end{enumerate}
    A concrete example is $\R^4$. It admits a natural $2$-sphere of Liouville forms coming from the complex structures $i,j$ and $k$ given by the unit quaternions. More generally, exact hyperkähler manifolds provide a natural source of examples.
    
    The \emph{fatization} of $(W^{2n},\lambda_1,\ldots,\lambda_k)$ is the corank $k$ fat manifold
    \begin{equation*}
        (W\times \R^k, \D=\ker(\d z_1-\lambda_1)\cap\cdots\cap \ker (\d z_k-\lambda_k ) ),
    \end{equation*}
    where $(z_1,\ldots,z_k)$ are the standard coordinates on $\R^k$. To produce compact examples one can quotient $\R^k$ to a torus $\T^k$.

    The fatization is a fat manifold. Indeed, every non-zero form $\alpha\in \D^\perp$ can be uniquely expressed as
    \begin{equation*}
        \alpha=t a_1 (\d z_1-\lambda_1)+ta_2(\d z_2-\lambda_2)+\cdots ta_2(\d z_k-\lambda_k),
    \end{equation*}
    where $(t,(a_1,\ldots,a_k))\in(0,\infty)\times \S^{k-1}$ are spherical coordinates. It follows that $\d\alpha=t(a_1 \d\lambda_1 +\cdots  a_k\d\lambda_k)$ is symplectic on $\D$.

    In the non-exact setting, one can consider a $k$-tuple of symplectic forms such that any combination is symplectic. If all the forms in the tuple are of integral class, one can consider the associated $\S^1$-bundles. Taking their product produces a $\T^k$-bundle with fat connection.
\end{example}

Beyond the contact case, fatness is rare and highly constrained. The following result provides a precise set of numerical restrictions:
\begin{theorem}[Rayner \cite{Rayner}]
    Let $\D^r$ be a distribution of positive rank $r$ and corank $k$ on a manifold $M^{r+k}$. If $\D^r$ is fat, then the following constraints hold:
    \begin{itemize}
        \item $r$ is even and $r\geq k+1$.
        \item If $k\geq2$, then $4$ divides $r$.
        \item The $(r-1)$-sphere $S^{r-1}$ admits $k$-many linearly independent vector fields.
    \end{itemize}
    Conversely, given any pair $(r,k)$ satisfying the above conditions, there is a germ of fat distribution of rank $r$ and corank $k$.
\end{theorem}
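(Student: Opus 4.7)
The plan is to split the statement into its necessary and sufficient directions, both of which reduce to pointwise linear algebra on the fibre $\D_p \cong \R^r$. At any $p \in M$, fatness asserts that the pencil $P_p \defi \Omega^*(\D_p^\perp) \subset \Lambda^2 \D_p^*$, which is $k$-dimensional, consists of non-degenerate $2$-forms away from the origin. After fixing an auxiliary inner product $g$ on $\D_p$, each element $\omega \in P_p$ is encoded by a $g$-skew endomorphism $J$ via $\omega(X,Y) = g(JX,Y)$; fatness becomes the algebraic statement that the resulting map $\D_p^\perp \to \mathrm{End}(\D_p)$ lands in invertible $g$-skew endomorphisms off the zero section. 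Choosing a basis $\alpha_1, \ldots, \alpha_k$ of $\D_p^\perp$ and setting $J_i \defi J_{\alpha_i}$, the problem reduces to the purely linear question: for which $(r,k)$ does $\R^r$ admit $k$ skew endomorphisms whose every non-trivial real combination is invertible?

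For the necessary direction, each $J_i$ being skew and invertible forces $r$ to be even. For any unit $v \in \R^r$, the linear map $\R^k \to v^\perp$, $(a_i) \mapsto \sum a_i J_i v$, is well-defined by skew-symmetry and is injective because $\sum a_i J_i$ is invertible whenever $(a_i) \neq 0$; this immediately gives $k \leq r-1$. Varying $v$ over the sphere $S^{r-1}$ and applying Gram--Schmidt to $(J_1 v, \ldots, J_k v)$ yields $k$ pointwise orthonormal tangent vector fields on $S^{r-1}$. If moreover $k \geq 2$, Adams' theorem on vector fields on spheres then forces $\rho(r) \geq k+1 \geq 3$ for the Hurwitz--Radon function $\rho$; the explicit formula $\rho(2^{4a+b}\cdot\mathrm{odd}) = 8a + 2^b$ with $0 \leq b \leq 3$ yields $4 \mid r$.

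For the sufficient direction, the three bullet conditions together are equivalent to $\rho(r) \geq k+1$ (indeed $\rho(r) \leq r$ covers $r \geq k+1$, while $\rho(r) \geq 2, 3$ characterise ``$r$ even'' and ``$4 \mid r$'' respectively). The Hurwitz--Radon construction then produces $k$ mutually anticommuting orthogonal complex structures $J_1,\ldots,J_k$ on $\R^r$, namely $J_i^2 = -\Id$ and $J_i J_j + J_j J_i = 0$ for $i \neq j$. Setting $\omega_i(X,Y) \defi g(J_i X, Y)$, the Clifford identity gives $(\sum a_i J_i)^2 = -|a|^2 \Id$, so every non-trivial combination $\sum a_i \omega_i$ is symplectic. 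Taking Liouville primitives $\lambda_i \defi \tfrac{1}{2}\iota_E \omega_i$, where $E$ is the radial Euler field on $\R^r$, so that $\d \lambda_i = \omega_i$, the fatization
\begin{equation*}
\D \defi \bigcap_{i=1}^k \ker(\d z_i - \lambda_i) \subset T(\R^r \times \R^k),
\end{equation*}
as explicitly discussed earlier in the manuscript, is a corank-$k$ fat distribution of rank $r$, furnishing the required germ at the origin.

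The hard step is the divisibility implication ``$k \geq 2 \Rightarrow 4 \mid r$'', which we route through Adams' theorem: this is a genuinely deep K-theoretic input, with no truly elementary alternative currently known. Everything else is either routine linear algebra or the (constructive) Hurwitz--Radon algorithm for producing Clifford module structures. A more elementary path would attempt to simultaneously normalize the pencil $(J_i)$ to Clifford form by an $O(r)$-conjugation directly from fatness, bypassing vector fields on spheres; but since Hurwitz--Radon together with Adams' theorem already yields the sharp characterisation $\rho(r) \geq k+1$, this does not appear more economical.
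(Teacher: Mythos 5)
The paper does not prove this statement; it is quoted from Rayner with a citation, so there is no in-paper argument to compare against. Your proof is correct and is the standard route: reducing fatness at a point to a $k$-dimensional pencil of invertible $g$-skew endomorphisms, deducing $r$ even, $r\geq k+1$, and $k$ linearly independent vector fields on $S^{r-1}$ (whence $\rho(r)\geq k+1$ and $4\mid r$ via Adams), and running Hurwitz--Radon plus the fatization of $(\R^r,\lambda_1,\ldots,\lambda_k)$ with $\lambda_i=\tfrac12\iota_E\omega_i$ for the converse germ. All the individual steps check out, including the Clifford identity $(\sum a_iJ_i)^2=-|a|^2\Id$ and the verification that $\d\lambda_i=\omega_i$ for constant-coefficient forms.
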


\subsection{Contactisation and symplectisation of a fat manifold} \label{sec:contactisation}

Given a manifold $M$, we denote by $\lambda_\can$ the canonical Liouville $1$-form in $T^*M$. The manifold $T^*M$ carries a natural $\R^*$-action (resp. $\R^+$-action), induced by the associated Liouville vector field. The quotient of $T^*M \setminus 0$ under this action is the space $\P T^*M$ of hyperplanes in $M$ (resp. the space $\S T^*M$ of co-oriented hyperplanes in $M$). Since the action conformally expands the symplectic form, it defines a canonical contact structure $(\P T^* M, \xi_\can)$ known as \emph{space of contact elements on $M$} (resp. $(\S T^* M,\xi_\can)$ known as \emph{space of co-oriented contact elements}). 

Consider the bundle projections $\pi_\P: \P T^* M\rightarrow M$ and $\pi: \S T^* M \rightarrow M$. Then, the canonical contact structure on $\P T^* M$ is 
\begin{equation*}
        \xi_\can(\a) =(\d_\a\pi_\P)^{-1}(\ker \a),\quad \a \in \P T^* M.
\end{equation*}
The canonical contact structure on $\S T^* M$ is $\xi_\can =\ker \lambda_\can$, hence at a point $\a\in \S T^*M$ is also given by $\xi_\can (\a)=(\d_\a\pi)^{-1} (\ker \a)$. The canonical double covering $(\S T^* M, \xi_\can)\rightarrow (\P T^* M,\xi_\can)$ is the co-oriented double contact cover of $(\P T^* M,\xi_\can)$, and commutes with the bundle projections.

Fix a distribution $\D\subseteq TM$ on $M$ of corank $k$. Its annihilator $\D^\perp \subseteq T^* M$ is preserved by the previous actions. Thus, we define the $\RP^{k-1}$-subbundle $\P \D^\perp\subseteq \P T^*M$ (resp. the $\S^{k-1}$-subbundle $\S \D^\perp\subseteq \S T^* M$). Notice that $\P \D^\perp$ (resp. $\S \D^\perp$) is nothing but the space of hyperplanes (resp. co-oriented hyperplanes) in $M$ that contain $\D$. The bundle projections will be denoted as before. 

The following characterisation of fatness is the key input that we will use in this article:
\begin{theorem}\label{thm:ContactisationExistence}
    Let $\D\subseteq TM$ be a distribution on $M$. The following are equivalent:
    \begin{enumerate}
        \item $\D$ is fat.
        \item $\D^\perp \setminus 0_M \subseteq (T^*M,\d \lambda_{\can}) $ is a symplectic submanifold.
        \item $\P\D^\perp \subseteq (\P T^*M,\xi_\can)$ is a contact submanifold.
        \item $\S\D^\perp \subseteq (\S T^*M,\xi_\can)$ is a contact submanifold.
    \end{enumerate}
\end{theorem}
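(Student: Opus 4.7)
The plan is to first establish the pointwise equivalence (1) $\Leftrightarrow$ (2), which contains all the geometric content; the remaining equivalences (2) $\Leftrightarrow$ (3) and (2) $\Leftrightarrow$ (4) then follow from a standard principle relating cone-invariant symplectic submanifolds of $(T^*M \setminus 0_M, d\lambda_\can)$ with contact submanifolds of its quotients.

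For (1) $\Leftrightarrow$ (2), I would fix $\alpha \in \D^\perp \setminus 0_M$ with $p = \pi(\alpha)$ and pick a local smooth section $\sigma:U \to \D^\perp$ through $\alpha$. The differential $d\sigma_p: T_pM \to T_\alpha \D^\perp$ splits the short exact sequence
\begin{equation*}
    0 \to \D^\perp_p \to T_\alpha \D^\perp \to T_pM \to 0,
\end{equation*}
yielding a decomposition $T_\alpha \D^\perp \cong d\sigma_p(T_pM) \oplus \D^\perp_p$. In this splitting I would compute the three blocks of $d\lambda_\can$. Using the tautological identity $\sigma^*\lambda_\can = \sigma$, the horizontal-horizontal block is $d\sigma|_p$. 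The vertical-vertical block is zero, since $d\lambda_\can$ pairs covector directions trivially. The mixed block, for $\dot\beta \in \D^\perp_p$ and $d\sigma_p(v) \in d\sigma_p(T_pM)$, computes to the tautological pairing $d\lambda_\can(\dot\beta, d\sigma_p(v)) = \dot\beta(v)$.

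The kernel analysis is then straightforward. Since $\dot\beta \in \D^\perp_p$ annihilates $\D_p$, the mixed pairing descends to the canonical perfect pairing $\D^\perp_p \otimes (T_pM/\D_p) \to \R$. A tangent vector $(d\sigma_p(v), \dot\beta)$ lies in $\ker(d\lambda_\can|_{T_\alpha \D^\perp})$ precisely when $v \in \D_p$ and $d\sigma|_p(v, w) = 0$ for every $w \in \D_p$, i.e.\ when $v \in \ker(\Omega^*(\alpha))$ on $\D_p$. (The component $\dot\beta$ is then uniquely determined by $v$ through the mixed block.) Consequently $d\lambda_\can|_{T_\alpha \D^\perp}$ is symplectic if and only if $\Omega^*(\alpha)$ is symplectic on $\D_p$, giving (1) $\Leftrightarrow$ (2). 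For (2) $\Leftrightarrow$ (4), I would invoke the standard principle: the Liouville vector field $X$ generating the $\R^+$-action on $T^*M \setminus 0_M$ is tangent to $\D^\perp \setminus 0_M$ (as this is a conical subset), and the $\R^+$-action is free, so $\S\D^\perp$ is the smooth quotient. Since $\lambda_\can$ descends (up to the cone structure) to the contact form defining $\xi_\can$ on $\S T^*M$, an $\R^+$-invariant submanifold $N \ni X$ of the symplectisation is a symplectic submanifold if and only if its quotient is a contact submanifold of $(\S T^*M, \xi_\can)$. The same argument for the $\R^*$-action gives (2) $\Leftrightarrow$ (3).

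The main obstacle is the mixed-block computation and, especially, the identification of the relevant piece of the kernel of $d\lambda_\can|_{T_\alpha \D^\perp}$ with $\ker(\Omega^*(\alpha))$ on $\D_p$; this requires being careful about the section-dependence of the splitting and checking that the final answer depends only on $\alpha$ and not on the auxiliary choice of $\sigma$, which follows from the intrinsic identity $\Omega^*(\alpha) = d\alpha|_\D$.
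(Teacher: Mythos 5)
Your proposal is correct, and its core — identifying the $\D$-block of $d\lambda_\can|_{\D^\perp}$ with the dual curvature $\Omega^*(\alpha)=d\alpha|_\D$, while the remaining directions pair nondegenerately for free — is exactly the mechanism the paper uses. The execution differs in presentation: the paper works in local coordinates, writes $\lambda_\can|_{\D^\perp}=\sum a_i\alpha_i$ for a coframe $\{\alpha_i\}$ of $\D^\perp$, and checks nondegeneracy by factoring the top wedge power $\omega^{n+k}$ as a volume form on $(TM/\D)\oplus\mathrm{Vert}$ wedged with $\bigl(\sum a_i\,d\alpha_i\bigr)^{n}$ on $\D$; you instead make the splitting $T_\alpha\D^\perp\cong d\sigma_p(T_pM)\oplus\D^\perp_p$ invariant via a section $\sigma$ through $\alpha$ and run a kernel analysis. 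Your route buys coordinate-freeness and makes explicit the perfect pairing $\D^\perp_p\otimes(T_pM/\D_p)\to\R$ that the paper's "first term is always a volume form" claim relies on; it also absorbs the parity discussion (the paper separately notes $\dim\D^\perp$ is even iff $\rk\D$ is even, whereas a nontrivial kernel is automatic for odd rank in your formulation). Your kernel bookkeeping is right: for $v\in\D_p$ the equation $d\sigma|_p(v,\cdot)=-\dot\beta(\cdot)$ is solvable with $\dot\beta\in\D^\perp_p$ precisely when $d\sigma|_p(v,\cdot)$ vanishes on $\D_p$, so the kernel projects isomorphically onto $\ker\Omega^*(\alpha)|_{\D_p}$, and independence of $\sigma$ follows from Cartan's formula as you say. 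For the equivalences (2) $\Leftrightarrow$ (3) $\Leftrightarrow$ (4) the paper simply cites the standard references; your Liouville-field/conicality argument is the standard proof of that cited fact.
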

\begin{proof}
    The equivalence between (2), (3) and (4) is a standard fact in contact and symplectic geometry \cite{cieliebakIntroductionPrinciple2024a, geigesIntroductionContactTopology2008}. We prove the equivalence between (1) and (2).

    Write $m=\dim(M)$, $k=\operatorname{corank}(\D)$ and $r=\operatorname{rank}(\D)$. Then, $\dim(\D^\perp) = m + k = r + 2k$ is even if and only if $r$ is even. From now on, we will assume that $r=2n$ is even, hence $\dim(\D^\perp)=2(n+k)$. 
    
    Let $\mathbf{q} = (q_1, \ldots, q_m)$ be a local coordinate chart on an open set $U \subseteq M$. Trivialize the distribution $\D$ on $U$ by a collection of $1$-forms $\{\alpha_1, \ldots, \alpha_k\}$ such that:
    \begin{equation*}
        \D \overset{\text{loc.}}{=}\ker\a_1\cap\cdots \cap \ker\a_k.
    \end{equation*}
    That is, the collection $\{\alpha_1, \ldots, \alpha_k\}$ forms a local frame for $\D^\perp$. Denote the corresponding fibre coordinates by $(a_1, \ldots, a_k)$.

    Hence, $(\mathbf{q}; a_1, \ldots, a_k)$ gives local coordinates on $\D^\perp$. The canonical Liouville $1$-form on $T^*M$ restricts to:
    \begin{equation*}
        \lambda = \sum_{i=1}^k a_i \alpha_i,
    \end{equation*}
    and its differential is:
    \begin{equation} \label{eq:LiouvilleAnnihilator}
        \omega \defi \d\lambda =  \d\Big(\sum_{i=1}^k a_i\a_i\Big) = \sum_{i=1}^k \Big( \d a_i\wedge \a_i + a_i\d\a_i \Big)
    \end{equation}
    
    Since the $1$-forms $\alpha_i$ are independent of the fibre coordinates $a_i$, the maximal wedge power of $\omega$ has the form:
    \begin{equation}\label{eq:top_power_fat}
        \omega^{(n+k)} = C \bigwedge_{i=1}^k\big(\d a_i\wedge \a_i \big) \wedge \big( \sum_{i=1}^k a_i\d\a_i \big)^{n},
    \end{equation}
    for a non-zero combinatorial number $C$. Realize that $T\D^\perp \cong TM \oplus \mathrm{Vert} \cong \D \oplus TM/\D \oplus \mathrm{Vert}$. The first term in \eqref{eq:top_power_fat} is a volume form on the subspace $TM/\D \oplus \mathrm{Vert}$. Therefore, $\omega$ is symplectic in $\D^\perp\setminus 0_M$ if and only if the second term in \eqref{eq:top_power_fat} is a volume form on $\D^{2n}$ for all $(a_1,\ldots, a_k)\neq (0,\ldots, 0)$, which is precisely the definition of fatness. This concludes the proof. 
\end{proof}

\begin{definition}
    Let $(M^{2n+k},\D^{2n})$ be a fat manifold.
    \begin{enumerate}
        \item The \emph{projectivised contactisation} of $(M,\D)$ is  $\cC_\P(M,\D)\defi (\P\D^\perp,\xi_\can)$,
        \item The (co-oriented) \emph{contactisation} of $(M,\D)$ is $\cC(M,\D)\defi (\S \D^\perp,\xi_\can)$,
        \item The \emph{symplectisation} of $(M,\D)$ is $\cS(M,\D)\defi (\D^\perp\setminus 0_M,\d\lambda_\can)$. \qedhere
    \end{enumerate}
\end{definition}

\subsection{Preisotropic and prelegendrian submanifolds} \label{subsec:preisotropic_submanifold}

Let us introduce a class of submanifolds adapted to the fat geometry:
\begin{definition}\label{def:Preisotropic}
    Let $(M^{2n+k}, \D^{2n})$ be a fat manifold. A submanifold $\Lambda\subseteq (M^{2n+k},\D^{2n})$ is said to be \emph{preisotropic} if
    \begin{equation*}
        (T\Lambda  + \D)|_{\Lambda}\subseteq TM|_{\Lambda}
    \end{equation*}
    has constant corank $1$. 
\end{definition}
Equivalently, $\Lambda \subseteq (M^{2n+k},\D^{2n})$ is preisotropic if
\begin{equation*}
    \dim(T\Lambda\cap \D)=\dim(\Lambda)-k+1.
\end{equation*}

A \emph{preisotropic immersion} is a parametrization of a possibly immersed preisotropic submanifold, and a \emph{preisotropic embedding} is a parametrization of an embedded preisotropic submanifold. 

\begin{remark}
    In the contact case, preisotropic submanifolds are precisely the isotropics. 
\end{remark}

Every preisotropic submanifold  $\Lambda\subseteq (M^{2n+k},\D^{2n})$ carries a hyperplane distribution 
\begin{equation*}
    H_\Lambda\defi (T\Lambda  + \D)_{|\Lambda},
\end{equation*}
that we will refer to as the \emph{preisotropic hyperplane}.
A \emph{preisotropic co-orientation} is a co-orientation of $H_\Lambda$, i.e. an orientation of the line bundle $l_\Lambda\defi TM|_{\Lambda}/H_\Lambda$. The preisotropics treated in this article carry an obvious preisotropic co-orientation (Subsection \ref{convention:prelegendrianCoorientation}), unless otherwise stated. 

\begin{remark}
    In the contact case $(M^{2n+1},\D^{2n})$, the preisotropic hyperplane is the contact structure, $H_\Lambda=\D_{|\Lambda}$. In particular, in co-oriented contact manifolds every preisotropic carries a natural preisotropic co-orientation.
\end{remark}

Since $(M^{2n+k},\D^{2n})$ is fat, the curvature of $\D$ along the preisotropic hyperplane $H_\Lambda$ defines a symplectic form
\begin{equation*}
    \Omega_{H_\Lambda}:D|_{\Lambda}\wedge \D|_{\Lambda} \longrightarrow l_\Lambda
\end{equation*}
with values on the line bundle $l_\Lambda$.
\begin{lemma}\label{lem:DimensionPreisotropic}
Let $(M^{2n+k},\D^{2n})$ be a fat manifold and $\Lambda\subseteq (M^{2n+k},\D^{2n})$ a preisotropic submanifold. Then, $T\Lambda\cap \D|_{\Lambda}\subseteq (\D|_{\Lambda}, \Omega_{H_\Lambda})$ is an isotropic sub-bundle. In particular, $\rk(T\Lambda\cap \D)\leq n$, equivalently, $\dim(\Lambda)\leq n+k-1$.
\end{lemma}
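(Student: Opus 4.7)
The plan is to verify isotropy at the level of local sections and then deduce the dimension bound by standard symplectic linear algebra plus the preisotropic dimension count.

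First I would unwind the definition of the directional curvature. By construction, for local sections $X, Y$ of $\D$, one has
\[
    \Omega_{H_\Lambda}(X, Y) \;=\; -[X, Y] \;\;\mathrm{mod}\;\; H_\Lambda,
\]
valued in the line bundle $l_\Lambda = TM_{|\Lambda}/H_\Lambda$. To test isotropy of $T\Lambda \cap \D_{|\Lambda}$ it suffices to take $X, Y$ to be local sections of $T\Lambda \cap \D$ and check this vanishes. Since $X, Y$ are tangent to $\Lambda$ (after extending arbitrarily off $\Lambda$, we may restrict the computation to $\Lambda$ where $X|_\Lambda, Y|_\Lambda$ are genuine vector fields on $\Lambda$), their Lie bracket along $\Lambda$ is tangent to $\Lambda$. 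Because $T\Lambda \subseteq H_\Lambda = (T\Lambda + \D)|_\Lambda$, this means $[X,Y] \in H_\Lambda$ along $\Lambda$, hence $[X,Y] \equiv 0 \mod H_\Lambda$. Therefore $\Omega_{H_\Lambda}$ vanishes on $T\Lambda \cap \D_{|\Lambda}$, which is the desired isotropy statement.

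Next, the rank bound follows from symplectic linear algebra: $(\D_{|\Lambda}, \Omega_{H_\Lambda})$ is a symplectic bundle of rank $2n$ (this is exactly fatness applied to a non-vanishing local section of $\D^\perp$ whose kernel is $H_\Lambda$), and any isotropic subbundle of a rank-$2n$ symplectic bundle has rank at most $n$. Hence $\rk(T\Lambda \cap \D) \leq n$.

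Finally I would convert this to a dimension bound on $\Lambda$ via the standard linear identity
\[
    \dim(T\Lambda) + \dim \D \;=\; \dim(T\Lambda + \D) + \dim(T\Lambda \cap \D).
\]
Using preisotropy, $\dim(T\Lambda + \D) = 2n + k - 1$, so $\dim \Lambda = (k-1) + \dim(T\Lambda \cap \D) \leq n + k - 1$, finishing the proof.

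There is no real obstacle; the only mild subtlety is making sense of $[X, Y]$ for sections defined only along $\Lambda$. This is handled by noting that the directional curvature $\Omega_H$ is tensorial (in $\D$) in both arguments, so its value at a point $p \in \Lambda$ depends only on the values $X_p, Y_p \in \D_p$; we may therefore compute with any convenient local extensions, in particular with ones tangent to $\Lambda$.
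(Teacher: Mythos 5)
Your proof is correct and is essentially the paper's argument in dual form: the paper fixes a local $1$-form $\alpha$ with $H_\Lambda=\ker\alpha$, identifies $\Omega_{H_\Lambda}$ with $\d\alpha|_\D$, and observes $\iota^*\d\alpha=\d\,\iota^*\alpha=0$, which by Cartan's formula is exactly your statement that $[X,Y]\in H_\Lambda$ for extensions $X,Y$ tangent to $\Lambda$. Your handling of the extension/tensoriality issue and the concluding dimension count are both fine.
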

\begin{proof}
    The computation is local so we may assume that $\Lambda$ has a fixed preisotropic co-orientation, i.e $H_\Lambda=\ker \alpha$ for some $1$-form $\alpha$, and identify $\Omega_{H_\Lambda}=\d\alpha$.
Since $\iota^*\alpha=0$, for the inclusion map $\iota:\Lambda\hookrightarrow  M$, we conclude that $\iota^* \d\alpha=0$ on $T\Lambda\cap \D$, from which the result follows.
\end{proof}

This motivates the following definition:
\begin{definition}\label{def:Prelegendrian}
    A preisotropic submanifold $\Lambda\subseteq (M^{2n+k},\D^{2n})$ is said to be \emph{subcritical} if $\dim(\Lambda)< n+k-1$. A \emph{prelegendrian submanifold} is a preisotropic submanifold $\Lambda$ of the maximal dimension $\dim(\Lambda)=n+k-1$.
\end{definition}

\subsubsection{Isotropic and Legendrian lifts to the contactisation}

The following lifting property justifies the terminologies \emph{preisotropic} and \emph{prelegendrian}:
\begin{proposition}\label{prop:LiftingPreisotropics}
Let $(M^{2n+k},\D^{2n})$ be a fat manifold. 
 
\begin{itemize}
     \item If $u: S\to (M^{2n+k},\D^{2n})$ is a preisotropic immersion, then there exists a unique isotropic immersion $\hat{u}:S\to \cC_\P(M,\D)$, that makes the following diagram commute:
    \begin{equation}\label{diagram:projectivised_legendrian_lift}
    \begin{tikzcd}
                                                 & \cC_\P(M,\D) \ar[d] \\
          S  \ar[r,"u"] \ar[ur,"\hat{u}",dashed] &  (M,\D)   
    \end{tikzcd}
    \end{equation}
    \item 
    If $u: S\to (M^{2n+k},\D^{2n})$ is a co-oriented preisotropic immersion, then there exists a unique isotropic immersion $\widetilde{u}:S\to \cC(M,\D)$, that makes the following diagram commute:
    \begin{equation}\label{diagram:legendrian_lift}
\begin{tikzcd}
                                             & \cC(M,\D) \ar[d] \\
                                             & \cC_\P(M,\D) \ar[d] \\
      S  \ar[r,"u"] \ar[ur,"\hat{u} "]  \ar[uur, "\tilde{u}", dashed] &  (M,\D)   
\end{tikzcd}
        \end{equation}

    \end{itemize}

        Moreover, if $u$ is an embedding, then $\hat{u}$ and $\tilde{u}$ are also embeddings.
\end{proposition}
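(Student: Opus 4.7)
The plan is to construct the lift $\hat{u}$ explicitly as the Gauss-type map sending a point $s \in S$ to the preisotropic hyperplane of $\Lambda = u(S)$, i.e.
\[
\hat{u}(s) \defi H_\Lambda|_{u(s)} = \D_{u(s)} + \d u(T_sS) \subseteq T_{u(s)}M.
\]
Since $\Lambda$ is preisotropic, this is a hyperplane in $T_{u(s)} M$ containing $\D_{u(s)}$, so it defines a point of $\P \D^\perp \subseteq \P T^*M$. Because $H_\Lambda$ has constant rank along $\Lambda$, it is a smooth subbundle of $u^*TM$, so $\hat{u}$ is smooth; and it lies over $u$ by construction.

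Next I would check the isotropy condition. Recall from the definition of $\cC_\P(M,\D)$ that the canonical contact hyperplane at a point $W \in \P \D^\perp \subseteq \P T^*M$ is $\xi_\can(W) = (\d\pi_\P)^{-1}(W)$. For any $v \in T_sS$,
\[
\d\pi_\P \bigl(\d\hat{u}(v)\bigr) = \d(\pi_\P \circ \hat{u})(v) = \d u(v) \in \d u(T_sS) \subseteq H_\Lambda|_{u(s)} = \hat{u}(s),
\]
so $\d\hat{u}(v) \in \xi_\can(\hat{u}(s))$, which says $\hat{u}$ is isotropic. It is an immersion since $\pi_\P \circ \hat{u} = u$ and $u$ is. Uniqueness comes from the same calculation in reverse: any lift $v : S \to \P \D^\perp$ of $u$ by isotropic immersions must satisfy $\d u(T_sS) \subseteq v(s)$ and also $\D_{u(s)} \subseteq v(s)$ (since $v(s) \in \P\D^\perp$); as both live in a hyperplane and $H_\Lambda|_{u(s)} = \D_{u(s)} + \d u(T_s S)$ already has codimension one, we must have $v(s) = H_\Lambda|_{u(s)} = \hat{u}(s)$.

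For the co-oriented lift $\tilde{u}$ to $\cC(M,\D)$, the preisotropic co-orientation of $\Lambda$ is by definition an orientation of $l_\Lambda = TM|_\Lambda / H_\Lambda$, which is equivalent to a continuous choice of co-orientation of the hyperplane $\hat{u}(s) \subseteq T_{u(s)}M$ for every $s$. Since $\cC(M,\D) \to \cC_\P(M,\D)$ is the double cover associated to co-orienting each hyperplane, the co-oriented preisotropy datum provides a unique continuous section of this double cover over the image of $\hat{u}$, producing $\tilde{u}$. Isotropy of $\tilde{u}$ follows from isotropy of $\hat{u}$ because the covering map is a local contactomorphism, and uniqueness is inherited.

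Finally, if $u$ is an embedding, both $\hat{u}$ and $\tilde{u}$ are injective (they factor $u$ through $\pi_\P$ resp.\ the composition of $\pi_\P$ with the double cover) and are immersions by the argument above. To upgrade injective immersion to embedding, note that $\pi_\P$ and the double cover are proper on preimages of compacts, so $\hat{u}$ and $\tilde{u}$ are homeomorphisms onto their images whenever $u$ is. The only genuinely conceptual step is verifying that the preisotropy corank-$1$ condition is exactly what guarantees both existence (so that $\hat{u}(s) \in \P\D^\perp$ is a hyperplane and not all of $T_{u(s)}M$) and uniqueness of the lift; everything else is routine.
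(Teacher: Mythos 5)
Your proof is correct and follows essentially the same route as the paper's: you define the lift pointwise by the corank-one subspace $\D_{u(s)}+\d u(T_sS)$ (the paper uses the dual description via the annihilating $1$-form up to scaling, which is the same point of $\P\D^\perp$), and the isotropy, immersion, uniqueness, and double-cover arguments are the ones the paper uses or leaves implicit. If anything, you spell out the uniqueness step more explicitly than the paper does.
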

\begin{proof}
Recall $\cC_\P(M,\D)=(\P\D^\perp,\xi_\can)\subseteq (\P T^*M,\xi_\can)$ and $\cC(M,\D)=(\S \D^\perp,\xi_\can)\subseteq (\S T^*M,\xi_\can)$. We denote the projections by $\pi_{\P}:\cC_\P(M,\D)\to (M,\D)$ and $\pi:\cC(M,\D)\to (M,\D)$ respectively. 
    
For each $p\in S$, since $u$ is preisotropic, there is a unique $1$-form $\hat{\alpha}_p\in \D^\perp_{u(p)}$ defined up to $\R^*$-scaling, such that 
\begin{equation*}
    \d u(T_pS)+\D_{u(p)} = \ker \tilde{\alpha}_p.
\end{equation*}
Analogously, if $u$ is co-oriented preisotropic, there is $\tilde{\alpha}_p$, uniquely defined up to positive $\R^+$-scaling, inducing the given co-orientation.

The lifts are defined by $\hat{u}(p)\defi [\hat{\alpha}_p]$ and $\tilde{u}(p)\defi [\tilde{\alpha}_p]$, respectively. Differentiating the identities $\pi_{\P}\circ \hat{u}=u$ and $\pi\circ \tilde{u}=u$, we obtain:
\begin{equation*}
    \d\pi_{\P}(\im \d_p\hat{u}) = \im \d_p u \subseteq \ker \hat{\alpha}_p
\end{equation*}
and 
\begin{equation*}
    \d\pi(\im \d_p\tilde{u}) = \im \d_p u \subseteq \ker \tilde{\alpha}_p.
\end{equation*}

From here it follows that both maps are isotropic immersions. Indeed, the immersion property is obvious. For the isotropic property, realize that at the point $[\hat{\alpha}_p]\in \P\D^\perp$ the canonical contact structure is defined as $\xi_\can ([\hat{\alpha}_p]) = d\pi_{\P}^{-1} \ker(\hat{\alpha}_p)$, the same argument applies in the co-oriented case. 
\end{proof}

\begin{definition}
    Let $(M^{2n+k},\D^{2n})$ be a fat manifold and $u:S\to (M,\D)$ a (co-oriented) preisotropic immersion. The isotropics $\mathcal{I}_\P(u)\defi\hat{u}:S\to \cC_\P(M,\D)$ and $\mathcal{I}(u)\defi \tilde{u}:S\to \cC(M,\D)$, from the proposition above, are called the \emph{projective isotropic lift} and the \emph{isotropic lift} of $u$, respectively.
\end{definition}

In the prelegendrian case, we will use the notation $\cL_\P(u)=\mathcal{I}_\P(u)$ and $\cL(u)=\mathcal{I}(u)$ to denote the lifts, and we will refer to them as \emph{Legendrian lifts}.  When dealing with submanifolds (not maps) we will use the same notation to denote  the lifts, i.e. if $\Lambda\subseteq (M,\D)$ is preisotropic, resp. prelegendrian, we will write $\mathcal{I}(\Lambda)$, resp. $\cL(\Lambda)$, to denote the lifts, and similarly for the projective ones. 

The construction of the lift is continuous for families of preisotropics. In particular, the isotropic lifts produce effective invariants to distinguish preisotropics. For later use, we highlight the following:

\begin{corollary}\label{cor:leg_lift_invariant}
    Let $(M^{2n+k},\D^{2n})$ be a fat manifold, and $\Lambda_0, \Lambda_1:S\hookrightarrow (M,\D)$ two (co-oriented) prelegendrian embeddings. Then,
    \begin{itemize}
        \item If $\cL_\P(\Lambda_0)$ and $\cL_\P(\Lambda_1)$ are not Legendrian isotopic, then $\Lambda_0$ and $\Lambda_1$ are not prelegendrian isotopic. 
        \item If $\cL(\Lambda_0)$ and $\cL(\Lambda_1)$ are not Legendrian isotopic, then $\Lambda_0$ and $\Lambda_1$ are not co-oriented prelegendrian isotopic.
    \end{itemize}
\end{corollary}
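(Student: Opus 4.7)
The plan is to prove both statements by contrapositive. Fix the co-oriented case; the projective case is identical with $\cC(M,\D)$ replaced by $\cC_\P(M,\D)$ and $\R^+$-scaling replaced by $\R^*$-scaling. Suppose there exists a smooth family of co-oriented prelegendrian embeddings $\Lambda_t : S \hookrightarrow (M,\D)$, $t \in [0,1]$, interpolating between $\Lambda_0$ and $\Lambda_1$. The goal is to produce from it a Legendrian isotopy between $\cL(\Lambda_0)$ and $\cL(\Lambda_1)$.

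The first step is to apply \cref{prop:LiftingPreisotropics} pointwise in $t$: for each $t$, the embedding $\Lambda_t$ is co-oriented prelegendrian, hence admits a unique Legendrian lift $\cL(\Lambda_t) : S \hookrightarrow \cC(M,\D)$. The substantive step is to show that this $t$-family is smooth in $t$, so that altogether we obtain a Legendrian isotopy from $\cL(\Lambda_0)$ to $\cL(\Lambda_1)$, contradicting the hypothesis.

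Smoothness is the only real point to verify and is the mild obstacle. It follows from the explicit construction in the proof of \cref{prop:LiftingPreisotropics}: at each $p \in S$ and each $t$, the lift $\cL(\Lambda_t)(p)$ is the unique (up to positive scaling) covector $\tilde\alpha_{t,p} \in \D^\perp$ whose kernel equals $\d\Lambda_t(T_p S) + \D_{\Lambda_t(p)}$ and which induces the prescribed co-orientation. The hyperplane $\d\Lambda_t(T_p S) + \D_{\Lambda_t(p)} \subseteq T_{\Lambda_t(p)} M$ varies smoothly in $(t,p)$ because $\Lambda_t$ is a smooth family of immersions with constant dimension of intersection with $\D$, so the preisotropic hyperplane distribution $H_{\Lambda_t}$ has constant rank throughout the isotopy. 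Hence its annihilator defines a smooth line subbundle of $\Lambda_t^{*}\D^\perp$, and the co-orientation selects a smooth section of the associated $\S^0$-bundle. Passing to the unit sphere bundle $\cC(M,\D) = \S \D^\perp$, we obtain a smooth family of Legendrian embeddings $\cL(\Lambda_t)$, with $\cL(\Lambda_t)$ an embedding for each $t$ by the last sentence of \cref{prop:LiftingPreisotropics}.

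This smooth family is the desired Legendrian isotopy from $\cL(\Lambda_0)$ to $\cL(\Lambda_1)$ in $\cC(M,\D)$, contradicting the assumption that the lifts are not Legendrian isotopic. The same argument, using the $\R^*$-quotient in place of the $\R^+$-quotient and dispensing with the co-orientation, yields the statement about $\cL_\P$.
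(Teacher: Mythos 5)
Your proof is correct and follows the same route as the paper, which simply observes that the lift construction of \cref{prop:LiftingPreisotropics} is continuous in families and therefore sends a (co-oriented) prelegendrian isotopy to a Legendrian isotopy of the lifts; you have merely filled in the smoothness verification that the paper leaves implicit.
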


\begin{remark}
    In practice, we will want to distinguish prelegendrians without the co-orientation information, using their lift to the (co-oriented) contactisations. For this, one would need to distinguish the set of all possible co-oriented Legendrian lifts. When the prelegendrian is connected, there are just two different lifts, and both are related by the co-orientation reversing contactomorphism of $\cC(M,\D)$ given the antipodal action in the $\S^{k-1}$-fibres. Therefore, it is enough to distinguish $\cL(\Lambda_0)$ and $\cL(\Lambda_1)$ by means of a Legendrian invariant preserved by this action, to conclude that $\Lambda_0$ and $\Lambda_1$ are not prelegendrian isotopic.  All the Legendrian invariants treated in this article satisfy that property. 
\end{remark}

\subsection{Formal preisotropics and prelegendrians}

The study of preisotropic immersions in fat distributions was initiated by Gromov \cite{gromovCarnotCaratheodorySpacesSeen1996}, who referred to them as \emph{partially horizontal immersions}. It was subsequently developed by Bhowmick \cite{Bhowmick:HorizontalContactPartially}, who in certain cases completed the $h$-principle analysis via the Hamilton–Moser implicit function technique.

To state the $h$-principle and prepare for the exotic prelegendrians appearing in our main results, we introduce the following notion:
\begin{definition}
    Let $(M^{2n+k},\D^{2n})$ be a fat manifold, and let $S$ be a smooth manifold.
    \begin{enumerate}
        \item A \emph{formal preisotropic immersion} is a pair $(f,F)$, such that:
    \begin{itemize}
        \item $f:S\to M$ is a smooth map,
        \item $F:TS\to TM$ is a bundle monomorphism covering $f$ such that 
    \begin{itemize}
        \item [(a)]$H_p=F(T_pS) + \D_{f(p)}$ has corank $1$, for all $p\in S$, and 
        \item [(b)]$F(T_pS)\cap \D_{f(p)}\subseteq (\D_{f(p)}, \Omega_{H_p})$ is an isotropic subspace, for all $p\in S$.
    \end{itemize}
\end{itemize}
    \item A \emph{formal preisotropic embedding} is a pair $(f,F_s)$ such that:
    \begin{itemize}
        \item $f:S\hookrightarrow M$ is an embedding,
        \item $F_s:TS\to TM$ with $s\in[0,1]$ is a $1$-parameter family of bundle monomorphisms covering $f$ such that $F_0=\d f$,
        \item $(f,F_1)$ is a formal preisotropic immersion.
    \end{itemize}
    \item A \emph{formal prelegendrian immersion (resp. embedding)} is a formal preisotropic immersion (resp. embedding) of dimension $n+k-1$. \qedhere
    \end{enumerate}
\end{definition}
A formal preisotropic immersion is \emph{holonomic} if $F=df$. Similarly, for embeddings we ask $F_s = df$ for each $s$. Note moreover that formal preisotropic immersions $(f,F)$ have naturally associated co-orientation bundles $T_{f(p)}M/(F(T_pS) +\D_{f(p)})$, so we can ask that a co-orientation is fixed.

For immersions, the following $h$-principle holds:
\begin{theorem}{{\cite[Theorem 5.3]{Bhowmick:HorizontalContactPartially}}}
    Let $(M^{2n+k},\D^{2n})$ be a fat manifold and let $S$ be a smooth manifold. Then, any subcritical formal preisotropic immersion $(f,F)$ from $S$ to $(M,\D)$ is homotopic to a holonomic one.  In the prelegendrian case, the same statement holds provided that $S$ is an open manifold.
\end{theorem}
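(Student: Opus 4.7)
The plan is to cast the problem in Gromov's $h$-principle framework and apply convex integration --- equivalently, the Hamilton--Moser scheme used by Bhowmick. First, I would encode formal preisotropic immersions as sections of a partial differential relation $\mathcal{R} \subseteq J^1(S,M)$: over a pair $(p,q)\in S\times M$, the fibre of $\mathcal{R}$ consists of injective linear maps $F : T_pS \to T_qM$ such that $H \defi F(T_pS) + \D_q$ has corank $1$ and $F(T_pS) \cap \D_q$ is isotropic in $(\D_q, \Omega_H)$. A holonomic section is precisely a preisotropic immersion, so the theorem reduces to showing that every section of $\mathcal{R}$ is homotopic, through sections, to a holonomic one.

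The key geometric input is to check that $\mathcal{R}$ is ample in the sense of Gromov when $\dim S < n+k-1$. Fatness supplies a symplectic form $\Omega_H$ on $\D_q$ for \emph{every} hyperplane $H \supseteq \D_q$, and subcriticality forces $F(TS) \cap \D$ to be an isotropic subspace of dimension strictly less than $n = \tfrac{1}{2}\rk \D$ inside $(\D_q, \Omega_H)$. Freezing all but one principal direction $v \in T_pS$, one verifies that the set of values for $F(v)$ compatible with both the corank-$1$ and the isotropic conditions is a connected open subset whose convex hull has full dimension. With ampleness in hand, Gromov's convex integration theorem produces the required homotopy; Bhowmick's Hamilton--Moser version gives tame $C^\infty$ estimates in a single step and is cleaner to run when the two linear-algebraic constraints interact.

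The main obstacle is the prelegendrian case $\dim S = n+k-1$. By \cref{lem:DimensionPreisotropic}, the intersection $F(TS) \cap \D$ is then a \emph{Lagrangian} subspace of $(\D, \Omega_H)$, and any perturbation of $F(v)$ along $\D$ immediately violates isotropy: ampleness fails pointwise. When $S$ is open, this is rescued by the classical open-manifold trick: choose a proper Morse function on $S$ without top-dimensional critical points, collapse $S$ onto a codimension-$1$ spine, and run the convex integration on a neighbourhood of the spine, where one always retains a free direction transverse to the truncated skeleton. When $S$ is closed this route is unavailable, and indeed the main results of the present article show that no replacement exists --- the $h$-principle genuinely fails for closed prelegendrians in corank $2$.
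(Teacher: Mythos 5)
First, a point of reference: the paper does not prove this theorem; it is quoted from Bhowmick, and the surrounding text records the actual method. Following Gromov's treatment of partially horizontal immersions, one establishes local integrability and microflexibility of the sheaf of preisotropic immersions --- this is where the Hamilton--Moser/Nash implicit function theorem enters, as the tool for solving the underdetermined, infinitesimally invertible system cutting out preisotropy --- and then invokes Gromov's $h$-principle for $\mathrm{Diff}$-invariant microflexible sheaves over open manifolds; the subcritical case for closed $S$ is recovered by microextension (thicken $S$ to a germ of an open manifold of critical dimension, solve there, restrict). Your proposal instead runs through convex integration for an ample relation, with Hamilton--Moser mischaracterised as an alternative implementation of the same scheme.

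The gap is that the relation $\mathcal{R}$ you define is not open in $J^1(S,M)$, and the basic ``open $+$ ample $\Rightarrow$ $h$-principle'' theorem does not apply. The condition that $F(T_pS)\cap\D_q$ be $\Omega_H$-isotropic is a closed algebraic condition, and the corank-one requirement on $F(T_pS)+\D_q$ is likewise non-open: if the frozen part already satisfies $\operatorname{corank}\bigl(F(\tau)+\D_q\bigr)=1$, then the new value $F(v)$ is forced to lie in the hyperplane $F(\tau)+\D_q$, while if $F(\tau)\cap\D_q\neq 0$ the isotropy constraints $\Omega_H\bigl(F(v),F(w)\bigr)=0$ confine $F(v)$ to a positive-codimension subvariety of the principal subspace. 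In either situation the admissible set sits inside a proper affine subspace, so its convex hull cannot have full dimension; your central ampleness claim is therefore false, and it fails already in the subcritical case, not only in the prelegendrian one. This is exactly the reason why isotropic and Legendrian immersions in contact manifolds (the case $k=1$ here) are not proved by naive convex integration either. Your closing use of the open-manifold trick is correct in spirit, but it must be run inside the microflexible-sheaf framework (flexibility of such sheaves near positive-codimension polyhedra), not as convex integration over a spine, since the latter would still require fibrewise openness and ampleness of the relation.
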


\begin{remark}
    In the contact case, our definition recovers the usual notion of formal isotropic immersions and embeddings. In this setting, subcritical isotropic embeddings satisfy the $h$-principle \cite{cieliebakIntroductionPrinciple2024a}; this follows from the $h$-principle for isocontact embeddings in codimension $\geq 4$. By contrast, Bhowmick establishes the result above using microflexibility and microextension, which yield immersions rather than embeddings. The present paper pursues the failure of the $h$-principle for prelegendrian embeddings. The $h$-principle problem for subcritical preisotropic embeddings and prelegendrian immersions remains to be explored. 
\end{remark}

\subsection{Formal preisotropic lifts}

One of the main goals of this article is to construct prelegendrians that are formally isotopic but not prelegendrian isotopic. To distinguish such formally equivalent prelegendrians, we will make use of the Legendrian isotopy class of their Legendrian lifts (\cref{prop:LiftingPreisotropics}). The next result shows that formally equivalent prelegendrians \emph{cannot} be distinguished by the formal class of their Legendrian lifts. 
\begin{proposition}\label{prop:formal_preisotropic_lift}
    Let $(M^{2n+k},\D^{2n})$ be a fat manifold, and $S$ a smooth manifold.
    \begin{itemize}
        \item Suppose $(f,F):S\to(M,\D)$ is a formal preisotropic immersion. Then there exists a formal isotropic immersion $(\hat{f},\widehat{F}):S \to \cC_\P(M,\D)$, canonical up to contractible choices, that makes the following diagram commute:
        \begin{equation*}
        \begin{tikzcd}[cramped]
        	&& {\cC_\P(M,\D)} \\
        	\\
        	S && {(M,\D)}
        	\arrow[from=1-3, to=3-3]
        	\arrow["{(\hat{f},\widehat{F})}", dashed, from=3-1, to=1-3]
        	\arrow["{(f,F)}", from=3-1, to=3-3]
        \end{tikzcd}
        \end{equation*}
        \item Suppose $(f,F):S\to(M,\D)$ is a co-oriented formal preisotropic immersion. Then there exists a formal isotropic immersion $(\tilde{f},\widetilde{F}) : S \to \cC(M,\D)$, canonical up to contractible choices, that makes the following diagram commute:
        \begin{equation*}
            \begin{tikzcd}[cramped,sep=scriptsize]
            	&&& {\cC(M,\D)} \\
            	\\
            	&&& {\cC_\P(M,\D)} \\
            	\\
            	S &&& {(M,\D)}
            	\arrow[from=1-4, to=3-4]
            	\arrow[from=3-4, to=5-4]
            	\arrow["{(\tilde{f},\widetilde{F})}", dashed, from=5-1, to=1-4]
            	\arrow["{(\hat{f},\widehat{F})}"', from=5-1, to=3-4]
            	\arrow["{(f,F)}"', from=5-1, to=5-4]
            \end{tikzcd}
        \end{equation*}
        
    \end{itemize}

    Moreover, the same statement holds for formal preisotropic embeddings. 
\end{proposition}
\begin{proof}
We will work out the lifting problem to $\cC_\P(M,\D)$ for formal preisotropic immersions $(f,F): S \rightarrow (M,\D)$. We will address embeddings at the end. The case of $\cC(M,\D)$ is left for the reader.

The proof amounts to showing that there is an affine bundle $A \rightarrow S$ whose sections are the formal isotropic lifts. This will imply that the space of lifts is contractible.

We let $H := F(TS)+\D \subseteq f^*TM \rightarrow S$ be the preisotropic hyperplane bundle associated to $(f,F)$. $H$ uniquely defines a map $\widetilde f: S \rightarrow \P\D^\perp$ lifting $f$ so that $\ker(\widetilde f(s)) = H_s$. We therefore obtain a bundle $\widetilde f^*\xi \rightarrow S$. The differential of the projection $\pi$ defines a bundle epimorphism $d\pi: \widetilde f^*\xi \rightarrow H$.

The bundle $\xi$ has a canonical conformal symplectic structure, which we denote by $\omega$. We obtain $(\widetilde f^*\xi,\widetilde f^*\omega)$ over $S$. We define $A \rightarrow S$ to be the subbundle of $\textrm{Hom}(TS,\widetilde f^*\xi)$ consisting of lifts of $F$ whose image is isotropic. We prove that it is affine by exhibiting suitable local trivialisations. Working locally, we choose a 1-form $\beta$ annihilating $H$. Note that $\widetilde f$ is its projective class. We moreover fix a local frame $\{\alpha_i\}$ of $\D^\perp$, which we assume satisfies $\alpha_k = \beta$.

According to Equation \eqref{eq:LiouvilleAnnihilator}, the symplectic form in $\beta^*T\D^\perp$ reads:
\begin{equation*}
    d\beta + \sum_{i=1}^k da_i \wedge \alpha_i.
\end{equation*}
We are leaving some pullbacks via $\pi$, $f$, and $\widetilde f$, implicit. Restricting to $\widetilde f^*\xi$ yields:
\[ \widetilde f^*\omega =  d\beta|_H + \sum_{i=1}^{k-1} da_i \wedge (\alpha_i)|_H. \]
This can be written more conceptually as follows. Let $Q := H/f^*\D \rightarrow S$, which we see as a concrete complement of $f^*\D$ within $H$ using a splitting. The formula above for $\widetilde f^*\omega$ identifies the vertical subspace of $\widetilde f^*\xi$ with $Q^*$. In this manner we have a decomposition
\[ (\widetilde f^*\xi,\widetilde f^*\omega) = (f^*\D \oplus Q\oplus Q^*,d\beta|_H + \Omega_\can), \]
where $\Omega_\can$ is the canonical symplectic form on $Q \oplus Q^*$. Do note that $d\beta|_H$ is symplectic on $f^*\D$, but its behaviour in the rest of $H$ we do not know. In particular, $f^*\D$ and $Q\oplus Q^*$ are not necessarily symplectically orthogonal with respect to $\widetilde f^*\omega$.

Consider now the subbundle $F(TS) \subseteq H$. We can decompose $F(TS) = [F(TS) \cap \D] \oplus Q$. Lifts of $F(TS)$ to $\widetilde f^*\xi$ are then given by the graph construction, applied to a pair of homomorphisms $\widetilde F = (\phi: F(TS) \cap \D \rightarrow Q^*,\psi: Q \rightarrow Q^*)$. Let us discuss what conditions one should impose on these maps so the isotropy condition holds.

First, the graph of $F(TS) \cap \D$ under any $\phi: F(TS) \cap \D \rightarrow Q^*$ is isotropic, since $F(TS) \cap \D \subseteq (\D,d\beta|_\D)$ is isotropic on $(\widetilde{f}^*\xi,\widetilde{f}^*\omega)$ (due to the preisotropic condition) and $\D$ and $Q^*$ are symplectically orthogonal. In contrast, the graph of $\psi$ is Lagrangian on $(Q\oplus Q^*,\Omega_\std)$ if and only if $\psi$ is symmetric, seen as a bundle map $Q \times Q \rightarrow \R$. The remaining constraint is that the graphs of $\phi$ and $\psi$ should be symplectically orthogonal so they jointly form an isotropic subspace.

Let us fix $\psi$. Having made this choice, we claim that the lift $\widetilde F$ is pinned uniquely. To see this, we show that there is a unique $\phi$ whose graph is symplectically orthogonal to the graph of $\psi$. We can simplify the linear algebra by noting that $\psi$ defines a symplectic change of basis $\widetilde \psi$ on $(Q \oplus Q^*,\Omega_\std)$ that acts as the identity on $Q^*$. This implies that, up to symplectic shear, we can assume that $\psi$ is the zero map. Assuming this, the isotropy condition demands that $\phi$ is given by the expression $v \mapsto -(\iota_v d\beta)|_Q$. Undoing the shear shows that $\phi$ depends smoothly on $\psi$.

The proof is now complete, since we have shown that the affine bundle $A \rightarrow S$ of isotropic lifts is modelled on the bundle of symmetric tensors $\psi: Q \times Q \rightarrow \R$. For formal embeddings, we observe that the space of lifts for a bundle monomorphism is also affine and thus contractible. This implies that $F_s$ lifts without homotopical choices and one can interpolate at $s=1$ to an isotropic monomorphism thanks to the argument given above for formal immersions.
\end{proof}

We will use the following crucial consequence, which follows from the parametric nature of the argument given in \cref{prop:formal_preisotropic_lift}:
\begin{corollary}
The Legendrian lifts of two formally isotopic prelegendrians are formally Legendrian isotopic.
\end{corollary}

%%%%%%%%%%%%%%%%%%%%%%%%%%%%%%%%%%%%%%%%%%%%%%%%%%%%%%%%%%%%%%%%%%%
\section{Co-real submanifolds in almost complex manifolds}
\label{sec:Review_coreal}

In this section we recall basic facts about linear complex algebra, almost-complex manifolds, and co-real submanifolds that we will use in subsequent sections. We need them to (1) construct examples of corank-$2$ fat manifolds, and to (2) build examples of compact prelegendrians in these fat manifolds. This section also serves to fix notation.

\subsection{Complex linear algebra}

Throughout this subsection we fix a real vector space $V$ of dimension $2n$ endowed with a \emph{complex structure} $J:V\to V$, thereby turning $V$ into a complex vector space. There is a dual complex structure $J^*$ on $V^*$ defined as $J^*(\alpha)=\alpha \circ J$.
\begin{definition}
Given $(V,J)$:
    \begin{itemize}
        \item A subspace $W\subseteq V$ is called $J$-invariant if $JW=W$.
        \item A $\C$-valued $\R$-linear map $\beta:V\to\C$ is called $J$-linear if $\beta\circ J = i\,\beta$.
        \item For a subspace $H\subseteq V$, its complex part is the $J$-invariant subspace $H^J\defi H\cap JH$.
        \item A subspace $S\subseteq V$ is called \emph{real} if $S^J=\{0\}$. 
        \item  A subspace $S\subseteq V$ is called \emph{co-real} if $S+JS=V$; equivalently, the annihilator $S^\perp\subseteq V^*$ is real. 
        \item The complexification of a linear map $\a:V\to\R$ is the $J$-linear map $\alpha^J\defi \alpha - i\,\alpha\circ J$. \qedhere
    \end{itemize}
\end{definition}

Note that if $S$ is real then $\dim S\le n$, while if $S$ is co-real then $\dim S\ge n$. The following observation will be used later to build co-real submanifolds.
\begin{lemma}\label{lem:corel+any_is_coreal}
    Let $S\subseteq V$ be co-real, and $H\subseteq V$ be any subspace. Then, $S+H$ is co-real.
\end{lemma}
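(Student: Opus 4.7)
The plan is essentially a one-line observation, so the ``proof'' will amount to tracing through the definition. Recall that co-realness of $S$ means precisely $S + JS = V$. My goal is to show $(S+H) + J(S+H) = V$.

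First I would expand the left-hand side using $\R$-linearity of $J$ to get $(S+H) + J(S+H) = S + JS + H + JH$. Since by hypothesis $S + JS = V$ already, the sum $S + JS + H + JH$ contains $V$, hence equals $V$. No dimension count or basis argument is needed, and the subspace $H$ plays no role beyond being contained in $V$.

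There is no real obstacle here: the lemma is a purely formal consequence of the definition together with the fact that enlarging a subspace can only enlarge its image under $J$ and the resulting sum. The statement is included as a preparatory lemma, presumably to be invoked later when showing that a transverse intersection of a co-real submanifold with another submanifold, or a sum of tangent spaces in some construction of prelegendrian fronts, remains co-real.
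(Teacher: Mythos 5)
Your proposal is correct and is essentially identical to the paper's proof: both expand $(S+H)+J(S+H)=S+JS+H+JH$ and use $S+JS=V$. Nothing further is needed.
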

%\begin{proof}
%    Since $S+JS=V$, we have
%    \begin{equation*}
%        (S+H)+J(S+H)=S+JS+H+JH= V,
%    \end{equation*}
%    so $S+H$ is co-real.
%\end{proof}

We denote the real and complex Grassmannians of $V$, respectively, by:
\begin{gather*}
    \Gr_{k}(V)\defi \{H\subseteq V: H \text{ is a $k$-dimensional subspace}\},\\
    \Gr_{2k}(V,J)\defi \{W\subseteq V: W \text{ is $J$-invariant}\}\subseteq \Gr_{2k}(V).
\end{gather*}
Sometimes it is more convenient to work with the dual picture, called Grassmannian duality:
\begin{lemma}\label{lem:grasmm_dual}
    There are canonical bijections:
    \begin{align*}
        \Gr_k(V) &\rightarrow \Gr_{2n-k}(V^*)\\
        H&\mapsto H^\perp,
    \end{align*}
    and
    \begin{align*}
        \Gr_{2k}(V,J) &\rightarrow \Gr_{2n-2k}(V^*,J^*)\\
        W&\mapsto W^\perp.
    \end{align*}
\end{lemma}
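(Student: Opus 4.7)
The plan is to verify that the annihilator map is a bijection in the real case (which is standard) and then to check that $J$-invariance is exchanged with $J^*$-invariance under this correspondence.

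First, I would recall the basic facts about annihilators. For any subspace $H \subseteq V$, we have $\dim H + \dim H^\perp = \dim V = 2n$, so the annihilator of a $k$-plane is a $(2n-k)$-plane in $V^*$. Moreover, under the natural identification $V \cong V^{**}$, the double annihilator satisfies $(H^\perp)^\perp = H$. Consequently, the map $H \mapsto H^\perp$ is a bijection $\Gr_k(V) \to \Gr_{2n-k}(V^*)$ with inverse $L \mapsto L^\perp$. This establishes the first bijection.

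For the second bijection, the remaining content is to show that $W$ is $J$-invariant if and only if $W^\perp$ is $J^*$-invariant. Suppose $JW = W$, and take $\alpha \in W^\perp$ and $w \in W$. Then
\begin{equation*}
    (J^*\alpha)(w) = \alpha(Jw) = 0,
\end{equation*}
since $Jw \in JW = W$. Hence $J^*\alpha \in W^\perp$, so $W^\perp$ is $J^*$-invariant. For the converse, apply the same argument to $W^\perp \subseteq V^*$ equipped with $J^*$, using that $(J^*)^* = J$ under $V \cong V^{**}$, and conclude that $W = (W^\perp)^\perp$ is $J$-invariant whenever $W^\perp$ is $J^*$-invariant.

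Finally, I would observe that the real bijection therefore restricts to a bijection $\Gr_{2k}(V,J) \to \Gr_{2n-2k}(V^*, J^*)$, as both directions preserve (complex) invariance. Since $W$ is $J$-invariant of complex dimension $k$ exactly when it has real dimension $2k$, the dimensions match: $W^\perp$ lands in $\Gr_{2n-2k}(V^*,J^*)$. I do not anticipate a significant obstacle here; the only point requiring care is the compatibility between $J$ and $J^*$ under double duality, which is purely formal.
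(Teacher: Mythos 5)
Your proof is correct, and it is exactly the standard linear-algebra argument that the paper implicitly relies on: the paper states \cref{lem:grasmm_dual} without proof, treating both the annihilator bijection and the exchange of $J$-invariance with $J^*$-invariance (via $(J^*\alpha)(w)=\alpha(Jw)$, matching the paper's definition $J^*\alpha = \alpha\circ J$) as canonical facts. Nothing is missing from your argument.
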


Since we will mostly work with (real or complex) hyperplane Grassmannians, we will sometimes write $\Gr(V)\defi \Gr_{2n-1}(V)$ and $\Gr(V,J)=\Gr_{2n-2}(V,J)$, whenever there is no possible confusion.
\begin{definition}
The \emph{Hopf map} is defined as:
\begin{align*}
    \Hopf: \Gr_{2n-1}(V)&\rightarrow \Gr_{2n-2}(V,J)\\
    H &\mapsto H^J.
\end{align*}
Equivalently, $\Hopf(H)$ is the maximal $J$-invariant subspace contained in $H$.  
\end{definition}
This is well defined since every real hyperplane is co-real. Applying Grassmannian duality and identifying with (real/complex) projectivisation, we obtain the dual Hopf map:
\begin{align*}
    \Hopf^*: \Gr_{1}(V^*)\simeq\P(V^*) &\rightarrow \P_\C(\Hom_\C(V,\C))\simeq\Gr_{2}(V^*,J^*)\\
    [\a]_\R &\mapsto [\a^J]_\C
\end{align*}

In the case $V=\R^{2n}\cong\C^n$ (i.e. $J=J_{\mathrm{std}}$), the above dual Hopf map yields the standard Hopf fibration:
\begin{align*}
    \pi_{\mathrm{Hopf}}:\RP^{2n-1}&\rightarrow \CP^{n-1}\\
    [p_1:\cdots:p_{2n}]_\R &\mapsto [p_1+i\,p_2:\cdots:p_{2n-1}+i\,p_{2n}]_\C
\end{align*}

With the notation above we recall the following linear-algebra lemma, that will be used later.
\begin{lemma}\label{lem:dual_complex_basis}
    Let $W\in\Gr_{2n-2}(V)$ be a subspace of codimension $2$, and let $\alpha_1,\alpha_2\in V^*$ be linear maps with $W=\ker\alpha_1\cap\ker\alpha_2$. Then $W$ is $J$-invariant if and only if $\alpha_1^J$ and $\alpha_2^J$ are $\C$-linearly dependent in $\Hom_\C(V,\C)$.
\end{lemma}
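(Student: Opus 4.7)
The plan is to reduce $J$-invariance of $W$ to a dimension comparison between $W$ and its complex part $W^J = W\cap JW$, and then express $W^J$ directly in terms of the complexifications $\alpha_i^J$.

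First, I would recall that $W$ is $J$-invariant if and only if $W = W^J$, since $W^J$ is by construction the maximal $J$-invariant subspace contained in $W$. This turns the question into a dimension count on $W^J$.

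Next, I would compute $W^J$ explicitly. The key observation is that, for any nonzero linear form $\alpha\in V^*$, one has
\[
\ker\alpha \cap J\ker\alpha \;=\; \ker\alpha \cap \ker(\alpha\circ J),
\]
which is nothing but the kernel of $\alpha^J$ viewed as a $\C$-linear functional $V\to\C$: indeed, from $\alpha^J = \alpha - i\,\alpha\circ J$ the vanishing of $\alpha^J(v)$ is equivalent to the simultaneous vanishing of $\alpha(v)$ and $\alpha(Jv)$. Applying this factorwise to $W = \ker\alpha_1\cap\ker\alpha_2$ yields
\[
W^J \;=\; \ker\alpha_1^J \,\cap\, \ker\alpha_2^J \;\subseteq\; (V,J).
\]

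Finally, I would finish with a dimension count. Since $W$ has real codimension $2$, the forms $\alpha_1,\alpha_2$ must be $\R$-linearly independent in $V^*$, which forces both $\alpha_i^J$ to be nonzero elements of the $n$-dimensional complex vector space $\Hom_\C(V,\C)$. Consequently $W^J$ has real codimension $2$ in $V$ precisely when $\alpha_1^J, \alpha_2^J$ are $\C$-linearly dependent, and real codimension $4$ otherwise. Comparing with $\operatorname{codim}_\R W = 2$, the equality $W^J = W$ (equivalently, $J$-invariance of $W$) holds exactly in the dependent case, which is the claim.

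There is no significant obstacle: the only step requiring care is the identification $\ker\alpha\cap\ker(\alpha\circ J) = \ker\alpha^J$, but this is immediate from the definition of $\alpha^J$. Everything else is a linear-algebra dimension count.
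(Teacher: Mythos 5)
Your proof is correct. The step $\ker\alpha\cap J\ker\alpha=\ker\alpha\cap\ker(\alpha\circ J)=\ker_\C\alpha^J$ is valid (using $J\ker\alpha=\ker(\alpha\circ J)$), it gives $W^J=\ker_\C\alpha_1^J\cap\ker_\C\alpha_2^J$, and the dimension count then correctly characterises when $W=W^J$, i.e.\ when $W$ is $J$-invariant.

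Your route is, however, genuinely different from the paper's. The paper passes to the dual: by Grassmannian duality (\cref{lem:grasmm_dual}), $W$ is $J$-invariant if and only if its annihilator $\spanop{\alpha_1,\alpha_2}\subseteq V^*$ is a $J^*$-complex line, and it then identifies $J^*$-invariance of that real $2$-plane with $\C$-linear dependence of $\alpha_1^J,\alpha_2^J$. You instead stay in $V$, compute the complex part $W^J$ explicitly, and compare codimensions. The paper's argument is shorter on the page but leans on the duality lemma and leaves the final equivalence ("$J^*$-invariant plane $\iff$ complexifications dependent") as a one-line assertion that itself hides a small computation; your version is more self-contained and, as a by-product, produces the identity $W^J=\ker_\C\alpha_1^J\cap\ker_\C\alpha_2^J$, which is exactly the formula used elsewhere in the paper (e.g.\ in the proof of \cref{prop:PrelegendrianCusp}) to describe complex tangencies of fronts. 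Either proof is acceptable.
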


\begin{proof}
    By Grassmannian duality (\cref{lem:grasmm_dual}), $W$ is $J$-invariant if and only if the plane $\operatorname{span}_\R\{\alpha_1,\alpha_2\}\subseteq V^*$ is $J^*$-invariant (i.e. a complex line), which is equivalent to $\alpha_1^J$ and $\alpha_2^J$ being $\C$-linearly dependent.
\end{proof}

\subsection{Co-real submanifolds}

Let $(X,J)$ be an almost complex manifold of real dimension $2n$.
\begin{definition}
    An immersion $f:S\to (X,J)$ is called \emph{co-real} if for every $q\in S$ the subspace $\d f_q(T_qS)\subseteq T_{f(q)}X$ is co-real (with respect to $J_{f(q)}$).

    A submanifold $S\subseteq (X,J)$ is called \emph{co-real} if $TS\subseteq TX$ is a co-real subbundle.
\end{definition}

The class of co-real immersions/embeddings fits into the framework of \emph{directed} immersions/embeddings \cite{gromovPartialDifferentialRelations1986,cieliebakIntroductionPrinciple2024a}, and the corresponding open subset of the Grassmannian bundle is ample. Consequently, Gromov’s convex integration technique yields the following $h$-principle:
\begin{theorem}[Gromov]
    Co-real embeddings satisfy the $C^0$-dense, parametric $h$-principle, relative both in domain and parameter.
\end{theorem}

Next, we will state several results about co-real submanifolds. Some of them will seem a bit unmotivated or technical, but will become relevant in our construction of prelegendrian stabilizations and compact prelegendrians. The first one is a (formal) topological obstruction to the existence of co-real embeddings in $\C^n$:
\begin{proposition}[\cite{Well:co-real}]
\label{lem:coreal_euler}
    If $N\subseteq\C^n$ is an oriented, co-real submanifold, then its Euler characteristic satisfies $\chi(N)=0$.
\end{proposition}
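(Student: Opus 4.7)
The plan is to split by the parity of $d \defi \dim N$. If $d$ is odd, then $\chi(N)=0$ follows for any closed orientable manifold by Poincaré duality, and there is nothing to prove. The interesting case is $d$ even, where one wants to show $e(TN)\in H^d(N;\Z)$ vanishes.

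The key geometric translation of the co-real condition is that the real normal bundle $\nu N\subseteq T\C^n|_N$ is itself totally real, i.e.\ $\nu_pN\cap J\nu_pN=0$ for every $p\in N$. Indeed, using the standard Hermitian metric, any $v$ in this intersection is Hermitian-orthogonal to $T_pN$, hence to $T_pN+JT_pN=T_p\C^n$, forcing $v=0$. As an immediate consequence, for any nonzero $v\in\C^n$ the two vector fields on $N$ defined by orthogonal projection
\[
    X_v(p) \defi \operatorname{proj}_{T_pN}(v), \qquad X_{iv}(p) \defi \operatorname{proj}_{T_pN}(iv)
\]
never vanish simultaneously. Combining them into $\tilde{s}_v \defi X_v - iX_{iv}$ produces a nowhere-vanishing section of the complexified tangent bundle $TN\otimes_\R\C$, and a direct check shows that $v\mapsto\tilde{s}_v$ is $\C$-linear in $v$, yielding a fibrewise-injective morphism of complex bundles
\[
    L\colon \underline{\C^n}_N \hookrightarrow TN\otimes_\R\C.
\]

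In the totally real case $d=n$, the morphism $L$ is a complex isomorphism (both sides have complex rank $n$), and one wins by the classical self-intersection argument: the map $J$ gives a real isomorphism $\nu N\cong TN$, so $e(\nu N)=\pm e(TN)$ and
\[
    \chi(N) = \langle e(TN),[N]\rangle = \pm\langle e(\nu N),[N]\rangle = \pm [N]\cdot [N] = 0,
\]
the last equality holding because $\C^n$ is contractible, so $N$ can be translated off itself. This handles the totally real case and serves as a model for the general one.

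For $d>n$, the morphism $L$ is injective but no longer surjective, so one instead gets $TN\otimes_\R\C\isom\underline{\C^n}_N\oplus Q$ with $Q$ a complex bundle of rank $k\defi d-n$. This needs to be combined with the dual real decomposition $TN\isom T^JN\oplus\nu N$ (coming from the short exact sequence $0\to T^JN\to TN\to\nu N\to 0$ defined using $v\mapsto Jv\mod TN$) together with the complex identification $T\C^n|_N = T^JN\oplus(\nu N\otimes_\R\C)$ (since $\nu N$ is a totally real subbundle of rank $n-k$ inside the complex bundle $(T^JN)^\perp$ of complex rank $n-k$). The main obstacle of the plan is then to leverage these three structural relations—together with a self-intersection argument for $N\subset\C^n$ generalising the totally real case—to conclude the vanishing of $e(TN)=e(T^JN)\cup e(\nu N)$ in $H^d(N;\Z)$; the bookkeeping of Chern and Pontryagin classes forced by the triviality of $T\C^n|_N$ is the most delicate ingredient.
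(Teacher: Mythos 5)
Your argument is complete and correct in the totally real case $d=n$, but in the general case $d>n$ it stops exactly where the proof should end: you assemble the decomposition $TN\cong T^JN\oplus\nu N$ and the product formula $e(TN)=e(T^JN)\cup e(\nu N)$, and then declare the conclusion to be "the main obstacle," to be settled by "bookkeeping of Chern and Pontryagin classes." No such bookkeeping is needed, and the auxiliary morphism $L\colon\underline{\C^n}_N\hookrightarrow TN\otimes_\R\C$ is a detour that plays no role. The missing step is the single observation that $e(\nu N)=0$ for \emph{any} closed oriented submanifold of Euclidean space, in every codimension — not only in the middle-dimensional case where it can be read off as a self-intersection number. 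Indeed, $e(\nu N)$ is the pullback to $N$ of the Thom class of a tubular neighbourhood, and (by excision) that restriction factors through $H^{2n-d}(\C^n)=0$; this is the statement the paper cites from Milnor--Stasheff. Once you have this, $e(TN)=e(T^JN)\cup e(\nu N)=0$ and hence $\chi(N)=0$, which is precisely the paper's proof. (Note also that your self-intersection argument for $d=n$ literally computes $\langle e(\nu N),[N]\rangle=[N]\cdot[N]$, which only identifies the Euler class with a number when the codimension equals the dimension; for $d>n$ the class $e(\nu N)$ lives in $H^{2n-d}(N)$ with $2n-d<d$, so one genuinely needs the Thom-class vanishing rather than a displacement of $N$ off itself.)

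In short: your structural relations are the right ones and coincide with the paper's (the paper's $\eta$ is a complement of $T^JN$ in $TN$ carried by $J$ onto $\nu N$, which is your exact sequence split), but the proof as written has a real gap at the final step, and the ingredient that closes it is elementary rather than delicate.
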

\begin{proof}
    Since $N$ is co-real in $\C^n$, there is a splitting
    \begin{equation*}
        T\C^n|_N \simeq (TN)^J \oplus \eta\ \oplus \nu_{N\backslash \C^n},
    \end{equation*}
    with $\nu_N$ the normal bundle, $\eta\subseteq TN$ and $J(\eta)=\nu_N$. In particular,
    \begin{equation*}
        TN \simeq (TN)^J\oplus \nu_N.
    \end{equation*}
    Hence $e(TN)=e((TN)^J)\cup e(\nu_N)$. Being in $\C^n$, we deduce \cite{milnorCharacteristicClasses1974} that the self-intersection $[N].[N]$ vanishes, which implies that $e(\nu_N)=0$ and therefore $e(TN) = 0$. It follows that $\chi(N)=\langle e(TN),[N]\rangle=0$.
\end{proof}

The following result will be fundamental in our construction of exotic prelegendrians (\cref{thm:InfinitelyPrelegendrianTorus}), since it will allow us to find a co-real torus to prelegendrian spin along. 
\begin{lemma}\label{lem:bundle_over_coreal_is_coreal}
    Let $(X,J)$ be an almost complex manifold such that $\pi:X\to N$ is a vector bundle over a compact manifold $N$. Assume that the zero section $0_N\subseteq X$ is co-real. Then, 
    \begin{itemize} 
    \item For every connection $\mathrm{Hor}\subseteq TX$ of $X$, then there exists a sufficiently small tubular neighbourhood $U$ of $0_N$ such that $\mathrm{Hor}_{|U}\subseteq TX_{|U}$ is a co-real subbundle.  
    \item Every submanifold $W\subseteq X$ fibered over $N$ is fibered isotopic to a co-real submanifold. 
    \end{itemize}
\end{lemma}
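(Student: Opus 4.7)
The plan is to exploit openness of the co-real condition on the Grassmannian bundle of $X$, combined with the fibre-wise rescaling $\phi_t : X \to X$, $(p,v) \mapsto (p,tv)$ of the vector bundle $\pi:X\to N$. I would interpret ``connection'' as a linear connection on $X$, so that the zero section is a parallel section and hence $\mathrm{Hor}_p = T_p(0_N)$ at every $p \in 0_N$.

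For the first bullet, $\mathrm{Hor}_p$ is then co-real along $0_N$ by hypothesis. Since co-realness of $\mathrm{Hor}_q$ is the complement of the closed condition ``$\dim(\mathrm{Hor}_q + J\mathrm{Hor}_q) < \dim X$'', the locus where $\mathrm{Hor}$ is co-real is open in $X$, and by compactness of $N$ it contains a tubular neighbourhood $U$ of $0_N$.

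For the second bullet I would fix such a connection, with $U$ as above, and use the maps $\phi_t$, which for $t > 0$ are fibre-preserving diffeomorphisms; hence $(\phi_t(W))_{t \in [\epsilon, 1]}$ is a fibred isotopy from $W$ to $\phi_\epsilon(W) \subseteq U$ once $\epsilon$ is small. The core computation is that, under the splitting $TX = \mathrm{Hor} \oplus V$, the differential $d\phi_t$ acts by parallel transport on $\mathrm{Hor}$ and by multiplication by $t$ on $V$. Since $W \to N$ is a submersion, the horizontal projection of $T_qW$ surjects onto $\mathrm{Hor}_q$, while the vertical part $T_qW \cap V_q$ is a fixed subspace of the fibre. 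Reparameterising the vertical direction by $v \mapsto tv$ shows that $d\phi_t(T_qW)$ converges, as $t \to 0$ in the Grassmannian bundle of $X$, to $\mathrm{Hor}_{0_{\pi(q)}} \oplus (T_qW \cap V_q)$. This limit contains the co-real subspace $\mathrm{Hor}_{0_{\pi(q)}}$, so by \cref{lem:corel+any_is_coreal} it is itself co-real; openness of co-realness then yields co-realness of $\phi_\epsilon(W)$ for small $\epsilon > 0$.

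The main point requiring care is that the smallness of $\epsilon$ must be taken uniformly over all $q \in W$, which is routine if the fibres of $W \to N$ are compact (so that $W$ itself is compact) — the case arising in the intended application to prelegendrian spinning.
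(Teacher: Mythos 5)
Your argument is correct and matches the paper's proof in essence: openness of co-reality plus compactness of $N$ for the first bullet, and the fibrewise dilation combined with \cref{lem:corel+any_is_coreal} and openness for the second. The only cosmetic difference is that the paper chooses the connection to restrict to a connection of $W$ rather than computing the Grassmannian limit of $d\phi_t(T_qW)$ for an arbitrary linear connection, and your closing remark about needing $W$ compact for a uniform choice of $\epsilon$ is a fair (and correct) observation that the paper leaves implicit.
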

\begin{proof}
    The first part is a consequence of the compactness of $N$ and the fact that co-reality is an open condition, since $\mathrm{Hor}_p=T_p N$ for $p \in N\equiv 0_N$.
    
    For the second part, fix a connection $\mathrm{Hor}$ of $X$ that restricts to a connection of $W$, and consider the dilation $\delta_t:X\rightarrow X$, $t>0$, along the fibre directions. It follows from the previous part and \cref{lem:corel+any_is_coreal} that $\delta_t(W)$ is co-real for $t>0$ sufficiently small. 
\end{proof}

\subsubsection{Sections of the complex tangencies}

The following result will play a relevant role in proving that prelegendrian stabilization does not change the formal prelegendrian isotopy class. The $N$ appearing in the statement will later be the co-real submanifold along which we perform the stabilization and $\cF$ will be a prelegendrian front.
\begin{proposition}\label{prop:existence_of_rotate_cpx_tang_vf}
    Let $N\subseteq \cF\subseteq (X,J)$, where $\cF$ is an oriented hypersurface and $N$ is a co-real submanifold. Let $M\defi \overline{\Op_\cF(N)}\subseteq \cF$ be the closure of a sufficiently small tubular neighbourhood of $N$ in $\cF$. Then there is an inward pointing section $Y\in \Gamma(TM^J|_{\partial M})$.
    
    Suppose moreover that the normal bundle $\nu_N^\cF$ of $N$ in $\cF$ has a nowhere-vanishing section. Then, every nowhere-vanishing, inward-pointing section $Z\in \Gamma(TM^J|_{\partial M})$ extends to a nowhere-vanishing section $\tilde Z\in\Gamma(TM^J)$.
\end{proposition}
\begin{proof}
    Consider the bundle $M \rightarrow N$. The boundary $\partial M$ is a fibered submanifold and can be assumed to be co-real (\cref{lem:bundle_over_coreal_is_coreal}) by shrinking $M$. Applying \cref{lem:existence_of_nonzero_section_fiber_ver} to $S = T\partial M \subset TM|_{\partial M } \subset TX|_{\partial M}$ produces a section of $F^J/S^J$. This yields the claimed $Y$ by taking a representative, which can be assumed to be inward-pointing up to a change of sign.
    
    The second claim will follow once we show that there is a nowhere-vanishing section $s$ of $\Gamma(TM^J)$ that is nowhere outside-pointing. Indeed, that being the case, we can use a bump function close to $\partial M$ to interpolate between $s$ and $Z$, remaining nowhere-vanishing.

    Consider the bundle $M \rightarrow N$. Let $\mathrm{Hor} \subseteq TM$ be a connection on $M \rightarrow N$, extending a connection on $T\partial M \rightarrow N$. Since $N \subseteq X$ is co-real, we may assume that $\mathrm{Hor} \subseteq TX|_{M}$ is a co-real subbundle, possibly after shrinking $M$ (\cref{lem:bundle_over_coreal_is_coreal}). By hypothesis, $\nu_N^\cF$ has a nowhere vanishing section, so the vertical bundle $TM/\mathrm{Hor}$ does as well. We have thus a flag of vector bundles $S = \textrm{Hor} \subseteq F = TM \subseteq E = TX|_M$ over $M$ satisfying the hypotheses of \cref{lem:existence_of_nonzero_section_fiber_ver} below; it follows that there exists a non-vanishing section $s: M\to TM^J\cap\mathrm{Hor}$. It is tangent to $\partial M$ by construction, concluding the proof.
\end{proof}

We now prove the claimed auxiliary result.
\begin{lemma}\label{lem:existence_of_nonzero_section_fiber_ver}
    Let $S\subseteq F \subseteq E$ be a flag of vector bundles over a manifold $A$ such that
    \begin{itemize}
        \item $E\rightarrow A$ is a complex vector bundle, with fiberwise complex structure $J:E\to E$,
        %\item $F\subseteq E$ has real corank one,
        %% we don't need it?
        \item $S\subseteq E$ is a co-real subbundle, and
        \item The normal bundle $F/S$ admits a nowhere zero section.
    \end{itemize}
    Then, both $F^J/S^J$ and $F^J\cap S$ admit non-vanishing sections.
\end{lemma}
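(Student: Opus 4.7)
The plan is to recognize $W := F^J\cap S$ as itself a co-real subbundle of the complex vector bundle $F^J$, and then convert the nowhere-zero section of $F/S$ into a nowhere-zero section of $W$ by means of a canonical isomorphism $W/(W\cap JW) \cong F/S$.

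First I would check that $W$ is a subbundle of the expected rank and is co-real inside $F^J$. Co-realness of $S$ in $E$ rules out $S\subseteq F^J$, since otherwise $S+JS \subseteq F^J \subseteq F \neq E$ would contradict $S + JS = E$. Because $F^J$ has real corank one in $F$, this forces $F^J + S = F$, so $W$ is a subbundle of rank $\dim S - 1$. A parallel computation, using the $J$-invariance of $F^J$ (giving $JW = F^J\cap JS$) together with the inclusion $S\cap JS \subseteq F\cap JF = F^J$, yields $W + JW = F^J$ and $W\cap JW = S\cap JS$. In particular, $W$ is co-real inside the complex bundle $F^J$.

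Second I would introduce the map
\[
  \Psi: W/(W\cap JW) \;\longrightarrow\; F/S,
  \qquad w + (W\cap JW) \;\longmapsto\; Jw + S.
\]
This is well-defined because $w\in F^J$ implies $Jw\in F^J\subseteq F$, and a representative change $w \mapsto w + Jv$ with $v\in W\subseteq S$ alters $Jw$ by $-v \in S$. Its kernel is exactly $\{w\in W: Jw\in S\}$, which reduces via $S\cap JS \subseteq F^J$ to $W\cap JW$, and a dimension count shows $\Psi$ is surjective; hence it is an isomorphism. Pulling back the hypothesized nowhere-zero section $\sigma\in\Gamma(F/S)$ through $\Psi$ yields a nowhere-zero section $\bar s\in\Gamma(W/(W\cap JW))$, and any smooth lift $s:M\to W$ of $\bar s$ (existing by a standard orthogonal splitting with an auxiliary metric on $E$) is automatically nowhere zero, since a zero of $s$ would project to a zero of $\bar s$. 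This $s$ is the desired section of $F^J\cap S$.

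The main difficulty is conceptual rather than computational: one has to spot that the hypothesis on $F/S$ is really a hypothesis about the quotient $F^J/(F^J\cap S)$ via the identification $F^J + S = F$, and that from the viewpoint of the smaller complex bundle $F^J$ the subbundle $W$ is again co-real, so the desired section corresponds to the \emph{totally real direction} of $W$ inside $F^J$. Once this reformulation is in place, the argument reduces to a short pointwise linear-algebra check combined with a routine vector-bundle splitting, and all statements upgrade from pointwise to global because the ranks involved are locally constant.
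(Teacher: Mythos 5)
Your proof is correct and rests on the same key observation as the paper's: the complex structure $J$ identifies $F/S$ with a complement of $S\cap JS$ inside $F^J\cap S$, so the hypothesized section of $F/S$ transports to a nowhere-zero section of $F^J\cap S$. The paper packages this as a chain of direct-sum splittings culminating in $F^J\simeq S^J\oplus J(F/S)\oplus F/S$ with $J(F/S)\subseteq S$, whereas you phrase it via the quotient isomorphism $\Psi\colon (F^J\cap S)/(S\cap JS)\to F/S$ followed by a lift; the content is the same and your verifications (ranks, co-realness of $W$ in $F^J$, well-definedness and bijectivity of $\Psi$) all check out.
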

\begin{proof}
Choose a complement $K$ of $S^J$ in $S$. Since $S$ is coreal, $J(K)$ is a complement of $S$ in $E$. We obtain thus an isomorphism:
\begin{equation*}
    E \simeq S^J \oplus K \oplus J(K).
\end{equation*}
Consider now $Q := F \cap J(K)$. It is a complement of $S$ in $F$ and thus isomorphic to $F/S$. It follows that both $Q$ and $J(Q)$ admit non-vanishing sections. The claim then follows by noting that $J(Q) = J(F) \cap K \subseteq S \subseteq F$, which implies that $Q \subset F^J$ and that $J(Q) \subseteq F^J \cap S$.
\end{proof}

%%%%%%%%%%%%%%%%%%%%%%%%%%%%%%%%%%%%%%%%%%%%%%%%%%%%%%%%%%%%%%%%%%%%%%%%%%
\section{\texorpdfstring{Examples of corank $2$ Fat Distributions}{Examples of corank 2 Fat Distributions}} \label{sec:examples_fat}

Our philosophy is that corank-$2$ fat distributions should be viewed as a generalisation of holomorphic contact structures. We expect them to form a much larger class, ameanable to topological methods. However, as discussed in the introduction, to what extent they are genuinely more flexible is still a wide open problem.

In this section we focus on \emph{spaces of complex contact elements} over almost complex manifolds. This is the family of examples most relevant to our study of prelegendrians. We will show that their contactisations are canonically contactomorphic to the spaces of contact elements of the underlying smooth manifold.

\subsection{The space of complex contact elements}

We now describe a class of corank-$2$ fat distributions arising from the study of almost-complex Grassmannians, generalising the space of contact elements to the almost-complex setting. The study of such structures, from the perspective of holomorphic contact geometry, was carried out in \cite{ForstnericLarusson:Projectivised_cotangent}.

Let $(X,J)$ be an almost-complex manifold of real dimension $2n$. The \emph{almost-complex Grassmannian} of complex codimension one is the fibre bundle
\begin{equation*}
    \pi:\Gr_{2n-2}(X,J)\to (X,J),
\end{equation*}
whose fibre over $x\in X$ is
\begin{equation*}
    \Gr_{2n-2}(X,J)_x = \Gr_{2n-2}(T_xX,J_x).
\end{equation*}
For brevity we write $\Gr(X,J)$ for $\Gr_{2n-2}(X,J)$ and refer to it as the \emph{almost-complex Grassmannian} of $(X,J)$. Analogously to the construction of the space of contact elements in contact geometry, we have:
\begin{proposition} \label{prop:almostComplexGrass}
    Let $(X,J)$ be an almost complex manifold. The distribution $\D_\can\subseteq T\Gr(X,J)$ defined by:
    \begin{equation*}
        \D_\can(W) \defi \d\pi^{-1}(W), \quad W\in \Gr(X,J),
    \end{equation*}
   is fat. The distribution $\D_\can$ is called the \emph{canonical fat distribution}. The fat manifold $(\Gr(X,J),\D_\can)$ is referred to as the \emph{space of complex contact elements} over $(X,J)$.
\end{proposition}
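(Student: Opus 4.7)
The plan is to verify fatness via the equivalence $(1) \Leftrightarrow (4)$ in \cref{thm:ContactisationExistence}: rather than inspect the dual curvature pointwise, I will exhibit a concrete contactomorphism
\begin{equation*}
\Phi: (\S T^*X, \xi_\can) \xrightarrow{\ \sim\ } \bigl(\S\D_\can^\perp,\ \xi_\can|_{\S\D_\can^\perp}\bigr),
\end{equation*}
thereby showing that $\S\D_\can^\perp$ is a contact submanifold of $\S T^*\Gr(X,J)$. As a bonus, this will realise the ``Hopf map'' announced in the introduction and identify the contactisation $\cC(\Gr(X,J),\D_\can)$ with the space of co-oriented contact elements $\S T^*X$.

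\textbf{Construction of $\Phi$.} For a non-zero $\beta \in T_x^*X$, the real hyperplane $\ker\beta \subseteq T_xX$ is automatically co-real, so its maximal $J$-invariant subspace $W(\beta) \defi \ker\beta \cap J_x\ker\beta$ has real dimension $2n$ and is a $J$-complex hyperplane, hence an element of $\Gr(T_xX,J_x)$; it depends only on the ray $[\beta]$. Set $\Phi([\beta]) \defi (W(\beta),\,[\pi^*\beta])$. Since $\d\pi$ maps $\D_\can(W(\beta)) = \d\pi^{-1}(W(\beta))$ into $W(\beta) \subseteq \ker\beta$, the pullback $\pi^*\beta$ annihilates $\D_\can(W(\beta))$, so $\Phi([\beta])$ lies in $\S\D_\can^\perp$. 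To see that $\Phi$ is a diffeomorphism I argue fiberwise over $\Gr(X,J)$: by Grassmannian duality (\cref{lem:grasmm_dual}), the annihilator $W^\perp \subseteq T_x^*X$ of a complex hyperplane $W$ is a real $2$-plane, and any non-zero $\beta \in W^\perp$ satisfies $W \subseteq \ker\beta \cap J\ker\beta = W(\beta)$, with equality by dimension count. Hence $\Phi$ restricts to a diffeomorphism $\S W^\perp \xrightarrow{\sim} \S\D_\can^\perp|_W$ on each fibre, and globally to a smooth bijection.

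\textbf{Contact condition and conclusion.} The key step is checking that $\Phi$ is a contactomorphism. Lift $\Phi$ to $\tilde\Phi: T^*X \setminus 0_X \to T^*\Gr(X,J)$, $\beta \mapsto \pi^*\beta \in T^*_{W(\beta)}\Gr(X,J)$. Since $\pi(W(\beta)) = x$, the composition $\pi \circ \pi_{T^*\Gr} \circ \tilde\Phi$ equals the cotangent projection $\pi_{T^*X}: T^*X \to X$, and unwinding definitions yields
\begin{equation*}
(\tilde\Phi^*\lambda_\can)_\beta(V) = (\pi^*\beta)\bigl(\d\pi_{T^*\Gr}\,\d\tilde\Phi\,V\bigr) = \beta\bigl(\d\pi_{T^*X}V\bigr) = (\lambda_\can)_\beta(V)
\end{equation*}
for every $V \in T_\beta T^*X$. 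Hence $\tilde\Phi^*\lambda_\can = \lambda_\can$, and passing to the spherisation gives $\Phi^*\xi_\can = \xi_\can|_{\S\D_\can^\perp}$. Since $(\S T^*X,\xi_\can)$ is contact, the restriction of $\xi_\can$ to $\S\D_\can^\perp$ must be too, so $\S\D_\can^\perp \subseteq \S T^*\Gr(X,J)$ is a contact submanifold. The equivalence $(1) \Leftrightarrow (4)$ of \cref{thm:ContactisationExistence} then yields that $\D_\can$ is fat. The main (and essentially only) subtlety is the Liouville-form computation, which requires careful bookkeeping of the three projections $\pi$, $\pi_{T^*\Gr}$, and $\pi_{T^*X}$; once the maps are set up correctly, the check is tautological.
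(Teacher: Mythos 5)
Your proof is correct and follows essentially the same route as the paper: the paper also deduces fatness from \cref{thm:ContactisationExistence} by identifying $\S\D_\can^\perp$ with $(\S T^*X,\xi_\can)$, which is exactly the content of \cref{thm:contactisation_grasm}. The only cosmetic difference is that the paper constructs the inverse of your map $\Phi$ (sending a hyperplane $H \supseteq \D_\can(W)$ to its pushforward $\d\pi_\Gr(H)$) and verifies the contact condition on the hyperplane fields of the spherisations rather than via the Liouville form on the symplectisation.
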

The master thesis \cite{Nina:fat_dist} treats the case $\dim X=4$. The general case can be established with an analogous computation. Instead of pursuing this route, we just remark that this will follow from the fact that its contactisation is the space of contact elements (\cref{thm:contactisation_grasm}).

It is straightforward to check that the canonical fat distribution is invariant under any diffeomorphism preserving the almost-complex structure. More generally, consider $(X,J)$ an almost complex manifold and $g:X\to X$ a diffeomorphism. Then, $g$ induces a pushforward almost complex structure $g_*J$ on $X$, and the map $\Gr(g):\Gr(X,J)\to \Gr(X,g_*J)$ is a diffeomorphism that preserves $\D_\can$.

\begin{remark}
    Fat structures of corank at least $2$ have local invariants. From this perspective, the distributions we are considering are rather special. Namely, there is a submersion $\pi_\Gr:(\Gr(X,J),\D_\can)\to (X,J)$ whose fibres are horizontal submanifolds (i.e., tangent to $\D_\can$). It is at the moment unknown whether a general corank-$2$ fat distribution admits such a submersion locally.
    
    A certain local invariant of corank-$2$ fat distributions was introduced in \cite{BhowmickDatta:Horizontal_fat}: the degree. The distributions $\D_\can$ have always degree $2$. A germ of fat distribution of type $(8,10)$ and degree different from $2$ was constructed in \cite{BhowmickDatta:Horizontal_fat}. Related results appeared recently in \cite{MartinezAguinaga:Reeb}.

    We expect our constructions of prelegendrians to extend to arbitrary fat distributions of corank $2$, but this remains an open question.
\end{remark}

\subsubsection{Contactisation of the space of complex contact elements}

As we mentioned, the contactisation of the space of complex contact elements  $(\Gr(X,J),\D_\can)$ has a particularly nice and geometric form:
\begin{theorem}\label{thm:contactisation_grasm}
    Let $(X,J)$ be an almost complex manifold. Then, the space of complex contact elements $(\Gr(X,J),\D_\can)$ is a fat manifold. Moreover:
    \begin{itemize}
        \item Its projectivised contactisation $\cC_\P(\Gr(X,J),\D_\can)$ is canonically contactomorphic to the space of contact elements $(\P T^*X,\xi_\can)$.
        \item Its contactisation $\cC(\Gr(X,J),\D_\can)$ is canonically contactomorphic to the space of co-oriented contact elements $(\S T^*X,\xi_\can)$.
    \end{itemize}
    Moreover, the contactisation bundle projections are given by the following commutative diagram:
  \begin{equation*}
        \begin{tikzcd}[sep=small]
            {\RP^1} && {\S^1} \\
        	\\
        	{(\P T^*X ,\xi_\can)} && {(\S T^*X,\xi_\can)} \\
        	\\
        	& {(\Gr(X,J),\D_\can)} \\
        	& {(X,J)}
        	\arrow[hook, from=1-1, to=3-1]
        	\arrow[from=1-3, to=1-1]
        	\arrow[hook, from=1-3, to=3-3]
        	\arrow["{{\mathrm{Hopf}}}"', from=3-1, to=5-2]
        	\arrow[from=3-3, to=3-1]
        	\arrow["{{\mathrm{Hopf}}}", from=3-3, to=5-2]
        	\arrow["{\pi_\Gr}", from=5-2, to=6-2]
        \end{tikzcd}
    \end{equation*}
    where the diagonal maps are the fiberwise geometric Hopf maps described in \cref{sec:Review_coreal}.
\end{theorem}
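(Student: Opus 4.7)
The plan is to construct an explicit map $\Phi:\S T^*X\to \S\D_\can^\perp$ covering the Hopf fibration $\S T^*X\to\Gr(X,J)$, and to verify that it is a diffeomorphism which pulls back the canonical Liouville 1-form $\lambda_\can$ on $\S T^*\Gr(X,J)$ to the canonical Liouville 1-form on $\S T^*X$. This will simultaneously yield the claimed strict contactomorphism in the co-oriented setting and provide a clean proof of \cref{prop:almostComplexGrass}: since $\Phi^*\lambda_\can=\lambda_\can$ is a contact form, $\S\D_\can^\perp$ must be a contact submanifold of $(\S T^*\Gr(X,J),\xi_\can)$, which by \cref{thm:ContactisationExistence} is equivalent to fatness of $\D_\can$.

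To define $\Phi$, fix $\alpha\in\S T^*_xX$ and set $W\defi\Hopf(\alpha)=\ker\alpha\cap J\ker\alpha\in\Gr(X,J)_x$; this is by construction the maximal $J$-invariant subspace of $\ker\alpha$, so in particular $\alpha|_W=0$. The pullback $\hat\alpha\defi(d\pi_\Gr)^*\alpha\in T^*_W\Gr(X,J)$ is then nonzero (as $d\pi_\Gr$ is surjective) and annihilates $\D_\can(W)=(d\pi_\Gr)^{-1}(W)$, so $\hat\alpha\in\S\D_\can^\perp|_W$. Setting $\Phi(\alpha)\defi\hat\alpha$ yields a smooth bundle map over $\Hopf$. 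To see that $\Phi$ is a diffeomorphism, identify $\D_\can^\perp|_W\cong(T_xX/W)^*\cong W^\perp\subseteq T_x^*X$ via $d\pi_\Gr$. Under this identification the fiber of $\S\D_\can^\perp\to\Gr(X,J)$ over $W$ is $\S W^\perp$, which coincides with the Hopf fiber $\Hopf^{-1}(W)=\S W^\perp\subseteq \S T_x^*X$, and $\Phi$ restricts to the identity on each such fiber. A dimension count (both bundles have total dimension $4n-1$) then confirms that $\Phi$ is a global diffeomorphism.

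A direct computation using $\tilde\pi\circ\Phi=\Hopf$ and $\pi_\Gr\circ\Hopf=\pi$, where $\tilde\pi:\S T^*\Gr(X,J)\to\Gr(X,J)$ and $\pi:\S T^*X\to X$, shows that
\begin{equation*}
(\Phi^*\lambda_\can)_\alpha(v)\;=\;\hat\alpha\bigl(d(\tilde\pi\circ\Phi)(v)\bigr)\;=\;\alpha\bigl(d(\pi_\Gr\circ\Hopf)(v)\bigr)\;=\;\alpha(d\pi(v))\;=\;(\lambda_\can)_\alpha(v)
\end{equation*}
for every $v\in T_\alpha\S T^*X$. Hence $\Phi$ is a strict contactomorphism preserving the natural co-orientation. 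For the projective statement, observe that the fiberwise antipodal action on $\S T^*X$ corresponds under $\Phi$ to the fiberwise antipodal action on $\S\D_\can^\perp$, since $\alpha\mapsto -\alpha$ forces $\hat\alpha\mapsto-\hat\alpha$. Quotienting both sides produces a contactomorphism $\P\Phi:\P T^*X\to\P\D_\can^\perp=\cC_\P(\Gr(X,J),\D_\can)$. The commutativity of the full diagram in the statement is then tautological, being encoded in the single identity $\tilde\pi\circ\Phi=\Hopf$ together with the compatibility of $\Phi$ with the double covers.

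I do not anticipate serious obstacles. The only technical subtlety lies in the bookkeeping for the fiberwise identification $\D_\can^\perp|_W\cong W^\perp$ and the verification that $\Hopf^{-1}(W)=\S W^\perp$; both follow directly from the algebraic description of the dual Hopf map in \cref{sec:Review_coreal} and from \cref{lem:dual_complex_basis}. The hardest conceptual point is recognizing that one can circumvent a direct (tedious) local verification of fatness, as done in \cref{lem:hol_contact_is_fat}, by letting the existence of $\Phi$ with $\Phi^*\lambda_\can=\lambda_\can$ do the work.
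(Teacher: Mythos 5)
Your proposal is correct and is essentially the paper's argument viewed from the dual side: the paper defines the contactomorphism from $\P\D_\can^\perp$ to $\P T^*X$ by pushing hyperplanes forward along $\d\pi_\Gr$ and checks the contact condition by chasing kernels, whereas you write down its inverse as the covector pullback $\alpha\mapsto\alpha\circ \d\pi_\Gr$ and verify the strict identity $\Phi^*\lambda_\can=\lambda_\can$, which is arguably the cleaner computation (and your observation that fatness of $\D_\can$ then follows from \cref{thm:ContactisationExistence} is exactly the shortcut the paper announces after \cref{prop:almostComplexGrass}). The one bookkeeping point to tighten is that $\lambda_\can$ is not literally a $1$-form on the quotient $\S T^*X=(T^*X\setminus 0)/\R^+$, so your computation should be carried out $\R^+$-equivariantly on $T^*X\setminus 0\to\D_\can^\perp\setminus 0$ (where it shows the map is an exact symplectomorphism) and then descended to the sphere and projective quotients.
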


\begin{proof}
    An element of $\Gr(X,J)$ is a pair $(x,W)$ with $W \subseteq T_xX$ a complex hyperplane. The distribution $(\D_\can)_{(x,W)}$ is the preimage of $W$ under the projection $\pi_\Gr:\Gr(X,J) \to (X,J)$. 

    Consider now the contactisation $\cC_\P(\Gr(X,J),\D_\can) = (\P\D_\can^\perp,\xi_{\P\D_\can^\perp})$. An element of $\P\D_\can^\perp$ is a pair $((x,W); H)$ with  $(x,W)\in \Gr(X,J)$ and $H\subseteq T_{(x,W)}\Gr(X,J)$ a real hyperplane containing $(\D_\can)_{(x,W)}$. It follows that $H$ is the preimage under $d_{(x,W)}\pi_\Gr$ of a real hyperplane $\tilde H \subseteq T_xX$ containing $W$. Dimensionality implies that $\tilde H\cap J\tilde H=W$. The assignment
    \begin{align*}
        \Phi:\P(\D_\can^\perp) &\longrightarrow \P T^*X\\
        \big( (x,W); H\big) & \mapsto (x,\tilde{H})
    \end{align*}
    is thus a diffeomorphism fibered over $\Gr(X,J)$. We obtain the following commuting square:
    \begin{equation*}
        \begin{tikzcd}[sep=scriptsize]
        	{\P\D_\can^\perp} &&& {\P T^*X} &&& {\big((x,W);H\big)} &&& {(x,\tilde{H})} \\
        	\\
        	{\Gr(X,J)} &&&&&& {(x,W)} \\
        	\\
        	{(X,J)} &&&&&& x
        	\arrow["\Phi", from=1-1, to=1-4]
        	\arrow["{\pi_\P}"', from=1-1, to=3-1]
        	\arrow["\rho", from=1-4, to=5-1]
        	\arrow[maps to, from=1-7, to=1-10]
        	\arrow[maps to, from=1-7, to=3-7]
        	\arrow[maps to, from=1-10, to=5-7]
        	\arrow["{\pi_\Gr}"', from=3-1, to=5-1]
        	\arrow[maps to, from=3-7, to=5-7]
        \end{tikzcd}
    \end{equation*}

    To see that $\Phi$ is a contactomorphism we differentiate the diagram. According to the definition of $\xi_{\P\D_\can^\perp}$, for any element $H\in \P\D_\can^\perp$, we have that
    \begin{equation*}
        v \in (\xi_{\P\D_\can^\perp})_H \iff \d_H\pi_\P(v)\in H \iff \d \pi_\Gr \circ \d_H\pi_\P(v) \in \tilde{H} \iff \d_{\tilde{H}} \rho\circ \d_H\Phi(v) \in \tilde H.
    \end{equation*}
    This implies that $\d_H\Phi (\xi_{\P\D_\can^\perp}) = \xi_{\P T^*X}$, by the definition of tautological contact distribution $\xi_{\P T^*X}$. This concludes the argument. The same reasoning applies to the co-oriented contactisation $\cC(\Gr(X,J),\D_\can)$.  
\end{proof}

\subsection{The standard fat structure on $\C^{2n+1}$} \label{subsec:Std_fat_Str}

The previous constructions can be specialised to define the standard fat distribution on $\C^{2n+1}$.

Equip $\R^{2n+2}\cong \C^{n+1}$ with an almost complex structure $J$. After choosing a complex framing, we trivialise the almost-complex Grassmannian as
\begin{equation*}
    \Gr(\R^{2n+2},J) \simeq \R^{2n+2}\times \CP^{n}.
\end{equation*}
Fix a complex projective hyperplane $H\simeq \CP^{n-1}$ in $\CP^{n}$, and identify $\CP^n\setminus H \simeq \C^n$. Thus we obtain an open subset
\begin{equation*}
    \R^{2n+2}\times \C^n\subseteq \R^{2n+2}\times \CP^n \simeq \Gr(\R^{2n+2},J).
\end{equation*}

The canonical fat structure $\D_\can$ on $\Gr(\R^{2n+2},J)$ restricts to a fat distribution $\D_{\std,J}$ on $\R^{2n+2}\times \C^n\simeq \R^{4n+2}$, called the \emph{standard fat structure twisted by $J$}. Different choices of almost complex structure, hyperplane, and framing produce homotopic fat distributions on $\R^{4n+2}$.
\begin{definition}
In the case of the standard complex structure $J\equiv J_\std$ on $\C^{n+1}$, the construction yields the \emph{standard fat structure} $\D_\std$ on $\C^{2n+1}$.
\end{definition}
% Note that the space of complex projective hyperplanes in $\CP^n$ is naturally identified with the dual projective space $(\CP^n)^\vee \cong \CP^n$, which is homogeneous.

%\begin{remark}
%    The space of almost complex structures on $\R^{2n+2}$ (inducing the standard orientation) is connected, hence all standard fat distributions $\D_{\std,J}$ twisted by $J$, are homotopic to $\D_\std$.
%\end{remark}

\begin{corollary}\label{cor:contactsation_fat_std}
    The contactisation $\cC(\C^{2n+1},\D_\std)$ is contactomorphic to $(J^1(\R^{2n+1}\times \S^1),\xi_\std)$.
\end{corollary}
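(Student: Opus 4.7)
The plan is to invoke \cref{thm:contactisation_grasm} as the key identification and then restrict to an open subset, reducing the statement to a standard fact about the $1$-jet space of an open piece of a sphere.

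First I would observe that $(\C^{2n+1},\D_\std)$ is, by construction in \cref{subsec:Std_fat_Str}, an open sub-fat-manifold of $(\Gr(\C^{n+1},J_\std),\D_\can)$: it consists of those complex hyperplanes graphical over $\C^n\times \{0\}\subseteq \C^{n+1}$. Since taking the contactisation commutes with restriction to open subsets, \cref{thm:contactisation_grasm} identifies $\cC(\C^{2n+1},\D_\std)$, as a contact manifold, with an open subset $U\subseteq (\S T^*\C^{n+1},\xi_\can)=(\S T^*\R^{2n+2},\xi_\can)$.

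Next I would compute $U$ explicitly. A co-oriented contact element $(q,\a)\in \S T^*\R^{2n+2}$ lies in $U$ if and only if the complex hyperplane $\ker \a^J$ delivered by the Hopf map is graphical over the first $n$ complex coordinates. By \cref{lem:dual_complex_basis} this is equivalent to $\a^J(\partial_z)\neq 0$, i.e., to the two real components $\a(\partial_{z_1})$ and $\a(\partial_{z_2})$ not vanishing simultaneously. Hence $U$ is the open set of co-oriented contact elements in $\R^{2n+2}$ whose ``$z$-part'' does not vanish.

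Finally, I would invoke the classical contactomorphism $\Psi:\S T^*\R^N\isom J^1(\S^{N-1})$ given by the support-function construction $(q,\a)\mapsto (\a,\, q-\langle q,\a\rangle\a,\, \langle q,\a\rangle)$, under which the tautological $1$-form $\sum \a_i\,dq_i$ pulls back from $dz - p\cdot d\a|_{\S^{N-1}}$. The set $U$ maps onto $J^1(V)$ where $V\subseteq \S^{2n+1}$ is the open subset on which the last two coordinates do not both vanish. The radial rescaling $(\a_x,\a_z)\mapsto (\a_x/|\a_z|,\, \a_z/|\a_z|)$ realises $V$ as $\R^{2n}\times\S^1$, so $J^1(V)$ becomes $(J^1(\R^{2n}\times\S^1),\xi_\std)$, a contact manifold of the expected dimension $4n+3$.

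The main obstacle is checking that $\Psi$ is a genuine contactomorphism rather than merely a diffeomorphism: this amounts to a direct calculation comparing the tautological form on $\S T^*\R^N$ with $dz - p\cdot d\a|_{\S^{N-1}}$, the rest being bookkeeping about the passage between open subsets. A more hands-on alternative would be to parametrise $\cC(\C^{2n+1},\D_\std)\isom \C^{2n+1}\times \S^1$ with contact form $\operatorname{Re}(e^{-i\theta}\Theta_\std)$ and perform the rotation $z\mapsto e^{i\theta} Z$, $y\mapsto e^{i\theta}Y$ to bring it into standard $J^1$ form; the difficulty there shifts into carefully absorbing the extra $d\theta$ contributions produced by differentiating the rotated coordinates.
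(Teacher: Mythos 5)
Your proposal is correct and follows essentially the same route as the paper: apply \cref{thm:contactisation_grasm}, identify the restriction to the affine chart with the open set $\C^{n+1}\times\Hopf^{-1}(\{w_{n+1}\neq 0\})\subseteq \S T^*\R^{2n+2}$, transport it through the classical contactomorphism $\S T^*\R^{2n+2}\isom J^1(\S^{2n+1})$, and conclude from $\S^{2n+1}\setminus\{w_{n+1}=0\}\simeq \R^{2n}\times\S^1$. The only cosmetic difference is that you phrase the graphicality condition via \cref{lem:dual_complex_basis} and make the diffeomorphism $V\simeq\R^{2n}\times\S^1$ explicit by rescaling, which the paper leaves implicit.
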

\begin{proof}
    By definition, $(\C^{2n+1},\D_\std) \subseteq \Gr(\C^{n+1},J_\std)$ is obtained from $\Gr(\C^{n+1},J_\std)$ by taking an affine chart and, by \cref{thm:contactisation_grasm}, the contactisation is
    \begin{equation*}
        \C^{n+1}\times \S^{2n+1}=\ \S T^*\C^{n+1} \simeq \cC(\Gr(\C^{n+1},J_\std)) \to \Gr(\C^{n+1},J_\std)=\C^{n+1}\times \CP^n.
    \end{equation*}
    with the canonical contact structure. Writing the complex coordinates on $\C^{n+1}$ as $(x,\ldots,x_n,z)$, and the induced homogeneous coordinates on $\CP^n$, we get
    \begin{equation*}
        \cC(\C^{2n+1},\D_\std) = \C^{n+1}\times \Hopf^{-1}(\{z\neq 0\})= \C^{n+1}\times S,
    \end{equation*}
    where $\Hopf:\S^{2n+1}\subseteq \C^{n+1}\to \CP^n$ is the standard Hopf map and $S\defi \S^{2n+1}\setminus\{z=0\}$.

    On the other hand, there is a contactomorphism
    \begin{align*}
        \S T^*\R^{2n+2} = \R^{2n+2}\times \S^{2n+1} &\to J^1(\S^{2n+1}) = T^*\S^{2n+1} \times \R\\
        (\mathbf{p},\mathbf{q})&\mapsto (\mathbf{q},\mathbf{p}-\langle \mathbf{q},\mathbf{p} \rangle\mathbf{q} , \langle \mathbf{q},\mathbf{p} \rangle),
    \end{align*}
    that induces a contactomorphism
    \begin{equation*}
        (\C^{n+1}\times S,\xi_\can) \simeq (J^1(S),\xi_\std).
    \end{equation*}
    Since $S\simeq \R^{2n}\times \S^1$, the result follows. 
\end{proof}

\subsection{Affine charts in the Grassmannian} \label{subsec:Affine_chart}

    It is sometimes useful to work inside an affine Grassmannian chart, as in the construction of $(\C^{2n+1},\D_\std)$. In general, we cannot do this globally (there may not exist a global section of $\Gr(X,J)\to X$), so we restrict to the product case $X=Q\times\R$.
    
\subsubsection{Contact case}
    Let $X=Q\times\R$ with coordinate $z$ on the second factor and \emph{vertical vector field} $\partial_z$. Define the horizontal Grassmannian of real hyperplanes by
    \begin{equation*}
        \P(T_\mathrm{hor}^* X) \defi \{ H\subseteq T_pX : \partial_z \notin H\}\subseteq \P T^*X.
    \end{equation*}
    Being an open submanifold (a fiberwise affine chart) of $\P T^*X$, it inherits the canonical contact structure $\xi_\can$.

    Similarly, for co-oriented hyperplanes we write $\S(T^*_{\mathrm{hor}}X)\subseteq \S T^*X$.  In this case, the fibres are not connected, and there is a canonical disjoint union decomposition
    \begin{equation*}
        \S(T^*_{\mathrm{hor}}X) = \S(T^*_{\mathrm{hor}}X)^+ \sqcup \S(T^*_{\mathrm{hor}}X)^-
    \end{equation*}
    into the so-called the spaces of \emph{positive} and \emph{negative} horizontal (co-oriented) contact elements. We note:
    \begin{lemma}\label{lem:HorizontalContactElements}
        There exists a canonical contactomorphism between $(\S T^*_\mathrm{hor}(Q\times \R)^\pm,\xi_\can)$ and $(J^1Q,\xi_\can)$.
    \end{lemma}
%    \begin{proof}
%        It is enough to consider positive case. An element of $\S T^*_{\mathrm{hor}}(Q\times\R)^+$ is a pair $((q,z);\alpha)$ with $\alpha:T_qQ\oplus T_z\R\to\R$ and $\alpha(\partial_z)>0$, hence uniquely of the form
%        \begin{equation*}
%            \a = \d z - \lambda
%        \end{equation*}
%        for some $\lambda \in T^*_qQ$.

%        It is straightforward to check that the diffeomorphism
%        \begin{align*}
%            \S T^*_\mathrm{hor}(Q\times \R)^+ &\to J^1Q=T^*Q\times \R\\
%            ((q,z);\alpha)&\mapsto ( (q,\lambda); z).
%        \end{align*}
%      is a contactomorphism. 
%\end{proof}

\subsubsection{Almost complex case}
    
    Assume that $X=Q\times\R$ is equipped with an almost complex structure $J$.  The \emph{horizontal complex Grassmannian} is
    \begin{equation*}
        \Gr_\mathrm{hor}(X,J)\defi \{ W\subseteq T_x X: \partial_z\notin W \text{ and $W$ is $J$-invariant} \}\subseteq \Gr(X,J),
    \end{equation*}
    i.e. the fiberwise open locus of complex hyperplanes avoiding the vertical line. Equivalently, $W$ is horizontal if and only if it does not contain the complex line $\mathrm{span}_\R\langle\partial_z,J\partial_z\rangle$. Thus, when $\dim_\R(Q\times\R)=2n+2$, the fibre of $\Gr_{\mathrm{hor}}(X,J)\to X$ is diffeomorphic to $\C^n$. The fiberwise Hopf map is well defined on both horizontal Grassmannians:
    \begin{equation*}
        \mathrm{Hopf}:\S(T^*_{\mathrm{hor}}X)^\pm\longrightarrow \Gr_{\mathrm{hor}}(X,J).
    \end{equation*}
    In particular, there a natural inclusion
    \begin{equation}\label{eq:InclusionHorizontal}
        (\S(T^*_{\mathrm{hor}}X)^\pm,\xi_\can)\hookrightarrow \cC(\Gr_\mathrm{hor}(X,J),\D_\can)
    \end{equation}

    In the model case $Q=\R^{2n+1}$ and $X=\R^{2n+1}\times \R$ with $J_\std$, the horizontal complex Grassmannian reproduces the standard structure:
    \begin{equation*}
        \big(\Gr_{\mathrm{hor}}(\C^{n+1},J_{\mathrm{std}}),\D_\can\big)\cong\big(\C^{2n+1},\D_\std\big),
    \end{equation*}
    and we obtain the inclusion
    \begin{equation}\label{eq:InclusionHorizontalCn}
        \big(J^1\R^{2n+1},\xi_\can\big)\hookrightarrow \cC\big(\C^{2n+1},\D_\std\big).
    \end{equation}

\subsubsection{Our convention on prelegendrian co-orientations} \label{convention:prelegendrianCoorientation}

    Assume that $\Lambda\subseteq \Gr(X,J)$ is a prelegendrian, equipped with a prelegendrian co-orientation (see \cref{subsec:preisotropic_submanifold}), whose Legendrian lift lies in $(\S (T^*_{\mathrm{hor}}X)^\pm, \xi_\can)$. Then, $\Lambda$ is automatically contained in $\Gr_{\mathrm{hor}}(X,J)$. 
    
    Observe that the opposite co-orientation yields a lift into $(\S (T^*_{\mathrm{hor}}X)^\mp, \xi_\can)$. We henceforth we assume that all prelegendrians in $\Gr_{\mathrm{hor}}(X,J)$ are co-oriented so that their lifts lie in the positive part $(\S T^*X,\xi_\can)^+\cong (J^1Q,\xi_\can)$.

\subsection{Holomorphic contact manifolds}

A closely related class of corank $2$ fat distributions is:
\begin{definition}
    Let $M$ be a complex manifold of complex dimension $2n+1$. A holomorphic vector subbundle $\D\subseteq TM$ of complex corank $1$ is called a \emph{holomorphic contact structure} if, for every point $p\in M$, there exists an open \nbh $U\subseteq M$ such that $\D|_U = \ker \Theta$ is defined by a holomorphic $1$-form $\Theta$ satisfying:
    \begin{equation*}
        \d\Theta\wedge(\Theta)^n\neq 0.
    \end{equation*}
    The pair $(M,\D)$ is called a \emph{holomorphic contact manifold}.
\end{definition}

A fundamental example is $\C^{2n+1}$ equipped with the \emph{standard holomorphic contact form}, expressed in complex coordinates $(x_1, \dots, x_n, y_1, \dots, y_n, z)$ as:
\begin{equation*}
    \Theta_\std = \d z + \sum_{i=1}^{n}y_i\d x_i,
\end{equation*}
whose associated holomorphic contact structure is $\ker (\Theta_\std)$. As suggested by the terminology:
\begin{proposition}
    The standard fat structure $\D_\std$ on $\C^{2n+1}$ coincides with standard holomorphic contact structure $\ker(\Theta_\std)$.
\end{proposition}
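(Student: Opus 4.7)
The plan is to unpack both definitions in matching coordinates and verify the equality pointwise. Since both distributions on $\C^{2n+1}$ have real corank $2$ and are specified by first-order data at each point, the comparison reduces to a linear-algebraic check fibre by fibre.

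First I would set up coordinates compatibly. Give the first factor $\C^{n+1}$ holomorphic coordinates $(x_1,\ldots,x_n,z)$, so that the projection $\pi_\Gr:\C^{n+1}\times\CP^n\to\C^{n+1}$ becomes the projection onto these coordinates. Identify the affine chart $\CP^n\setminus\CP^{n-1}$ with $\C^n$ via the slope parameterisation: $y=(y_1,\ldots,y_n)\in\C^n$ corresponds to the complex hyperplane
\[
W_y \defi \ker_\C\Bigl(dz + \sum_{i=1}^n y_i\,dx_i\Bigr)\subseteq T_{(x,z)}\C^{n+1},
\]
i.e.\ the graph of the $\C$-linear map $\dot x\mapsto -\sum_i y_i\dot x_i$ over the coordinate hyperplane $\{z=0\}\cong\C^n$. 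This is the convention that matches $\Theta_\std$; the opposite convention $y_i\mapsto -y_i$ differs by a $J_\std$-holomorphic fibrewise automorphism and yields the same distribution up to a global holomorphic change of coordinates on $\C^{2n+1}$.

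Next I would apply the definition $\D_\can(W)\defi(d\pi_\Gr)^{-1}(W)$ from \cref{prop:almostComplexGrass}. At a point $((x,z),y)\in\C^{n+1}\times\C^n\cong\C^{2n+1}$, a tangent vector $v=\dot x\,\partial_x+\dot z\,\partial_z+\dot y\,\partial_y$ projects under $d\pi_\Gr$ to $\dot x\,\partial_x+\dot z\,\partial_z$, and the condition $v\in\D_\std$ is exactly $d\pi_\Gr(v)\in W_y$, i.e.\ $\dot z+\sum_i y_i\,\dot x_i=0$. Interpreting this as the vanishing of the $\C$-valued form $dz+\sum_i y_i\,dx_i$ (equivalently, the simultaneous vanishing of its real and imaginary parts), we obtain
\[
\D_\std \;=\; \ker\Bigl(dz+\sum_{i=1}^n y_i\,dx_i\Bigr)\;=\;\ker(\Theta_\std)
\]
as real corank-$2$ distributions on $\C^{2n+1}$. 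The only delicate point — and the closest thing to an obstacle — is bookkeeping the sign convention of the slope parameterisation so that the $y_i$'s agree on the nose with those appearing in $\Theta_\std$. Once this is fixed, the equality is an immediate consequence of unwinding the definitions, and no further computation is required.
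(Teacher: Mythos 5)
Your proof is correct and follows essentially the same route as the paper's: both unwind the definition $\D_\can(W)=(\d\pi_\Gr)^{-1}(W)$ in the affine chart of the fibre $\CP^n$ and identify the resulting corank-$2$ condition with the vanishing of $\Theta_\std$ (the paper parameterises the chart by the coefficients $y_j=w_j/w_{n+1}$ of the defining linear form rather than by graph slopes, but this is the same identification up to the sign convention you already address). No gaps.
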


\begin{proof}
    Write the complex coordinates on $\C^{n+1}$ as $(x_1,\ldots,x_n,z)$. The canonical fat distribution $\D_\can$ on the complex Grassmannian $\Gr(\C^{n+1})=\C^{n+1}\times \CP^n$ is given by
    \begin{equation*}
        \D_\can\big( (x_1,\cdot,x_n,z);[w_1:\cdots:w_{n+1}] \big) = \ker (w_1\d x_1+\cdots+ w_{n+1}\d z)\big).
    \end{equation*}
    Choose the hyperplane at infinity $H\defi \{w_{n+1}=0 \}\subseteq \CP^n$ and affine coordinates $y_j\defi \frac{w_j}{w_{n+1}}$. Then, the standard fat distribution is given by
    \begin{equation*}
        \D_\std \big( (x_1,\ldots,x_n,z);(y_1,\ldots,y_n) \big) = \ker (y_1\d x_1+\cdots+y_n\d x_n + \d z),
    \end{equation*}
    which is precisely  $\ker \Theta_\std$, as required.
\end{proof}

This, together with the holomorphic version of Darboux's theorem \cite[Theorem~A.2]{alarconHolomorphicLegendrianCurves2017}, implies:

\begin{corollary} \label{lem:hol_contact_is_fat}
    Let $(M,\D)$ be a holomorphic contact manifold. Regard $M$ as a smooth manifold and $\D$ as a real distribution of corank $2$. Then $\D$ is a fat distribution of corank~$2$ on $M$.
\end{corollary}

\begin{remark}
One can also prove \cref{lem:hol_contact_is_fat} using the algebraic characterisation of corank-$2$ fat distributions; see for instance \cite{BhowmickDatta:Horizontal_fat,Nina:fat_dist}.
\end{remark}

\subsection{The standard fat structure on $\CP^{2n+1}$} \label{subsec:complex_projective}

The odd dimensional complex projective spaces are endowed with canonical holomorphic contact structures, as we now explain. The corresponding fat distributions will be relevant for us.

First, recall that $\C^{2n+2}\setminus \{0\} \to \CP^{2n+1}$ is a $\C^*$-bundle. Its associated line bundle, the \emph{tautological bundle}, is usually denoted by $\O(-1)$. Write $z_0,\ldots,z_{2n+1}$ for the coordinates in $\C^{2n+2}$.
\begin{lemma} \label{lem:standardHolomorphicCP2n1}
Consider the holomorphic $1$-form $\widetilde\Theta$ on $\C^{2n+2}$
\begin{equation*}
    \widetilde\Theta = \sum_{j=0}^{n} (z_{2j}\d z_{2j+1} -z_{2j+1}\d z_{2j}).
\end{equation*}
It is a primitive of the standard holomorphic symplectic form. It satisfies the following properties:
\begin{itemize}
    \item[(a) ] It annihilates the radial vector field $\sum_i z_i\partial_{z_i}$.
    \item[(b) ] It is homogeneous of degree $2$ under the scaling action.
\end{itemize}
Consequently, it descends to a holomorphic $1$-form $\Theta_\std$ on $\CP^{2n+1}$ with values in $\O(2)$.
\end{lemma}
\begin{proof}
The first three claims are immediate computations. For the last claim, we use (a) to observe that $\widetilde\Theta$ annihilates the vertical bundle $\textrm{Vert}$ of $\pi: \O(-1) \to  \CP^{2n+1}$. As such, at each $z \in \O(-1)$, we can regard $\widetilde\Theta_z$ as a well-defined form on $T_z\O(-1)/\textrm{Vert} \simeq T_{\pi(z)}\CP^{2n+1}$.

Suppose that $v$ and $z$ are locally defined holomorphic sections of $T\CP^{2n+1}$ and $\O(-1)$, respectively. Then $\widetilde\Theta_z(v)$ is a locally defined holomorphic function on $\CP^{2n+1}$. It is linear on $v$, but not on $z$. Indeed, according to (b), applying the scaling action $z \mapsto \lambda z$ scales $\widetilde\Theta_z(v)$ by $\lambda^2$. That is, $\widetilde\Theta$ can be regarded as a holomorphic $1$-form $\Theta_\std$ on $\CP^{2n+1}$ with values in $\O(2)$, the line bundle of fibrewise degree-$2$ homogeneous polynomials on $\O(-1)$.
\end{proof}

\begin{definition} \label{def:standardHolomorphicCP2n1}
Let $\Theta_\std$ be the holomorphic $1$-form introduced in \cref{lem:standardHolomorphicCP2n1}. The \emph{standard holomorphic contact structure} $\D_{\std}$ on $\CP^{2n+1}$ is the hyperplane distribution $\ker(\Theta_\std)$. We also refer to it as the \emph{standard fat structure} on $\CP^{2n+1}$.
\end{definition}

\begin{remark}
As noted in \cite[Lemma 2.1]{AlarconForstnericLarusson}, the restriction of $\D_{\std}$ to affine charts of $\CP^{2n+1}$ is holomorphically contactomorphic to the standard $(\C^{2n+1}, \D_{\std})$.
\end{remark}

Let $\S\O(1)$ be the $\S^1$-bundle associated to $\O(1)$ and note that $\S\O(1) \simeq \S^{4n+3}$. Then:
\begin{proposition}\label{prop:contactisation_CP^2n+1}
$\cC(\CP^{2n+1},\D_\std)$ is contactomorphic to $(\RP^{4n+3},\xi_\std)$. $\cC_\P(\CP^{2n+1},\D_\std)$ is contactomorphic to the lens space $(L(4,1,\ldots,1),\xi_\std)$.
\end{proposition}
\begin{proof}
The co-oriented contactisation covers the unoriented one $2$-to-$1$. As such, the second claim follows from the first. We focus on the co-oriented case.

Given a locally-defined section $z$ of $\O(-2)$, we plug it to yield a complex-valued form $\Theta_\std(z)$; by construction this is complex linear in $z$. Then, we take its real part $\Re((\Theta_\std)(z))$. Complex linearity implies that the imaginary part is simply $\Im ((\Theta_\std)(z)) = -\Re((\Theta_\std)(iz))$. It follows that the space of real $1$-forms annihilating $\ker(\Theta_\std)$ is spanned by $\Re((\Theta_\std)(z))$, as we range through the complex multiples of $z$.

Let us denote $\p: \S\O(-2) \rightarrow \CP^{2n+1}$ for the projection. The reasoning above implies that $\S\O(-2) \simeq \RP^{4n+3}$, endowed with the real $1$-form $\alpha_z = \Re((\Theta_\std)_{\p(z)}(z) \circ d_z\p)$, is contactomorphic to the co-oriented contactisation $\cC(\CP^{2n+1},\D_\std)$. We claim that $\ker(\alpha)$ is contactomorphic to the standard structure $\xi_\std$ in projective space. To see this, it is enough to show that its double cover is the standard contact sphere.

Observe that the real part of $\widetilde\Theta$ is the standard real Liouville form, so it restricts to the standard contact structure in the sphere. Fix a point $w \in \S^{4n+3} = \S\O(-1)$ and a vector $v \in T_w\S^{4n+3}$. Let $w^2 \in \S\O(-2)$ and $\tilde v \in T_{w^2}\RP^{4n+3}$ be their images under the two-fold covering map. Then:
\[ \alpha_{w^2}(\tilde v) = \Re((\Theta_\std)_{\p(w^2)}(w^2) \circ d_{w^2}\p)(\tilde v) =  \Re((\Theta_\std)_{\pi(w)}(w^2) \circ d_w\pi)(v) = \Re(\widetilde\Theta_w)(v), \]
according to the definition of $\Theta_\std$. This proves the claim.
\end{proof}

\begin{remark}
    A different proof of \cref{prop:contactisation_CP^2n+1}, without making use of holomorphic geometry, can be given by explicitly constructing a $2$-to-$1$ covering from the standard contact sphere $\S^{4n+3}$ to $\cC(\CP^{2n+1},\D_\std)$. Since the argument may be of independent interest, we outline the main steps here. 
    
    Consider the unit quaternions $i,j,k$ in $\C^{2n+2}=\H^{n+1}$. The argument is based on the following observations, whose proofs are left to the reader:
    \begin{itemize}
        \item [(i)] The differential of the Hopf map $\pi:\S^{4n+3}\rightarrow \CP^{2n+1}$, restricts to a fibrewise isomorphism of bundles $\C\langle jz \rangle  \to T\CP^{2n+1}/\D_\std$.

        \item [(ii)] Let \[ e^{i\theta}:\S^{4n+3}\rightarrow \S^{4n+3},z \mapsto e^{i\theta}(z) \defi \cos(\theta) z + \sin(\theta) iz, \]
    with $\theta\in[0,2\pi]$ be the Hopf flow. Then
    \begin{equation*}
        \d_{e^{i\theta}(z)}\pi (ke^{i\theta}(z)) = \d_z\pi\big(\cos(2\theta) kz + \sin(2\theta) jz\big).
    \end{equation*}

        \item [(iii)] Consider the contact structure $\xi_{\std,k}=T\S^{4n+3}\cap k T\S^{4n+3}$, given by the complex $k$-tangencies. Using (ii) it can be shown that the map \begin{align*}
    \Phi: (\S^{4n+3},\xi_{\std,k}) &\to (\S(\D_\std^\perp),\xi_\can) = \cC(\CP^{2n+1},\D_\std)\\
    z&\mapsto \big(\pi(z),\d_z\pi(kz) \big)
\end{align*}
is a contact double cover. Here $\d_z\pi(kz)$ is regarded as an element of the cotangent bundle by using the Fubini-Study metric (i.e. the metric induced by the Hopf map).
   \item [(iv)] Since $(\S^{4n+3},\xi_\std)\cong (\S^{4n+3},\xi_{\std,k})$ by a linear contactomorphism, this concludes the argument. \qedhere
    \end{itemize}
\end{remark}

\section{Legendrian Fronts}\label{sec:Leg_front}

In view of \cref{thm:contactisation_grasm}, contactisations of fat complex Grassmannians are spaces of contact elements. The latter are examples of \emph{Legendrian fibrations}, which admit canonical front projections that can be used to study the Legendrians therein. In this section we provide an overview of these concepts. Crucially, we introduce a subclass of Legendrian fronts, which we call \emph{simple}. This class will be specially relevant in our study of \emph{prelegendrian fronts} in \cref{sec:Preleg_front}. The primary references are \cite{Arnold:Dynamical_systems_SG}, \cite{arnoldSingularitiesDifferentiableMaps2012}, and \cite{Arnold:Singularities_fronts}.

\subsection{Legendrian fibrations}

We begin by recalling the notion of Legendrian fibrations and front projections.
\begin{definition}
Let $(Y^{2n+1},\xi)$ be a $(2n+1)$-dimensional contact manifold. A fibre bundle
\begin{equation*}
    \pi_\F:(Y^{2n+1},\xi)\to B^{n+1}
\end{equation*}
over a manifold $B$ is called a \emph{Legendrian fibration} if all its fibres are Legendrian submanifolds of $Y$, i.e. if $\ker (d\pi_\F)\subseteq \xi$. The bundle map $\pi_\F$ is referred as a \emph{front projection}. 
\end{definition}

\begin{example}
The following are examples of Legendrian fibrations:
    \begin{itemize}
        \item $\pi_\F:(J^1Q,\xi_\std)\to J^0Q$ with $Q$ a smooth manifold.
        \item The concrete case $\pi_\F:(J^1\R^n,\xi_\std)\to J^0\R^n$ with $Q=\R^n$ is the \emph{standard front projection}. Locally, every Legendrian fibration is contactomorphic to this one; see for instance {\cite[p.~72]{Arnold:Dynamical_systems_SG}}.
        \item The space of contact elements over a manifold $X^n$ is a $\RP^{n-1}$-Legendrian fibration $\pi_\P:(\P T^*X,\xi_\can)\rightarrow X$. Similarly, the space of co-oriented contact elements is an $\S^{n-1}$-Legendrian fibration $\pi:(\S T^* X, \xi_\can)\rightarrow X$. \qedhere
    \end{itemize}
\end{example}

\begin{definition}
    A \emph{front map} is a diagram consisting of an embedding of a Legendrian submanifold $\iota:L^n\hookrightarrow (Y^{2n+1},\xi)$ and a Legendrian fibration $\pi_\F:(Y^{2n+1},\xi)\to B^{n+1}$:
    \begin{equation*}
        f: L^n\xhookrightarrow{\iota} (Y^{2n+1},\xi) \xrightarrow{\pi_\F}B^{n+1}.
    \end{equation*}
    By abuse of notation, we often refer to $f$ itself as the front map. The image $\mathcal{F}\coloneqq f(L)\subseteq B^{n+1}$ is called the \emph{wave front} (or simply the \emph{front}) of $L$ or $f$.
\end{definition}
Front maps $f$ are smooth maps from an $n$-dimensional manifold to an $(n+1)$-dimensional ambient manifold. They form a distinguished class of smooth maps with some special singularities. Roughly speaking, at each point $p\in L$, whether regular or singular, $f$ still has a well-defined tangent map $\iota$ of full rank. These tangent spaces record, and thus recover, the original Legendrian submanifold.

Concretely, in the local case of the standard front projection $\pi_\F:(\R^{2n+1},\xi_\std)\to\R^{n+1}$, one can use the front $\cF\subseteq \R^{n+1}$ to recover the $\mathbf{y}$-coordinates by taking\footnote{Here we identify a non-vertical oriented hyperplane $H\subseteq T\R^{n+1}$ with the unique $1$-form $\d z-\mathbf{y}\d \mathbf{x}$ on $\R^{n+1}$ that annihilates it.} the tangencies of $T\cF$, at the very least in the smooth points of $\cF$.

\subsection{Front singularities and simple fronts}

Let $f:L^n\to B^{n+1}$ be a front map. We now study the singularities of the map $f$ at a point $p\in L$. By singularity, we mean an equivalence class of front map germs. Since any front map can be locally described by the standard front projection, we always present such germs as $(\R^{n},0)\to(\R^{n+1},0)$. 

The possible singularities of a germ of a front are studied by means of generating functions or generating families of hypersurfaces \cite{Arnold:Singularities_fronts}. Although in the lower dimensional cases ($\dim L<6$), the possible front singularities are stable and completely classified  \cite{Arnold:Dynamical_systems_SG,arnoldSingularitiesDifferentiableMaps2012}, in full generality front singularities are intractable. In this article, we will focus on very \emph{simple} singularities:
\begin{definition} \label{def:cusp}
    Let $f:L\xhookrightarrow{\iota}(Y,\xi)\xrightarrow{\pi_\F}B^{n+1}$ be the front map of a Legendrian submanifold $L$, and $\cF\defi f(L)\subseteq B^{n+1}$ the corresponding front. We say that $f$ has a \emph{cusp singularity} or $A_2$-singularity at $q\in L$, and that $p\defi f(q)\in \cF$ is a \emph{cusp point} of the front, if the germ of the front map $f|_{\Op(q)}:\Op(q) \to B^{n+1}$ is equivalent to 
    \begin{align*}
        \hat{f}:\R^{n-1}\times \R&\to \R^{n}(\mathbf{x})\times \R(z)\\
        (q_1,\ldots,q_{n-1},t)&\mapsto \Big((q_1,\ldots,q_{n-1},-3t^2); 2t^3 \big) \qedhere 
    \end{align*} 
\end{definition}

The class of (non-generic) fronts that we will work with is defined as follows:
\begin{definition}\label{def:simple_front}
    A front $\cF\subseteq B^{n+1}$ is said to be \emph{simple} if there exists a stratification
    \begin{equation*}
        \cF = \Sigma_{A_1}(\cF)\sqcup \Sigma_{A_1A_1}(\cF)\sqcup \Sigma_{A_2}(\cF)
    \end{equation*}
    such that
    \begin{itemize}
        \item for all $p\in \Sigma_{A_1}(\cF)$, $p$ is a smooth point of $\cF$, i.e., $\cF$ is a smooth hypersurface near $p$;
        \item for $p\in \Sigma_{A_1A_1}(\cF)$, $\cF$ has a transversal self-intersection at $p$;
        \item for $p\in \Sigma_{A_2}(\cF)$, $p$ is a cusp point of $\cF$.
    \end{itemize}

    In this case, the sets $\Sigma_{A_1}(\cF)$, $\Sigma_{A_1A_1}(\cF)$ and $\Sigma_{A_2}(\cF)$ are called \emph{smooth strata}, \emph{self-intersection strata} and \emph{cusp strata} of $\cF$, respectively. We define the subset of singular points by
    \begin{equation*}
        \Sigma(\cF) \defi \Sigma_{A_1A_1}(\cF)\sqcup \Sigma_{A_2}(\cF). \qedhere
    \end{equation*}
\end{definition}

The following follows from standard transversality arguments:
\begin{lemma}
    Let $\cF\subseteq B^{n+1}$ be a generic simple front, then the subset of singularities $\Sigma(\cF)\subseteq B$ is a codimension $2$ submanifold in $B^{n+1}$.
\end{lemma}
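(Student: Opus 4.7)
The plan is to argue locally at each type of singular point and then glue using the fact that the two singular strata are disjoint by definition of a simple front.

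First I would analyse the self-intersection stratum. Near a point $p\in\Sigma_{A_1A_1}(\cF)$, the front is by definition locally the union of two smooth hypersurface branches $\Sigma_1,\Sigma_2\subseteq B^{n+1}$ meeting transversely, since this is exactly what an $A_1A_1$-singularity encodes. The locus $\Sigma_{A_1A_1}(\cF)\cap \Op(p)$ coincides with $\Sigma_1\cap\Sigma_2$, and transversality gives that this intersection is a submanifold of $B^{n+1}$ of codimension $2$. Generically (and in the simple-front hypothesis) no third branch passes through $p$, so this description is complete.

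Next I would analyse the cusp stratum. Near a point $p\in\Sigma_{A_2}(\cF)$ the germ of the front map is given by the normal form
\begin{equation*}
    \hat{f}(q_1,\ldots,q_{n-1},t)=\bigl(q_1,\ldots,q_{n-1},-3t^2,\,2t^3\bigr),
\end{equation*}
and the set of source points at which this fails to be an immersion is exactly $\{t=0\}$. Its image under $\hat{f}$ is $\{(q_1,\ldots,q_{n-1},0,0)\}\subseteq B^{n+1}$, which is embedded of dimension $n-1$, hence codimension $2$. Thus $\Sigma_{A_2}(\cF)$ is locally a codimension-$2$ submanifold near any cusp point.

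Finally I would assemble the pieces. By the very definition of simple front, each point of $\cF$ carries a unique singularity type, so the decomposition
\begin{equation*}
    \Sigma(\cF)=\Sigma_{A_1A_1}(\cF)\sqcup \Sigma_{A_2}(\cF)
\end{equation*}
is a disjoint union. Since each piece is locally a codimension-$2$ embedded submanifold, $\Sigma(\cF)$ is itself a codimension-$2$ submanifold of $B^{n+1}$, provided the two pieces are globally disjoint and each is globally embedded.

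The main obstacle I expect is the role of \emph{genericity}: a priori the closures of the two strata could touch, or either stratum could self-intersect (e.g. two cusp edges meeting, or a cusp edge crossing a double-point locus), neither of which is captured by the pointwise simple-front condition. I would handle this by a standard transversality argument: the higher incidences ($A_1A_1A_1$, $A_1A_2$, $A_3$, etc.) are described by Thom--Boardman-type strata of positive codimension in an ambient jet space, so they are avoided after a generic $C^\infty$-small perturbation of the Legendrian. Combined with the two local models above, this yields the claimed global codimension-$2$ submanifold structure.
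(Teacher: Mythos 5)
Your argument is correct and supplies precisely the content the paper leaves implicit: the paper states this lemma with only the remark that it ``follows from standard transversality arguments'' and gives no proof of its own. Your two local normal-form computations (transverse intersection of two branches for $\Sigma_{A_1A_1}$, the semicubical cusp model for $\Sigma_{A_2}$) together with the genericity step excluding higher incidences ($A_1A_1A_1$, $A_1A_2$, $A_3$, \ldots) is exactly the standard argument being invoked.
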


\section{Legendrian stabilization and loose Legendrians}\label{sec:Leg_stablization}

In this section we review the notions of Legendrian stabilization and loose Legendrians in higher-dimensional contact manifolds, as introduced in \cite{Eliashberg:Stein,EES:NonIsotopicLegendrians,Murphy:Loose}. All the results presented here are well known, as are their proofs. We do this in order to clarify certain technical constructions that are important to the purposes of this article.

\subsection{\texorpdfstring{$N$-Pushing}{N-Pushing}}\label{subsec:N-pushing}

We introduce an operation on fronts, called \emph{$N$-pushing}, which alters the Legendrian isotopy class of a submanifold while preserving its formal Legendrian class, provided certain conditions on $N$ are satisfied. This operation is useful to show that two Legendrians belong to the same formal class.
    
This construction first appeared in \cite{Eliashberg:Stein, EES:NonIsotopicLegendrians}, where it was referred to as a ``stabilization''. However, the particular construction from \cite{EES:NonIsotopicLegendrians} does not, in general, produce a loose Legendrian submanifold \cite{Murphy:Loose}. For this reason, we adopt the term \emph{pushing}, and reserve the term \emph{stabilization} for the operation that produces a loose Legendrian.

Let $Q$ be a manifold. Given a function $f\in C^\infty(Q)$, we denote by $j^1f\subseteq J^1Q=T^*Q\times \R$ the Legendrian submanifold of $(J^1Q,\xi_\std)$ defined as the $1$-jet of $f$. Given $c\in \R$, we write $c_Q: Q\to \R$ for the constant function with value $c$. Consider the disconnected Legendrian $L_0=j^1 0_Q\sqcup j^1 1_Q$ and a closed submanifold $N\subseteq Q$ together with a smooth function $f:Q \rightarrow [0,2]$ such that 
\begin{itemize}
    \item [(i)] $f_{|Q \backslash \Op(N)}\equiv 0$, 
    \item [(ii)] $f(p)>1$ for $p\in N$,
    \item [(iii)] $f(p)>0$ for $p\in \Op(N)$, and
    \item [(iv)] The critical points of $f$ in $\Op(N)$ have critical values greater than $1$.
\end{itemize}
        
The \emph{$N$-pushing}  of $L_0$ is the disconnected Legendrian
\begin{equation*}
    L_N =j^1 f \sqcup j^1 1_Q
\end{equation*}
 
\begin{definition}
    Let $(Y^{2n+1},\xi)$ be a contact manifold of dimension $2n+1 \geq 5$ and let $L\subseteq Y$ be a Legendrian submanifold. Suppose we are given an open codimension $0$ submanifold $U \subseteq Y$ and an identification
    \begin{equation} \label{eq:modelForPushing}
        (U, L\cap U,\xi) \simeq (J^1 Q, L_0,\xi_\std)
    \end{equation}
     for some smooth manifold $Q$, as well as a closed submanifold $N \subset Q$.
     
     The \emph{$N$-pushing} of the Legendrian $L$ is the Legendrian $\mathrm{p}_N(L)$ obtained by replacing $(U, L\cap U,\xi)$ by  $(J^1 Q, L_N,\xi_\std)$.
\end{definition}

Do note that the trivialisation appearing in Equation \ref{eq:modelForPushing} is part of the data involved in $N$-pushing, but we leave it implicit. The following result is well known. We will adapt its proof later to the prelegendrian setting.
\begin{proposition}[Ekholm-Etnyre-Sullivan, Murphy] \label{prop:FormalClassNPushing}
    Let $L\subseteq (Y,\xi)$ be a Legendrian submanifold, and suppose that the Euler characteristic $\chi(N)=0$. Then, the $N$-pushing $\mathrm{p}_N(L)$ is formally Legendrian isotopic to $L$ in $(Y,\xi)$.
\end{proposition}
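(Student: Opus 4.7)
The proof is local: since the $N$-pushing operation modifies $L$ only inside the chart $U \simeq J^1 Q$, it suffices to produce a formal Legendrian isotopy inside $U$ carrying $j^1 0_Q$ to $j^1 f$ through smooth embeddings disjoint from the fixed $j^1 1_Q$. The natural Legendrian candidate $t \mapsto j^1(tf)$ is not an isotopy: by condition (iv) it hits $j^1 1_Q$ at each critical point $q^* \in \Op(N)$ of $f$, at the time $t^* = 1/f(q^*) \in (1/2,1)$, where $\d f(q^*) = 0$ and $t^* f(q^*) = 1$.

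The plan is to perturb this path in the $y$-direction near each collision time using a nowhere-zero section $\beta$ of $T^*Q$ over a neighborhood of $\mathrm{Crit}(f)$. Concretely, I would set
\begin{equation*}
  \phi_t(q) \;=\; \bigl(q,\ t\,\d f(q) + \sigma(t)\beta(q),\ t f(q)\bigr),
\end{equation*}
with $\sigma \colon [0,1] \to [0,1]$ a bump function supported near the collision times $\{t^*\}$ and equal to $1$ on a smaller neighborhood. The hypothesis $\chi(N)=0$ enters precisely here: a nowhere-zero section $\beta$ on a compact set retracting onto $N$ exists iff $N$ admits a non-vanishing vector field, which by Poincar\'e--Hopf is equivalent to $\chi(N) = 0$. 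With such a $\beta$, one checks that $\phi_t$ is an embedding of $Q$ into $J^1 Q \setminus j^1 1_Q$ for every $t \in [0,1]$, agreeing with $j^1 0_Q$ at $t=0$ and with $j^1 f$ at $t=1$.

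Along $\phi_t$ I would take the Lagrangian plane field $\Lambda_t(q) = \mathrm{graph}(t \cdot \operatorname{Hess}(f)_q)$, viewed as a subspace of $\xi_{\phi_t(q)}$ via the canonical identification $\xi \cong T(T^*Q)$. Each $t \cdot \operatorname{Hess}(f)_q$ is symmetric, so its graph is Lagrangian; the family $\Lambda_t$ is continuous in $(q,t)$, with $\Lambda_0 = TQ = T(j^1 0_Q)$ and $\Lambda_1 = \mathrm{graph}(\operatorname{Hess}(f)) = T(j^1 f)$. The pair $(\phi_t, \Lambda_t)$ is then the desired formal Legendrian isotopy between $L_0$ and $L_N$, fixing $j^1 1_Q$ pointwise.

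The main obstacle is the first step: ensuring that $\phi_t$ remains globally disjoint from $j^1 1_Q$ for all $t$, not merely at the isolated collision instants. This hinges on the existence of $\beta$, and hence on $\chi(N) = 0$; without this hypothesis Poincar\'e--Hopf obstructs any smooth perturbation that handles all critical collisions simultaneously. The Lagrangian interpolation is by contrast unobstructed and essentially automatic once the smooth isotopy is in place, which is why the topological hypothesis only enters through the embedded smooth step.
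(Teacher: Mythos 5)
Your overall strategy --- perturb the $\mathbf{y}$-component of the linear interpolation $j^1(tf)$ so that it avoids $j^1 1_Q$, using a nowhere-vanishing covector field whose existence is tied to $\chi(N)=0$ --- is the same as the paper's. However, the key step has a genuine gap. The claim that ``a nowhere-zero section $\beta$ of $T^*Q$ on a compact set retracting onto $N$ exists iff $\chi(N)=0$'' is false: $\overline{\Op(N)}$ is a compact $n$-manifold with non-empty boundary, so it is homotopy equivalent to an $(n-1)$-complex and the rank-$n$ bundle $T^*Q$ admits a nowhere-vanishing section over it for \emph{any} $N$ (the only obstruction, the Euler class, lives in $H^n$, which vanishes). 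If your argument worked with such an arbitrary $\beta$, it would prove the proposition without the hypothesis $\chi(N)=0$, which is false: pushing over a point is the stabilization and changes the rotation class. Correspondingly, the verification you defer (``one checks that $\phi_t$ is disjoint from $j^1 1_Q$'') is exactly where the content lies, and it fails for a generic $\beta$. On the collision hypersurface $\{tf(q)=1\}\cong M\defi f^{-1}([1,2])$ the section $t\,\d f(q)+\sigma(t)\beta(q)$ loses its zeros at $\mathrm{Crit}(f)$ but acquires new ones wherever the two terms cancel; since $\sigma$ vanishes for $t$ near $1$, this section agrees with $\d f$ near $\partial M=f^{-1}(1)$, and the algebraic count of zeros of any section with that boundary behaviour is the relative Euler number, i.e.\ $\pm\chi(M)=\pm\chi(N)$ by Poincar\'e--Hopf. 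So no choice of cutoff removes all zeros unless $\chi(N)=0$, and even then one must choose the perturbation globally over $M$, compatibly with $\d f$ along $\partial M$ --- not merely a nowhere-zero $\beta$ near $\mathrm{Crit}(f)$ multiplied by a $t$-cutoff.

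That relative extension problem is precisely how the paper uses the hypothesis: the vector field $Z$ is taken to be $\nabla h$ on $f^{-1}(-\infty,1+\varepsilon)$ (hence inward-pointing along $\partial M$) and is extended without zeros over $f^{-1}([1,2])$ using $\chi(f^{-1}([1,2]))=\chi(N)=0$; tilting the tangent planes of $j^0 f$ towards $\partial_z$ along $Z$ then plays the role of your perturbation $\sigma(t)\beta$. Your treatment of the Lagrangian plane field via $\operatorname{graph}(t\operatorname{Hess}(f))$ is the routine part and is fine in spirit, but the embedded-isotopy step must be redone with a boundary-matched section as above.
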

\begin{proof}
    It suffices to check that the Legendrians $L_0=j^1 0_Q \sqcup j^1 1_Q$ and $L_N=j^1 f \sqcup j^1 1_Q$ are formally Legendrian isotopic in $J^1Q$ through a compactly supported formal isotopy. We parametrise explicitly:
    \begin{align*}
        j^1 0_Q : \mathbf{x} & \mapsto (\mathbf{x},0,0),\\
        j^1 1_Q : \mathbf{x} & \mapsto (\mathbf{x},1,0),\\
        j^1 f : \mathbf{x} & \mapsto (\mathbf{x},f(\mathbf{x}),\d_\mathbf{x}f).
    \end{align*}
    We must construct a formal isotopy of Legendrian embeddings between $j^1 0_Q$ and $j^1f$ that does not meet the Legendrian $j^1 1_Q$, and is supported in a small neighbourhood of the support of $f$. This isotopy will be defined by a homotopy of formal sections $g_t$ of $J^1Q$. The idea is to consider the obvious homotopy between $j^1 0_Q$ and $j^1 f$ induced by the linear interpolation between the functions $0_Q$ and $f$ and perturb it. Do note that the obvious homotopy does not work precisely because it intersects $j^1 1_Q$ along the critical points of $f$.

    Fix $\varepsilon>0$ sufficiently small so that $\d f\neq 0$ on $f^{-1}(1-\varepsilon,1+\varepsilon)$. Since $\chi(\overline{\Op_Q(N)}) = \chi (N) =0$, there exists a $1$-form $\beta$ on $Q$ such that
    \begin{itemize}
        \item $\beta$ is non-zero on $f^{-1}(1-\varepsilon,2]\cong \Op_Q(N)$.
        \item $\beta = \d f$ on $f^{-1}(-\infty,1+\varepsilon)$.
    \end{itemize}
    That is, $\beta$ is a nowhere-vanishing extension of $df|_{f^{-1}(1)}$ to $N$. We define a homotopy of embeddings $g_t: Q \to J^1Q$ in three stages:
    \begin{itemize}
        \item For $t\in[0,1]$, we set
        \begin{equation*}
            g_t(\mathbf{x}) := (\mathbf{x},0, t\beta(\mathbf{x})).
        \end{equation*}
        No intersection with $j^11_Q$ can occur since the underlying $0$-jets are disjoint.
        \item For $t\in[1,2]$, we consider:
        \begin{equation*}
            g_t(\mathbf{x}) := (\mathbf{x}, (t-1)f(\mathbf{x}), \beta(\mathbf{x})).
        \end{equation*}
        In the region $f^{-1}(1-\varepsilon,2]$, $\beta$ is non-zero, so $g_t$ and $j^11_Q$ differ in their derivative term. In the region $f^{-1}(-\infty,1-\epsilon)$, $g_t = j^1f$ and $f$ has no critical points, so it is also distinct from $j^11_Q$.
        \item For $t\in[2,3]$, define
        \begin{equation*}
            g_t(\mathbf{x}) = (\mathbf{x}, f(\mathbf{x}), (t-2)\d f + (3-t)\beta(\mathbf{x})). 
        \end{equation*}
        Inspecting their $0$-jets we see that $g_t$ can meet $j^11_Q$ only over the region $\{f=1\}$. However, there it holds $\beta=\d f$, which is non-zero.
    \end{itemize}
    We have thus defined a homotopy of embeddings between $L_0=j^1 0_Q \sqcup j^1 1_Q$ and $L_N=j^1 f \sqcup j^1 1_Q$. To upgrade this to a homotopy of formal Legendrians we note that, at a given $\sigma \in J^1Q$, the possible tangent spaces for a Legendrian graphical over $Q$ are parametrised by the fibre of $J^2Q$ over $\sigma$, which is an affine space and thus contractible. This implies that upgrading to formal Legendrians graphical over $Q$ is unique up to homotopy, also relatively.
\end{proof}

\subsection{Legendrian stabilization and Loose Legendrians} \label{subsec:Leg_stabilzation}

Recall the notion of cusp singularity for a Legendrian (\cref{def:cusp}). We will now apply $N$-pushing in the vicinity of such a cusp. To do this as explicitly as possible, we introduce some models.

We consider a $1$-dimensional Legendrian cusp $L_\mathrm{cusp}^0\subseteq J^1 \R$ defined by $L_\mathrm{cusp}^0\cap \{x\leq 0\}=j^1 0_\R\sqcup j^1 1_\R \cap \{x\leq 0\}$, while over $\{x\geq 0\}$ the front $\pi_\F(L_\mathrm{cusp}^0)$ is given by a single cusp connecting $j^0 0_\R$ and $j^0 1_\R$. See \cref{fig:stabilization}
\begin{figure}[h]
    \centering
    \includegraphics[width=1\linewidth]{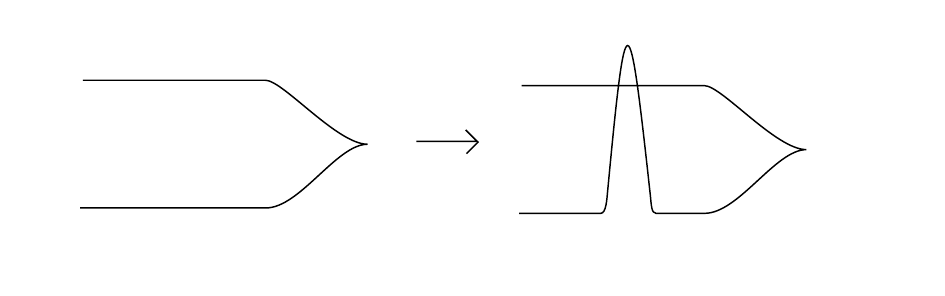}
    \caption{Left: the front of the cusp model $L^0_\mathrm{cusp}$. Right: the stabilized front $L_\mathrm{loose}$.}
    \label{fig:stabilization}
\end{figure}

Let $Q$ be a $(n-1)$-dimensional manifold, and define $X=\R\times Q$. Consider $1$-jet bundle $J^1X = J^1\R \times T^*Q$. Taking the product of the $1$-dimensional cusp with $Q$ yields a Legendrian submanifold $L_\mathrm{cusp}\subseteq J^1\R \times T^*Q$:
\begin{equation*}
    L_\mathrm{cusp} \defi L_\mathrm{cusp}^0 \times Q \subseteq J^1\R\times T^*Q,
\end{equation*}
note that $Q\subseteq T^*Q$ denotes the $0$-section.

Notice that over $\{x\leq 0\}$ we have that
\begin{equation*}
    L_\mathrm{cusp}\cap \{x\leq 0\}= (j^1 0_\R\sqcup j^1 1_\R \cap \{x\leq 0\})\times Q=j^1 0_{\{x\leq 0\}\times Q}\sqcup j^1 1_{\{x\leq 0\}\times Q},
\end{equation*}
which is the situation described in the previous section. Therefore, given a closed submanifold $N\subseteq \R\times Q\cap \{x< 0\}$ with $\chi(N)=0$, we define the $L_\mathrm{loose}\defi \mathrm{p}_N(L_\mathrm{cusp})$.

 \begin{definition}
    Let $(Y^{2n+1},\xi)$ be a contact manifold of dimension $2n+1 \geq 5$, $L\subseteq Y$ a Legendrian submanifold and $U\subseteq Y$ an open \nbh such that
    \begin{equation*}
        (U, L\cap U,\xi) \simeq (J^1 X, L_\mathrm{cusp},\xi_\std),
    \end{equation*}
    for some manifold $X=\R\times Q$. The \emph{stabilization} of $L$ is the Legendrian $s(L)$ obtained by replacing $(U, L\cap U,\xi)$ by  $(J^1 X, L_\mathrm{loose},\xi_\std)$.
\end{definition}

\begin{definition}
    Let $(Y^{2n+1},\xi)$ be a contact manifold of dimension $2n+1 \geq 5$. A Legendrian submanifold $L\subseteq Y$ is \emph{loose} if $L=s(L')$ for some Legendrian submanifold $L'$.
\end{definition}

The relevance of loose Legendrians and the justification of dropping $N$ from the notation in the definitions relies on the following $h$-principle of Murphy:
\begin{theorem}[Murphy \cite{Murphy:Loose}] \label{thm:h-principle_loose_leg}
    Two loose Legendrians are Legendrian isotopic if and only if are formally Legendrian isotopic. Moreover, every formal Legendrian is formally Legendrian isotopic to a genuine (loose) Legendrian by a $C^0$-small isotopy.
\end{theorem}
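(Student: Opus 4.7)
The plan is to establish both clauses of the theorem via an h-principle machinery built on \emph{wrinkled Legendrians}, exploiting the fact that a loose chart provides a reservoir in which local singular phenomena can be absorbed and cancelled. I would treat the existence clause first and then use its parametric version for uniqueness.

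For existence, I would begin with a formal Legendrian $(f,F)$ and apply Gromov's h-principle for Legendrian immersions (a consequence of microflexibility for open, $\mathrm{Diff}$-invariant relations) to produce a genuine Legendrian immersion $\iota_0$ whose underlying map is $C^0$-close to $f$ and whose formal class matches $(f,F)$. Generically $\iota_0$ has only transverse double points, which I would resolve through a front-theoretic wrinkling construction (\'a la Eliashberg--Mishachev, in the Legendrian form developed by Murphy and \'Alvarez-Gavela), inserting pairs of cusps joined by $C^0$-small Reidemeister moves. The result is a wrinkled Legendrian embedding $C^0$-close to $(f,F)$; a further local perturbation that cancels each wrinkle pair of singularities removes the wrinkles, producing a genuine Legendrian in the given formal class by a $C^0$-small isotopy.

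For uniqueness, I would take a formal Legendrian isotopy $(f_s,F_s)_{s\in[0,1]}$ between two loose Legendrians $L_0$ and $L_1$. Applying the parametric version of the above construction yields a wrinkled Legendrian isotopy $\tilde L_s$ between $L_0$ and $L_1$ realising the prescribed formal homotopy. Along the path, finitely many wrinkle singularities appear; the looseness hypothesis is used now. By assumption each $L_s$ contains, after a preliminary contact ambient isotopy, a loose chart $U_s$ modelled on $L_{\mathrm{loose}}$. I would first transport every wrinkle of $\tilde L_s$ into $U_s$ through a parametric ambient isotopy (which changes no formal data), and then cancel the wrinkles inside $U_s$ using the specific geometry of $L_{\mathrm{loose}}$: the built-in zig-zag of the loose model is precisely a configuration that, up to a genuine Legendrian isotopy in a bounded enlargement of $U_s$, absorbs a wrinkle. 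Iterating over the finitely many wrinkles yields a genuine Legendrian isotopy from $L_0$ to $L_1$.

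The main obstacle is the wrinkle-cancellation step inside a loose chart: one must exhibit a \emph{genuine} compactly supported Legendrian isotopy inside the Darboux neighbourhood of $L_{\mathrm{loose}}$ that consumes a nearby wrinkle, with size estimates ensuring the isotopy stays within a controlled neighbourhood so that it patches with the ambient construction. This requires the explicit analytic study of the zig-zag in the cusp chart and is precisely the technical core of Murphy's original argument; the h-principle steps before and after it are comparatively soft. Once this local cancellation is in hand, both statements of the theorem follow.
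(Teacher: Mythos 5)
This statement is not proved in the paper: it is quoted verbatim as Murphy's theorem and used as a black box (the citation \cite{Murphy:Loose} is the proof). So there is no internal argument to compare yours against; what you have written is a sketch of Murphy's original proof, and it should be judged as such.

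As an outline of Murphy's strategy your proposal is broadly faithful — formal data is upgraded to a wrinkled Legendrian isotopy, and looseness is what permits the singularities to be cancelled — but there are two genuine gaps. First, and most importantly, you explicitly defer the technical core (the compactly supported Legendrian isotopy inside a loose chart that absorbs a wrinkle/zig-zag, with the size control needed to patch it into the ambient construction). Acknowledging that this is "the technical core of Murphy's original argument" does not discharge it; without it neither clause of the theorem is established. Second, your uniqueness step assumes that "each $L_s$ contains \dots a loose chart $U_s$", but the hypothesis is only that the \emph{endpoints} $L_0$ and $L_1$ are loose; the intermediate objects produced by the wrinkling h-principle are wrinkled Legendrians and there is no reason for them to carry loose charts, nor can one in general "transport every wrinkle into $U_s$ through a parametric ambient isotopy" since wrinkles are born and die along the path. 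Murphy's actual mechanism is different: one resolves wrinkles canonically via twist markings, shows that the resolution of a wrinkled isotopy is a genuine Legendrian isotopy between the resolutions of its endpoints, and then uses looseness of $L_0$ and $L_1$ only at the ends, via the key lemma that stabilizing a loose Legendrian along a marking does not change its Legendrian isotopy class. Your existence clause has a milder version of the same issue: resolving the double points of the Legendrian immersion is done by stabilization (inserting zig-zags), which is why the genuine representative one obtains is in fact loose; this is worth stating, since the paper's later arguments rely on exactly that point.
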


\section{Legendrian spinning} \label{sec:Leg_spinning}

In low dimensions, generic fronts are simple. In contrast, a front in high dimensions may be extremely singular and thus complicated to represent. In this section we introduce a construction of higher-dimensional fronts that uses lower-dimensional fronts as input. This construction is called \emph{spinning}; it was introduced first in \cite{EES:NonIsotopicLegendrians} to construct of Legendrian submanifolds in $\R^{2n+1}$. It has been further studied in \cite{EkholmEtnyreSabloff,Golovko,Lambert-Cole:LegendrianProducts}.

\subsection{Front spinning}
Let us recall some basic facts about the $1$-jet bundles, which is a crucial part of the spinning construction.
\begin{lemma}
    Let $L_N$ and $L_M$ be bundles over $N$ and $M$, respectively, with $1$-dimensional fibres. Consider a diagram of embeddings:
    \begin{equation*}
        \begin{tikzcd}
            L_N \arrow[rr, "\varphi"] \arrow[dd]&  & L_M \arrow[dd] \\
                                                &  &                \\
            N \arrow[rr, "e"]                   &  & M             
        \end{tikzcd}
    \end{equation*}
    Then, there is an induced $1$-jet bundle map:
    \begin{equation*}
        \hat{\varphi}: (J^1(L_N),\xi_\std) \to (J^1(L_M),\xi_\std),
    \end{equation*}
    which is an isocontact embedding.
\end{lemma}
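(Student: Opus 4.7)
The plan is to construct $\hat{\varphi}$ fibrewise over $\varphi$, mirroring the classical construction of an isocontact embedding $J^1 X \hookrightarrow J^1 Y$ induced by an embedding $X \hookrightarrow Y$. I will identify $J^1 L = T^*L\times \R$ with contact form $\alpha_\std = \d z - \lambda_\can$ on both sides; the goal is then to arrange $\hat\varphi^*\alpha_\std^{L_M} = \alpha_\std^{L_N}$.

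The key input is a choice of normal complement. Since $\varphi$ is an embedding, I would fix a smooth splitting
\begin{equation*}
    T L_M|_{\varphi(L_N)} = \d\varphi(T L_N)\oplus \nu,
\end{equation*}
coming from a tubular neighbourhood of $\varphi(L_N)$ in $L_M$. The $1$-dimensional fibre hypothesis makes the vertical part of this splitting canonical: $\d\varphi$ identifies the vertical subbundle $V_N\subseteq TL_N$ with $V_M|_{\varphi(L_N)}$, so only the base-horizontal complement must be chosen. With such a splitting, each covector $\xi\in T^*_xL_N$ extends uniquely to $\tilde\xi\in T^*_{\varphi(x)}L_M$ with $\tilde\xi\circ \d\varphi = \xi$ and $\tilde\xi|_{\nu_x}=0$. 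I would then define
\begin{equation*}
    \hat\varphi(x,\xi,z)\defi (\varphi(x),\tilde\xi,z).
\end{equation*}
This manifestly covers $\varphi$ and restricts to a linear injection on fibres, so $\hat\varphi$ is a smooth bundle embedding.

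The isocontact check is then a short calculation with the Liouville form. For a tangent vector $v$ at $(x,\xi,z)$ with base component $v_x\in T_xL_N$, one computes
\begin{equation*}
    \hat\varphi^*\lambda_\can^{L_M}(v) = \tilde\xi(\d\varphi(v_x)) = \xi(v_x) = \lambda_\can^{L_N}(v),
\end{equation*}
using $\tilde\xi\circ \d\varphi = \xi$ by construction. Since the $z$-coordinates agree under $\hat\varphi$, this yields $\hat\varphi^*\alpha_\std^{L_M}=\alpha_\std^{L_N}$, hence $\hat\varphi$ is isocontact.

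The only subtle point I expect is the dependence of $\hat\varphi$ on the choice of complement $\nu$, but this is harmless: the space of such splittings is convex, so any two choices produce isocontact embeddings that are connected by an isotopy of isocontact embeddings. Since the lemma asserts only existence, this is sufficient for the subsequent use in the front spinning construction.
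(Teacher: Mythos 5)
Your construction is a correct isocontact embedding, but of the wrong spaces: you have read $J^1(L_N)$ as the space of $1$-jets of \emph{functions on the total space} $L_N$, i.e.\ $T^*L_N\times\R$, whereas in this lemma $J^1(L_N)$ denotes the bundle of $1$-jets of \emph{sections} of $L_N\to N$. For the trivial bundle $L_N=N\times\R$ this is $T^*N\times\R=J^1N$ (dimension $2\dim N+1$), not $T^*(N\times\R)\times\R$ (dimension $2\dim N+3$); the remark immediately following the lemma identifies the induced map in the trivial case as $T^*N\times\R\to T^*M\times\R$, and the definition of the spinning map composes $\hat\varphi_*$ with $G_K$, which only typechecks if $J^1\bigl(J^0(K\times\R^m)\bigr)=J^1(K\times\R^m)$. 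A symptom of the misreading is that your argument never uses the hypotheses that $L_N,L_M$ have $1$-dimensional fibres and that $\varphi$ covers $e$: any embedding of manifolds induces the map you build. More importantly, the whole point of the lemma in this paper is that the fibrewise shift $\varphi(n,z)=(e(n),z+h(n))$ translates the $0$-jet (front) datum of a Legendrian; in your construction the fibre coordinate of $L_N$ sits in the base of $T^*L_N$ and the jet value $z$ is carried over unchanged, so the resulting map cannot implement the graphing/spinning operation for which the lemma is invoked.

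The intended construction is the section-level analogue of what you wrote. A point of $J^1(L_N)$ is a triple $(x,u,p)$ with $x\in N$, $u\in (L_N)_x$ and $p\in\Hom(T_xN,V_uL_N)$, where $V$ denotes the vertical tangent bundle. Choose, as you do, a splitting $TM|_{e(N)}=\d e(TN)\oplus\nu$. Since the fibres are $1$-dimensional and $\varphi$ is a fibred embedding, $\d\varphi:V_uL_N\to V_{\varphi(u)}L_M$ is an isomorphism, so one may set $\hat\varphi(x,u,p)=(e(x),\varphi(u),q)$ with $q\circ\d e=\d\varphi\circ p$ and $q|_\nu=0$; this is injective and covers $\varphi$. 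Writing the contact structure as the kernel of the vertical-bundle-valued Cartan form $\theta(\dot x,\dot u,\dot p)=\dot u^{\mathrm{vert}}-p(\dot x)$, one finds $\hat\varphi^*\theta_{M}=\d\varphi\circ\theta_{N}$, and since $\d\varphi$ is a fibrewise isomorphism on vertical bundles the kernels agree — this is exactly where the $1$-dimensionality of the fibres and the commutativity of the square enter. Your closing remark about convexity of the space of splittings then applies verbatim to this corrected map.
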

In particular, $\hat{\varphi}$ sends isotropics to isotropics. In the case $\dim(N)=\dim(M)$, it defines a map between the corresponding spaces of Legendrians:
\begin{equation*}
    \hat{\varphi}_*: \Leg(J^1(L_N),\xi_\std)\to \Leg(J^1(L_M),\xi_\std).
\end{equation*}
Moreover, this construction  works parametrically. If $\varphi_s$ is a homotopy of fibred embeddings, then the induced isocontact embeddings $\hat{\varphi}_0$ and $\hat{\varphi}_1$ are contact isotopic. In particular, in the equidimensional case, the induced maps between Legendrian embedding spaces are homotopic.

\begin{remark}
    In the case where $L_N = N \times \R$ and $L_M = M \times \R$ are trivialised line bundles, the embedding $e \colon N \hookrightarrow M$ induces an isocontact embedding
    \begin{equation*}
        T^*N \times \R \to T^*M \times \R.
    \end{equation*}

    In this standard construction the $\R$–coordinate is mapped identically, but this is not necessary. In fact, for the spinning construction it will be convenient to add a suitable shift the $\R$-coordinate. This motivates us to consider bundle maps of the form
    \begin{align*}
        \varphi : N\times \R&\to M\times \R,\\
        (n,z)&\mapsto \big(e(n),z+h(n)\big),
    \end{align*}
    where $h\in C^\infty(N;\R)$; by the previous lemma these lift to isocontact embeddings of $1$-jet spaces.
\end{remark}

The second easy observation that we use is the following:
\begin{lemma}
    For every smooth manifolds $N^m$ and $K$, there exists a well-defined front-graph map:
    \begin{equation*}
        G_K: \mathrm{Map}(K,\Leg(N,(J^1\R^m,\xi_\std))) \to \Leg(K\times N,(J^1(K\times \R^m),\xi_\std)).
    \end{equation*}
\end{lemma}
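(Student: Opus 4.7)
The plan is to construct $G_K$ explicitly and then verify the Legendrian condition by a direct pullback computation. Using the standard identification $J^1(K\times\R^m) \cong T^*K\times T^*\R^m\times\R$ with coordinates $(k, p, x, y, z)$ and standard contact form
\begin{equation*}
    \alpha_\std = \d z - y\,\d x - p\cdot \d k,
\end{equation*}
I would view an element $\phi\in \mathrm{Map}(K,\Leg(N,(J^1\R^m,\xi_\std)))$ as a smooth map $\phi : K\times N\to J^1\R^m$ whose restriction $\phi(k,\cdot)$ to each slice $\{k\}\times N$ is a Legendrian embedding, and write $\phi(k,n) = (x(k,n), y(k,n), z(k,n))$.

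Next, I would \emph{define} the front-graph map by
\begin{equation*}
    G_K(\phi)(k,n) \defi \bigl(k,\; p(k,n),\; x(k,n),\; y(k,n),\; z(k,n)\bigr),
\end{equation*}
where the covector $p(k,n)\in T_k^*K$ is prescribed by
\begin{equation*}
    p(k,n) \defi \d_k z(\cdot,n) - y(k,n)\cdot \d_k x(\cdot,n).
\end{equation*}
This choice is forced on us: any Legendrian lift to $J^1(K\times\R^m)$ covering the given map to $J^0(K\times\R^m)=K\times\R^m\times\R$ is uniquely determined by the tangencies of its front, and $p(k,n)$ is precisely the ``$k$-slope'' of the front $\pi_\F\circ G_K(\phi)$.

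To verify that $G_K(\phi)$ is Legendrian, I would pull back $\alpha_\std$ via $G_K(\phi)$ and split the computation according to the $\d k$ and $\d n$ directions. In $(k,n)$-coordinates,
\begin{equation*}
    (G_K(\phi))^*\alpha_\std = \bigl(\partial_n z - y\,\partial_n x\bigr)\d n + \bigl(\partial_k z - y\,\partial_k x - p\bigr)\d k.
\end{equation*}
The $\d n$-component vanishes because $\phi(k,\cdot)$ is Legendrian in $J^1\R^m$ for every fixed $k$, while the $\d k$-component vanishes by the very definition of $p(k,n)$. Embeddedness of $G_K(\phi)$ is automatic: the first coordinate is $k$ itself and the slice $G_K(\phi)(k,\cdot)$ coincides, in the $(x,y,z)$-coordinates, with the embedding $\phi(k,\cdot)$. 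Smoothness and smooth dependence on $\phi$ are immediate from the formula.

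There is really no hard step here; the only subtlety is checking that $p(k,n)$ transforms correctly as a covector in $T_k^*K$ under changes of coordinates on $K$, but this is clear from the coordinate-free characterisation given above as the $k$-slope of the front. Consequently, $G_K$ is a well-defined map into $\Leg(K\times N,(J^1(K\times\R^m),\xi_\std))$, and by construction it is functorial under composition with fibred isocontact embeddings of the type discussed in the previous lemma.
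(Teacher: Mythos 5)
Your proposal is correct and follows essentially the same route as the paper: the paper defines $G_K(L_k)$ by the front map $(k,n)\mapsto (k,\pi_\F(L_k(n)))$ and leaves the lift implicit, while you write out the lift explicitly (carrying the $y$-coordinate from $L_k$ and prescribing the $k$-slope $p(k,n)$) and verify the Legendrian condition by pulling back $\alpha_\std$. Your explicit formula has the minor virtue of being manifestly well defined even at singular points of the slice fronts, but it is the same construction.
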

\begin{proof}
    Given $L_k \in \mathrm{Map}(K,\Leg(N,(J^1\R^m,\xi_\std)))$, we define the Legendrian embedding $G_K(L_k) \in \Leg(K\times N,(J^1(K\times \R^m),\xi_\std))$ by the front map:
    \begin{align*}
        G_K(L_k): K\times N \to J^0(K\times \R^m) = K\times J^0\R^m, \quad (k,n)\mapsto (k,\pi_\F(L_k(n))),
    \end{align*}
    where $\pi_\F:J^1\R^m\to J^0\R^m$ is the front projection.
\end{proof}

Now, we can define spinning: 
\begin{definition}
    Let $e:K\times \R^m\to \R^n$ be a smooth embedding of codimension $0$, covered by a fibred embedding $\varphi: J^0(K\times \R^m) \to J^0\R^n$. Let $N$ be a smooth manifold of dimension $m$. The \emph{spinning map} of $N$ along $K$ via $\varphi$ is defined as the composition:
    \begin{equation*}
        \hat{\varphi}_* \circ G_K: \mathrm{Map}(K,\Leg(N,(J^1\R^m,\xi_\std))) \to \Leg(K\times N,(J^1\R^n),\xi_\std). \qedhere
    \end{equation*}
\end{definition}

\subsection{A specific spinning}\label{subsec:special_spinning}

We will only use the spinning map in a very concrete situation, that we describe below. Let $\hat{e}:K\hookrightarrow J^0 \R^n =\R^n\times \R_z$ be a smooth embedding graphical over $\R^n$, that means 
\begin{equation*}
    \hat{e}=(\hat{e}_{\R^n},h): K \to \R^n\times \R_z,
\end{equation*}
where $\hat{e}_{\R^n}:K\hookrightarrow \R^n$ is also an embedding and $h\in C^{\infty}(K)$ is a smooth function. We further assume that the embedding $\hat{e}_{\R^n}$ has trivialisable normal bundle. We fix a framing $\mathrm{Fr}$ of the normal bundle of $\hat{e}_{\R^n}(K)$, to define an embedding:
\begin{equation*}
    e: K \times \R^m \to \R^n.
\end{equation*}
This codimension-$0$ embedding is naturally covered by the fibrewise isomorphism 
\begin{equation*}
    \varphi:J^0(K\times \R^m) \to J^0\R^n, \quad (k,v,z)\mapsto (e(k,v),h(k)+z).
\end{equation*}
Note that the map $\varphi$ is an affine isomorphism, and is invariant along the normal directions.

In this situation, given a family of Legendrian embeddings $L_k\in \mathrm{Map}(K,\Leg(N,(J^1\R^m,\xi_\std)))$, we will denote by
\begin{equation*}
    \spin_{\hat e,\mathrm{Fr}}[L_k]\defi \hat{\varphi}_* \circ G_K (L_k)\in\Leg(N\times K, (J^1 \R^n,\xi_\std))
\end{equation*}
the spinning of the family $\{L_k\}_{k\in K}$. In the cases that we will treat later the framing will be unique so we will simply write $\spin_{\hat e}[L_k]$, and if $L_k\equiv L$ is a constant family we will write $\spin_{\hat e}[L]$.

\begin{figure}
    \centering
    \includegraphics[width=1\linewidth]{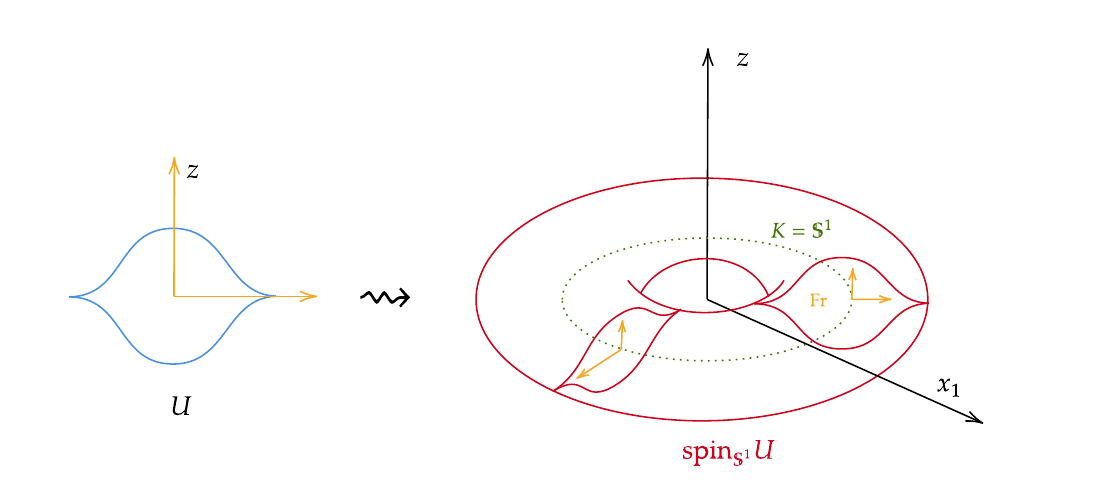}
    \caption{A schematic depiction of the front spinning of the Legendrian unknot $U\subseteq (J^1\R,\xi_\std)$ along the standard circle $\S^1\subseteq \R^2$.}
    \label{fig:FrontSpinning}
\end{figure}

The following result follows by combining the Ekholm-Etnyre-Sullivan computation of the formal classes of a spinning  \cite{EES:LCH}, together with Murphy classification of formal Legendrians \cite{Murphy:Loose}. However, we provide a direct proof since it will be fundamental for our study of prelegendrians.
\begin{lemma}\label{prop:rot=0_formal_isotopic}
    Let $\hat{e}:K^{n-1}\rightarrow J^0 \R^n$ be a smooth embedding, graphical over $\R^n$, with $\chi(K)=0$. Fix moreover a framing $\mathrm{Fr}$.
    
    If $L_0,L_1\in\Leg(J^1\R^1,\xi_\std)$ are two Legendrian knots with $\operatorname{Rot}(L_0)=\operatorname{Rot}(L_1)$, then $\spin_{\hat e,\mathrm{Fr}}[L_0]$ and $\spin_{\hat e,\mathrm{Fr}}[L_1]$ are formally Legendrian isotopic. Moreover, the formal Legendrian isotopy is induced by a sequence of Reidemeister moves of the $1$-dimensional Legendrians $L_0$ and $L_1$, and $N$-pushings along parallel copies of $\hat{e}(K)$.
\end{lemma}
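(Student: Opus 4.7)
The plan is to factor the comparison between $\spin_{\hat e,\mathrm{Fr}}[L_0]$ and $\spin_{\hat e,\mathrm{Fr}}[L_1]$ through a sequence of elementary moves at the $1$-dimensional level, and to track how each move behaves under spinning. The structure mirrors the ``Moreover'' clause of the statement: reduce $L_0 \rightsquigarrow L_1$ to a sequence of $1$-dim moves, spin each move, and show that the resulting moves on the spun Legendrians are either honest Legendrian isotopies or $K$-pushings.

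First, I would argue that any two Legendrian knots $L_0,L_1\subseteq (J^1\R^1,\xi_\std)$ with equal rotation number can be connected by a finite sequence of (i) Legendrian isotopies (equivalently, sequences of Legendrian Reidemeister moves at the front level) and (ii) \emph{balanced} stabilization pairs $s^+\!\circ s^-$, each supported in a small neighbourhood of a point of the Legendrian. After matching smooth types via an ambient isotopy, the Fuchs--Tabachnikov stabilization theorem guarantees that the two Legendrians become Legendrian isotopic after finitely many signed stabilizations; the equality of rotation numbers forces positive and negative stabilizations to occur in equal numbers, and they may be grouped into balanced pairs, each of which preserves $\operatorname{Rot}$ and decreases $tb$ by $2$.

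Next, I would analyze how each elementary move behaves under the spinning construction. Any Legendrian isotopy of $L_i$ spins to a Legendrian isotopy of $\spin_{\hat e,\mathrm{Fr}}[L_i]$ by the parametric naturality established in \cref{sec:Leg_spinning} (the induced isocontact embedding and the front-graph map both respect homotopies of Legendrian families). The key geometric computation is to verify that a balanced stabilization pair at a point $p\in L$, after spinning, corresponds exactly to a $K$-pushing along a parallel copy of $\hat{e}(K)$ in the sense of \cref{subsec:N-pushing}: near $p$ the local $1$-dim front can be arranged to consist of two parallel strands, and a balanced pair modifies one of them into the graph of a bump function which rises above the other strand and returns. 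Spinning this bump along $K$ produces, in the $n$-dimensional front, the graph of a bump function supported in a tubular neighbourhood of a parallel copy of $\hat e(K)$ and satisfying conditions (i)--(iv) of the $N$-pushing definition, while the other strand spins to an untouched parallel spun Legendrian.

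Finally, since $\chi(K)=0$ by hypothesis, each such $K$-pushing defines a formal Legendrian isotopy by \cref{prop:FormalClassNPushing}. Concatenating the Legendrian isotopies coming from the Reidemeister moves with the formal Legendrian isotopies coming from the $K$-pushings yields the desired formal Legendrian isotopy between $\spin_{\hat e,\mathrm{Fr}}[L_0]$ and $\spin_{\hat e,\mathrm{Fr}}[L_1]$. The main obstacle I anticipate is the verification in the middle step: one must choose compatible local contact charts (identifying a neighbourhood of a parallel copy of $\hat e(K)$ in $J^1\R^n$ with $J^1(K\times \R)$, in the spirit of \cref{lem:HorizontalContactElements}) and confirm precisely that the spun balanced-stabilization bump matches the local front model of the $N$-pushing with its prescribed support and critical-value conditions.
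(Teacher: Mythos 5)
Your overall architecture is the same as the paper's (decompose the comparison of $L_0$ and $L_1$ into elementary $1$-dimensional moves, spin each move, identify the non-isotopy moves with $K$-pushings, and invoke \cref{prop:FormalClassNPushing} using $\chi(K)=0$), but the specific decomposition you chose does not work. The hypothesis is only $\operatorname{Rot}(L_0)=\operatorname{Rot}(L_1)$; the knots need not be smoothly isotopic. Your first step, ``after matching smooth types via an ambient isotopy, apply Fuchs--Tabachnikov,'' silently assumes they are, and since stabilization preserves the underlying smooth knot type, no amount of balanced stabilization will ever connect, say, a Legendrian unknot and a Legendrian trefoil of equal rotation number. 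The paper instead invokes the $h$-principle for Legendrian \emph{immersions} in $J^1\R$ (Legendrian Whitney--Graustein): equality of rotation numbers is exactly the condition for a generic regular homotopy $L_t$ through Legendrian immersions to exist, and the only non-embedded events of such a homotopy are isolated transverse double points, i.e.\ crossing changes, which is where arbitrary changes of knot type happen.

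The second issue is the identification in your middle step. The local model you describe --- two parallel strands $j^1 0 \sqcup j^1 1$ with one of them replaced by the graph of a bump function rising above the other --- is the $N$-pushing of \cref{subsec:N-pushing}, which at the front level is a \emph{clasp/crossing-change} move between two sheets. A balanced stabilization pair $s^+\circ s^-$ is a different move: it inserts a zigzag (a pair of cusps) into a \emph{single} strand and makes no reference to a second sheet, so spinning it does not land in the two-sheet local model required by the definition of $K$-pushing, and your verification cannot go through as stated. In the paper's proof the matching is immediate: the spinning of the family $L_t$ for $t\in[t_i-\varepsilon,t_i+\varepsilon]$ around a double-point time $t_i$ is, by inspection of the fronts, precisely a $K$-pushing along the parallel copy $e(K\times\{\pi_\F(p_i)\})$, and this is what the ``Moreover'' clause of the lemma is recording. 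To repair your argument you would replace the Fuchs--Tabachnikov reduction by the Legendrian immersion $h$-principle and identify the crossing changes (not stabilizations) with $K$-pushings; the remaining steps of your proposal (naturality of spinning under Legendrian isotopy, and the final appeal to \cref{prop:FormalClassNPushing}) are correct and agree with the paper.
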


\begin{proof}
    Following the notations above we write $e:K\times \R^2\rightarrow J^0\R^n$, to denote the tubular neighbourhood embedding of $\hat e$ induced by $\mathrm{Fr}$, in which we identify $\{k\}\times\R^2=J^0\R$. 
    
    By the $h$-principle for Legendrian immersions in $\R^3$, there is a generic homotopy of Legendrian immersions $L_t$, $t\in [0,1]$, between the given Legendrian knots. There is a finite number of times $0<t_1<\ldots <t_n<1$ in which a unique transverse self-intersection of $L_{t_i}$ appears at a point $p_i$. This is seen as a tangency in the front projection. Consider the spinning of $L_{t}$ in the time interval $t\in [t_i-\varepsilon, t_i+\varepsilon]$. It is an $N$-pushing along $N = e(K\times \{\pi_\F(p_i)\})$. The result follows then from \cref{prop:FormalClassNPushing}.
\end{proof}

The original definition of spinning was given in \cite{EES:NonIsotopicLegendrians} for $K=\S^1$ and later generalized by Golovko in \cite{Golovko} to $K=\S^k$. They work with the standard inclusion; specifically, they consider
\begin{equation*}
    \hat{e}: \S^k \hookrightarrow \R^{k+1}\times \{0\}\times \R_z \subseteq \R^{k+1}\times \R^{m-1} \times \R_z,
\end{equation*}
equipped with  the canonical normal framing of $\S^k\subseteq \R^{k+1} \times \R^{m-1}$. In this case, we denote the spinning construction simply by $\spin_{\S^k}[L]$ and refer to it as the \emph{standard $\S^k$-spinning}. This yields a map
\begin{equation*}
    \spin_{\S^k}:\Leg(N^{m},(J^1\R^m) )\to \Leg(\S^k\times N^m,J^1\R^{m+k}).
\end{equation*}

\subsubsection{Torus spinning}

For our purposes (constructing prelegendrians via spinning along a general embedding of the torus) we need to verify that torus spinning produces the same Legendrian isotopy class as iterated standard $\S^1$-spinning. The reason is that we understand the effect that $\S^1$-spinning has on Legendrian Contact Homology; see \cite[Example 4.20]{EES:NonIsotopicLegendrians}. 

The \emph{standard $k$-torus} $\T^k\subseteq \R^{k+1}$ is defined inductively as follows:
\begin{itemize}
    \item $\T^1 =\S^1\subseteq \R^{2}$ is the standard inclusion.
    \item Assume that $\T^{k-1}\subseteq \R^k$ is defined. Consider
    \begin{equation*}
        \S^1\subseteq \R^2(x_1,x_2)\times \{0\}\subseteq \R^2(x_1,x_2)\times  \R^{k-1}(x_3,\ldots,x_{k+1})
    \end{equation*}
    endowed with the canonical normal framing $\nu\simeq \mathrm{span}\langle\partial_r,\partial_{x_3},\ldots,\partial_{x_{k+1}}\rangle$. Identifying $\nu\cong \R^{k}$ and using the standard inclusion $\T^{k-1}\subseteq \R^{k}\simeq \nu$, we obtain the standard embedding $\T^k\subseteq \R^{k+1}$.
\end{itemize}
For the standard $k$-torus $\T^k\subseteq \R^{k+1}$ there is also a canonical normal framing; therefore we denote the corresponding spinning simply by $\spin_{\T^k}[L]$ for $L\in \Leg(J^1\R^m,\xi_\std)$, in complete analogy with the $k$-sphere case. The following lemma is immediate from the definition of the standard $k$-torus.
\begin{lemma}\label{lem:torus_spin_is_iter_S1_spin}
    Let $L\subseteq J^1\R^m$ be a Legendrian submanifold. Consider the standard torus $\T^k\subseteq \R^{k+1}$, then
    \begin{equation*}
        \spin_{\T^k}[L] \simeq (\spin_{\S^1})^k[L].
    \end{equation*}
\end{lemma}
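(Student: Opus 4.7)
My plan is to proceed by induction on $k$. The base case $k=1$ is immediate, since by definition of the standard torus $\T^1 = \S^1 \subset \R^2$ with matching canonical normal framings, so $\spin_{\T^1}[L] = \spin_{\S^1}[L]$ as Legendrian embeddings (not merely up to isotopy).

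For the inductive step, it suffices to prove the one-step identity
\begin{equation*}
\spin_{\T^k}[L] \simeq \spin_{\S^1}\bigl[\spin_{\T^{k-1}}[L]\bigr],
\end{equation*}
since applying the inductive hypothesis to the inner spinning then yields $(\spin_{\S^1})^k[L]$ (this application is legitimate because spinning is well-defined up to Legendrian isotopy of its input: $G_K$ is continuous in the input and $\hat\varphi_\ast$ is induced by an isocontact embedding, so isotopic inputs produce isotopic outputs).

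To establish the one-step identity I would unwind the definitions. By the inductive construction, the tubular embedding $\hat e_{\T^k}\colon \T^k = \S^1 \times \T^{k-1} \to \R^{k+1}$ factors as the standard $\S^1$-embedding in the first two coordinates composed with the $\T^{k-1}$-embedding placed in the $(k-1)$-dimensional normal slice $\langle \partial_r, \partial_{x_3}, \ldots, \partial_{x_{k+1}}\rangle$, and the canonical framing of $\T^k$ is precisely the concatenation of the $\S^1$- and $\T^{k-1}$-framings. Feeding this factorisation into the formula $\spin_{\hat e, \mathrm{Fr}} = \hat\varphi_\ast \circ G_K$: on the one hand, the isocontact embedding $\hat\varphi_{\T^k}$ decomposes as a composition of the two corresponding $\hat\varphi$'s (the fiberwise maps are affine and translation-invariant along the normal directions, so composition is transparent); on the other hand, the front-graph map $G_{\T^k}$ identifies, after rearranging coordinates, with applying $G_{\S^1}$ to the family of Legendrians $\{\spin_{\T^{k-1}}[L]\}$ (a constant family in this case).

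The main obstacle I expect is purely bookkeeping. There is no genuine geometric content beyond the fact that normal-framed spinning is associative under iteration; the challenge is to keep track of the natural linear permutation identifying $\R^m \times \R^{k+1} \times \R_z$ (the target of $\spin_{\T^k}$) with $\R^m \times \R^{k-1} \times \R^2 \times \R_z$ (the target of $\spin_{\S^1} \circ \spin_{\T^{k-1}}$), and to verify that under this permutation the two framings and two $J^0$-graph embeddings coincide on the nose. After this matching, the claimed equivalence can be realised by the identity of the underlying parametrised submanifolds, and the statement promotes to a Legendrian isotopy after composing with the linear coordinate permutation (which is a contactomorphism of $(J^1\R^{m+k},\xi_\std)$ isotopic to the identity through contactomorphisms, e.g.\ via a rotation of the Euclidean factor).
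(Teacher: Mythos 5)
Your proof is correct and follows exactly the route the paper intends: the paper states this lemma as ``immediate from the definition of the standard $k$-torus,'' and your induction via the one-step identity $\spin_{\T^k}[L]\simeq\spin_{\S^1}\bigl[\spin_{\T^{k-1}}[L]\bigr]$ is precisely the careful unwinding of that inductive definition, with the correct supporting observations (spinning respects Legendrian isotopy of its input, and the coordinate identifications match up to a linear contactomorphism isotopic to the identity).
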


%%%%%%%%%%%%%%%%%%%%%%%%%%%%%%%%%%%%%%%%%%%%%%%%%%%%
\section{Prelegendrian fronts} \label{sec:Preleg_front}

Consider an almost complex manifold $(X,J)$, and the associated fat manifold $(\Gr(X,J),\D_\can)$. By \cref{thm:contactisation_grasm}, the contactisation $\cC(\Gr(X,J),\D_\can)$ is contactomorphic to the unit cotangent bundle $\S T^*X$. This yields the commutative diagram:
\begin{equation*}
    \begin{tikzcd}[sep=scriptsize]
    	{(\Gr(X,J),\D_\can)} &&&& {(\mathbb{S}T^*X,\xi_\can)} \\
    	\\
    	\\
    	&& {(X,J)}
    	\arrow["{\pi_\PF}"', from=1-1, to=4-3]
    	\arrow["\Hopf"', from=1-5, to=1-1]
    	\arrow["{\pi_\F}", from=1-5, to=4-3]
    \end{tikzcd}
\end{equation*}

For simplicity, we assume throughout that all prelegendrians are \emph{co-oriented} (see \cref{subsec:preisotropic_submanifold}).

Given an embedded prelegendrian $\Lambda\subseteq \Gr(X,J)$, we will refer to $\pi_{\PF}(\Lambda)\subseteq (X,J)$ as a \emph{prelegendrian front}. Every generic Legendrian submanifold of $L\subseteq \S T^*X$ can be represented by its front $\mathcal{F}=\pi_\mathrm{F}(L)$ in $X$.  However, not every front $\cF$ in $X$ is a prelegendrian front; strong geometric constraints must be satisfied along the singularities of $\cF$ for this to be the case. Indeed, note that $\Hopf(L)$ is always tangent to $\D_\can$ but it may be singular due to the presence of self-intersections or singularities of mapping.

The main result of this section is a complete characterization of which \emph{simple} fronts are prelegendrian. It is a rephrasing of \cref{thm:PreLegendrianFronts}, from the introduction:
\begin{theorem}[Simple Prelegendrian Fronts]\label{thm:PrelgendrianFronts}
    A simple front $\mathcal{F}\subseteq(X,J)$ is a prelegendrian front if and only if its singular loci $\Sigma= \Sigma(\mathcal{F)}$ are co-real. Moreover, the prelegendrian $\Lambda\subseteq (\Gr(X,J),\D_\can)$ determined by $\mathcal{F}$ is unique.
\end{theorem}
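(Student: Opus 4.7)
The plan is to construct the candidate prelegendrian $\Lambda$ tautologically from $\cF$ and then verify the preisotropic and embeddedness conditions stratum-by-stratum. On the smooth stratum I set $\Lambda(x)\defi(x,\,T_x\cF\cap JT_x\cF)$, using that a real hyperplane is automatically co-real so that its complex part is a well-defined complex hyperplane of complex dimension $n$. Uniqueness is immediate: if $\Lambda'$ is any prelegendrian with $\pi_\Gr(\Lambda')=\cF$, the preisotropic condition forces the complex hyperplane $W'(x)\in\Lambda'$ to satisfy $W'(x)\subseteq T_x\cF$ at every smooth $x$, and $T_x\cF$ contains a unique complex subspace of complex dimension $n$ (its maximal complex part). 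Continuity then identifies $\Lambda'$ with $\Lambda$ everywhere.

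The key algebraic input is the dichotomy that any real subspace $V\subseteq T_pX$ of real dim $2n$ satisfies $\dim_\R(V\cap JV)\in\{2n-2,\,2n\}$, corresponding to $V$ being \emph{co-real} or \emph{complex} respectively. Applying this to $V=T_p\Sigma$: for the direction ($\Rightarrow$), at a self-intersection $p\in\Sigma_{A_1A_1}$ with branches $\cF_1,\cF_2$, the lifted hyperplanes $W_i=T_p\cF_i\cap JT_p\cF_i$ both contain $T_p\Sigma=T_p\cF_1\cap T_p\cF_2$; a dimension count shows $W_1=W_2$ iff $T_p\Sigma$ is itself a complex hyperplane of dim $2n$, so co-reality of $\Sigma_{A_1A_1}$ is equivalent to $\Lambda$ being embedded at $p$. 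At a cusp $p\in\Sigma_{A_2}$ the Legendrian lift $L\subseteq\S T^*X$ is smooth but $\pi_\F|_L$ drops rank by one along a cusp direction $K\subseteq T_pL$, and $\Hopf|_L$ fails to be an immersion exactly when $K$ is tangent to the Hopf fibre. A computation in a standard cusp model shows that the family of complex parts $H(t)\cap JH(t)$ of the tangent hyperplane along the cusp is stationary in the cusp direction iff $T_p\Sigma_{A_2}$ is $J$-invariant, so co-reality of $\Sigma_{A_2}$ is equivalent to $\Hopf|_L$ being an immersion at $p$.

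For the direction ($\Leftarrow$), the above analysis reverses: co-reality at the singular loci yields injectivity of $\Hopf|_L$ at self-intersections and the immersion property at cusps, so $\Lambda=\Hopf(L)$ is a smooth embedded $(2n+1)$-submanifold. The preisotropic condition $\dim(T_W\Lambda\cap\D_\can(W))=2n$ is then checked stratum-by-stratum: at smooth points it follows from $W\subseteq T_x\cF$; at self-intersections each sheet contributes independently; at cusps, the count $\dim(W\cap T_p\Sigma_{A_2})=2n-1$ (valid in the co-real case) together with the one-dimensional contribution of $\ker d\pi_\Gr|_\Lambda$ yields exactly $2n$. I expect the cusp case to be the main obstacle, since showing in an arbitrary cusp chart that $\ker d\Hopf|_L=K$ iff the cusp edge is $J$-invariant requires careful use of the complexification $\alpha^J=\alpha-i\alpha\circ J$ of the defining forms of $T\cF$, together with \cref{lem:dual_complex_basis}, to track when the family of complex hyperplanes $H(t)\cap JH(t)$ is constant as one approaches the cusp along $L$.
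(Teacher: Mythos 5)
Your overall strategy is the same as the paper's: reduce to germs of the three strata, define the lift on the smooth locus by the complex tangencies $T_x\cF\cap JT_x\cF$ (equivalently, as $\Hopf$ of the Legendrian lift), and show that embeddedness at self-intersections and the immersion property at cusps are each equivalent to co-reality of the corresponding singular stratum. Your uniqueness argument (preisotropy forces $W\subseteq T_x\cF$, hence $W=(T_x\cF)^J$, then density of the smooth stratum) is correct, as are the dichotomy $\dim_\R(V\cap JV)\in\{2n-2,2n\}$ and the final dimension counts verifying the preisotropic condition.

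Two points need repair. First, at a self-intersection the claim that both $W_i=(T_p\cF_i)^J$ contain $T_p\Sigma$ is false in general: $W_i$ and $T_p\Sigma$ are two $2n$-dimensional subspaces of the $(2n+1)$-dimensional space $T_p\cF_i$, so $W_i\supseteq T_p\Sigma$ iff $W_i=T_p\Sigma$ iff $T_p\Sigma$ is complex --- your premise holds exactly in the degenerate case, and if it held for both branches it would force $W_1=T_p\Sigma=W_2$ outright, contradicting the dichotomy you want to extract. The correct one-line count (this is \cref{prop:preleg_fton_intersection_point}) is $W_1\cap W_2=(T_p\cF_1\cap JT_p\cF_1)\cap(T_p\cF_2\cap JT_p\cF_2)=T_p\Sigma\cap JT_p\Sigma$, which has dimension $2n$ (i.e.\ $W_1=W_2$) iff $T_p\Sigma$ is complex. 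Second, the cusp case --- the crux of the theorem --- is asserted rather than proved. Your reformulation ($\Hopf|_L$ fails to be an immersion iff the cusp direction $K=\ker d\pi_\F|_L$ is tangent to the Hopf fibre) is correct and equivalent to what the paper computes in \cref{prop:PrelegendrianCusp}: writing $T\cF=\ker\alpha_t$ with $\alpha_t=t\,\d x_{2n+1}+\d x_{2n+2}$, the $\CP^n$-component of the lift is $[(\alpha_t)^J]=[t(\d x_{2n+1})^J+(\d x_{2n+2})^J]$, whose $t$-derivative at $t=0$ is nonzero precisely when $(\d x_{2n+1})^J$ and $(\d x_{2n+2})^J$ are $\C$-independent, which by \cref{lem:dual_complex_basis} is exactly co-reality of the cusp edge; the converse also needs the $O(t^2)$ estimate on the coefficients to see that the derivative genuinely vanishes when the edge is complex. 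Until that computation is written out, the central equivalence of the theorem remains unestablished.
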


\begin{remark}\label{rmk:preleg_immersion}
    We point out that a ``simple'' front $\mathcal{F}\subseteq(X,J)$ whose cusp locus is co-real, but whose self-intersection locus is either not co-real or not transverse, also defines a unique \emph{immersed} prelegendrian $\Lambda\subseteq (\Gr(X,J),\D_\can)$. The self-intersection locus of $\Lambda$ is precisely the preimage, under the projection $\pi_{\PF}$, of the union of the complex points of the self-intersection locus of $\mathcal{F}$ and the non-transverse self-intersections of $\mathcal{F}$
\end{remark}

\subsection{Proof of \cref{thm:PrelgendrianFronts}}

Since the property of being a prelegendrian front is local, it suffices to work with the germ of $\cF$. We therefore argue case by case according to the singularity type.

Let $p\in\cF\subseteq (X,J)$. Let $2n+2$ be the dimension of $X$. Since $\cF$ is a simple front, there exists a diffeomorphism from an open neighbourhood of $p$ in $X$ onto an open neighbourhood of $0$ in $\R^{2n+2}$:
\begin{equation*}
    \psi:(\Op(p),p)\to (\R^{2n+2},0),
\end{equation*}
such that, via $\psi$, the germ of $\cF$ at $(\Op(p),p)$ is in the normal form of \cref{def:simple_front}, namely:
\begin{itemize}
    \item [(i)] For the $A_1$-singularity, $\cF=\R^{2n+1}\times \{0\}\subseteq \R^{2n+2}$ is the horizontal hyperplane. These are smooth points of the front.
    \item [(ii)] For the $A_1A_1$-singularity, the front $\cF$ is the union of two hyperplanes intersecting transversely at $0$. This corresponds to a transverse self–intersection of the front.
    \item [(iii)] For the $A_2$-singularity, $\mathcal{F}$ is parametrized by the front map
    \begin{equation}\label{eq:param_cusp}
        \begin{aligned}
        f:(\R^{2n+1},0)\to& (\R^{2n+2},0) \\
        (q_1,\ldots,q_{2n},t) \mapsto& (q_1,\cdots,q_{2n},-3t^2,2t^3).
        \end{aligned}
    \end{equation}
    These are the cusp points of the front. 
    
\end{itemize}
For simplicity, we continue to denote by $J$ the pushforward almost–complex structure $\psi_*J$ on $\R^{2n+2}$. Since the almost–complex Grassmannian construction is functorial, we may reduce to the case $X=\R^{2n+2}$, with $\cF$ one of the three types above.

We treat each case separately. We begin with the smooth stratum (i). In this case, the prelegendrian is determined by the complex tangencies of the front (in the contact setting, by the real tangencies). This is the content of the following:

\begin{proposition}[Smooth points]\label{prop:preleg_smooth_point}
    The (oriented) horizontal hyperplane $\mathcal{F}=\R^{2n+1}\times \{0\}\subseteq \R^{2n+2}$ admits a prelegendrian lift:
    \begin{align*}
        g: \R^{2n+1} &\to \Gr(\R^{2n+2},J)= (\R^{2n+2}\times \CP^{n},\D_{\std,J});\\
        \mathbf{q} &\mapsto \big( (\mathbf{q},0); H\cap J_x H \big),
    \end{align*}
    such that the following diagram commutes: 
    
    \begin{equation}
        \begin{tikzcd}
	&&& {\S T^*\R^{2n+2}=\R^{2n+2}\times \S^{2n+1}} \\
	\\
	{\R^{2n+1}} &&& {\Gr(\R^{2n+2},J)=\R^{2n+2}\times \CP^{n}}
	\arrow["\Hopf", from=1-4, to=3-4]
	\arrow["{\cL(g)}", from=3-1, to=1-4]
	\arrow["g", from=3-1, to=3-4]
    \end{tikzcd}
    \end{equation}
    where $\cL(g)(\mathbf{q}) = \big( (\mathbf{q},0); H \big)$ is the Legendrian lift of $g$.
\end{proposition}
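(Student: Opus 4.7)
The plan is to verify in turn that $g$ is a well-defined smooth map into $\Gr(\R^{2n+2},J)$, that it is prelegendrian, and that the stated diagram with the Hopf map commutes. All three are essentially linear-algebraic consequences of the setup, combined with the identification in \cref{thm:contactisation_grasm} of the contactisation of $(\Gr(X,J),\D_\can)$ with the space of co-oriented contact elements.

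For well-definedness, I would observe that at each $x=(\mathbf{q},0)$ the real tangent hyperplane $H = T_x\cF$ is automatically co-real, so its $J_x$-invariant part $H\cap J_xH$ has even real dimension. Since $\dim_\R H = 2n+1$ is odd, this complex part must have real codimension exactly $2$ in $\R^{2n+2}$, hence is a complex hyperplane, i.e. a point in the fibre of $\Gr(\R^{2n+2},J)$ over $x$. This depends smoothly on $\mathbf{q}$ through $J_x$, and injectivity is automatic because $g$ is a section of $\pi_\Gr$ over $\R^{2n+1}\times\{0\}$.

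For the prelegendrian condition, the target fat manifold has rank $4n$ and corank $2$, so by \cref{lem:DimensionPreisotropic} the intersection of $Tg$ with $\D_\can$ must be of real dimension $2n$ for $g$ to be prelegendrian of dimension $2n+1$. Given $v\in T_\mathbf{q}\R^{2n+1}$, the horizontal projection $d\pi_\Gr(dg(v))$ is just $v$ seen inside $H$, so $dg(v)\in \D_\can(W) = (d\pi_\Gr)^{-1}(W)$ if and only if $v\in W\defi H\cap J_xH$. Because $W\subseteq H$, the intersection $dg(T_\mathbf{q}\R^{2n+1})\cap \D_\can$ is canonically identified with $W$, which has the required real dimension $2n$.

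For the commutativity of the diagram, the canonical Legendrian lift furnished by \cref{prop:LiftingPreisotropics} sends $\mathbf{q}$ to the pair $((\mathbf{q},0); H)$, where $H$ is the co-oriented tangent hyperplane viewed as an element of $\S T^*\R^{2n+2}$. By \cref{thm:contactisation_grasm}, the map from $\S T^*\R^{2n+2}\simeq \cC(\Gr(\R^{2n+2},J),\D_\can)$ down to $\Gr(\R^{2n+2},J)$ is precisely the fibrewise Hopf map $H\mapsto H\cap J_xH$. Hence $\Hopf(\cL(g)(\mathbf{q})) = ((\mathbf{q},0); H\cap J_xH) = g(\mathbf{q})$, as required. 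I do not anticipate any serious obstacle; the one subtle point is the codimension count in the first step, where the parity of $\dim_\R H$ is what forces the complex part to have the correct dimension, and this relies on $H$ being a smooth real hyperplane, which is the defining feature of the $A_1$-stratum.
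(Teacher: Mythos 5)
Your proposal is correct and follows essentially the same route as the paper: the paper defines $g$ as the fibrewise Hopf image of the conormal-type Legendrian lift of the front, notes it is an embedding because it projects onto a parametrisation of $\cF$, and dismisses the prelegendrian condition as holding ``by the definition of $\D_\can$''. You simply make that last step explicit by checking that $H=T_x\cF$ is co-real, so $H\cap J_xH$ is a genuine complex hyperplane, and that $dg(T_{\mathbf q}\R^{2n+1})\cap\D_\can$ is identified with $H\cap J_xH$ and hence has the required dimension $2n$; there are no gaps.
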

\begin{proof}
    The fibrewise Hopf map $\Hopf:\S T^*\R^{2n+2}\to \Gr(\R^{2n+2},J)$ (see \cref{sec:Review_coreal}) sends an oriented hyperplane to its complex part. The Legendrian determined by the hyperplane front $\mathcal{F}$ is parametrised by
    \begin{equation*}
        (q_1,\ldots,q_{2n+1}) \to (q_1,\ldots,q_{2n+1},0; H)\in \R^{2n+2}\times \S^{2n+1}
    \end{equation*}
    Thus the prelegendrian map is the composition
    \begin{align*}
        g:\R^{2n+1}&\to \R^{2n+2}\times \S^{2n+1} \to \R^{2n+2}\times \CP^{n},\\
        \mathbf{q}&\mapsto (\mathbf{q},0;H_0)\mapsto (\mathbf{q},0:H_0\cap J H_0).
    \end{align*}
    Since the first projection parametrises $\mathcal{F}$, $g$ is an embedding; and it is prelegendrian by the definition of $\D_\can$.
\end{proof}

Now, we deal with the self-intersection points (ii). In this case, the prelegendrian lift of each smooth branch is given by the complex tangencies of the front (\cref{prop:preleg_smooth_point}).  The lift of both branches will be embedded if and only if the complex tangencies of each branch are transverse. Since they intersect in a $2n$-dimensional space, it is enough to require this intersection to not be complex: 
\begin{proposition}[Self-intersections] \label{prop:preleg_fton_intersection_point}
    Let $\mathcal{F}=H_1\cup H_2\subseteq \R^{2n+2}$ be the front given by the union of two transversal hyperplanes. Then, it lifts to an embedded prelegendrian  if and only if $\Sigma= H_1\cap H_2$ is co-real.
\end{proposition}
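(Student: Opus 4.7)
The plan is to reduce to the smooth point case (\cref{prop:preleg_smooth_point}), which already provides the unique prelegendrian lift of each individual branch $H_i$, namely the map
\[
g_i: H_i \to \R^{2n+2}\times \CP^n,\qquad x\mapsto \bigl(x,\; H_i\cap J_x H_i\bigr),\quad i=1,2.
\]
Setting $\Lambda_i\defi g_i(H_i)$, any prelegendrian lift of $\cF=H_1\cup H_2$ must, when restricted to each branch, coincide with $\Lambda_i$ by the uniqueness part of \cref{prop:preleg_smooth_point}. Hence the only candidate lift is $\Lambda_1\cup\Lambda_2$, which is automatically an immersed prelegendrian; what remains is to determine exactly when it is embedded, and uniqueness of the embedded lift will then be automatic.

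Since $\Lambda_i$ projects onto $H_i$, any collision point of $\Lambda_1$ and $\Lambda_2$ must lie over the self-intersection locus $\Sigma=H_1\cap H_2$. For $p\in\Sigma$, we have $g_1(p)=g_2(p)$ if and only if the two complex hyperplanes $H_1\cap J_p H_1$ and $H_2\cap J_p H_2$ in $(T_p\R^{2n+2},J_p)$ coincide as elements of $\CP^n$. So it suffices to characterize this pointwise condition in terms of co-reality.

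The key linear-algebra claim, applied at each $p\in\Sigma$, is the following: in a $2(n+1)$-real-dimensional complex vector space $(V,J)$, given two transverse real hyperplanes $H_1,H_2$ with intersection $\Sigma_p\defi H_1\cap H_2$, one has $H_1^J=H_2^J$ if and only if $\Sigma_p$ is $J$-invariant. Indeed, if $\Sigma_p=J\Sigma_p$ then $\Sigma_p$ is a complex hyperplane contained in each $H_i$, and since $H_i^J$ is by definition the unique maximal complex subspace of $H_i$ (of the same real dimension $2n$ as $\Sigma_p$), we get $\Sigma_p=H_1^J=H_2^J$; conversely, if $H_1^J=H_2^J$, this common space is a complex hyperplane contained in $\Sigma_p$ and of the same real dimension, so it equals $\Sigma_p$, which is therefore $J$-invariant.

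Finally, $\Sigma_p$ is $J_p$-invariant if and only if $\Sigma_p+J_p\Sigma_p=\Sigma_p\subsetneq T_p\R^{2n+2}$, i.e. if and only if $\Sigma$ fails to be co-real at $p$. Combining the three steps: $\Lambda_1\cap\Lambda_2=\emptyset$ precisely when at every $p\in\Sigma$ the complex hyperplanes $H_1\cap J_pH_1$ and $H_2\cap J_pH_2$ differ, which occurs precisely when $\Sigma$ is co-real at every point. No step is a serious obstacle; the one requiring a moment's care is the linear-algebra claim, where the dimension count must be tracked carefully since $H_i^J$ has real codimension $2$ in $V$ while $\Sigma_p$ has real codimension $2$ as well—exactly the coincidence that makes the equivalence clean.
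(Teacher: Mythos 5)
Your proof is correct and follows essentially the same route as the paper: the lift on each branch is forced to be the field of complex tangencies, so embeddedness reduces to the pointwise linear-algebra fact that $H_1\cap J_pH_1 = H_2\cap J_pH_2$ precisely when $\Sigma_p$ is $J_p$-invariant, i.e.\ precisely when $\Sigma$ fails to be co-real at $p$. The paper compresses this into the single identity $\Sigma\cap J\Sigma=(H_1\cap JH_1)\cap(H_2\cap JH_2)$ plus a dimension count; your write-up just makes that dimension count explicit.
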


\begin{proof}
    We have that
    \begin{equation*}
        \Sigma^{2n}\cap J(\Sigma^{2n})= (H_1\cap J H_1)^{2n}\cap (H_2\cap JH_2)^{2n} \subseteq \Sigma^{2n}.
    \end{equation*}
    By dimensional reasons, $H_1\cap J H_1 = H_2\cap JH_2$ if and only if $\Sigma$ is $J$-complex.
\end{proof}

Finally, we consider the cusp stratum (iii). According to \cref{prop:preleg_smooth_point}, there is a genuine prelegendrian lift along the smooth stratum. It extends uniquely to the cusp points because it is the projection of the legendrian lift, which is an embedding. We must check that this lift is an immersion. In the contact case this is true because the tangent space of the front rotates continuously when one crosses the cusp locus transversely. It follows that we must check that the complex tangencies also rotate as we cross the cusp locus. The following geometric condition ensures this:
\begin{proposition}[Cusps points] \label{prop:PrelegendrianCusp}
    Let $f:\R^{2n+1}\to \R^{2n+2}$ be the front map given in Equation \eqref{eq:param_cusp}, and let $\cF=f(\R^{2n+1})$. Then $f$ lifts to a prelegendrian embedding if and only if $\Sigma = \Sigma(\cF)$ is co-real.
\end{proposition}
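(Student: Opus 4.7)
The plan is to lift the cusp front to a Legendrian in $\S T^*\R^{2n+2}$ (which always exists and is embedded, being the standard Legendrian cusp), push it to $\Gr(\R^{2n+2},J)$ via the Hopf map, and determine precisely when this composition is an embedding at $t=0$. First, I compute the conormal of the front away from the cusp: for $t\neq 0$, the image of $\d f_{(q,t)}$ is annihilated by a single, well-defined (up to scale) $1$-form, and a direct computation yields
\[
\alpha_t \defi t\,\d x_{2n+1} + \d x_{2n+2},
\]
so $H_t = \ker\alpha_t$. The family $\{\alpha_t\}$ extends smoothly to $t=0$ with $\alpha_0 = \d x_{2n+2}$. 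By \cref{prop:preleg_smooth_point}, on the smooth stratum the prelegendrian lift is
\[
g(q,t) = \bigl(f(q,t);\, [\alpha_t^J]_\C\bigr)\in \Gr(\R^{2n+2},J),
\]
where $\alpha_t^J = \alpha_t - i\,\alpha_t\circ J$, and this formula extends $g$ smoothly across $t=0$ since $\alpha_0^J = (\d x_{2n+2})^J$ is nonzero in $\Hom_\C(\R^{2n+2},\C)$.

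Next, I check when $g$ is an immersion at $t=0$. The partial derivatives in the $q_i$ directions are injective into the base $\R^{2n+2}$, so the only issue is the $t$-direction: $\d f_{(q,0)}(\partial_t)=0$, hence $\d g_{(q,0)}(\partial_t)$ must be detected in the $\CP^n$ fibre. By the geometry of the Hopf fibration $\S^{2n+1}\to \CP^n$, the tangent vector $\tfrac{d}{dt}[\alpha_t^J]_\C\big|_{t=0}$ vanishes in $\CP^n$ if and only if $\tfrac{d}{dt}\alpha_t^J\big|_{t=0} = (\d x_{2n+1})^J$ is $\C$-proportional to $\alpha_0^J = (\d x_{2n+2})^J$. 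By \cref{lem:dual_complex_basis}, this $\C$-proportionality is equivalent to
\[
\ker(\d x_{2n+1})\cap\ker(\d x_{2n+2}) \;=\; \R^{2n}\times\{0\}\times\{0\} \;=\; \Sigma
\]
being $J$-invariant. Since $\Sigma$ is a real $2n$-plane in $\R^{2n+2}$, its $J$-complex part $\Sigma\cap J\Sigma$ has real dimension $2n$ or $2n-2$, and the latter is exactly the condition $\Sigma + J\Sigma = \R^{2n+2}$, i.e.\ $\Sigma$ is co-real. Thus $g$ is an immersion at the cusp points if and only if $\Sigma$ is co-real.

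It remains to check that, when $\Sigma$ is co-real, $g$ is an embedding, and to establish uniqueness. Injectivity away from $t=0$ is immediate from \cref{prop:preleg_smooth_point}; injectivity at $t=0$ is automatic since the $q$-components already separate points in $\Sigma$; and injectivity in a punctured neighbourhood of $\{t=0\}$ follows because the complex line $[\alpha_t^J]_\C\in \CP^n$ varies non-trivially with $t$ precisely by the immersion condition just established. Finally, uniqueness of the prelegendrian is immediate: its Legendrian lift in $\S T^*\R^{2n+2}$ is uniquely determined by the simple front $\cF$ (the standard Legendrian cusp model), and the prelegendrian is its Hopf image.

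The only delicate point is the identification of the $t$-derivative of $g$ in the $\CP^n$-fibre; once one writes it out as the class of $(\d x_{2n+1})^J$ modulo $\alpha_0^J$ and invokes \cref{lem:dual_complex_basis} together with the dimension dichotomy (complex vs.\ co-real for a real hyperplane of $\R^{2n+2}$ viewed through its $2n$-dim $\Sigma$), the equivalence drops out. This is the step to carry out with care; everything else is a smoothness and injectivity check.
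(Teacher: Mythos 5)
Your proposal is correct and follows essentially the same route as the paper: you compute the conormal family $\alpha_t = t\,\d x_{2n+1}+\d x_{2n+2}$, identify the candidate lift $g=(f;[\alpha_t^J])$, reduce the immersion condition at $t=0$ to non-vanishing of the $t$-derivative in the $\CP^n$-fibre (legitimate since $\d f(\partial_t)=0$ there, so the base point is stationary and the variation of $J$ does not enter), and invoke \cref{lem:dual_complex_basis} to translate $\C$-dependence of $(\d x_{2n+1})^J$ and $(\d x_{2n+2})^J$ into $J$-invariance versus co-reality of $T\Sigma$. The only cosmetic difference is that you phrase the key step via the general criterion for when a curve in $\CP^n$ has vanishing derivative, whereas the paper expands $\alpha_t^J$ in an adapted complex coframe and compares $[0:\cdots:0:t:1]$ with $[O(t^2):\cdots:O(t^2):1]$.
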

\begin{proof}
    First, write the coordinates of $\R^{2n+2}$ as $(x_1,\ldots,x_{2n+2})$. Define the family of $1$-forms $\a_t\defi t\d x_{2n+1}+\d x_{2n+2}$. A simple computation shows that the tangent space of the front $\cF$ is 
    \begin{equation*}
        T_{f(\mathbf{q},t)}\cF =\ker \alpha_t.
    \end{equation*}
    The singular set of $\cF$ is
    \begin{equation*}
        \Sigma = f(\R^{2n}\times\{0\})= \{ x_{2n+1} = x_{2n+2}=0 \}\subseteq \mathcal{F},
    \end{equation*}
    with tangent space
    \begin{equation*}
        T_{f(\mathbf{q},0)}\Sigma=\mathrm{span}(\partial_{x_1},\ldots,\partial_{x_{2n}}) = \ker (\d x_{2n+1})\cap\ker(\d x_{2n+2}),
    \end{equation*}
    Therefore, the Legendrian embedding associated to $f$ is
    \begin{align*}
        \tilde{g}:\R^{2n+1}\to \R^{2n+2}\times \S^{2n+1} ;\\
        (\mathbf{q},t)\mapsto (f(\mathbf{q},t), \alpha_t),
    \end{align*}
    with dual identification. According to \cref{prop:preleg_smooth_point}, the associated prelegendrian map must be
    \begin{align*}
        g:\R^{2n+1}&\to \R^{2n+2}\times \CP^n;\\
        (\mathbf{q},t)&\mapsto (f(\mathbf{q},t); (\alpha_t)^J),
    \end{align*}
    where
    \begin{equation*}
        (\a_t)^J = t (\d x_{2n+1})^J + (\d x_{2n+2})^J
    \end{equation*}
    is the complexification of $\a_t$.

    Observe that $g$ is injective, and, moreover, is an embedding away from $t=0$. At $\{t=0\}$, the vectors $\{Df_{(\mathbf{q},0)} (\partial_{q_i}), i=1,\ldots, 2n\} \subseteq T_{f(\mathbf{q},t)} \R^{2n+2}$ span a $2n$-dimensional subspace, while $Df_{\mathbf{q},0} (\partial_t)=0$. Therefore, $g$ will be an immersion at a point $(\mathbf{q},0)$ if and only if 
    \[ Dg_{(\mathbf{q},0)} (\partial_t)=(0,D_{(\mathbf{q},0)}(\a_t)^J (\partial_t))\neq 0. \] 

    Since $T_{f(\mathbf{q},0)}\Sigma = \ker (\d x_{2n+1})\cap\ker(\d x_{2n+2})$, \cref{lem:dual_complex_basis} applies, i.e. $T_{f(\mathbf{q},0)}\Sigma$ is $J$-invariant if and only if the complexifications $(\d x_{2n+1})^J$ and $(\d x_{2n+2})^J$ are $\C$–linearly dependent.

    Assume first that $\Sigma$ is co-real. Then, $\{(\d x_{2n+1})^J,(\d x_{2n+2})^J\}$ are $\C$-linearly independent along $\Sigma$. Since being linearly independent is an open condition, this is true in a neighbourhood of the cusp locus. Therefore, $\{(\d x_{2n+1})^J,(\d x_{2n+2})^J\}$ can be completed to a $\C$-basis of $\Hom_\C(T\R^{2n+2},\C)$ locally. In that basis, the value of $g$ in the $\CP^n$ component is of the form $[0:\cdots:0:t:1]$. In particular, the derivative with respect to $t$ is non-zero, hence $g$ is an immersion.

    Conversely, assume that $\Sigma$ is not co-real at some $f(\mathbf{q}_0,0)$, i.e. $T_{f(\mathbf{q}_0,0)}\Sigma$ is $J$-invariant. Hence $(\d x_{2n+1})^J$, $(\d x_{2n+2})^J$ are $\C$-linearly dependent at the point $f(\mathbf{q}_0,0)$. As in the  previous case, we complete $(\d x_{2n+2})^J$ to a basis $\{v_1,\ldots, v_n, (\d x_{2n+2})^J\}$ over $\R^{2n+2}$. Locally around $f(\mathbf{q}_0,0)$ we express $(\d x_{2n+1})^J$ in terms of this basis:
    \[ (\d x_{2n+1})^J = a_1(\mathbf{q},t)v_1+\cdots+ a_n(\mathbf{q},t)v_n + a_{n+1}(\mathbf{q},t) (\d x_{2n+2})^J, \]
    where $a_i(\mathbf{q},t)\in\C$,  $i=1,\ldots,n+1$, are the coefficients. Using the assumption that $(\d x_{2n+1})^J$ and $(\d x_{2n+2})^J$ are $\C$-linearly dependent at $f(\mathbf{q}_0,0)$ we have that:
    \begin{itemize} 
    \item $a_{n+1}(\mathbf{q}_0,t)\neq 0$
    \item $a_{i}(\mathbf{q}_0,t)=O(t)$ for $i=1, \ldots, n$.
    \end{itemize}
    
    Therefore, for $t$ sufficiently small, the value of $g(\mathbf{q}_0,t)$ in the $\CP^n$ component is of the form:
 \begin{equation*}
\begin{aligned}
  [t a_1(\mathbf{q}_0,t):\cdots: t a_n(\mathbf{q}_0,t):1+t a_{n+1}(\mathbf{q}_0,t)]
  &= [O(t^2):\cdots:O(t^2):1+O(t)] \\
  &= [O(t^2):\cdots:O(t^2):1].
\end{aligned}
\end{equation*}
This implies that the derivative of $g$ along the $t$-direction vanishes at the point $(\mathbf{q},0)$, and $g$ fails to be an immersion.
\end{proof}

\subsection{Simple prelegendrian Reidemeister moves}

The study of prelegendrian simple fronts conducted above can be generalised to $1$-parametric families. 
\begin{definition}
We will say that a one–parameter family of Legendrian fronts  $(\cF_s)_{s\in[0,1]}$ in $(X,J)$ undergoes a \emph{Reidemeister I move} if
\begin{itemize}
    \item[(1) ] There is a closed coorientable hypersurface $O \subseteq \cF_0$ contained in the smooth locus such that
    \item[(2) ] $\cF_s$ is supported in $\Op(O) \cong O \times \R^2$ with $\Op(O) \cap \cF_0 \cong O \times \R$, and
    \item[(3) ] $\cF_s$ is the product of $O$ with the standard $1$-dimensional move of $\R$ in $\R^2$ (\cref{fig:R_moves}).
\end{itemize}
The \emph{Reidemeister moves II and III} are defined similarly by considering $O$ to be one of the components of the cusp and self-intersection loci, respectively.
\end{definition}

Then:
\begin{lemma}\label{lem:PrelegendrianReidemeister}
    Suppose that $\cF_0$ is a simple prelegendrian front. Introduce a Reidemeister move along a coreal submanifold $O \subseteq \cF_0$. Then, the resulting family of fronts $(\cF_s)_{s\in[0,1]}$ is prelegendrian.
\end{lemma}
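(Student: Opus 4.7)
The plan is to invoke \cref{thm:PrelgendrianFronts} at every $s\in[0,1]$: it suffices to verify that for each $s$ the front $\cF_s$ is simple and that its singular loci are co-real in $(X,J)$. The analysis is local, so we work in the product chart $\Op(O)\cong O\times\R^2$, where by hypothesis the family has the product form $\cF_s=O\times\gamma_s$ for $\gamma_s\subseteq\R^2$ the standard $1$-dimensional Reidemeister move. Generically, $\gamma_s$ is a simple front in $\R^2$ with finitely many singular points $\{p_i(s)\}$ (cusps and transverse double points), so $\cF_s$ is a simple front whose singular locus is the disjoint union of slices $\bigsqcup_i(O\times\{p_i(s)\})$, each diffeomorphic to $O$.

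The central slice $O\times\{0\}$ is co-real in $(X,J)$: in case I this is the standing hypothesis, while in cases II and III it follows from applying \cref{thm:PrelgendrianFronts} to the prelegendrian front $\cF_0$, since $O$ is a component of the singular locus of $\cF_0$. Co-reality is an open condition on subspaces of the fibres of $TX$, and the tangent space of the slice $O\times\{p\}$ at $(q,p)$ is the image of $T_qO\oplus\{0\}$ under the chart diffeomorphism; this depends continuously on $p$. Since $O$ is compact and $p$ ranges over a small disc in $\R^2$, after shrinking the product neighbourhood $\Op(O)$ (equivalently, the support of $\gamma_s$) we can ensure that every slice $O\times\{p\}$ in the chart is co-real. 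Applying \cref{thm:PrelgendrianFronts} produces, for each generic $s$, a unique embedded prelegendrian lift $\Lambda_s$ of $\cF_s$.

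It remains to deal with the isolated exceptional times at which $\gamma_s$ passes through a codimension-$1$ degeneration (swallowtail, tangential double crossing, or triple crossing). At such moments $\cF_s$ is not simple in the sense of \cref{def:simple_front}, but its singular locus is still a union of slices $O\times\{p_i(s)\}$ with co-real tangent spaces, and the prelegendrian lift construction from \cref{prop:preleg_smooth_point}, \cref{prop:preleg_fton_intersection_point} and \cref{prop:PrelegendrianCusp} extends continuously across these moments by the product structure in $\Op(O)\cong O\times\R^2$: the tangent hyperplane and its complex part along the $O$-factor are unchanged, and the transverse $1$-dimensional picture in $\R^2$ reduces the analysis to standard $1$-parametric front singularity theory.

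The main delicate point is therefore not geometric but combinatorial: one must catalogue the singular strata appearing in each of the three Reidemeister moves and check that each is indeed a union of slices $O\times\{p_i(s)\}$. Once this bookkeeping is done, the co-reality of $O$ propagates automatically to all of them via the product structure, and the lemma follows.
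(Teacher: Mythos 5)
Your proposal is correct and follows essentially the same route as the paper: reduce to the product chart $\Op(O)\cong O\times\R^2$, observe that every singular stratum of $\cF_s$ is a slice $O\times\{p\}$ and hence co-real by openness of co-reality plus compactness of $O$ (this is the paper's \cref{lem:bundle_over_coreal_is_coreal}), and then invoke the local lift arguments of \cref{prop:preleg_fton_intersection_point} and \cref{prop:PrelegendrianCusp}. You are somewhat more explicit than the paper about the degenerate instants of the move, which the paper dispatches by asserting that the arguments of those propositions ``apply verbatim'' to $\cF_{1/2}$; neither treatment spells out the degenerate local models in full, so your level of rigour matches the original.
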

\begin{proof}
By construction, the singularities of $\cF_{1/2}$ occur along an embedded co-real submanifold. The argument in the proof of \cref{prop:preleg_fton_intersection_point} applies verbatim for the RII and RIII moves. For the RI move, the argument from \cref{prop:PrelegendrianCusp} applies.
\end{proof}
The resulting prelegendrian homotopies are referred to as \emph{prelegendrian Reidemeister} I, II and III moves.

\begin{figure}[h]
    \centering
    \includegraphics[width=1\linewidth]{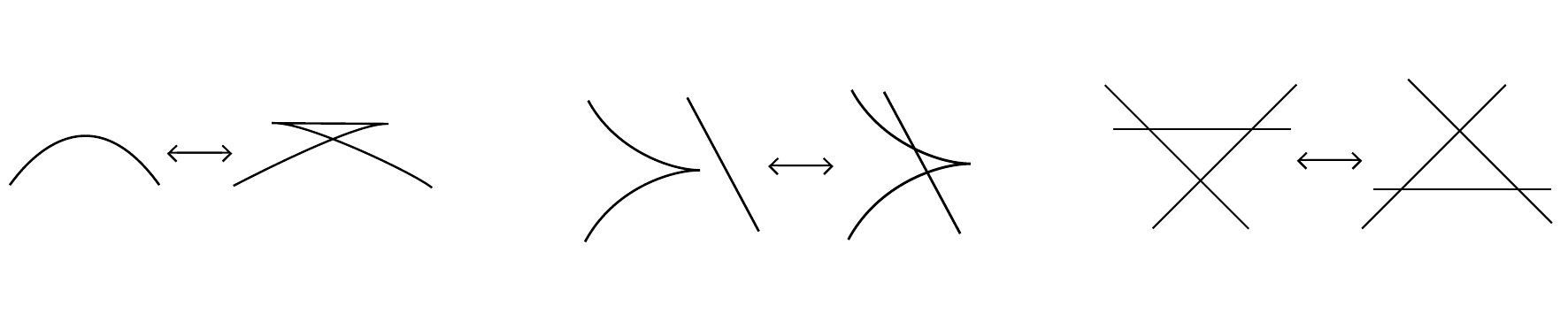}
    \caption{From left to right the fronts $\hat{\cF}_s\subseteq J^0\R$, $s\in[0,1]$, of the $1$-dimensional Reidemeister moves I, II and III. The higher dimensional moves are locally modelled on $\hat{\cF}_s\times\R^n\subseteq J^0\R\times\R^n=J^0\R^{n+1}$, $s\in[0,1]$.}
    \label{fig:R_moves}
\end{figure}

\begin{remark}\label{rmk:PrelegendrianRIMove}
In plain words, \cref{lem:PrelegendrianReidemeister} says that Reidemeister moves for prelegendrian fronts can be performed along co-real submanifolds. In particular, let $\cF$ be a prelegendrian front and $N\subseteq \Sigma_{A_1}(\cF)$ a codimension $1$ co-real submanifold in the smooth stratum. Then, there exists a homotopy of prelegendrian fronts $(\cF_s)_{s\in [0,1]}$, that describes an RI-move along $N$. Moreover, since being co-real is an open condition we may achieve that the region in the prelegendrian front $\cF_1$ in which the RI appears is foliated by co-real submanifolds parallel to $N$.
\end{remark}

\begin{remark}
A generic $1$-parametric family of Legendrians in higher-dimensions will undergo a wider class of bifurcations than those introduced above. We expect the same to be true for prelegendrians. Moreover, the three Reidemeister moves shown here are not generic and will generally appear as combinations of more elementary moves. Developing the theory further in this direction is left as an open question.
\end{remark}

\subsection{Prelegendrian spinning} \label{subsec:preleg_spinning}

\cref{thm:PrelgendrianFronts} provides a criterion ensuring that, under mild assumptions, the Legendrians in $(\R^{4n+3},\xi_\std)=(J^1\R^{2n+1},\xi_\std)$ produced by the Legendrian spinning of \cref{subsec:special_spinning} admit a prelegendrian front and therefore define prelegendrian embeddings in $(\R^{4n+2},\D_{\std,J})$. This yields an effective method to build prelegendrian embeddings and leads us to the construction of exotic prelegendrian tori in \cref{sec:Non_isotopic_preleg}.

The setup is as follows. Consider $\R^{2n+2}=J^0\R^{2n+1}$ equipped with an almost complex structure $J$. As explained in \cref{subsec:special_spinning}, given an embedding $\hat e=(\hat e_{\R^{2n+1}},h):K\hookrightarrow J^0\R^{2n+1}$ of codimension $m+1$ that is graphical over $\R^{2n+1}$, together with a framing of $\hat e_{\R^{2n+1}}$, there is a well-defined Legendrian spinning map. This operation takes as input a Legendrian $L\subseteq (J^1\R^m,\xi_\std)$ and produces a higher-dimensional Legendrian
\begin{equation*}
    \spin_{\hat e}[L]\subseteq (J^1 \R^{2n+1},\xi_\std)
\end{equation*}

Our first observation concerns the singularities of the front of the Legendrian spinning:
\begin{lemma}
    The singular locus of the front of Legendrian spinning $\spin_{\hat e}[L]$ satisfies:
    \begin{equation*}
        \Sigma\big(\pi_\F(\spin_{\hat e}[L])\big) = \varphi\big(K \times \Sigma(\pi_\F(L))\big),
    \end{equation*}
    In particular,
    \begin{equation*}
         \Sigma\big(\pi_\F(\spin_{\hat e}[L])\big) = \bigsqcup_{p\in \Sigma(\pi_\F(L))} \varphi(K \times  \{p\})
    \end{equation*}
    is partitioned into submanifolds diffeomorphic to $K$. These submanifolds may be assumed to be $C^\infty$-close to $\hat{e}(K)=\varphi (K\times \{0\})$ by considering $\pi_\F(L)$ sufficiently \emph{small}.
\end{lemma}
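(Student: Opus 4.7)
The plan is to unwind the definition of the Legendrian spinning given in \cref{subsec:special_spinning}. Recall $\spin_{\hat e}[L] = \hat{\varphi}_*(K \times L)$, where $K \times L$ sits as a Legendrian in $J^1(K \times \R^m)$ via the product construction of \cref{subsec:special_spinning}, and $\hat{\varphi}:(J^1(K \times \R^m),\xi_\std) \to (J^1 \R^{2n+1},\xi_\std)$ is the isocontact embedding induced by the fibrewise–affine bundle map $\varphi: J^0(K \times \R^m) \to J^0\R^{2n+1}$. Since front projections commute with isocontact embeddings of $1$-jet bundles of this shape, the first step is the identification
\[
\pi_\F\bigl(\spin_{\hat e}[L]\bigr) = \varphi\bigl(\pi_\F(K \times L)\bigr) = \varphi\bigl(K \times \pi_\F(L)\bigr),
\]
where in the last equality I use that the front of the product Legendrian $K\times L$ in $J^0(K\times \R^m) = K\times J^0\R^m$ is simply $K \times \pi_\F(L)$.

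Next I would analyse the singular stratification of this front. Because $K$ is smooth (and so enters the product trivially), a direct check in the local normal forms of \cref{def:simple_front} shows that the singular set of the product front $K \times \pi_\F(L)$ is precisely $K \times \Sigma(\pi_\F(L))$: transverse self–intersections and cusp singularities of $K \times \pi_\F(L)$ occur exactly along the $K$-factor times the corresponding singularities of $\pi_\F(L)$, since near a smooth point of $\pi_\F(L)$ the product is smooth. Now $\varphi$ is a diffeomorphism onto its image (it is fibrewise affine over the embedded $K$), so it sends singular stratum to singular stratum. Combining these two facts gives
\[
\Sigma\bigl(\pi_\F(\spin_{\hat e}[L])\bigr) = \varphi\bigl(K \times \Sigma(\pi_\F(L))\bigr) = \bigsqcup_{p \in \Sigma(\pi_\F(L))} \varphi(K \times \{p\}),
\]
the disjointness being a consequence of the injectivity of $\varphi$.

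For the foliation statement, each leaf $\varphi(K \times \{p\})$ is the image of the embedding $\varphi(\cdot,p): K \hookrightarrow J^0\R^{2n+1}$, so it is diffeomorphic to $K$. For the $C^\infty$-closeness, I would note that $\varphi$ is smooth and that $\varphi(K \times \{0\}) = \hat{e}(K)$; thus, shrinking the support of $\pi_\F(L)$ around $0 \in J^0\R^m$, the maps $\varphi(\cdot,p)$ for $p \in \pi_\F(L)$ converge to $\hat{e}$ in the $C^\infty$–topology as $p \to 0$, by continuity of the evaluation map.

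No step here poses a serious obstacle: the lemma is essentially an unpacking of the spinning construction. The only point requiring genuine verification is the product claim $\Sigma(K \times \pi_\F(L)) = K \times \Sigma(\pi_\F(L))$, which follows from inspecting the standard parametrisations of $A_1$, $A_1A_1$ and $A_2$ singularities and observing that taking the product with the smooth factor $K$ does not introduce new singularities nor hide existing ones.
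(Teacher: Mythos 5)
Your proof is correct, and it is exactly the intended argument: the paper states this lemma without proof, treating it as an immediate unpacking of the spinning construction, and your verification (front of the spinning equals $\varphi(K\times\pi_\F(L))$, singularities of a product front with a smooth factor are the product of $K$ with the singularities, and $\varphi$ is an open embedding preserving the stratification) supplies precisely the missing details. Note also that you have implicitly corrected what is evidently a typo in the first displayed equation of the lemma, which should read $\Sigma\big(\pi_\F(\spin_{\hat e}[L])\big)=\varphi\big(K\times\Sigma(\pi_\F(L))\big)$, consistent with the second display and with what your argument establishes.
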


\begin{remark}
The meaning of \emph{smallness} in the lemma is the following: Given a Legendrian in $(J^1 \R^m,\xi_\std)$, we can Legendrian isotope it using the contact isotopy 
\[ \delta_t(\mathbf{x},\mathbf{y},z) = (t\mathbf{x},t\mathbf{y},t^2z), \qquad\textrm{ for }t>0. \]
Taking $t$ small enough produces a new Legendrian that is $C^0$-close to the origin. Morever, each of its branches, graphical over $\R^m$, may be assumed to be $C^\infty$-close to the $1$-jet of the zero function.
\end{remark}

An important consequence is the following:
\begin{corollary}\label{cor:prelegendrianspinning}
    Assume that $\hat{e}:K\hookrightarrow (\R^{2n},J)$ is a co-real embedding.  Let $L\subseteq(J^1 \R^m,\xi_\std)$ be a Legendrian with sufficiently small simple front. Then, the front $\pi_\F\big(\spin_{\hat e}[L]\big)$ is a prelegendrian front. In particular, it defines a prelegendrian embedding
    \begin{equation*}
        \pspin_{\hat{e}}[L]\subseteq (\R^{4n+2},\D_{\std,J})
    \end{equation*}
    that will be referred as \emph{prelegendrian spinning} of $L$ along the (co-real embedding) $\hat{e}$.
\end{corollary}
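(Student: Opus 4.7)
The strategy is to reduce the statement to \cref{thm:PrelgendrianFronts}. The plan has three main steps: verify that the spinning front is simple, verify that its singular loci are co-real, and identify the resulting prelegendrian as an embedding into $(\R^{4n+2},\D_{\std,J})$ using the affine chart framework of \cref{subsec:Affine_chart}.

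For the first step, I would invoke the Lemma immediately preceding the corollary, which describes the singular locus of $\pi_\F(\spin_{\hat e}[L])$ as $\bigsqcup_{p\in\Sigma(\pi_\F(L))}\varphi(K\times\{p\})$. Since the spinning construction is a fibred-graph construction of $L$ parametrised by $K$, the singularities of the spinning front are obtained by ``spinning'' the simple singularities of $\pi_\F(L)$ along $K$. Concretely, smooth (resp.\ self-intersection, cusp) points of $\pi_\F(L)$ produce smooth (resp.\ self-intersection, cusp) strata of $\pi_\F(\spin_{\hat e}[L])$. Hence the spinning front is simple in the sense of \cref{def:simple_front}. This step is essentially bookkeeping once one knows what the spinning does locally.

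For the second step, the key observation is that co-reality is an \emph{open} condition on submanifolds in the $C^1$-topology (it amounts to the non-vanishing of a determinant computed from a frame). Because $\hat e(K)\subseteq(\R^{2n+2},J)$ is co-real by hypothesis, and each leaf $\varphi(K\times\{p\})$ is $C^\infty$-close to $\hat e(K)=\varphi(K\times\{0\})$ once $\pi_\F(L)$ is taken sufficiently $C^\infty$-small (which one can always arrange after a contact rescaling of $L$), each leaf is itself co-real. The singular strata are disjoint unions of such leaves, so they are co-real as submanifolds of $(\R^{2n+2},J)$. The quantitative step here—making precise how small $\pi_\F(L)$ must be so that all leaves remain inside the open set of co-real embeddings near $\hat e(K)$—is the main technical point, but it is ensured by compactness of $K$ and $\Sigma(\pi_\F(L))$ together with the openness of the co-real condition.

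Combining these two facts, \cref{thm:PrelgendrianFronts} produces a unique prelegendrian submanifold $\Lambda\subseteq(\Gr(\R^{2n+2},J),\D_\can)$ whose prelegendrian front is $\pi_\F(\spin_{\hat e}[L])$. To finish, I would argue that $\Lambda$ lies in the horizontal complex Grassmannian: the Legendrian $\spin_{\hat e}[L]\subseteq(J^1\R^{2n+1},\xi_\std)$, viewed via \cref{lem:HorizontalContactElements} as a Legendrian inside $\S T^*_{\hor}\R^{2n+2}$, projects under the Hopf map to a submanifold lying in $\Gr_{\hor}(\R^{2n+2},J)$, since the Hopf map respects horizontality. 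By uniqueness of the Legendrian lift (\cref{prop:LiftingPreisotropics}), $\cL(\Lambda)=\spin_{\hat e}[L]$, and therefore $\Lambda$ is contained in $\Gr_{\hor}(\R^{2n+2},J)\cong(\R^{4n+2},\D_{\std,J})$ via the identification at the end of \cref{subsec:Affine_chart}. We then define $\pspin_{\hat e}[L]\defi\Lambda$.
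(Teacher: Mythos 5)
Your proposal is correct and follows essentially the same route as the paper: the paper's proof simply combines the preceding lemma on the singular loci of the spinning front with \cref{lem:bundle_over_coreal_is_coreal} (whose content is exactly the openness-of-co-reality argument you spell out) and then applies \cref{thm:PrelgendrianFronts}. Your additional third step identifying $\Lambda$ inside the horizontal Grassmannian is left implicit in the paper but is a correct and harmless elaboration.
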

\begin{proof}
   The previous lemma combined with \cref{lem:bundle_over_coreal_is_coreal} implies that the singularities of the front are co-real whenever $\pi_\F(L)$ is sufficiently $C^\infty$-small. The result follows from \cref{thm:PrelgendrianFronts}.
\end{proof}

\begin{remark}
By definition, the Legendrian lift of the prelegendrian spinning is the Legendrian spinning:
\[ \cL( \pspin_{\hat{e}}[L] ) =\spin_{\hat{e}}[L]. \qedhere \]
\end{remark}

\begin{remark}\label{rmk:PrelegendrianSpinningReidemeister}
    We continue using the notation of the corollary, and work still under the same smallness assumption on the fronts. Assume that $L_s\subseteq (J^1 \R^m,\xi_\std) $, $s\in[0,1]$, is a Legendrian isotopy whose front describes a Reidemeister I, II or III move between two simple fronts $L_0$ and $L_1$. Then, by \cref{lem:PrelegendrianReidemeister}, there is a prelegendrian isotopy
    \begin{equation*}
        \pspin_{\hat{e}}[L_s]\subseteq (\R^{4n+2},\D_{\std,J}), s\in[0,1],
    \end{equation*}
    describing an RI, RII or RII prelegendrian move, respectively.
    In particular, assume that the co-real embedding $\hat{e}$ has codimension $2$ and, therefore, $L$ is a $1$-dimensional Legendrian. Then, the prelegendrian spinning depends solely, up to prelegendrian isotopy, on the Legendrian isotopy class of $L$.
\end{remark}

%%%%%%%%%%%%%%%%%%%%%%%%%%%%%%%%%%%%%%%%%%%%%%%%%%%%%%%%%%%%%%%%%%%
\section{Prelegendrian stabilization} \label{sec:Preleg_stab}

In this section we develop the prelegendrian $N$-pushing operation along a co-real submanifold $N$, describe its effect on the formal prelegendrian class, and explain how to stabilize prelegendrians. This will lead us to the proof of \cref{thm:PrelegendrianStabilization} in the next section.

\subsection{Prelegendrian $N$-pushing}

In the contact case, the $N$-pushing construction is defined in the local model of the $1$-jet bundle, where a natural front projection is available, and then transported via a Darboux chart to any contact manifold. For fat distributions of corank $2$ (or higher), there is no Darboux-type theorem. For this reason, we restrict our definition to the almost complex Grassmannian, which admits a prelegendrian front projection. For more general fat manifolds, refer to Section \ref{sec:general}. 

\subsubsection{Model case}

First we work in a local model where $X \defi J^0Q = Q\times\R(z)$, which allows us to consider the horizontal Grassmannians:
\begin{equation*}
    \begin{tikzcd}[sep=small]
        (\Gr_\mathrm{hor}(X,J),\D_{\std,J}) &&&&  (\mathbb{S}T_\mathrm{hor}^*X,\xi_\can ) \simeq (J^1Q,\xi_\std) \\
        \\
        \\
        && (X,J)
	\arrow["{{\pi_\PF}}"', from=1-1, to=4-3]
	\arrow["{{\Hopf}}"', from=1-5, to=1-1]
	\arrow["{\pi_\mathrm{F}}", from=1-5, to=4-3]
    \end{tikzcd}
\end{equation*}

We use the notation of \cref{subsec:N-pushing}, both in the following statement and its proof:
\begin{proposition}\label{prop:preleg_N-pushing_affine}
    Let $\Lambda\subseteq \Gr_\mathrm{hor}(X,J)$ be the prelegendrian with $\pi_\PF(\Lambda) = j^00_Q\sqcup j^01_Q \subseteq J^0Q$. Suppose that $N\subseteq j^0 0$ is a co-real submanifold that admits a non-zero section of the normal bundle $\nu_N^Q$. Then, there exists a prelegendrian $\mathrm{p}_N(\Lambda)$ such that
    \begin{itemize}
        \item $\pi_\PF(\mathrm{p}_N(\Lambda)) = j^0f\cup j^01_Q$,
        \item $\cL(\mathrm{p}_N (\Lambda)) = \mathrm{p}_N(\cL(\Lambda))$,
        \item $\Lambda$ and $\mathrm{p}_N(\Lambda)$ coincide outside of $\Gr_\mathrm{hor}(\Op_X(N\times [0,1]),J)$,
        \item $\mathrm{p}_N(\Lambda)$ is formally isotopic to $\Lambda$ as prelegendrians.
    \end{itemize}
    The prelegendrian $\mathrm{p}_N(\Lambda)$ is said to be an \emph{$N$-pushing} of $\Lambda$.
\end{proposition}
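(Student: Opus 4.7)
The plan is to exhibit $p_N(\Lambda)$ explicitly by choosing $f$ so that the resulting front is a simple prelegendrian front with co-real singularities; the first three bullets will then follow essentially by construction, and the fourth by adapting the proof of \cref{prop:FormalClassNPushing} via \cref{prop:existence_of_rotate_cpx_tang_vf}. Concretely, I would fix an auxiliary Riemannian metric on $Q$ so that a tubular neighbourhood of $N$ in $Q$ is the disc bundle of $\nu_{N\backslash Q}$, and let $r$ denote the distance to $N$. Viewing $\nu_{N\backslash Q}\oplus \R_z\to N$ as a vector bundle whose zero section $N\times\{0\}\subseteq X$ is co-real by hypothesis, \cref{lem:bundle_over_coreal_is_coreal} shows that the sphere bundles $\{r=r_0\}\times\{0\}$ are co-real in $X$ for $r_0$ sufficiently small; the $\partial_z$-translation invariance of $J_\std$ then gives that $\{r=r_0\}\times\{1\}\subseteq X$ is co-real as well. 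I would fix such an $r_0$ and set $f\defi\phi\circ r$ for a smooth strictly decreasing profile $\phi:[0,\infty)\to[0,2]$ with $\phi(0)=2$, $\phi(r_0)=1$, and $\phi\equiv 0$ on $[2r_0,\infty)$, so that conditions (i)--(iv) of \cref{subsec:N-pushing} hold and $f^{-1}(1)=\{r=r_0\}$.

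The front $\cF\defi j^0 f\cup j^0 1_Q$ is then simple with empty cusp locus and self-intersection locus $\{r=r_0\}\times\{1\}$, which is co-real, so \cref{thm:PrelgendrianFronts} produces a unique prelegendrian $p_N(\Lambda)\subseteq (\C^{2n+1},\D_\std)$ with $\pi_\PF(p_N(\Lambda))=\cF$. The first bullet is then immediate; the second follows because the Legendrian lift is uniquely determined by the front together with the upward co-orientation of each sheet, and the only such Legendrian lift is $j^1 f\sqcup j^1 1_Q=p_N(\cL(\Lambda))$; the third is immediate from $f\equiv 0$ outside $\Op_Q(N)$.

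The fourth bullet is the main obstacle. The plan is to adapt the three-stage construction in the proof of \cref{prop:FormalClassNPushing}: (i) tilt the bundle data of the upper sheet $j^0 f$ toward near-vertical using an appropriate non-vanishing axis, (ii) linearly interpolate the $z$-coordinate from $f$ down to $0_Q$ without crossing $j^0 1_Q$, and (iii) tilt the bundle data back to horizontal. In the Legendrian setting the axis was a non-vanishing tangent vector field on $j^0 f$ over $f^{-1}[1,2]$, produced from $\chi(N)=0$. In the prelegendrian setting the bundle data to rotate is the complex tangency $T(j^0 f)^J$, viewed as the $\CP^n$-coordinate of the prelegendrian lift, so the axis must be a non-vanishing section of the complex tangencies over $M\defi\overline{\Op_{j^0 f}(N\times\{2\})}$. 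Such a section is supplied by \cref{prop:existence_of_rotate_cpx_tang_vf}: the normal bundle $\nu_{N\times\{2\}\backslash j^0 f}$ is canonically identified with $\nu_{N\backslash Q}$—since $df|_N=0$ forces the tangent to the graph along $N\times\{2\}$ to coincide with $TQ$—and hence admits a non-vanishing section by hypothesis, while $N\times\{2\}$ is itself co-real in $X$ by $\partial_z$-translation invariance of $J_\std$. With the resulting section $\tilde Z\in\Gamma(TM^J)$ in hand, the rotation in the $\CP^n$-fibre is generated by the pair $(\tilde Z,\partial_z)$, and the three-stage scheme then proceeds verbatim in this $\CP^n$-coordinate, producing a formal prelegendrian isotopy between $\Lambda$ and $p_N(\Lambda)$, provided the rotation angle is kept strictly below $\pi/2$ so that the tilted complex tangencies remain non-vertical (keeping the maps inside $\Gr_{\mathrm{hor}}$) and avoid those of $j^0 1_Q$.
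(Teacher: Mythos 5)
Your proposal is correct and follows essentially the same route as the paper: existence of $\mathrm{p}_N(\Lambda)$ via co-reality of the self-intersection locus together with \cref{thm:PrelgendrianFronts}, and the formal isotopy by adapting the three-stage scheme of \cref{prop:FormalClassNPushing} using a nowhere-vanishing section of the complex tangencies supplied by \cref{prop:existence_of_rotate_cpx_tang_vf}. The only divergences are cosmetic: you invoke $\partial_z$-translation invariance of $J_\std$ where the paper shrinks the model and uses openness of co-reality, and you leave implicit the construction of the inward-pointing boundary section of $TM^J|_{\partial M}$ (obtained from the splitting in \cref{lem:existence_of_nonzero_section_fiber_ver}, using the co-reality of $\partial M$ that you have already established) which \cref{prop:existence_of_rotate_cpx_tang_vf} requires as input.
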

\begin{proof}
    Let us write $(\mathbf{x},z)$ for the coordinates in $X = Q \times \R$. Recall that $\Gr_\mathrm{hor}(X,J)$ is a bundle with fibre $\C^n\subset \CP^n$. This fibre encodes the slopes of the complex tangencies of fronts. Equivalently, each element represents a complex hyperplane transverse to the $z$-coordinate. It follows that such an element can be described as the kernel of a complex-valued $1$-form $(\d z-\alpha)^J$; here $\alpha$ is a covector on $Q$ and the superscript $J$ denotes complexification. This description is unique up to $\C^*$-scaling.
    
    Consider the front $\cF\defi j^0f\cup j^01_Q$. It is a simple front, since its singular locus consists only of the transverse self-intersection $j^0 f\cap j^0 1_Q$. After shrinking the model and homotoping the section $j^0 1_Q$ relative to its boundary, we may assume, using the openness of co-reality, that every shifted copy $N\times\{c\}\subseteq Q\times\R$ is co-real for $c\in[-1,3]$. Shrinking also implies that the intersection $j^0 f\cap j^0 1_Q$ is co-real, by \cref{lem:bundle_over_coreal_is_coreal}. Therefore, $\cF$ is a prelegendrian front and thus determines a prelegendrian $\Lambda'$. We set $\mathrm{p}_N(\Lambda)\defi \Lambda'$, and the first three claims follow directly from the construction. 

    It remains to prove the formal statement; we argue as in \cref{prop:FormalClassNPushing}. Note that the prelegendrian associated with the graph $j^0 f$ is parametrised by
    \begin{equation*}
        \mathbf{x}\in Q \longmapsto \big(\mathbf{x},f(\mathbf{x}), (dz-\d_\mathbf{x}f)^J\big).
    \end{equation*}
    We claim that there exists a $1$-form $\beta$ on $Q$ such that
    \begin{itemize}
        \item $[(dz-\beta)^J]\neq [(dz)^J] \in \C^n \subset \CP^{n}$ at each point in $N$. Invoking the shrinking argument introduced at the beginning of the proof the same will be true over the slab $M\times [-1,3]$, where $M = f^{-1}((1-\varepsilon,2])$.
        \item $\beta = \d_\mathbf{x}f$ on $f^{-1}(-\infty,1+\varepsilon)$. 
    \end{itemize}
    Assuming this for the moment, one repeats verbatim the three-stage isotopy of embeddings from \cref{prop:FormalClassNPushing}. Namely, we have
    \begin{equation*}
        g_t: Q\longrightarrow \Gr_{\mathrm{hor}}(Q\times \R,J), \qquad t\in[0,3],
    \end{equation*}
    given by the expressions:
    \begin{align*}
        g_t(\mathbf{x})& \defi \big(\mathbf{x},0,t(dz-\beta)^J\big),\quad &t\in [0,1];\\
        g_t(\mathbf{x})& \defi \big(\mathbf{x},(t-1)f(\mathbf{x)},(dz-\beta)^J\big),\quad &t\in [1,2];\\
        g_t(\mathbf{x})& \defi \big(\mathbf{x},f(\mathbf{x}),(t-2)(dz-\d_\mathbf{x}f)^J+(3-t)(dz-\beta)^J\big),\quad &t\in [2,3].
    \end{align*}
    In particular, $g_0$ and $g_3$ are the prelegendrian projections of $j^1 0_Q$ and $j^1 f$, respectively.

    The homotopy $g_t$ is compactly supported in a neighbourhood of $\mathrm{supp}(f)$ and does not meet the prelegendrian corresponding to $j^0 1_Q$, which is parametrised by
    \begin{equation*}
        \mathbf{x}\mapsto \big(\mathbf{x},1,(dz)^J\big),\quad \mathbf{x}\in Q.
    \end{equation*}
    As in \cref{prop:FormalClassNPushing}, $g_t$ is homotopic to the family induced by the linear interpolation between $0_Q$ and $f$. The prelegendrians in this family may have non-transverse self-intersections but are nonetheless immersed (\cref{rmk:preleg_immersion}). As such, they have well-defined formal prelegendrian data. An application of the homotopy lifting property yields then our desired formal prelegendrian isotopy.
    
    It remains to construct the $1$-form $\beta$. Consider $M \supset N$ as above. According to \cref{lem:existence_of_nonzero_section_fiber_ver} there is a non-vanishing section $\tilde Z$ of $TM^J$ that is inward-pointing along $\partial M$. In particular $df|_{\partial M}(\tilde Z) > 0$. By duality there is then a $1$-form $\beta$ on $M$ that is non-vanishing on $TM^J$ and agrees with $\d f$ along $\partial M$. It thus extends as $\d f$ to the rest of $Q$. Now, since $\d z$ vanishes on $Q$, we deduce that $\d z^J$ vanishes on $TM^J$, which implies $[(\d z-\beta)^J] \neq [\d z^J]$, concluding the proof.
\end{proof}

\subsubsection{General case}

We now describe the $N$–pushing in the general almost–complex Grassmannian setting:
\begin{theorem}[Prelegendrian $N$-pushing]\label{thm:Prelegendria_NPushing}
    Let $\Lambda \subseteq (\Gr(X,J),\D_\can)$ be a prelegendrian, $\cF\defi \pi_\PF(\Lambda)$, $N$ a closed manifold, and $\Phi: N \times [0,1] \hookrightarrow X$ an embedding such that:
    \begin{itemize}
        \item $N_0\defi \Phi(N\times \{0\})$, $N_1\defi\Phi(N\times \{1\})$ are contained in the smooth strata $\Sigma_{A_1}(\mathcal{F})$.
        \item $N_1$ is a co-real submanifold of $(X,J)$.
        \item $\Phi(N\times [0,1])\cap \mathcal{F} = \Phi(N\times \{0,1\})$,
        \item the normal bundle $\nu_{N_1}^{\cF}$ admits a non-zero section.
    \end{itemize}
    
    Then, there exists a prelegendrian $\mathrm{p}_N(\Lambda)$ such that:
    \begin{itemize}
        \item $\Lambda\cap \Gr(X\backslash\Op(\Phi),J)$ = $\mathrm{p}_N(\Lambda)\cap\ \Gr(X\backslash\Op(\Phi),J)$.
        \item The Legendrian lifts $\cL(\Lambda)$ and $\cL(\mathrm{p}_N(\Lambda))$ in $(\P T^*X,\xi_\std)$ satisfy:
        \begin{equation*}
        \cL(\mathrm{p}_N(\Lambda))=\mathrm{p}_N(\cL(\Lambda)) 
        \end{equation*}
        \item $\mathrm{p}_N(\Lambda)$ and $\Lambda$ are formally prelegendrian isotopic.
    \end{itemize}
    The prelegendrian $p_N(\Lambda)$ is said to be an $N$-pushing of $\Lambda$.
\end{theorem}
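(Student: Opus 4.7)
The plan is to reduce to the model case of \cref{prop:preleg_N-pushing_affine} by building a straightening chart on a neighbourhood of $\Phi(N\times[0,1])$ that turns both local sheets of $\cF$ into constant sections of a product, performing the prelegendrian $N$-pushing there, and extending by the identity outside.

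To build the chart, set $U = \Op(\Phi(N\times[0,1]))\subseteq X$. Since $N_0, N_1\subseteq \Sigma_{A_1}(\cF)$ and $\Phi(N\times[0,1])\cap \cF = \Phi(N\times\{0,1\})$, after shrinking $U$ we can write $\cF\cap U = \cF_0\sqcup \cF_1$ with each $\cF_i\supseteq N_i$ a smooth hypersurface. Choose a vector field on $U$ transverse to both $\cF_i$, tangent to the cylinder $\Phi(N\times[0,1])$, and pointing from $\cF_0$ to $\cF_1$; its flow off $\cF_0$ supplies a product coordinate, yielding a diffeomorphism $\Psi\colon U \xrightarrow{\simeq} V \subseteq Q\times \R$ with $\Psi(\cF_i) = j^0 i_Q \cap V$ and $\Psi(N_i) = N\times\{i\}$ for $i=0,1$, under a common identification $N\hookrightarrow Q$. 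Setting $J' \defi \Psi_* J$, the co-reality of $N_1$ transports to co-reality of $N\times\{1\}$ in $(V, J')$; rescaling the $z$-direction so that all shifted copies $N\times\{c\}$ sit within an arbitrarily small neighbourhood of $N\times\{0\}$ then upgrades this, via openness of co-reality, to co-reality of $N\times\{c\}$ for every $c\in[0,1]$, and in particular of $N \subseteq j^0 0_Q$. The non-vanishing section of $\nu_{N_1\backslash \cF}$ transports likewise to a non-vanishing section of $\nu_{N \backslash Q}$.

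With the chart in place, $\Lambda\cap\Gr(U,J)$ corresponds under $\Gr(\Psi)$ to a prelegendrian $\Lambda'\subseteq \Gr_{\mathrm{hor}}(V,J')$ with front $(j^0 0_Q\sqcup j^0 1_Q)\cap V$, which is exactly the setup of \cref{prop:preleg_N-pushing_affine}. Invoking it produces $\mathrm{p}_N(\Lambda')$, the lift identity $\cL(\mathrm{p}_N(\Lambda')) = \mathrm{p}_N(\cL(\Lambda'))$, and a compactly supported formal prelegendrian isotopy between $\Lambda'$ and $\mathrm{p}_N(\Lambda')$. Transporting back via $\Gr(\Psi^{-1})$ and gluing with $\Lambda$ on $X\setminus U$ defines $\mathrm{p}_N(\Lambda)$. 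Agreement outside $\Op(\Phi)$ is built in; the Legendrian lift identity globalises via the natural inclusion $\cC(\Gr(U,J),\D_\can)\hookrightarrow \cC(\Gr(X,J),\D_\can)$ induced by $U\hookrightarrow X$; the formal prelegendrian isotopy globalises by trivial extension outside its support.

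The hard part will be producing the chart $\Psi$, namely simultaneously straightening both sheets $\cF_0, \cF_1$ while matching $N_0$ with $N_1$ through the cylinder. The hypothesis $\Phi(N\times[0,1])\cap\cF = \Phi(N\times\{0,1\})$ is precisely what keeps the transverse-connecting vector field well defined on all of $U$, and the smooth-stratum condition on $N_0, N_1$ ensures that each sheet $\cF_i$ is flattenable by the tubular neighbourhood theorem. Once $\Psi$ is in hand, the shrinking argument based on openness of co-reality, the invocation of the model, and the extension by the identity are all essentially routine.
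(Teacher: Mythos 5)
Your proposal is correct and follows essentially the same route as the paper: the paper's proof simply extends $\Phi$ to an embedding $\widetilde{\Phi}\colon Q\times[0,1]\hookrightarrow X$ with $Q=\Op_{\cF}(N_0)$ and identifies the image with the model of \cref{prop:preleg_N-pushing_affine}, which is exactly the straightening chart you construct (in more detail) via the transverse vector field. The transport of co-reality and of the normal section, the shrinking argument via openness of co-reality, and the extension by the identity outside the chart all match the paper's (much terser) argument.
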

\begin{proof}
    Let $Q\defi \Op_{\mathcal{F}}(N_0)$ be a neighbourhood of $N_0$ in $\mathcal{F}$. The embedding $\Phi:N\times[0,1]\hookrightarrow X$ extends to an embedding $\widetilde{\Phi}:Q\times[0,1]\hookrightarrow X$. Thus $\Phi(Q\times[0,1])\subseteq (X,J)$ is identified with the model $(Q\times[0,1],\widetilde{\Phi}^*J)$ of \cref{prop:preleg_N-pushing_affine}, which yields the result.
\end{proof}

\subsection{Prelegendrian stabilization}\label{subsec:PrelegendrianStabilization}

We can now spell out the concrete case of the prelegendrian stabilization. 
\begin{definition}
Let $\Lambda\subseteq (\Gr(X,J),\D_\can)$ be a prelegendrian, $p\in \mathcal{F}= \pi_\mathrm{PF}(\Lambda)$ be a smooth point, and $N\subseteq \Op(p)\cap \mathcal{F}$ a co-real hypersurface. We define a new prelegendrian as follows:
\begin{itemize}
    \item [(i)] Perform a prelegendrian RI move on $\Lambda$ along $N$, see \cref{rmk:PrelegendrianRIMove}. Denote the resulting prelegendrian by $\Lambda_1$. 
    \item [(ii)] Apply the $N$-pushing operation described in \cref{thm:Prelegendria_NPushing} to $\Lambda_1$ along a co-real $N'\cong N$ contained in $\Op(N)\cap \pi_{\mathrm{PF}}(\Lambda_1)$, very close to the cusp locus of the RI-move. Denote the resulting prelegendrian by $s(\Lambda)$
\end{itemize}
The prelegendrian $s(\Lambda)$ is said to be a \emph{stabilization} of $\Lambda$ along $N$.
\end{definition}

The following immediate corollary highlights the key property of prelegendrian stabilization:
\begin{proposition}\label{prop:LooseLegendrianLift}
The prelegendrians $\Lambda$ and $s(\Lambda)$ are formally prelegendrian isotopic. Moreover,
\begin{equation*}
    s(\mathcal{L} (\Lambda))=\mathcal{L}(s(\Lambda)).
\end{equation*}
In particular, $\mathcal{L}(s(\Lambda))$ is loose.
\end{proposition}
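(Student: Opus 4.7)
The plan is to decompose $s(\Lambda)$ according to its two defining steps and verify that each step behaves well with respect to (i) the formal prelegendrian class and (ii) the Legendrian lift. Write $s(\Lambda) = \mathrm{p}_{N'}(\Lambda_1)$, where $\Lambda_1$ is produced from $\Lambda$ by a prelegendrian RI-move along the co-real hypersurface $N\subseteq \Op(p)\cap \cF$, and $N'\cong N$ is a parallel co-real copy inside the smooth stratum of $\pi_{\PF}(\Lambda_1)$, placed close to the newly created cusp locus (as in Remark \ref{rmk:PrelegendrianRIMove}).

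For the formal isotopy statement, the first step is automatic: the RI-move produces a \emph{genuine} prelegendrian isotopy from $\Lambda$ to $\Lambda_1$ by Lemma \ref{lem:PrelegendrianReidemeister}, hence in particular a formal one. For the second step I will apply Theorem \ref{thm:Prelegendria_NPushing} to $\Lambda_1$ along $N'$. Its hypotheses require that $N'$ be co-real (arranged by construction, using that co-reality is an open condition so parallel copies remain co-real) and that $\nu_{N'\setminus \cF_1}$ admit a nowhere-zero section. The latter is ultimately guaranteed by combining Proposition \ref{lem:coreal_euler} (since $N$ is co-real, $\chi(N)=0$) with the flag-bundle argument of Lemma \ref{lem:existence_of_nonzero_section_fiber_ver} and Proposition \ref{prop:existence_of_rotate_cpx_tang_vf}, exactly as in the proof of Proposition \ref{prop:preleg_N-pushing_affine}. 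Concatenating the two isotopies yields a formal prelegendrian isotopy from $\Lambda$ to $s(\Lambda)$.

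The identity $\cL(s(\Lambda))=s(\cL(\Lambda))$ will be established by tracking the Legendrian lift through each step. First, the prelegendrian RI-move along the co-real $N$ lifts, via the Hopf fibration, to the standard Legendrian RI front move across $N$: the cusp locus of $\pi_{\PF}(\Lambda_1)$ is co-real by Theorem \ref{thm:PrelgendrianFronts}, and Proposition \ref{prop:PrelegendrianCusp} together with the uniqueness clause of Theorem \ref{thm:PrelgendrianFronts} pins down the lift so that $\cL(\Lambda_1)$ contains a model neighbourhood contactomorphic to $(J^1X, L_{\mathrm{cusp}}, \xi_{\std})$ of Section \ref{subsec:Leg_stabilzation}. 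Second, Theorem \ref{thm:Prelegendria_NPushing} gives $\cL(\mathrm{p}_{N'}(\Lambda_1)) = \mathrm{p}_{N'}(\cL(\Lambda_1))$. Combining the two, $\cL(s(\Lambda)) = \mathrm{p}_{N'}(\cL(\Lambda_1))$, which is precisely the Legendrian $N'$-pushing performed inside the cusp chart and thus, by the definition in Section \ref{subsec:Leg_stabilzation}, equals $s(\cL(\Lambda))$. Looseness of $\cL(s(\Lambda))$ then follows tautologically from the definition of loose Legendrian.

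The main obstacle I anticipate is the first step of the Legendrian-lift comparison: one must identify, after passing through the Hopf map $\S T^*X \to \Gr(X,J)$, the local Legendrian front obtained from the prelegendrian RI with the standard cusp model $L_{\mathrm{cusp}}^0\times Q$ appearing in the definition of Legendrian stabilization. The combinatorial content is clear, but one must keep careful track of the prelegendrian co-orientation (to ensure the two branches of the lift on the relevant side of the cusp locus are oriented so that the model is genuinely $L_{\mathrm{cusp}}$, rather than an inequivalent orientation-reversed variant), and of the fact that the parallel co-real copies $N'$ used for the pushing are positioned on the correct side of the cusp. The uniqueness clause of Theorem \ref{thm:PrelgendrianFronts} reduces this to a direct check in the local normal form of Proposition \ref{prop:PrelegendrianCusp}.
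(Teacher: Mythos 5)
Your proposal is correct and is precisely the argument the paper intends: the proposition is stated without proof because it is meant to follow immediately by concatenating the genuine prelegendrian isotopy of the RI-move (\cref{lem:PrelegendrianReidemeister}) with the formal-isotopy and lift-compatibility clauses of \cref{thm:Prelegendria_NPushing}, exactly as you do. Your additional care about verifying the hypotheses of \cref{thm:Prelegendria_NPushing} for the parallel copy $N'$ and about matching the lifted cusp with the model $L_{\mathrm{cusp}}$ only makes explicit what the paper leaves implicit, and introduces no divergence from its route.
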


\begin{remark}
    Do note that we have not yet shown \cref{thm:PrelegendrianStabilization}, i.e. that prelegendrian stabilization is always possible. The reason is that we have not yet established that a co-real $N$ can always be found. We do this in the next section.
\end{remark}

\begin{remark}
    Even though the Legendrian lift of the prelegendrian stabilization is independent of the choice of co-real submanifold (by Murphy's $h$-principle, \cref{thm:h-principle_loose_leg}), it is unclear whether the stabilized prelegendrian satisfies an h-principle as well. It is an open question whether prelegendrian stabilization depends on the choice of co-real submanifold $N$.
\end{remark}

%%%%%%%%%%%%%%%%%%%%%%%%%%%%%%%%%%
\section{Existence of compact prelegendrians and prelegendrian stabilizations}
\label{sec:Existence_of_preleg}

In this section we establish the existence of compact prelegendrians in $(\C^{2n+1},\D_{\std})$ and prelegendrian stabilizations. To do this, we will give an explicit family of a co-real submanifolds in $\C^{n+1}$ which is graphical over $\R^{2n+1}\subseteq \C^{n+1}=\R^{2n+2}=\R^{2n+1}\times \R$. The existence of these submanifolds implies then the existence of prelegendrian stabilizations as discussed in the previous section, thus yielding the proof of \cref{thm:PrelegendrianStabilization}.

\subsection{Co-real submanifolds in $\C^{n+1}$ graphical over $\R^{2n+1}$}

We begin by proving the existence of suitable co-real submanifolds to perform our prelegendrian spinning. After this, \cref{cor:prelegendrianspinning} trivially implies the existence of compact prelegendrians in $(\C^{2n+1},\D_\std)$.  
\begin{proposition}\label{prop:existence_coreal_graphical}
    Let $F\subseteq \R^{n}$ be a closed submanifold. Then, there exists a co-real embedding $\hat e:\T^{n+1}\times F\hookrightarrow \C^{n+1}=\R^{2n+2}=\R^{2n+1}\times \R$ which is graphical over $\R^{2n+1}$. In particular, there exists a co-real embedding of $\T^{2n}$.
\end{proposition}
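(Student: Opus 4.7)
The plan is to first construct a co-real, graphical embedding $\hat e_0\colon\T^{n+1}\hookrightarrow\C^{n+1}$ of the torus alone, and then to thicken it to $\T^{n+1}\times F$ by a tubular-neighborhood construction together with \cref{lem:bundle_over_coreal_is_coreal}. Writing $\C^{n+1}=\R^{2n+1}\times\R(v_{n+1})$, graphicality over $\R^{2n+1}$ reduces the problem to producing an embedding $e_0\colon\T^{n+1}\hookrightarrow\R^{2n+1}$ together with a height function $h\colon\T^{n+1}\to\R$ such that the lifted map $\hat e_0=(e_0,h)$ is co-real in $\C^{n+1}$.

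For the base case $n=1$, I would take $e_0\colon\T^2\hookrightarrow\R^3$ to be the standard torus of revolution around the $u_2$-axis, parametrized by $(\phi,\theta)$, together with the lift $h=c\cos\phi$ for small $c>0$. A direct pointwise linear-algebra check establishes co-realness. Away from the critical circles $\phi=\pm\pi/2$, the tangent $T_pe_0(\T^2)\subseteq T\R^3$ differs from the $J$-invariant Levi distribution $\xi_p=\operatorname{span}\{\partial_{u_1},\partial_{v_1}\}$ of $\R^3\subseteq\C^2$, and a dimension count yields co-realness of the lift. At $\phi=\pm\pi/2$ the tangent collapses to $\xi_p$, but the nonvanishing differential $dh=-c\sin\phi\,d\phi$ contributes a $\partial_{v_2}$-component to $T\hat e_0(\T^2)$, whose $J$-image supplies $\partial_{u_2}$; together with $\{\partial_{u_1},\partial_{v_1}\}$ these fill out $T\C^2$. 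For general $n$ I would iterate by a ``co-real spinning'': given the $(n-1)$-case in $\C^n$, include it in $\C^n\times\{0\}\subseteq\C^{n+1}$ and adjoin a small thickening by two real directions of $\R^{2n+1}$ chosen so that the new complex tangency in the last $\C$-factor is broken by the same height function $h$. Graphicality over $\R^{2n+1}$ is automatic since the thickening stays inside $\R^{2n+1}$, and co-realness at each new critical stratum reduces to a computation analogous to the $n=1$ model applied factor-by-factor.

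Once $\hat e_0$ is in hand, the extension to $\T^{n+1}\times F$ proceeds as follows. The underlying embedding $e_0\colon\T^{n+1}\hookrightarrow\R^{2n+1}$ has normal bundle of rank $n$, so after choosing a framing we obtain a tubular neighborhood embedding $\bar e\colon\T^{n+1}\times\R^n\hookrightarrow\R^{2n+1}$ extending $e_0$. Lifting by $h$ pulled back from the first factor, define $\hat e\colon\T^{n+1}\times F\hookrightarrow\C^{n+1}$ by $(p,f)\mapsto(\bar e(p,f),h(p))$, which is graphical over $\R^{2n+1}$ by construction. For co-realness, view a sufficiently small tubular neighborhood of $\hat e_0(\T^{n+1})$ in $\C^{n+1}$ as a vector bundle over $\hat e_0(\T^{n+1})$; since the zero section is co-real by the first step and $\hat e(\T^{n+1}\times F)$ fibers over it via the projection to the first factor, \cref{lem:bundle_over_coreal_is_coreal} implies that after a small fiberwise dilation the resulting submanifold is co-real. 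The dilation is supported in the normal directions within $\R^{2n+1}$, so it preserves graphicality. The ``in particular'' statement for $\T^{2n}$ follows by taking $F=\T^{n-1}$ standardly embedded in $\R^n$, so that $\T^{n+1}\times\T^{n-1}\cong\T^{2n}$.

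The main technical obstacle lies in the inductive construction of $\hat e_0$ for $n\geq 2$: at every combination of critical loci of the nested tori of revolution, one must verify that either the lift $h$ or the newly introduced small-circle directions contribute the missing complement to $T+JT$. Each verification is a finite linear-algebra check of the same flavour as the $n=1$ base, but choosing the spinning axes (equivalently, the inclusion $\R^{n+2}\subseteq\R^{2n+1}$) so that no ``bad'' combination of critical parameters produces a tangent space whose $J$-image misses a complex direction is the delicate combinatorial point; the $n=1$ computation already contains all the relevant geometric ideas.
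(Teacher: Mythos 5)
Your overall architecture matches the paper's: first produce a co-real embedding of $\T^{n+1}$ into $\C^{n+1}$ that is graphical over $\R^{2n+1}$, then thicken along the normal directions inside $\R^{2n+1}$ using a tubular neighbourhood of the base embedding and \cref{lem:bundle_over_coreal_is_coreal} to keep co-reality, and finally take $F=\T^{n-1}$ for the ``in particular''. That second half of your argument is correct and is essentially what the paper does.

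The genuine gap is in the first half, for $n\geq 2$. Your $n=1$ base case (torus of revolution in $\R^3$ lifted by $h=c\cos\phi$, with the complex tangencies along $\phi=\pm\pi/2$ broken by $dh\neq 0$) is a valid pointwise check. But the inductive ``co-real spinning'' is only sketched: you yourself flag that at every combination of critical loci of the nested tori one must verify that some direction supplies the missing complex complement, and that choosing the spinning axes so that no bad combination occurs is ``the delicate combinatorial point''. That verification is precisely the content of the proposition for $n\geq 2$, and it is not carried out; as written, the proof is incomplete. The paper avoids this entirely by starting from the Clifford torus $\mathrm{Cl}(\theta_1,\dots,\theta_{n+1})=(e^{i\theta_1},\dots,e^{i\theta_{n+1}})$, whose tangent space at every point is $\operatorname{span}_\R\{iz_1e_1,\dots,iz_{n+1}e_{n+1}\}$ and hence totally real (indeed Lagrangian), so co-real everywhere with no critical strata to analyse. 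It is then made graphical over $\R^{2n+1}$ by the small radial perturbation $r_1=1+\delta\sin\theta_{n+1}$, and total reality survives because it is a $C^1$-open condition. If you want to salvage your approach, the cleanest fix is to replace your iterated tori of revolution by this perturbed Clifford torus (or, at minimum, to prove that your inductive construction produces a \emph{totally real} torus, so that openness does the casework for you). A secondary point, not needed for the proposition itself but used later in the paper (\cref{lem:coreal_torus_spin_is_iter_spin}): the embedded torus should be smoothly isotopic to the standard torus in $\R^{2n+1}$, which is immediate for the perturbed Clifford torus but would require an extra check for your iterated construction.
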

\begin{proof}[Proof of \cref{prop:existence_coreal_graphical}]
We use polar coordinates $(r_i,\theta_i)$, in each $\C$-factor of $\C^{n+1} = \C \times \cdots \times \C$. 
Our construction takes places close to the Clifford torus $\T^{n+1}_\mathrm{Cl} \subseteq \C^{n+1}$, defined as the image of the embedding:
\begin{align*}
   \mathrm{Cl}:\T^{n+1}= \S^1\times\cdots \times \S^1 \quad\hookrightarrow\quad & \C^{n+1};\\
    (\theta_1,\ldots,\theta_{n+1}) \quad\mapsto\quad & \Big( (1,\theta_1);\cdots,(1,\theta_n);(1,\theta_{n+1})\Big).
\end{align*}

The Clifford torus is clearly not graphical over $\R^{2n+1}$. However, the family of perturbations
\begin{align*}
\mathrm{Cl}^\delta(\theta_1,\dots,\theta_{n+1})=\Big((1+\delta \sin(\theta_{n+1}),\theta_1);\cdots,(1,\theta_n);(1,\theta_{n+1})\Big),
\end{align*}
for $0<\delta<1$, is easily seen to be graphical over $\R^{2n+1}$. 

Since being totally real is a $C^1$-open condition we may fix $\delta>0$ small enough such that $\mathrm{Cl}^\delta$ is totally real and graphical over $\R^{2n+1}$. Write $\mathrm{Cl}^\delta=(g,\sin(\theta_{n+1})),$ where  $g:\T^{n+1}\hookrightarrow \R^{2n+1}$ is the embedding obtained by projecting $\mathrm{Cl}^\delta$ over $\R^{2n+1}$.

A global trivialisation of the normal bundle of $\mathrm{Cl}^\delta$ can be given by
\begin{equation*}
    \langle \partial_{r_1}+\partial_{r_{n+1}}, \partial_{r_2}, \ldots, \partial_{r_n} \rangle \oplus \langle \partial_z \rangle, 
\end{equation*}
where $\partial_z$ spans the vertical direction in $\C^{n+1}=\R^{2n+1}\times \R$. Moreover, the first $n$-vectors project to a trivialisation of the normal bundle of $g$ in $\R^{2n+1}$. 

Consider the inclusion $F\hookrightarrow \R^n$; without loss of generality we may assume that it has image in an $\varepsilon$-ball, for $\varepsilon>0$ sufficiently small. After fixing a tubular neighbourhood embedding of $g$, this inclusion induces an embedding  $e:\T^{n+1}\times F\hookrightarrow \R^{2n+1}$.

The embedding $\hat{e}=(e,\sin(\theta_{n+1})):\T^{n+1}\rightarrow \C^{n+1}=\R^{2n+1}\times \R$, is co-real for $\varepsilon>0$ sufficiently small, by an application of \cref{lem:bundle_over_coreal_is_coreal}. This concludes the argument. 
\end{proof}

Note that the Clifford torus $\T^{n+1}_\mathrm{Cl}\subseteq \C^{n+1}$ is isotopic to the standard torus (cf.\ \cref{subsec:special_spinning}). If we take $F=\T^{\,n-1}\subseteq \R^n$ to be the standard torus, then the resulting embedding $e:\T^{2n}\hookrightarrow \R^{2n+1}$ is also isotopic to the standard torus. Hence, by \cref{lem:torus_spin_is_iter_S1_spin}, we obtain:
\begin{lemma}\label{lem:coreal_torus_spin_is_iter_spin}
    In the situation of the above proposition, we can moreover assume that $\hat{e}:\T^{2n}\to \C^{n+1}$ satisfies:
    \begin{equation*}
        \spin_{\hat{e}}[L] \simeq (\spin_{\S^1})^{2n}[L],
    \end{equation*}
    for any Legendrian $L\subseteq J^1\R^m$.
\end{lemma}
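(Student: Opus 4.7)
The plan is to invoke the fibered homotopy invariance of the Legendrian spinning construction from \cref{subsec:special_spinning}: a smooth path $(\hat e_t,\mathrm{Fr}_t)_{t\in[0,1]}$ of graphical embeddings together with normal framings induces a homotopy of fibered embeddings $\varphi_t$, and hence Legendrian isotopic spun Legendrians $\spin_{\hat e_t,\mathrm{Fr}_t}[L]$. Therefore it suffices to exhibit such a path connecting our framed co-real data $(\hat e,\mathrm{Fr})$ to the standard torus spinning data $(\T^{2n}\subseteq\R^{2n+1}\subseteq J^0\R^{2n+1},\mathrm{Fr}_\std)$, after which \cref{lem:torus_spin_is_iter_S1_spin} identifies the resulting spinning with $(\spin_{\S^1})^{2n}[L]$.

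First I would flatten the graph. Since $\hat e=(e,\sin\theta_{n+1})$, the linear interpolation $\hat e_t\defi(e,t\sin\theta_{n+1})$ is a path of graphical embeddings ending at $(e,0)$ with the framing held fixed, reducing matters to the framed base embedding $e:\T^{2n}\hookrightarrow\R^{2n+1}$. By the recipe in \cref{prop:existence_coreal_graphical}, $e$ is obtained by placing the standard $F=\T^{n-1}\subseteq\R^n$ inside the tubular neighbourhood of $g:\T^{n+1}\hookrightarrow\R^{2n+1}$ cut out by the chosen normal framing, while the inductive standard $\T^{2n}\subseteq\R^{2n+1}$ is by definition obtained from the same procedure applied to the standard $\T^{n+1}\subseteq\R^{n+2}\subseteq\R^{2n+1}$ with its canonical framing. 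Hence the problem reduces to deforming $(g,\mathrm{Fr})$ through framed graphical embeddings of $\T^{n+1}$ in $\R^{2n+1}$ to the standard framed $\T^{n+1}$.

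To build this deformation I would first let $\delta\to 0$, carrying $\mathrm{Cl}^\delta$ back to the Clifford torus $\mathrm{Cl}$, and then invoke the observation recorded in the paragraph preceding the lemma that $\mathrm{Cl}\subseteq\C^{n+1}$ is isotopic through embeddings to the standard $\T^{n+1}\subseteq\R^{n+2}\subseteq\C^{n+1}$. Projecting this ambient isotopy to $\R^{2n+1}$ and perturbing generically to preserve graphicality and embeddedness yields the desired path $g_t:\T^{n+1}\hookrightarrow\R^{2n+1}$; the framing is transported along the isotopy.

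The main obstacle I expect is framing-matching at the endpoint: the transported framing of the standard $\T^{n+1}$ will differ from its canonical normal framing by a map $\Phi:\T^{n+1}\to GL(n,\R)$, and the fibered isotopy argument closes only if $\Phi$ can be connected to the identity so that the framing discrepancy is absorbed into a further fibered homotopy of $\varphi_t$. I expect to handle this by noting that the prescribed framing $\{\partial_{r_1}+\partial_{r_{n+1}},\partial_{r_2},\dots,\partial_{r_n}\}$ along $\mathrm{Cl}$ differs from the outward radial framing by an explicit bundle automorphism that is manifestly null-homotopic, and that the Clifford-to-standard isotopy can be chosen so as to carry radial framings to the canonical framing of $\T^{n+1}\subseteq\R^{n+2}$; the residual $\Phi$ is then null-homotopic, providing the required homotopy of $\varphi_t$. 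Combining all these steps with \cref{lem:torus_spin_is_iter_S1_spin} concludes the proof.
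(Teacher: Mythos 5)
Your proposal is correct and follows essentially the same route as the paper, which in fact offers no separate proof of this lemma: it simply observes in the preceding paragraph that the perturbed Clifford construction yields an embedding $e:\T^{2n}\hookrightarrow\R^{2n+1}$ isotopic to the standard torus and then invokes the parametric invariance of spinning together with \cref{lem:torus_spin_is_iter_S1_spin}. Your version fills in the details the paper leaves implicit (flattening the graph, the $\delta\to 0$ interpolation, and the framing-matching, which in this codimension is only a sign and hence harmless), so it is a more careful rendering of the same argument rather than a different one.
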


\subsection{Existence of prelegendrian stabilizations}

\begin{proof}[Proof of \cref{thm:PrelegendrianStabilization}]
Let $\Lambda$ be a prelegendrian submanifold in $(\C^{2n+1},\D_\std)$. Consider a point $q\in \Lambda$ in the smooth stratum of the prelegendrian front $\pi_{\mathrm{PF}}(\Lambda)$. It follows from \cref{prop:existence_coreal_graphical} that, possibly after a prelegendrian front homotopy of $\Op(\pi_\PF(p))\cap \pi_\PF(\Lambda)$, there exists a codimension $2$ co-real torus $N\subseteq \Op(\pi_{\mathrm{PF}}(p))\cap \pi_{\mathrm{PF}}(\Lambda)\subseteq \C^{n+1}$. This places us in the setup of \cref{subsec:PrelegendrianStabilization}. In particular, the stabilization construction defined there is not an empty operation.

The very same approach works in $(\Gr(X,J),\D_{\can})$. Indeed, the co-real torus from \cref{prop:existence_coreal_graphical} can be assumed to be co-real for a general $J$ if we work in a sufficiently small ball. This is because the property of being co-real is preserved under $C^0$-perturbations of the almost complex structure, and every almost complex structure can be assumed to coincide with the standard one at a given point.
\end{proof}

\section{Exotic prelegendrians} \label{sec:Non_isotopic_preleg}

In this section we prove our main results about the failure of the $h$-principle for prelegendrian embeddings:  \cref{thm:InfinitelyPrelegendrianTorus}, \cref{cor:ExoticPrelegendrianPairsBall} and \cref{cor:ExoticPrelegendrianPairGrassmanian}.

\subsection{Auxiliary Lemmas in Contact Topology}

We recall two basic results in contact topology that we will need later. The first one is well-known. It follows from the functoriality of jet spaces:
\begin{lemma}\label{lem:unversal_cover_J1S^1xR^2n}
    Let $Q$ be a smooth manifold and $\widetilde{Q}$ its universal cover. Then, the contact universal cover of $J^1Q$ is
    \begin{equation*}
        (\widetilde{J^1Q},\widetilde{\xi_\can})  \simeq (J^1\widetilde{Q},\xi_\can).
    \end{equation*}
    
    In particular, the contact universal cover of $(J^1(\R^{m}\times \S^1),\xi_\can)$ is contactomorphic to $(J^1 \R^{m+1},\xi_\std)$.
\end{lemma}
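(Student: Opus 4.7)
The plan is to produce an explicit contact covering $\widetilde{p}: (J^1\widetilde Q,\xi_\can) \to (J^1Q,\xi_\can)$ induced by the universal cover $p:\widetilde Q \to Q$ and to verify that the total space is simply connected; by uniqueness of universal covers this will give the statement.

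First I would recall the intrinsic description $J^1Q=T^*Q\times\R$ together with the contact form $\alpha_\can=\d z-\lambda_\can$, where $\lambda_\can$ is the tautological $1$-form on $T^*Q$. Since $p:\widetilde Q\to Q$ is a local diffeomorphism, the differentials $\d p_{\tilde q}:T_{\tilde q}\widetilde Q\to T_{p(\tilde q)}Q$ are linear isomorphisms, so one obtains a fibrewise lift
\begin{equation*}
    \widetilde p: T^*\widetilde Q \longrightarrow T^*Q, \qquad (\tilde q,\tilde\alpha)\longmapsto \bigl(p(\tilde q),\,((\d p_{\tilde q})^{*})^{-1}\tilde\alpha\bigr).
\end{equation*}
A direct check shows $\widetilde p^{\,*}\lambda_\can=\lambda_\can$. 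Extending by the identity on the $\R$-factor, I get a map $\widetilde p:J^1\widetilde Q\to J^1Q$ with $\widetilde p^{\,*}\alpha_\can=\alpha_\can$, so it is a local contactomorphism, and it clearly intertwines the Legendrian fibrations $\pi_\F$ over $p$.

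Next I would argue that $\widetilde p$ is an honest covering map. This is essentially a bundle-theoretic statement: $\widetilde p$ is a bundle map over the covering $p:\widetilde Q\to Q$, the map on fibres $T^*_{\tilde q}\widetilde Q\times\R\to T^*_{p(\tilde q)}Q\times\R$ is a linear isomorphism, and the deck group $\pi_1(Q)$ acts on $J^1\widetilde Q$ by contactomorphisms with quotient $J^1Q$. Simply-connectedness of $J^1\widetilde Q$ follows from the homotopy equivalences $J^1\widetilde Q\simeq T^*\widetilde Q\simeq \widetilde Q$ (the cotangent bundle has contractible fibres), combined with the simple-connectedness of the universal cover $\widetilde Q$. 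Hence $\widetilde p$ is the universal contact covering of $(J^1Q,\xi_\can)$.

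For the specific case, the universal cover of $\R^m\times\S^1$ is $\R^{m+1}$ via $(\mathbf x,t)\mapsto(\mathbf x,t\bmod 2\pi)$, and under the identification $J^1\R^{m+1}\cong(\R^{2(m+1)+1},\xi_\std)$ this recovers the standard contact structure. The only subtlety worth double-checking is the sign/convention for the pullback on cotangent bundles, since $p$ goes \emph{down} from $\widetilde Q$ to $Q$ while the naive pullback $p^*:T^*Q\to T^*\widetilde Q$ goes in the wrong direction; I get the lift in the correct direction precisely because $\d p$ is a pointwise isomorphism, so one can invert and pair appropriately.
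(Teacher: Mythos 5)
Your proposal is correct and follows essentially the same route as the paper: the paper's one-line proof also observes that the differential of the covering map induces a covering of cotangent bundles preserving the canonical $1$-form, from which the contact covering of $1$-jet spaces follows. Your version simply spells out the fibrewise inverse-dual lift, the pullback check on $\lambda_\can$, and the simple-connectedness of $J^1\widetilde Q$, all of which are accurate.
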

\begin{proof}
The differential of the covering map $\widetilde{Q}\rightarrow Q$ induces a symplectic covering map $(T^*\widetilde{Q},\d\lambda_\can)\rightarrow (T^* Q, \d\lambda_\can)$ which preserves the canonical $1$-form. The result follows.
\end{proof}

The second auxiliary result follows by an standard cut-off argument with contact Hamiltonians:
\begin{lemma}\label{lem:non_isotopic_on_ball}
    Let $(Y^{2n+1},\xi)$ be a contact manifold such that its universal cover $(\widetilde{Y},\tilde{\xi})$ is contactomorphic to $(\R^{2n+1},\xi_\std)$ or $(\S^{2n+1},\xi_\std)$. Fix moreover a Darboux ball $(B,\xi_\std)\subseteq (Y,\xi)$. Suppose $L_0,L_1\subseteq (B,\xi_\std)\subseteq (Y,\xi)$ are two compact Legendrian submanifolds which are not Legendrian isotopic in the ball. Then $L_0$ and $L_1$ are not Legendrian isotopic in $(Y,\xi)$.
\end{lemma}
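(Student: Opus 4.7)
The plan is to argue by contrapositive: assume $L_0$ and $L_1$ are Legendrian isotopic in $(Y,\xi)$, and produce a Legendrian isotopy between them inside $B$. The overall strategy is to lift everything to the universal cover $(\widetilde Y, \widetilde\xi)\simeq(\R^{2n+1},\xi_\std)$, correct the deck ambiguity at the endpoint, and then use the Liouville dilation of $\R^{2n+1}$ to compress the resulting isotopy into a Darboux ball. We then project back to $Y$ to get the desired contradiction.

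First, by Legendrian isotopy extension, a hypothetical isotopy $L_t$ promotes to an ambient compactly supported contact isotopy $\psi_t\colon Y\to Y$ with $\psi_0=\Id$ and $\psi_1(L_0)=L_1$. Choose lifts $\widetilde L_0,\widetilde L_1\subset\widetilde B$ inside a single connected component $\widetilde B\subset \widetilde Y$ of the preimage of $B$; this is possible because the covering restricted to the simply connected set $B$ is trivial. The isotopy $\psi_t$ lifts canonically to a contact isotopy $\widetilde\psi_t$ of $(\R^{2n+1},\xi_\std)$ starting at the identity, and the endpoint $\widetilde\psi_1(\widetilde L_0)$ is some lift of $L_1$, so $\widetilde\psi_1(\widetilde L_0)=\gamma\cdot\widetilde L_1$ for some deck transformation $\gamma\in\pi_1(Y)$.

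Second, I would correct the deck ambiguity. The deck transformation $\gamma$ is a contactomorphism of $(\R^{2n+1},\xi_\std)$, and the fact that the identity component of $\operatorname{Cont}(\R^{2n+1},\xi_\std)$ is the whole group allows us to choose a contact isotopy $\phi_s$ of $\R^{2n+1}$ with $\phi_0=\Id$ and $\phi_1=\gamma^{-1}$. Define the corrected isotopy $\widetilde L_t'\defi \phi_t\bigl(\widetilde\psi_t(\widetilde L_0)\bigr)$; this is a Legendrian isotopy in $\R^{2n+1}$ from $\widetilde L_0$ to $\widetilde L_1$. Third, I would compress $\widetilde L_t'$ inside $\widetilde B$. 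After possibly composing with a global contactomorphism of $\R^{2n+1}$, we may assume $\widetilde B$ is a standard Darboux ball centered at the origin and invariant under the Liouville dilation $\delta_\lambda(\mathbf x,\mathbf y,z)=(\lambda\mathbf x,\lambda\mathbf y,\lambda^2 z)$ for $\lambda\in(0,1]$. Pick $\lambda>0$ small enough that $\delta_\lambda\bigl(\bigcup_t\widetilde L_t'\bigr)\subset\widetilde B$, and concatenate three Legendrian isotopies, all contained in $\widetilde B$: the shrinking path $s\mapsto\delta_s(\widetilde L_0)$ for $s\in[\lambda,1]$ from $\widetilde L_0$ to $\delta_\lambda(\widetilde L_0)$; then the compressed isotopy $\delta_\lambda(\widetilde L_t')$ from $\delta_\lambda(\widetilde L_0)$ to $\delta_\lambda(\widetilde L_1)$; then the expanding path $s\mapsto\delta_s(\widetilde L_1)$ from $\delta_\lambda(\widetilde L_1)$ to $\widetilde L_1$. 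Projecting this concatenation down to $B\subset Y$ via the covering map yields a Legendrian isotopy between $L_0$ and $L_1$ inside $B$, contradicting the hypothesis.

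The main technical obstacle is the second step: producing the contact isotopy from $\gamma$ to the identity. In the main application of the lemma (via \cref{lem:unversal_cover_J1S^1xR^2n}), $Y=J^1(\R^m\times\S^1)$ and the deck group is $\Z$ acting on $\R^{2n+1}$ by translations along the lifted $\S^1$ factor, so the required isotopy is completely explicit---simply shrink the translation parameter to zero and observe that each stage is a contactomorphism. In full generality one invokes that every contactomorphism of $(\R^{2n+1},\xi_\std)$ lies in the identity component of its contactomorphism group, which can in turn be established by an explicit interpolation using a time-dependent contact Hamiltonian.
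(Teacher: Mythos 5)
Your overall route is the same as the paper's: argue by contrapositive, lift to the universal cover $(\R^{2n+1},\xi_\std)$, fix the deck ambiguity at the endpoint, and then use the dilation structure of the standard contact $\R^{2n+1}$ to compress the resulting isotopy into the lifted ball. Your compression step (shrink with $\delta_\lambda$, run the shrunk isotopy, expand back) is fine and is essentially the paper's cut-off contact Hamiltonian argument in a different guise; the minor hypothesis that $\tilde B$ can be assumed dilation-invariant is handled by standard Darboux-ball connectedness plus isotopy extension, just as the paper's version is.

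The genuine gap is in your deck-correction step. You invoke that \emph{every} contactomorphism of $(\R^{2n+1},\xi_\std)$ lies in the identity component of $\operatorname{Cont}(\R^{2n+1},\xi_\std)$, and assert that this follows from ``an explicit interpolation using a time-dependent contact Hamiltonian.'' That is not a proof: producing a path from $\Id$ to a given contactomorphism is exactly the nontrivial content (a contact Alexander trick via the anisotropic dilation, with a convergence argument for the rescaled family and an analysis of the limiting graded automorphism), and the connectedness of the full contactomorphism group of standard $\R^{2n+1}$ is not a fact you can simply cite in all dimensions. The point is that you do not need it. The deck transformation $\gamma$ only enters through its effect on the lifted Darboux ball: $\gamma(\tilde B)$ is another Darboux ball containing the errant lift of $L_1$, and the space of Darboux-ball embeddings into a connected contact manifold is connected. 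This weaker, standard fact (plus contact isotopy extension) already gives a Legendrian isotopy in $\R^{2n+1}$ from the endpoint $\tilde\psi_1(\tilde L_0)$ back to $\tilde L_1$, which is all the argument requires; this is precisely how the paper proceeds. Your explicit treatment of the translation deck group suffices for the application to $J^1(\R^{2n}\times\S^1)$, but the lemma is stated for a general $Y$ with universal cover $(\R^{2n+1},\xi_\std)$, so as written your step 2 rests on an unjustified claim.
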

\begin{proof}
Since any Legendrian isotopy in $\S^{2n+1}$ avoids some point $p\in \S^{2n+1}$, it suffices to treat the case in which the universal cover is $(\R^{2n+1},\xi_{\std})\cong (\S^{2n+1}\backslash\{p\},\xi_\std)$. We prove the contrapositive. Assume that there is a Legendrian isotopy $L_t\subseteq (Y,\xi)$, $t\in[0,1]$, between $L_0$ and $L_1$. Consider a lift $(\tilde{B},\xi_\std)\subseteq (\R^{2n+1},\xi_\std)$ of the Darboux ball $(B,\xi_\std)$, together with lifts of the initial Legendrians $\tilde{L}_0,\tilde{L}_1\subseteq (\tilde{B},\xi_\std)$.
    
We claim that, since there is a Legendrian isotopy $L_t\subseteq(Y,\xi)$, $t\in[0,1]$, there exists a Legendrian isotopy $\tilde{L}_t\subseteq (\R^{2n+1},\xi_\std)$, $t\in[0,1]$, in the universal cover between the Legendrians $\tilde{L}_0$ and $\tilde{L}_1$. Indeed, the Legendrian isotopy can be lifted to a Legendrian isotopy $\hat{L}_t\subseteq (\R^{2n+1},\xi_\std)$, $t\in[0,1]$, starting at $\hat{L}_0=\tilde{L}_0$ but possibly ending at a lift $\hat{L}_1$ of $L_1$ which is different to $\tilde{L}_1$. This lift would be in a different lift $(\hat{B},\xi_\std)$ of the Darboux ball $(B,\xi_\std)$. Since the space of Darboux ball embeddings into a connected contact manifold is connected, the contact embeddings $(\tilde{B},\xi_\std)\hookrightarrow (\R^{2n+1},\xi_\std)$ and $(\hat{B},\xi_\std)\hookrightarrow (\R^{2n+1},\xi_\std)$ are contact isotopic. This implies that $\hat{L}_1$ and $\tilde{L}_1$ are Legendrian isotopic. The concatenation of both Legendrian isotopies is the required Legendrian isotopy $\tilde{L}_t\subseteq (\R^{2n+1},\xi_\std)$, $t\in [0,1]$.

Now we will show that the Legendrian isotopy $\tilde{L}_t$ is homotopic, relative to $t\in \{0,1\}$, to a Legendrian isotopy whose image is in $(\tilde{B},\xi_\std)$. Since the covering map defines a contactomorphism $(\tilde{B},\xi_\std)\rightarrow (B,\xi_\std)$ this would imply the existence of a Legendrian isotopy in $(B,\xi_\std)$ between $L_0$ and $L_1$, thus concluding the proof. 

Fix two auxiliary Darboux balls $(B_0,\xi_\std)$ and $(B_1,\xi_\std)$ in $(\R^{2n+1},\xi_\std)$ such that 
    \begin{itemize}
        \item $\tilde{L}_0,\tilde{L}_1\subseteq B_0$
        \item $\tilde{L}_t\subseteq B_1$
        \item $B_0\subseteq \tilde{B}\subseteq B_1$. 
    \end{itemize}
Since the space of Darboux balls is connected we may further assume that $(B_0,\xi_\std)$ is the unit ball in $(\R^{2n+1},\xi_\std)$. Consider the contact vector field
\begin{equation*}
    X=2z \partial z+\sum_{i=1}^n x_i\partial x_i + y_i\partial y_i,
\end{equation*}

in $(\R^{2n+1},\xi_\std)$, whose contact flow is the contact dilation $\varphi^t_X(\mathbf{x},\mathbf{y},z)=(e^t \mathbf{x}, e^t \mathbf{y}, e^{2t} z)$, $t\in\R$.  With respect to the standard contact form, $X$ has contact Hamiltonian
\begin{equation*}
    H=\alpha_\std (X)=2z-\mathbf{y}\cdot \mathbf{x}.
\end{equation*}

Let $\rho:\R^{2n+1}\rightarrow [0,1]$ be a smooth function such that 
 \begin{itemize}
     \item $\rho_{|B_0}\equiv 0$ and 
     \item $\rho_{|\R^{2n+1}\backslash \tilde{B}}\equiv 1$. 
\end{itemize}
Denote by $\tilde{X}$ the contact vector field associated to the Hamiltonian $\tilde{H}\defi\rho \cdot H$. The required Legendrian isotopy is then
\begin{equation*}
    \varphi^{-T}_{\tilde{X}}(\tilde{L}_t), t\in[0,1],
\end{equation*}
where $T>>0$ is big enough. This concludes the argument. 
\end{proof}

The previous lemma yields the following consequence, which, although not used later in the manuscript, may be of independent interest.
\begin{corollary}
     Let $(Y^{2n+1},\xi)$ be a contact manifold whose universal cover $(\widetilde{Y},\tilde{\xi})$ is contactomorphic to $(\R^{2n+1},\xi_\std)$ or $(\S^{2n+1},\xi_\std)$. Fix $L\subseteq (B,\xi_\std)$, a Legendrian submanifold contained in a Darboux ball. Then, $L$ is loose in $(B,\xi_\std)$ if and only if it is loose in $(Y,\xi)$.
\end{corollary}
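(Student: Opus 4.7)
The plan is to establish the contrapositive: if $L$ is loose in $(Y,\xi)$, then $L$ is loose in $(B,\xi_\std)$. The key idea is that the immediately preceding \cref{lem:non_isotopic_on_ball} together with Murphy's $h$-principle (\cref{thm:h-principle_loose_leg}) allow us to transport Legendrian isotopies from $(Y,\xi)$ down into the Darboux ball.

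Here is how I would organise the steps. First, construct an auxiliary Legendrian $s(L) \subseteq (B,\xi_\std)$ by performing a stabilization of $L$ \emph{inside} the Darboux ball (possible since $\dim L \geq 2$ is implicit in the whole setup, and a cusp chart together with a codimension-$0$ submanifold of zero Euler characteristic can be chosen inside $B$; if $\dim L = 1$ the statement is vacuous, and for $\dim L \geq 2$ a standard stabilization model fits in any Darboux ball). By construction $s(L)$ is loose in $B$, hence also loose in the larger manifold $(Y,\xi)$. Moreover $s(L)$ is formally Legendrian isotopic to $L$ by the standard fact that $N$-pushing along a zero Euler characteristic submanifold preserves the formal class (\cref{prop:FormalClassNPushing}).

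Now apply Murphy's $h$-principle: $L$ and $s(L)$ are two Legendrians in $(Y,\xi)$, both loose (the first by hypothesis, the second by construction), and formally Legendrian isotopic. Therefore they are genuinely Legendrian isotopic in $(Y,\xi)$. Since both $L$ and $s(L)$ are contained in the Darboux ball $(B,\xi_\std) \subseteq (Y,\xi)$, the contrapositive of \cref{lem:non_isotopic_on_ball} gives a Legendrian isotopy between them inside $(B,\xi_\std)$. Because looseness is preserved by Legendrian isotopy within a fixed contact manifold (any ambient contact isotopy realising the Legendrian isotopy transports a loose chart to a loose chart), and $s(L)$ is loose in $B$ by construction, we conclude that $L$ is loose in $B$.

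The only subtle point is verifying that \cref{lem:non_isotopic_on_ball} does apply. Its hypothesis is exactly what we have: $(\widetilde Y, \tilde\xi) \simeq (\R^{2n+1},\xi_\std)$, $(B,\xi_\std)$ is a Darboux ball, and $L, s(L)$ are compact Legendrians contained in $B$. The conclusion of that lemma, read contrapositively, states that any Legendrian isotopy in $(Y,\xi)$ between two Legendrians sitting in $B$ can be deformed (rel.\ endpoints) to a Legendrian isotopy supported in $B$. The rest is then formal. Thus no new technical input beyond the three ingredients (stabilization, Murphy's $h$-principle, and the preceding lemma) is required, which is presumably why the authors state this as a corollary.
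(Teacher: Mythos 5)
Your proposal is correct and follows essentially the same route as the paper: apply \cref{lem:non_isotopic_on_ball} to the pair $L$ and $s(L)$, use Murphy's $h$-principle (\cref{thm:h-principle_loose_leg}) to get a Legendrian isotopy in $(Y,\xi)$ if $L$ were loose there, and transport it into the ball. The extra care you take (stabilizing inside $B$, invoking \cref{prop:FormalClassNPushing}, noting that looseness is preserved under Legendrian isotopy) only fleshes out steps the paper leaves implicit.
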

\begin{proof}
    Argue as in the proof of \cref{lem:non_isotopic_on_ball}. Given a loose chart in $(Y,\xi)$, lift it to the universal cover and use contact Hamiltonians to push it into $(B,\xi_\std)$.
\end{proof}

\subsection{Exotic prelegendrian tori}

We now prove our main results.
\begin{proof}[Proof of \cref{thm:InfinitelyPrelegendrianTorus}]
Consider the family of Legendrian Whitehead doubles of stabilized Legendrian unknots $W_s\subseteq (J^1\R,\xi_\std)$ with $s=3,5,\cdots,$ depicted in \cref{fig:W_s-front}. By \cref{prop:existence_coreal_graphical}, there exists a co-real embedding $\hat{e}=(\hat{e}_{\R^{2n+1}},h):\T^{2n}\to\C^{n+1}=\R^{2n+2}=\R^{2n+1}\times\R$, which is graphical over $\R^{2n+1}$.

\begin{figure}[h]
    \centering
    \includegraphics[width=0.75\linewidth]{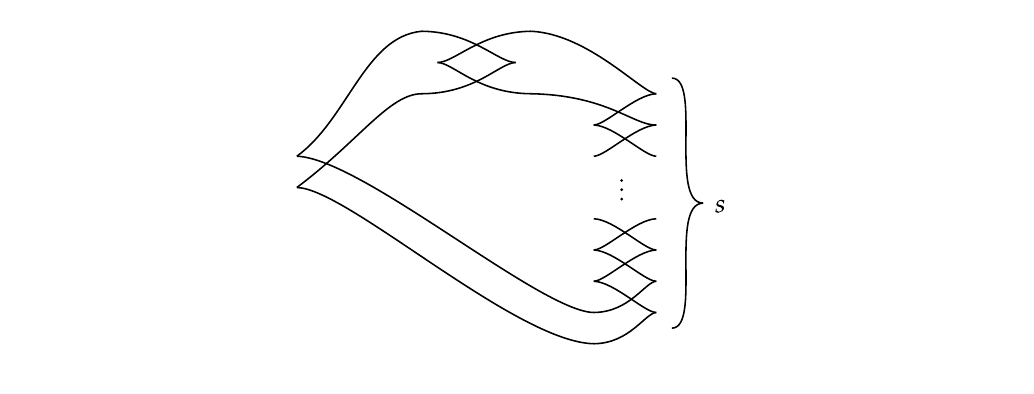}
    \caption{The Legendrian $W_s$, where $s$ is the number of crossings in the right part.}
    \label{fig:W_s-front}
\end{figure}
    
By \cref{cor:prelegendrianspinning} there is well-defined infinite family of prelegendrian $(2n+1)$-tori:
\begin{equation*}
    \Lambda_s=\pspin_{\hat{e}}[W_s]\subseteq (\C^{2n+1},\D_\std)
\end{equation*}
obtained by prelegendrian spinning. 

Observe that $\operatorname{Rot}(W_s)=0$ for all $s$. Therefore, by \cref{prop:rot=0_formal_isotopic} and \cref{rmk:PrelegendrianSpinningReidemeister} the prelegendrian fronts of $\Lambda_s$ are related by a sequence of prelegendrian Reidemeister moves and $N$-pushings along parallel copies of $\hat{e}(\T^{2n})$. \cref{thm:Prelegendria_NPushing} then implies that all these prelegendrians are formally isotopic. A formal prelegendrian isotopy is depicted in \cref{fig:W_s_homotopy}.
\begin{figure}[h]
    \centering
    \includegraphics[width=1\linewidth]{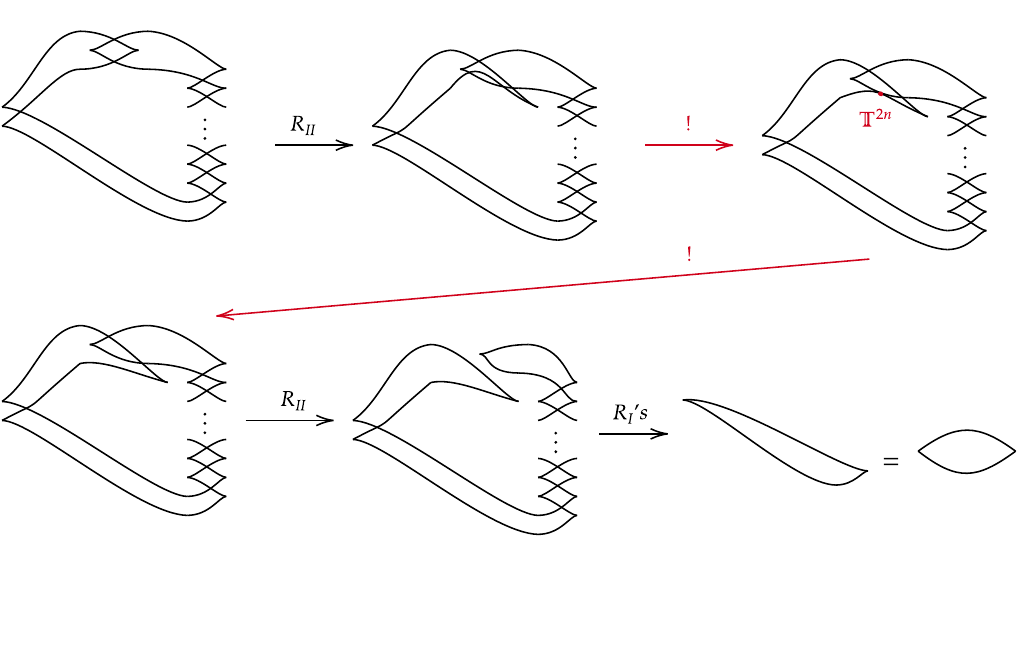}
    \caption{A formal prelegendrian isotopy between of $\Lambda_s=\mathrm{prespin}_{\hat e} [W_s]$ and $\mathrm{prespin}_{\hat{e}}[U]$, where $U\subseteq (J^1\R,\xi_\std)$ is the standard Legendrian unknot. The first and fourth arrows indicate prelegendrian RII moves. The second and third arrows describe prelegendrian $N$-pushing along the co-real $\T^{2n}$. The last arrow represents several prelegendrian RI moves along parallel copies of the co-real $\T^{2n}$.}
    \label{fig:W_s_homotopy}
\end{figure}

Now we will show that the Legendrian lifts $\cL(\Lambda_s)$ are pairwise not Legendrian isotopic, which by \cref{cor:leg_lift_invariant} will imply that the prelegendrians $\Lambda_s$ are pairwise not prelegendrian isotopic and hence the result. 

By construction,
\begin{equation*}
    \cL(\Lambda_s)= \spin_{\hat{e}}[W_s]\subseteq (J^1\R^{2n+1},\xi_\std).
\end{equation*}
Moreover, as Legendrians in $(J^1\R^{2n+1},\xi_\std)$, the $\cL(\Lambda_s)$ and $(\spin_{\S^1})^{2n}[W_s]$ are Legendrian isotopic by \cref{lem:coreal_torus_spin_is_iter_spin}. It was shown in \cite[Example 4.20]{EES:NonIsotopicLegendrians} that the Legendrian contact homologies of $\cL(\Lambda_s)$ are all non quasi-isomorphic, so the $\cL(\Lambda_s)$ are pairwise not Legendrian isotopic in $(J^1\R^{2n+1},\xi_\std)$. To conclude we need to tell them apart in the contactisation $\cC(\C^{2n+1},\D_\std)$. 
    
Notice that there is a natural inclusion
\begin{equation*}
    (J^1\R^{2n+1},\xi_\std)\cong (\S T^*_\mathrm{hor}(\R^{2n+2})^+,\xi_\can)\subseteq \cC(\Gr_\mathrm{hor}(\C^{n+1},J_\std),\D_{\can})=\cC(\C^{2n+1},\D_\std),
\end{equation*}
where the first contactomorphism follows from \cref{lem:HorizontalContactElements} and the inclusion is the one given in Equation \eqref{eq:InclusionHorizontalCn}. Moreover, by \cref{cor:contactsation_fat_std} we have $\cC(\C^{2n+1},\D_\std)\cong (J^1 (\R^{2n}\times \S^1),\xi_\std)$. Hence, \cref{lem:unversal_cover_J1S^1xR^2n} tells us that the contact universal cover of $\cC(\C^{2n+1},\xi_\std)$ is $(J^1\R^{2n+1},\xi_\std)$. Therefore, \cref{lem:non_isotopic_on_ball} can be applied to conclude that the Legendrians $\cL(\Lambda_s)$ are pairwise Legendrian non-isotopic in the contactisation $\cC(\C^{2n+1},\D_\std)$.
\end{proof}

\begin{remark}\label{rmk:InfinitelyToriComplexProjectiveSpace}
Recall that $(\C^{2n+1},\D_\std)\subseteq (\CP^{2n+1},\D_\std)$ is an affine chart ( \cref{subsec:complex_projective}). In particular, the infinite family $\Lambda_s$ constructed in the previous proof defines prelegendrians in complex projective space. 

Since the contact universal cover of $\cC(\CP^{2n+1},\D_\std)$ is $(\S^{4n+3},\xi_\std)$ by \cref{prop:contactisation_CP^2n+1}, we can apply \cref{lem:non_isotopic_on_ball} in this setting as well to conclude that the prelegendrians are not prelegendrian isotopic in $(\CP^{2n+1},\D_\std)$.
\end{remark}

\begin{remark}\label{rmk:InfinitelyToriInfinitelyManyFat}
   Let $(Q^{2n+1}\times \R,J)$ be an almost complex manifold, with universal cover $\R^{2n+1}\times \R$. Then, it follows from the same argument that $(\Gr_{\mathrm{hor}}(Q\times \R), \D_\can)$ has infinitely many exotic prelegendrian $(2n+1)$-tori as well. Indeed, we consider the prelegendrian fronts appearing in the proof and we insert them in a very small ball $\Op(q,z)$, where $(q,z)\in Q\times \R$. It will then follow that the front singularities are co-real for $J$, and hence define prelegendrians in $(\Gr_{\mathrm{hor}}(Q\times \R), \D_\can)$. Our assumption on $Q$ then implies that the contact universal cover of $\cC(\Gr_{\mathrm{hor}}(Q\times \R), \D_\can)$ is contactomorphic to $(J^1 \R^{2n+1},\xi_\std)$, so the same proof can conclude in the same manner. This yields infinitely many examples of fat manifolds of corank $2$ with infinitely many exotic prelegendrians.
\end{remark}

\subsection{Proof of \cref{cor:ExoticPrelegendrianPairsBall}}

Consider the perturbation of the Clifford torus $\hat{e}:\T^{n+1} \hookrightarrow \C^{n+1}$ from \cref{prop:existence_coreal_graphical}. The prelegendrian embedding $\Lambda: \T^{n+1}\times \S^{j_0}\times \cdots\times \S^{j_k} \hookrightarrow (\C^{2n+1},\D_\can)$ with non-loose Legendrian lift $\cL(\Lambda)$ will be defined as
\begin{equation*}
    \Lambda\defi \pspin_{\hat{e}} [L]\subseteq (\C^{2n+1},\D_\std),
\end{equation*}
for a certain Legendrian $L\cong \S^{j_0}\times \cdots\times \S^{j_k}\subseteq (J^1\R^n,\xi_\std)$. To conclude that $\cL(\Lambda)=\spin_{\hat{e}}[L]\subseteq (J^1\R^{2n+1},\xi_\std)$ is non-loose, we will show that $\cL(\Lambda)$ is Lagrangian fillable \cite{Ekholm:RSFT,Murphy:Loose}. The second part of the result will then follow from \cref{thm:PrelegendrianStabilization}.

By \cref{lem:coreal_torus_spin_is_iter_spin}, the Legendrians $\spin_{\hat{e}}[L]$ and $(\spin_{\S^1})^{n+1}[L]$
are Legendrian isotopic. Therefore, the proof reduces to showing the existence of a Legendrian $L\cong \S^{j_0}\times \cdots\times \S^{j_k}\subseteq (J^1 \R^n,\xi_\std)$ such that
\begin{itemize}
    \item $L$ has a simple front, so $\Lambda=\pspin_{\hat{e}}[L]\subseteq (\C^{2n+1},\D_\std)$ is well-defined by \cref{cor:prelegendrianspinning}.
    \item $(\spin_{\S^1})^{n+1}[L]\subseteq (J^1 \R^{2n+1},\xi_\std)$ is Lagrangian fillable.
\end{itemize}

Let $U\cong \S^{j_0}\subseteq (J^1 \R^{j_0},\xi_\std)$ be the standard Legendrian unknot. Then, the Legendrian
\begin{equation*}
    L\defi \spin_{\S^{j_k}}\circ \spin_{\S^{j_{k-1}}}\circ \cdots \circ \spin_{\S^{j_1}} [U]\subseteq (J^1\R^n,\xi_\std),
\end{equation*}
 satisfies the required properties: it has a simple front and, moreover, $(\spin_{\S^1})^{n+1}[L]$ is Lagrangian fillable because the $\S^k$-spinning of a fillable Legendrian is fillable \cite[Remark 2]{Golovko}. This concludes the argument.
\qed

\begin{remark}
There is an alternative argument to conclude non-looseness in the above construction, which avoids appealing to Lagrangian fillability. Dimitroglou Rizell and Golovko prove in \cite[Theorem~3.1]{DimitroglouGolovko:LinearRep_CharacteristicAlgebra} that $\S^{m}$–spinning preserves augmentations: if the Chekanov--Eliashberg DGA of a Legendrian submanifold admits an augmentation, then the DGA of its $\S^{m}$--spinning also admits one.
In particular, non-acyclicity of the DGA (equivalently, non-vanishing of linearized LCH) is preserved under spinning.  Thus, whenever $L$ has non-zero Legendrian contact homology,
the same holds for $\spin_{\S^{m}}[L]$, which already implies that the latter is non-loose.
\end{remark}

\begin{remark}
It follows from \cite[Theorem 1.2]{Golovko} that we can actually find infinitely many prelegendrians as in \cref{cor:ExoticPrelegendrianPairsBall} whose Legendrian lifts are not Legendrian isotopic. We expect these prelegendrians to be formally prelegendrian isotopic, yielding new infinite families of exotic prelegendrians with topology not restricted to be tori. However, the formal computation is more involved than the case of tori, since we cannot reduce it to $1$-dimensional Legendrian knot theory, as in \cref{prop:rot=0_formal_isotopic}. This remains an open problem.
\end{remark}

\subsection{Proof of \cref{cor:ExoticPrelegendrianPairGrassmanian}} 

The Legendrian lift of the prelegendrian $\Lambda_M=(M,TM^J)\subseteq (\Gr(X,J),\D_\can)$ is
\begin{equation*}
    \cL(\Lambda_M)=(M,TM)\subseteq (\S T^* X,\xi_\can)=\cC(\Gr(X,J),\D_\can),
\end{equation*}
here the prelegendrian co-orientation is defined by the outward normal at $M=\partial Y$ defined by $Y$. 

We claim that $\cL(\Lambda_M)$ admits an exact Lagrangian filling in $(\mathbb{D}T^* X, \d\lambda_\can)$, hence is non-loose \cite{Ekholm:RSFT,Murphy:Loose} and the result follows from \cref{thm:PrelegendrianStabilization}. This must be well-known, however we include the proof here for completeness. Fix a Riemannian metric in $M$, and consider the associated metric in $T^*M$. Then, if $\nu_p$, $p\in M=\partial Y$, is the outward  unit normal to $M$, we can identify the Legendrian lift as the unit co-normal lift associated to $\nu$, i.e. $\cL(\Lambda_M)=\{(p,\nu^*_p):p\in M\}\subseteq (\S T^*X,\xi_\can)$.

Consider a tubular \nbh of $M$
\begin{align*}
    \Phi:M\times (-\varepsilon,\varepsilon)&\to U\subseteq X\\
        (p,t)&\mapsto \exp_p(t\nu_p),
\end{align*}
Define $\psi:U\to\R$ by $\psi(\Phi(p,t))\defi t$. Then, $\d_p\psi=\nu^*_p$ for $p\in M$, $\|\d\psi\|=1 $, $\psi_{|Y\cap U} <0$ and $ \psi_{|M}=0$. Since $M$ is separating we may extend $\psi$ to a smooth function in $X$, still denoted by $\psi$ such that $\phi_{|Y\setminus U}\leq -\varepsilon$ and $\psi_{|X\setminus Y}>0$.

Fix a step function $\chi:(-\infty,0]\to[-1,0]$ such that
\begin{itemize}
    \item $\chi(0)=0$,
    \item $\chi'(0)=1$,
    \item $0\leq\chi'(t)< 1$ for all $t<0$,
    \item $\operatorname{supp}(\chi')\subseteq (-\varepsilon,0]$.
\end{itemize}

Set $f:Y\to \R$ by $f(p)\defi\chi(\psi(p))$. Then $L\coloneqq \mathrm{graph}(\d f)\subseteq T^*X$ is an exact Lagrangian. By definition,
\begin{equation*}
    \d_p f = \chi'(\psi(p))\d_p\psi.
\end{equation*}
From here it follows that $\|(\d f)_p\|< 1 $ for $p\in \mathrm{Int}(Y)$ and $\d_p f=\nu^*_p$ for $p \in M=\partial Y$. That is, $L\subseteq (\bD T^* X, \d \lambda)$ is an exact Lagrangian filling of $\Lambda_M\subseteq (\S T^* X, \xi_\can)$ and the result follows.
\qed

%%%%%%%%%%%%%%%%%%%%%%%%%%%%%%%%%%%%%%%%%%%%%%%%
\section{Constructing prelegendrians in non-standard fat distributions} \label{sec:general}

In this section we translate our previous results to more general fat distributions. To do this we use two main tools: (1) the nilpotentisation, to be introduced next, and (2) Gray stability for contact structures. These tools partially remedy the lack of Darboux and Gray for fat structures of higher corank.

\subsection{The nilpotentisation}

According to Cartan's study of normal forms of distributions, corank-$2$ fat distributions do not admit a Darboux type theorem. However, an approximate local form does exist \cite{Ge:Char_subRiemannian, montgomeryTourSubriemannianGeometries2002}, as we now explain.

Consider $(M^{4n+2},\D^{4n})$ fat and fix a point $p \in M$. Then, there are local coordinates $(x_1,\ldots,x_{4n},z_1,z_2)$ and $1$-forms
\begin{equation*}
    \lambda^i =\d z_i + \sum_{j,k} \Gamma^i_{jk}x_j\d x_k + R_i,\quad i=1,2,
\end{equation*}
such that $\D \overset{loc.}{=} \ker\lambda^1\cap \ker \lambda^2$. Here $R_i$ is a remainder/defect term
\begin{equation*}
    R_i = \sum_j f_{ij}\d z_j + \sum_j g_{ij}\d x_j,
\end{equation*}
with $f,g\in O(|x|^2+|z|^2)$.

The coefficients $\{\Gamma^i_{jk}\}$ depend on $p$ and are the structure constants of the Lie algebra structure on $\D^{2n}_p \oplus TM_p/\D_p$ induced by the curvature of $\D$. As $p$ varies, this yields a weak Lie algebra bundle over $M$, meaning that the isomorphism type of the Lie algebra may vary from point to point. This bundle is called the \emph{nilpotentisation} of $\D$. The Lie algebra at $p$ defines a left-invariant fat distribution on the Lie group that integrates it; it is given precisely by the $1$-forms
\begin{equation*}
    \lambda^i =\d z_i + \sum_{j,k} \Gamma^i_{jk}x_j\d x_k,
\end{equation*}
i.e. by removing the remainder. The following standard result \cite{Bellaiche1996} tells us that this provides an approximate model for $\D$:
\begin{proposition}
Fix a fat distribution $(M^{4n+2},\D^{4n})$, a point $p \in M$, and local coordinates $(x_1,\ldots,x_{4n},z_1,z_2)$, as above. Let $\D_{model}$ be the model fat distribution associated to the nilpotentisation at $p$. 

Then, there is a path of fat distributions $(\D_s)_{s \in [0,1]}$ on $\R^{4n+2}$ such that:
\begin{itemize}
    \item If $s>0$, $\D_s$ is diffeomorphic to the restriction of $\D$ to the chart.
    \item $\D_0 = \D_{model}$.
\end{itemize}
\end{proposition}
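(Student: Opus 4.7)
The plan is to use the anisotropic dilation
\[ \delta_s:\R^{4n+2}\to\R^{4n+2}, \qquad \delta_s(x,z)=(sx,s^2 z), \]
which for $s>0$ is a diffeomorphism, and which reflects the weights $(1,2)$ of the $(x,z)$ coordinates under the grading of the nilpotent Lie algebra $\D_p\oplus TM_p/\D_p$. The key observation is that $\delta_s$ rescales the model $1$-forms $\d z_i + \sum_{j,k}\Gamma^i_{jk}\,x_j \d x_k$ uniformly by $s^2$, so after dividing by $s^2$ they become invariant under $\delta_s$. The remainder terms $R_i$, being of strictly higher weighted order, will be suppressed by an additional positive power of $s$; hence the rescaled $1$-forms will produce a smooth family interpolating between $\D$ for $s>0$ and $\D_{model}$ at $s=0$.

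Concretely, I would compute
\[
\delta_s^*\lambda^i = s^2\,\d z_i + s^2\sum_{j,k}\Gamma^i_{jk}\, x_j \d x_k + \delta_s^* R_i,
\]
and bound the remainder using $f_{ij},g_{ij}=O(|x|^2+|z|^2)$: the pulled-back coefficients are $O(s^2)$, so $\delta_s^*(f_{ij}\,\d z_j)=O(s^4)$ and $\delta_s^*(g_{ij}\,\d x_j)=O(s^3)$. Consequently
\[
\lambda^i_s \defi s^{-2}\,\delta_s^* \lambda^i \;=\; \d z_i + \sum_{j,k}\Gamma^i_{jk}\, x_j \d x_k + O(s)
\]
extends smoothly to all of $s\in[0,1]$, with $\lambda^i_0$ the model form. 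I would then set $\D_s\defi\ker\lambda^1_s\cap\ker\lambda^2_s$; by construction $\D_0=\D_{model}$, while for $s>0$ one has $\D_s=\delta_s^*\D$, so $\D_s$ is diffeomorphic to $\D$ via $\delta_s$.

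Fatness is then automatic along the path: for $s\in(0,1]$, $\D_s$ is diffeomorphic to $\D$ and hence fat; at $s=0$, $\D_{model}$ is fat because its structure constants $\Gamma^i_{jk}$ are precisely those encoding the curvature $\Omega^*$ of $\D$ at $p$, which is nondegenerate by hypothesis. So the path lies in the fat locus by construction, and one does not even need to invoke the openness of fatness.

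The main technical subtlety will be the domain of definition: the forms $\lambda^i$ are a priori only defined on a neighbourhood $U$ of $p$, not on the whole of $\R^{4n+2}$, while $\delta_s^{-1}$ expands in scale. I would deal with this by choosing $U$ to be diffeomorphic to $\R^{4n+2}$ from the outset (any open ball around $p$ admits such an identification) and regarding the $\lambda^i$ as globally defined $1$-forms via this identification; then each $\delta_s$ with $s\in(0,1]$ is a global diffeomorphism of $\R^{4n+2}$ and $\D_s$ is well defined everywhere, giving the required path.
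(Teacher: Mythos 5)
Your proof is correct and follows exactly the paper's (one-sentence) argument: the weighted dilation $\delta_s(x,z)=(sx,s^2z)$ rescales the model part of $\lambda^i$ by $s^2$ while the remainder is suppressed by an extra power of $s$, so $s^{-2}\delta_s^*\lambda^i$ extends smoothly to $s=0$ with limit the model form. The only caveat is that your identification of the chart with $\R^{4n+2}$ should be taken to be the identity near the origin, so that the normal form (and hence the remainder estimates governing the $s\to 0$ limit) is preserved.
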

The proof amounts to performing a suitable weighted dilation in order to get rid of the remainder. As we take the limit as the dilation factor goes to zero, we obtain the model structure.

For the standard fat space $(\C^{2n+1},\D_\std)$, this space is itself the model, and its Lie algebra is known as the \emph{complex Heisenberg algebra}. A particularly interesting case is $n=1$. By the classification of nilpotent $6$-dimensional Lie algebras \cite{ConsoleFinoSamiou:6dimNilLieAlg,Magnin:LieNilAlg<=7}, there is a unique Lie algebra structure that is fat of corank $2$ for dimension $6$. Hence:
\begin{corollary} \label{cor:nilpotentisation46}
Fix a fat distribution $(M^6,\D^4)$, a point $p \in M$. Then, there is a path of fat distributions $(\D_s)_{s \in [0,1]}$ on $\C^3$ such that:
\begin{itemize}
    \item If $s>0$, $\D_s$ is diffeomorphic to the restriction of $\D$ to the chart.
    \item $\D_0 = \D_\std$.
\end{itemize}
\end{corollary}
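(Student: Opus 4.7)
The plan is to combine the preceding proposition with the classification of $6$-dimensional nilpotent Lie algebras. Applying that proposition to $(M^6,\D^4)$ at the point $p$ and the given chart already produces a path $(\D_s)_{s\in[0,1]}$ of fat distributions on $\R^6$ such that $\D_s$ is diffeomorphic to the restriction of $\D$ to the chart for every $s>0$, and $\D_0$ is the left-invariant distribution $\D_{model}$ coming from the nilpotentisation at $p$. The only remaining task is therefore to identify $\D_{model}$ with $\D_\std$ on $\C^3$, up to diffeomorphism of fat manifolds.

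The first step I would carry out is to check that $\D_{model}$ is itself fat. This is essentially tautological, since fatness of $\D$ at $p$ is a first-order condition on the curvature $\Omega:\D_p\wedge \D_p \to T_pM/\D_p$, and that map coincides (up to a sign) with the Lie bracket of the graded nilpotent Lie algebra $\mathfrak{g}=\mathfrak{g}_{-1}\oplus\mathfrak{g}_{-2}$ underlying the nilpotentisation. Hence $\D_{model}$ is a left-invariant fat distribution of rank $4$ and corank $2$ on the simply connected nilpotent Lie group $G$ integrating $\mathfrak{g}$.

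The second step is to invoke the classification of $6$-dimensional real nilpotent Lie algebras cited in the excerpt (Console--Fino--Samiou, Magnin). Among all of these, only one isomorphism type admits a grading $\mathfrak{g}_{-1}\oplus \mathfrak{g}_{-2}$ with $\dim\mathfrak{g}_{-1}=4$, $\dim\mathfrak{g}_{-2}=2$, and whose bracket $\mathfrak{g}_{-1}\wedge\mathfrak{g}_{-1}\to\mathfrak{g}_{-2}$ is fat. Existence is witnessed by $(\C^3,\D_\std)$ itself, which was identified earlier in the paper with the left-invariant distribution of the complex Heisenberg group; for uniqueness one may argue directly using \cref{lem:ell_equivalence}, namely that picking a basis of $\mathfrak{g}_{-2}$ turns the bracket into a pair $(\omega_1,\omega_2)$ of symplectic forms on $\R^4$ whose connecting isomorphism has no real eigenvalue, so by linear algebra a change of basis normalises $(\omega_1,\omega_2)$ to the standard complex form.

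Consequently $G$ is isomorphic to the complex Heisenberg group and $\D_{model}$ corresponds, under any such isomorphism, to $\D_\std$ on $\C^3$; absorbing this diffeomorphism into the path $(\D_s)_{s\in[0,1]}$ yields the statement. The main obstacle I foresee is the classification/uniqueness step: translating the fatness of the bracket into a statement about the structure constants and verifying that it rules out all other entries on the list of nilpotent $6$-dimensional Lie algebras. The direct linear-algebraic argument via \cref{lem:ell_equivalence} side-steps having to go through the full classification, and this is the route I would take in writing out the details.
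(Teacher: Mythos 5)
Your proposal is correct and follows the same overall route as the paper: apply the nilpotentisation proposition to get a path ending at the model left-invariant distribution, then identify that model with $(\C^3,\D_\std)$ by the uniqueness of the fat $6$-dimensional $2$-step nilpotent Lie algebra (the complex Heisenberg algebra), and conjugate the path by the resulting linear identification. The one place you genuinely diverge is the uniqueness step: the paper simply cites the classification of $6$-dimensional nilpotent Lie algebras, whereas you propose a self-contained linear-algebra argument via \cref{lem:ell_equivalence}. That argument does go through, but the detail you should not omit is that the connecting isomorphism $A$ of a pair of \emph{antisymmetric} forms is automatically $\omega_2$-self-adjoint, i.e.\ $\omega_2(Au,v)=\omega_2(u,Av)$. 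This is what does all the work: it forces the complexified generalized eigenspaces of $A$ to be pairwise $\omega_2$-orthogonal and $\omega_2$-nondegenerate (hence even-dimensional), so in dimension $4$ with no real eigenvalues there is a single conjugate pair $\{\lambda,\bar\lambda\}$ of multiplicity two, and self-adjointness further excludes a Jordan block, giving $A=aI+bJ'$ for a linear complex structure $J'$. After the $GL(2,\R)$ change of basis in $\mathfrak{g}_{-2}$ one is left with $(\omega_2(\cdot,J'\cdot),\omega_2)$, the real and imaginary parts of a complex symplectic form on $(\R^4,\pm J')$, which is the complex Heisenberg bracket. Without the self-adjointness observation one might worry about, say, two distinct elliptic eigenvalue pairs, which a real M\"obius change of basis could never merge; self-adjointness is precisely what rules this out. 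Your route buys independence from the classification tables; the paper's buys brevity.
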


\subsection{Prelegendrians in general fat manifolds}

This language of approximate models allows us to prove:
\begin{proof}[Proof of  \cref{thm:general46}]
Use \cref{cor:nilpotentisation46} to produce a path of fat distributions $\D_s$ starting with $\D_0 = \D_\std$ such that $\D_s$ is diffeomorphic to the restriction of $\D$ to some small ball whose radius depends on $s>0$. Now, consider the contactisation of the family $(M,\D_s)$, these define a family of (isotropic) bundle projections:
\begin{equation*}
    \mathrm{pr}_s: (U,\xi_s) \to  (B,\D_s),
\end{equation*}
where $U$ is an open manifold diffeomorphic to $B\times \S^1$ and $B$ is some open $6$-ball around the origin. Consider now the prelegendrians $P$ of $(\C^3,\D_\std) = (B,\D_0)$ constructed in \cref{thm:InfinitelyPrelegendrianTorus}, they define corresponding Legendrian lifts $L$ on $\cC(B,\D_0) = (U,\xi_0)$.

These Legendrian lifts can be assumed to be contained in some compact neighbourhood $K \subseteq U$. Gray stability tells us that there is a small time $s_0>0$ such that a family of isocontact embeddings $\phi_s: (K,\xi_0) \rightarrow (U,\xi_s)$, $s \in [0,s_0]$, exists. Consider then the images $\phi_s(L)$, these are Legendrians in $(U,\xi_s)$. Since $\phi_0(L)$ is transverse to the projection $\mathrm{pr}_0$ and transversality is an open condition, then $\phi_s(L)$ is transverse to $\mathrm{pr}_s$ for sufficiently small $s>0$, hence it projects down to a prelegendrian in $(B,\D_s)$, as desired. This proves the first claim.

For the second claim, we consider any prelegendrian $\Lambda$ in $(M,\D)$ and zoom in at a point $p \in \Lambda$. The zooming argument above, in a chart, allows us to produce a path of prelegendrians $\Lambda_s \subseteq (B,\D_s)$ such that $\Lambda_0 \simeq T_p\Lambda$ is a prelegendrian linear subspace in $(B,\D_0) = (\C^3,\D_\std)$. By the symmetry of the complex Heisenberg algebra, we can assume that $\Lambda_0$ projects to a prelegendrian front; we can then stabilise and denote $\tilde{\Lambda}_0 \defi s(\Lambda_0)$, and we denote its Legendrian lift by $\tilde{L}_0$.

In the contactisations $\mathrm{pr}_s\colon (U,\xi_s)\to (B,\D_s)$, the same construction as in the first claim produces a path of Legendrians $\tilde{L}_s \defi \phi_s(\tilde{L}_0)$ contained in a compact set $K \subseteq U$, for $s \in [0,s_0]$ with $s_0>0$ small. Furthermore, we can assume that the loose chart of each $\tilde{L}_s$ is contained in a compact set $W \subseteq K$.

Note that $\tilde{L}_s$ is $C^\infty$-close to the Legendrian lift $L_s = \cL(\Lambda_s)$. Then $\tilde{L}_s$ is a graph in a Weinstein \nbh\ chart of $L_s$, and by cutting off with a suitable function we produce a new path of Legendrians $L_s'$ such that 
\begin{itemize}
    \item $L_s'$ coincides with $\tilde{L}_s$ in $W$, which contains a loose chart;
    \item $L_s'$ coincides with $L_s$ away from $\Op(W)$.
\end{itemize}
Observe that $L_0' = \tilde{L}_0$. By openness of transversality, we know that $L_s'$ is transverse to $\mathrm{pr}_s$ for sufficiently small $s>0$; this defines the stabilised prelegendrian of $\Lambda$.
\end{proof}

The exact same reasoning shows:
\begin{theorem}
Fix a fat distribution $(M^{4n+2},\D^{4n})$. Suppose it contains a point whose nilpotentisation is the complex Heisenberg algebra. Then, it contains compact prelegendrians.
\end{theorem}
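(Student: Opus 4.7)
The plan is to repeat the zooming argument used in the proof of \cref{thm:general46} verbatim, replacing \cref{cor:nilpotentisation46} with the general nilpotentisation construction and \cref{thm:InfinitelyPrelegendrianTorus} with its existence content at a single point. The only structural fact used about the complex Heisenberg model in dimension $6$ was that it is diffeomorphic to the standard fat space $(\C^3,\D_\std)$; by assumption the same identification is available here, so the strategy transports directly.

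First, I would apply the weighted-dilation construction described right before \cref{cor:nilpotentisation46} at the distinguished point $p \in M$. Since the nilpotentisation at $p$ is by hypothesis the complex Heisenberg algebra, the limit model is exactly $(\C^{2n+1},\D_\std)$. This yields a path of fat distributions $(\D_s)_{s \in [0,1]}$ on an open ball $B \subseteq \R^{4n+2}$ such that $\D_0 = \D_\std$ and, for every $s > 0$, $\D_s$ is diffeomorphic to the restriction of $\D$ to a (shrinking) neighbourhood of $p$. Next, I would invoke \cref{thm:InfinitelyPrelegendrianTorus} to obtain a compact prelegendrian $\Lambda \subseteq (\C^{2n+1},\D_\std) = (B,\D_0)$, contained in some compact subset, together with its Legendrian lift $L = \cL(\Lambda)$ in the contactisation $\mathrm{pr}_0 \colon (U,\xi_0) \to (B,\D_0)$, where $U \cong B \times \S^1$.

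The key deformation step is Gray stability applied to the family of contactisations $\mathrm{pr}_s \colon (U,\xi_s) \to (B,\D_s)$. Because $L$ lies in a compact set $K \subseteq U$, Gray stability provides a time $s_0 > 0$ and a family of isocontact embeddings $\phi_s \colon (K,\xi_0) \hookrightarrow (U,\xi_s)$ for $s \in [0,s_0]$ with $\phi_0 = \mathrm{id}$. Set $L_s \defi \phi_s(L)$. Since $L_0 = L$ is transverse to $\mathrm{pr}_0$ (this is precisely the condition that it projects to a prelegendrian), and transversality to $\mathrm{pr}_s$ is an open condition both in $s$ and in the Legendrian, there exists $s_1 \in (0,s_0]$ such that $L_s$ is transverse to $\mathrm{pr}_s$ for all $s \in [0,s_1]$. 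Then $\mathrm{pr}_s(L_s) \subseteq (B,\D_s)$ is a compact prelegendrian. Transporting via the diffeomorphism $(B,\D_{s_1}) \simeq (\Op(p),\D|_{\Op(p)})$ yields a compact prelegendrian in $(M,\D)$.

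The only genuinely delicate point is the openness of transversality during the Gray isotopy, and the requirement that $L_s$ remain inside the domain of $\mathrm{pr}_s$; both are automatic once we restrict to a compact subset containing $L$ and take $s_1$ small. Everything else — the existence of $\Lambda$, the identification of the contactisation with $\S T^\ast$ of the ambient $\C^{n+1}$, and the ability to deform fat structures through their contactisations — is already established in the main text.
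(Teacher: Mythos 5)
Your proposal is correct and is essentially identical to the paper's argument: the paper proves this theorem by remarking that "the exact same reasoning" as in the proof of \cref{thm:general46} applies, which is precisely the zooming/nilpotentisation step followed by Gray stability in the contactisation and openness of transversality to the projection, exactly as you describe. The one hypothesis you use — that the nilpotentisation at the distinguished point is the complex Heisenberg algebra, so the limit model is $(\C^{2n+1},\D_\std)$ — is exactly what replaces the dimension-$6$ classification input of \cref{cor:nilpotentisation46}, so nothing further is needed.
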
 

\subsection{Stability phenomena}

The absence of a Darboux-type theorem for fat distributions of corank greater than $1$ implies that Gray stability does not hold for these classes. Nonetheless:
\begin{proof}[Proof of \cref{lem:paths}]
For the first claim we reason as in the proof of  \cref{thm:general46}. We apply Gray stability in the contactisation to the Legendrian lift. This produces a family of Legendrians that, at least for small time, remain transverse to the projection to the fat manifold.

For the second claim we apply the reasoning appearing in the proof of \cref{thm:general46} parametrically. That is: we apply the nilpotentisation parametrically on $s$. Then, given any prelegendrian for $\D_\std$ we may assume that it is contained in an arbitrary small ball, thanks to a dilation. Since $\D_\std$ is the model structure on the nilpotentisation, we see a copy of the prelegendrian in each $\D_s$, parametrically.
\end{proof}

\begin{proof}[Proof of  \cref{thm:noPaths}]
Suppose that such a path does exist. Since the homotopy $\D_s$ is compactly supported, the contactisations define a path of contact structures that is also compactly supported. As such, Gray stability applies, telling us that the Legendrian lifts are contactomorphic. Since the Legendrian lifts are distinguished by LCH, this is a contradiction.
\end{proof}

\bibliographystyle{abbrv}
\bibliography{Elliptic_references.bib}

\end{document}